\newtheorem{theorem}{Theorem}
\renewcommand{\thetheorem}{\Alph{theorem}}
\newtheorem{lemma}{Lemma}
\newtheorem{claim}[lemma]{Claim}
\newtheorem{question}[lemma]{Question}
\newtheorem{conjecture}[lemma]{Conjecture}
\newtheorem{corollary}[lemma]{Corollary}
\newtheorem{proposition}[lemma]{Proposition}
\newtheorem{remark}[lemma]{Remark}
\newtheorem{remarks}[lemma]{Remarks}
\newtheorem*{main-conjecture}{Main conjecture}
\theoremstyle{definition}
\newtheorem{definition}[lemma]{Definition}
\def\DD{{\mathbb D}}  
\def\ZZ{{\mathbb Z}}
\def\RR{{\mathbb R}}
\def\NN{{\mathbb N}}
\def\cC{\mathcal{C}}
\def\cO{\mathcal{O}}
\def\cI{\mathcal{I}}
\def\cJ{\mathcal{J}}
\def\cD{\mathcal{D}}
\def\la{\lambda}
\def\eps{\epsilon}
\def\supp{{\rm {supp}}}
\def\interior{\operatorname{Interior}}
\renewcommand{\l@section}{\@dottedtocline{2}{3.8em}{3.2em}}
\renewcommand{\l@subsection}{\@dottedtocline{3}{3.8em}{3.2em}}
\newcommand{\subsectionruninhead}{\@startsection{subsection}{2}{0mm}{-\baselineskip}{-0mm}{\bf\large}}
\newcommand{\subsubsectionruninhead}{\@startsection{subsubsection}{3}{0mm}{-\baselineskip}{-0mm}{\bf\normalsize}}
\begin{document}

\title{Mildly dissipative diffeomorphisms of the disk\\ with zero entropy}
\author{Sylvain Crovisier\footnote{S.C. was partially supported by the ERC project 692925 -- NUHGD; S.C. and E.P. were partially supported
by the Balzan Research Project of J. Palis. E.P. received support of NSF via grant DMS-1956022.}, Enrique Pujals, Charles Tresser}

\maketitle

\begin{abstract}
We discuss the dynamics of smooth diffeomorphisms of the disc with vanishing topological entropy which satisfy the mild dissipation property introduced in~\cite{CP}. This class contains the H\'enon maps with Jacobian up to $1/4$.
We prove that these systems are either (generalized) Morse Smale
or infinitely renormalizable. In particular we prove a conjecture of Tresser is this class: any diffeomorphism in the interface between
the sets of systems with zero and positive entropy admits doubling cascades.
This generalizes a well known consequence
of Sharkovskii's theorem for interval maps, to mild dissipative diffeomorphisms of the disk with zero entropy.
\end{abstract}

\tableofcontents

\section{Introduction}

The space of diffeomorphisms splits into two classes:
those with zero entropy and those with positive entropy (by which we always mean topological entropy). The former contains Morse-Smale diffeomorphisms: their nonwandering set is formed by finitely many periodic points.
The latter contains the systems exhibiting a transverse homoclinic orbit, \emph{i.e.,} an orbit which accumulates on the past and on the future
on a same periodic orbit and which persists under small perturbations: the nonwandering set is uncountable. In particular, both classes contain $C^1$-open sets. It has been proved that Morse-Smale systems and those having a transverse homoclinic intersection define a $C^1$-dense open set~\cite{PuSa,C}.
However, even in the $C^1$ context, the dynamics of systems belonging to the interface
of these two classes is not well understood, while in higher topologies almost nothing is known.
One goal would be to characterize the systems in the boundary of the zero entropy class  and to try to identify, if it exists, the universal phenomenon that generates entropy.

In a more general context
our central question here is the transition between simple and complicated dynamics as seen from two different angles:
{\color{black} the theoretical perspective and 
the applied one.
The transition to chaos has been observed in a variety of natural and engineering contexts, which are modeled by dissipative flows,
and that can be reduced to discrete-time systems by considering their trace on Poincar{\'e} sections.}
This happens both in some forced damped oscillators for which one observes the formation of horseshoes
and in autonomous flows where the chaos is linked to a Shil'nikov bifurcation in $C^\omega$ regularity~\cite{shilnikov}, or even $C^{1+\text{Lip}}$ regularity~\cite{Tr}.

We can think about two related problems when considering this central question:  
\smallskip

-- \emph{the transition to chaos} (\emph{i.e.,} the transition from zero to positive entropy),\footnote{In dimensions $1$ and $2$, the topological entropy is continuous with respect to the $C^{\infty}$-topology by~\cite{Mi,Ka,Yo}. In particular, there is no jump in entropy at the transition. For $C^1$ families on the interval, the transition to positive entropy requires infinitely many period doubling bifurcations \cite{BlHa}.}
\smallskip

-- \emph{the transition from finitely to infinitely many periods of hyperbolic periodic orbits.}
\hspace{-2cm}\mbox{} 
\smallskip

In the one-dimensional context, the natural ordering on the interval allows the development of a ``combinatorial theory", which describes properties of orbits related to this ordering. An example of this is Sharkovskii's hierarchy of periodic orbits~\cite{sharkovskii}; it implies in particular that any system with zero entropy only admits periodic points of period $2^n$. One paradigmatic example is the case of unimodal maps: Coullet-Tresser and independently Feigenbaum conjectured~\cite{CT,F} that the ones in the boundary of the zero entropy class are a limit of a period doubling cascade with universal metric property under rather mild smoothness assumptions and are infinitely renormalizable (see also~\cite{Chandra} and~\cite{CP2}).

In those papers a renormalization operator  was introduced\footnote {In \cite{CT}, Coullet and Tresser recognized that operator as similar to the renormalization operator introduced in Statistical Mechanics by Kennet Wilson
following a prehistory in the context of high energy physics.} and it was shown that the numerical observations could be explained if this operator, defined on an appropriate space of functions, would have a hyperbolic fixed point. The central results of the universality theory for unimodal maps have been proved by Lyubich~\cite{L} for analytic unimodal maps and extended to lower regularity in~\cite{FMP}.  Partial results about multimodal maps and the associated transition to chaos have been obtained  by many authors (see \emph{e.g.,} \cite{MiTr} and references cited or citing). 

\paragraph{a -- Mildly dissipative diffeomorphisms of the disc.}
The first step towards these universal goals in higher dimension, is to consider embeddings of the disc $\mathbb{D}$. 
These embeddings can be extended as diffeomorphisms of the two-dimensional sphere by gluing a repelling disc, 
as detailed in~\cite{BoFr}.\footnote{Notice that \cite{BoFr} is the first paper studying cascades of period doubling in dimensions 1 and 2.}
Therefore, to avoid notations we will call \emph{dissipative diffeomorphisms of the discs}
the $C^r$ embeddings $f\colon \DD\to f(\DD)\subset \text{Interior}(\DD)$ with $r>1$,
such that $|\det(Df(x))|<1$ for any $x\in \DD$.
Observe that any $f$-invariant ergodic probability measure $\mu$
which is not supported on a hyperbolic sink
has one negative Lyapunov exponent
and  one which is non-negative.
In particular,
for $\mu$-almost every point $x$, there exists a well-defined one-dimensional
stable manifold $W^s(x)$.
We denote $W^s_\DD(x)$ the connected component of $W^s(x)\cap \DD$
containing $x$.
We strengthen the notion of dissipation:

\begin{definition}\label{SD defi}
A dissipative diffeomorphism of the disc is \emph{mildly dissipative}\index{dissipation, mild and $\gamma$-dissipation} if
for any ergodic measure $\mu$ not supported on a hyperbolic sink, and for
$\mu$-almost every $x$, the curve $W^s_\DD(x)$ separates $\DD$.
\end{definition}

This notion was introduced for any type of surface\footnote {In \cite{CP} these systems are called \emph{strongly dissipative diffeomorphisms} since many results were only applied for systems with very small Jacobian; in the  context of the disc we call them \emph{mildly dissipative}, since there are classes of diffeomorphisms with not such small Jacobian, as the H\'enon maps, that satisfy the main property of the definition.} in \cite{CP}, where
it is shown that mild dissipation is satisfied for large classes of systems, 
For instance it holds for $C^2$ open sets of diffeomorphisms of the disc,
and for polynomial automorphisms of $\mathbb{R}^2$ whose Jacobian is sufficiently close to $0$,
including the diffeomorphisms from the H\'enon family with Jacobian of modulus less than $1/4$
(up to restricting to an appropriate trapped disc).
This class captures certain properties of one-dimensional maps but keeps two-dimensional features showing all the well known complexity of dissipative surface diffeomorphisms. The dynamics of the new class, in some sense, is intermediate between one-dimensional dynamics and general surface diffeomorphisms.

\paragraph{b -- Renormalization.}
As  mentioned before, the essential mechanism for interval endomorphisms in the transition to chaos are the period doubling cascades; the main universal feature of systems in the boundary of zero entropy is that they are  infinitely renormalizable.
A similar result can be proved for mildly dissipative diffeomorphisms of the disc that belong to the boundary of the zero entropy class.

A diffeomorphism $f$ of the disc is \emph{renormalizable}\index{renormalization, infinite renormalization} if
there exists a compact set $D\subset \mathbb{D}$ homeomorphic to the unit disc
and an integer $k>1$ such that
$f^i(D)\cap D=\emptyset$ for each $1\leq i<k$ and $f^k(D)\subset D$.
Moreover $f$ is \emph{infinitely renormalizable} if there exists an infinite nested sequence of renormalizable attracting periodic domains with arbitrarily large periods.
For instance \cite{GvST} built a $C^\infty$-diffeomorphism which has vanishing entropy
and is infinitely renormalizable
(see also Figure~\ref{f.odometer}).

\begin{theorem}\label{t.theoremA}
For any mildly dissipative diffeomorphism $f$ of the disc whose topological entropy vanishes,
\begin{itemize}
\item[--] either $f$ is renormalizable,
\item[--] or any forward orbit of $f$ converges to a fixed point.
\end{itemize}
\end{theorem}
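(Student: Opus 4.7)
The plan is to prove the contrapositive: assuming that $f$ is \emph{not} renormalizable, I will show that every forward orbit converges to a fixed point. Since $f(\DD)\subset \mathrm{Interior}(\DD)$, forward orbits stay in a fixed compact set, so it is enough to control the recurrent dynamics and, more concretely, the ergodic invariant measures.

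The first step is an ergodic dichotomy. For any ergodic $f$-invariant probability measure $\mu$, dissipation gives $\int \log|\det Df|\,d\mu<0$, so $\lambda_1(\mu)<0$. If $\mu$ is not carried by a hyperbolic sink, the setup of the paper provides $\lambda_2(\mu)\ge 0$, and zero entropy together with Katok's theorem (available in $C^{1+\alpha}$, which we have since $r>1$) rules out $\lambda_2>0$, for a genuine saddle measure would produce a horseshoe. Hence every non-sink ergodic measure satisfies $\lambda_1<0=\lambda_2$, and Pesin theory together with mild dissipation produces at $\mu$-almost every point a one-dimensional stable manifold $W^s_\DD(x)$ that separates $\DD$. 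These separating stable curves are the key one-dimensional combinatorial object that will let me import Sharkovskii-style reasoning into dimension two.

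Next I would rule out non-fixed recurrence. A hyperbolic sink of period $k>1$ already gives a renormalization, contradicting the assumption, so I may assume there is no periodic sink of period greater than one. For any other non-fixed periodic point $p$ of period $k$, I would use the separating stable arcs through the $k$ orbit points: since zero entropy forbids transverse homoclinic intersections, the stable and unstable pieces near the orbit sit in a tree-like planar configuration, and the complement in $\DD$ of these arcs consists of finitely many simply connected regions cyclically permuted by $f$; dissipation then sends one of them into its own image, producing a renormalization. A parallel argument handles general $\omega$-limits: if some orbit fails to converge to a fixed point, its $\omega$-limit carries a non-trivial ergodic $\mu$ with $\lambda_2=0$, and the quotient of $\DD$ by the separating stable laminae carries a zero-entropy ``interval-like'' factor. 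Sharkovskii's theorem applied to this factor forces either a non-fixed periodic orbit downstairs (already handled) or an infinite nest of invariant intervals, which lifts back to a renormalization on $\DD$ via the separating property.

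The main obstacle will be the topological construction of the renormalization disc from a non-fixed orbit or non-trivial minimal set. Locally the separating stable arcs do bound simply connected pieces, but globally one must control how distant stable manifolds can interlock in $\DD$; this requires using dissipation to rule out wild accumulation and zero entropy to prevent the stable and unstable laminations from crossing transversely. Once this combinatorial picture and the one-dimensional factor are rigorously set up, the reduction to Sharkovskii is essentially one-dimensional, and the convergence of every forward orbit to a single fixed point follows from a standard Conley-theoretic argument once the recurrent set has been reduced to the fixed point set.
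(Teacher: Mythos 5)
Your overall architecture (reduce to ergodic measures, split according to whether there are non-fixed periodic points, extract a trapped disc from separating stable manifolds) matches the paper's, but the two steps you flag as "obstacles" are not technicalities: they are the theorem, and the mechanisms you propose for them do not work. First, for a non-fixed periodic orbit of period $k$, it is false that "dissipation then sends one of the complementary regions into its own image." Volume contraction gives no topological trapping: a component $V$ of $\DD\setminus W^s_\DD(p)$ need not satisfy $f^k(V)\subset V$, and even the preliminary claim that the orbit sits in a "tree-like configuration" with one component per orbit point free of the other iterates is exactly the \emph{decoration} property, which the paper must earn. The actual route is: prove there are no cycles of periodic orbits (theorem~\ref{t.cycle}, a non-generic version of Pixton's theorem, proved by a separate horseshoe-construction argument); use Lefschetz indices and the chain structure of periodic points to show every non-fixed periodic point is stabilized or decreasing-chain related to a stabilized one (proposition~\ref{p.decreasing-chain}); and then build the trapped disc inside the decorated region by covering the accumulation set of the stabilizing unstable branch with finitely many \emph{Pixton discs} bounded by unstable arcs and stable manifolds, where finiteness of the cover comes from the localized closing lemma (theorem~\ref{t.measure local}). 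None of this is replaced by your Sharkovskii-factor argument: there is no global interval quotient of $\DD$ by the stable lamination (the lamination exists only $\mu$-a.e., and the quotient is at best a real tree on deep renormalization domains), so "apply Sharkovskii to the factor and lift the invariant intervals" has no content as written.

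Second, in the case where all periodic points are fixed, "a standard Conley-theoretic argument once the recurrent set has been reduced to the fixed point set" begs the question. The closing lemma only tells you that $\omega(x)$ \emph{contains} a fixed point $p$; to conclude $\omega(x)=\{p\}$ you must exclude that $\omega(x)$ also meets the unstable branch of $p$ and then returns near $p$, i.e.\ you must exclude cycles among fixed points (including the homoclinic case where $W^u(p)$ accumulates on $W^s_\DD(p)$ without transverse intersection). That is again theorem~\ref{t.cycle}, plus theorem~\ref{t.renormalize}: the accumulation set of each invariant unstable branch lies in a trapping disc disjoint from $W^s(p)$, which is what forces $\omega(x)$ to be a single fixed point. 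Your proposal correctly identifies where the difficulty lives but supplies neither the no-cycle theorem nor the Pixton-disc construction that the paper uses to cross it, so as it stands the argument does not close.
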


Morse-Smale diffeomorphisms (whose non-wandering set is a finite set of hyperbolic periodic points)
are certainly not infinitely renormalizable.
It is natural to generalize this class of diffeomorphisms
in order to allow bifurcations of periodic orbits.

\begin{definition}\label{d.generalizedMS}
A diffeomorphism is \emph{generalized Morse-Smale}\index{Morse-Smale and generalized Morse-Smale diffeomorphism} if:
\begin{itemize}
\item[--] the $\omega$-limit set of any forward orbit is a periodic orbit,
\item[--] the $\alpha$-limit set of any backward orbit in $\mathbb{D}$ is a periodic orbit,
\item[--] the period of all the periodic orbits is bounded by some $K>0$.
\end{itemize}
\end{definition}
Clearly these diffeomorphisms have zero entropy.
We will see in Section~\ref{s.gms} that the set of mildly dissipative generalized Morse-Smale diffeomorphisms of the disc
is $C^1$ open. A stronger version of Theorem~\ref{t.theoremA}, proved in Section~\ref{s.renormalize} (see Theorem \ref{t.renormalize-prime}),  states that in the renormalizable case there exist finitely many renormalizable domains  such  that the limit set in their  complement consists of fixed points. That version  implies:

\begin{corollary}\label{c.dichotomy0}
A mildly dissipative diffeomorphism of the disc with zero entropy is
\begin{itemize}
\item[--] either infinitely renormalizable,
\item[--] or generalized Morse-Smale.
\end{itemize}
\end{corollary}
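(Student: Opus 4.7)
The plan is to deduce Corollary~\ref{c.dichotomy0} from the stronger Theorem~\ref{t.renormalize-prime} by recursively applying the dichotomy to renormalized pieces: at each step either the piece has only fixed points in its limit set, or it contains finitely many renormalizable sub-disks. This produces a (possibly infinite) tree of renormalizations whose finiteness determines which of the two alternatives in the corollary holds.

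First I apply Theorem~\ref{t.renormalize-prime} to $f$. If there are no renormalizable domains, every forward orbit converges to a fixed point, so $\omega(x)$ is a fixed point for every $x \in \DD$. For a backward orbit $\{f^{-n}(x)\}$ remaining in $\DD$, the $\alpha$-limit is a compact connected invariant set each of whose points also has $\omega$-limit a fixed point; connectedness of $\alpha$-limit sets of negative orbits then forces $\alpha(x)$ to be a single fixed point. With $K = 1$ all three conditions of Definition~\ref{d.generalizedMS} are satisfied and $f$ is generalized Morse--Smale.

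Otherwise Theorem~\ref{t.renormalize-prime} provides finitely many attracting periodic disks $D_1,\dots,D_m$ of periods $k_1,\dots,k_m \ge 2$ whose orbits contain every non-fixed limit point of $f$. I then recurse on $g_j := f^{k_j}|_{D_j}$: this is a $C^r$ embedding of $D_j$ into its interior, with zero entropy (bounded by $k_j\,h_{\operatorname{top}}(f)$) and the dissipation property. Mild dissipation is inherited from $f$: every ergodic $g_j$-measure $\nu$ decomposes into at most $k_j$ ergodic $f$-measures supported on the iterates $f^i(D_j)$, none of which is concentrated on a hyperbolic sink, so their $f$-stable manifolds separate $\DD$; the connected component $W^s_{D_j}(x)$ of $W^s_\DD(x) \cap D_j$ through a $\nu$-typical $x$ then separates $D_j$.

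Applying Theorem~\ref{t.renormalize-prime} to each $g_j$ and iterating, either at some finite level every leaf of the tree falls into the non-renormalizable case --- yielding a generalized Morse--Smale structure for $f$ with period bound the largest product $k_{j_1}\cdots k_{j_n}$ along a root-to-leaf branch of the finite tree --- or some branch renormalizes forever, producing a nested sequence of $f$-periodic disks whose periods $k_{j_1} k_{j_2} \cdots k_{j_n} \to \infty$, so that $f$ is infinitely renormalizable. The main obstacle is the inheritance of mild dissipation under the passage $f \rightsquigarrow g_j$: one must verify that the connected component $W^s_{D_j}(x)$ still separates $D_j$, which requires controlling how the curve $W^s_\DD(x)$ meets the Jordan boundary $\partial D_j$ and using that a proper arc in $D_j$ with endpoints on $\partial D_j$ separates $D_j$, so the global separation property in $\DD$ transfers to the chosen component inside $D_j$.
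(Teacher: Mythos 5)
Your proposal is correct and follows essentially the same route as the paper: apply Theorem~\ref{t.renormalize-prime}, recurse on the return maps $f^{k_j}|_{D_j}$ inside the trapped discs, and conclude according to whether the recursion terminates. The only point you develop further than the paper is the inheritance of mild dissipation by the renormalized maps, which the paper asserts without proof and which your decomposition of ergodic measures handles adequately.
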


\paragraph{c -- Boundary of zero entropy.}
The set of $C^r$ diffeomorphisms, $r>1$, with positive entropy is $C^1$ open (see~\cite{Ka}).
One may thus consider how positive entropy appears: a diffeomorphism belongs to the boundary of
zero entropy if its topological entropy vanishes, but it is the $C^1$ limit of diffeomorphisms with
positive entropy.
The previous results immediately give:

\begin{corollary}\label{c.infinitely-renormalizable}
A mildly dissipative diffeomorphism of the disc in the boundary of zero entropy
is infinitely renormalizable.
\end{corollary}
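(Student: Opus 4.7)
The plan is to combine the dichotomy from Corollary~\ref{c.dichotomy0} with the $C^1$-openness of the mildly dissipative generalized Morse-Smale class, announced in the paragraph preceding Corollary~\ref{c.dichotomy0} and to be established in section~\ref{s.gms}. Since the boundary of the zero entropy set is, by definition, contained in the zero entropy set itself, any mildly dissipative $f$ in that boundary satisfies the hypotheses of Corollary~\ref{c.dichotomy0}. It therefore falls into one of two cases: either $f$ is infinitely renormalizable -- which is the desired conclusion -- or $f$ is generalized Morse-Smale. The whole task thus reduces to ruling out this second alternative.

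Suppose for contradiction that $f$ is generalized Morse-Smale. The announced $C^1$-openness yields a $C^1$-neighborhood $U$ of $f$ in which every diffeomorphism is generalized Morse-Smale, hence has vanishing topological entropy. On the other hand, because $f$ lies in the boundary of the zero entropy set, there is a sequence $f_n\to f$ in the $C^1$ topology with $h_{\rm top}(f_n)>0$. For $n$ large, $f_n\in U$, contradicting the fact that every diffeomorphism in $U$ has zero entropy. Hence $f$ must be infinitely renormalizable.

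The one technical point that I foresee needing care is the precise form of the $C^1$-openness statement from section~\ref{s.gms}. If it is $C^1$-open inside the full space of $C^r$ diffeomorphisms of the disc, the argument above closes immediately. If instead the openness is only relative to the class of mildly dissipative diffeomorphisms, a small additional step is required to guarantee that the approximating $f_n$ can be chosen mildly dissipative. Combining the $C^1$-robustness of positive entropy from~\cite{Ka} with the $C^2$-openness of mild dissipation proved in~\cite{CP}, each $f_n$ may be perturbed slightly in the $C^2$ topology to land back in the mildly dissipative class while still having positive entropy, after which the contradiction proceeds as before. I expect this to be the only delicate point in what is otherwise an immediate consequence of the results already accumulated.
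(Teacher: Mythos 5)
Your argument is correct and is exactly the proof the paper intends: it declares the corollary ``immediate'' from Corollary~\ref{c.dichotomy0} together with the $C^1$-openness of mildly dissipative generalized Morse-Smale diffeomorphisms established in section~\ref{s.gms}, combined with the definition of the boundary of zero entropy. Concerning your technical caveat, no extra perturbation step is needed: the proof of the openness proposition in section~\ref{s.gms} shows that \emph{every} diffeomorphism $g$ that is $C^1$-close to $f$ is generalized Morse-Smale (mild dissipation of $g$ is not used there), and the generalized Morse-Smale property alone forces zero topological entropy via the variational principle, which already yields the desired contradiction.
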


We may ask if the converse also holds:

\begin{question}\label{q.approximate} In the space of mildly dissipative $C^r$ diffeomorphisms of the disc, $r>1$,
can one approximate any diffeomorphism exhibiting periodic orbits of arbitrary large period
by diffeomorphisms with positive entropy?
\end{question}

This would imply that generalized Morse-Smale
diffeomorphisms are the mildly dissipative diffeomorphisms of the disc  with robustly vanishing entropy.
Question~\ref{q.approximate} has a positive answer if one considers $C^1$-approximations of $C^2$-diffeomorphisms
(this is essentially Corollary 2 in~\cite{PuSa2}).
In a similar spirit, it is unknown (even in the $C^1$-topology) if diffeomorphisms with zero entropy are limit of generalized Morse-Smale diffeomorphisms.

\begin{question}\label{q.approximate2} In the space of mildly dissipative $C^r$ diffeomorphisms of the disc, $r>1$,
can one approximate any diffeomorphism with zero entropy by a generalized Morse-Smale diffeomorphism?\footnote{For issues related to the two last questions in the context of interval maps, see \emph{e.g.,}~\cite{HuTr} and references therein.}
\end{question}

\paragraph{d -- Decomposition of the dynamics with zero entropy.}
Let us recall that Conley's theorem (see~\cite[Chapter 9.1]{robinson}) decomposes the dynamics of homeomorphisms:
the chain-recurrent set splits into disjoint invariant compact sets called
\emph{chain-recurrence classes}.
We now describe the dynamics inside the chain-recurrence classes
of mildly dissipative diffeomorphisms with zero entropy.

Let $h$ be a homeomorphism of the Cantor set $\mathcal{K}$.
One considers partitions of the form
$\mathcal{K}=K\cup h(K)\cup\dots\cup h^{p-1}(K)$ into clopen sets that
are cyclically permuted by $h$.
We say that $h$ is an \emph{odometer}\index{odometer, generalized odometer} if there exist such partitions into clopen sets
with arbitrarily small diameters.
The set of the periods $p$ is a multiplicative semi-group which uniquely determines the odometer.
Each odometer is minimal and preserves a unique probability measure
(this allows to talk about almost every point of the odometer $x\in \mathcal{K}$).
Figure~\ref{f.odometer} represents a diffeomorphism of the disc which induces
an odometer on an invariant Cantor set.

\begin{corollary}\label{c.structure}
Let $f$ be a mildly dissipative diffeomorphism of the disc with zero entropy.
Then any chain-recurrence class $\mathcal{C}$ of $f$ is:
\begin{itemize}
\item[--] either \emph{periodic}: there exists a compact connected set $C$
and an integer $n\geq 1$ such that $\mathcal{C}=C\cup\dots\cup f^{n-1}(C)$
and any point in $\color{black} C$ is fixed under $f^n$,
\item[--] or a \emph{generalized odometer}:\index{odometer, generalized odometer}
there exists an odometer $h$ on the Cantor set $\mathcal{K}$ and a
continuous subjective map $\pi\colon \mathcal{C}\to \mathcal{K}$ such that
$\pi\circ f=h\circ \pi$ on $\mathcal{K}$. Moreover almost every point $z\in \mathcal{K}$
has at most one preimage under $\pi$.
\end{itemize}
In addition:
\begin{itemize}
\item[--] Each generalized odometer is a quasi-attractor, \emph{i.e.,} admits a basis of open neighborhoods $U$
satisfying $f(\overline U)\subset U$.
\item[--] The union of the generalized odometers is an invariant compact set $\Lambda$.
Outside any neighborhood of $\Lambda$ the set of periods of the periodic orbits is finite.
\end{itemize}
\end{corollary}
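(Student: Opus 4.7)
The strategy is to iterate the stronger renormalization result (Theorem~\ref{t.renormalize-prime}) along each chain-recurrence class $\mathcal{C}$ and read off the resulting combinatorial structure. A first application produces finitely many periodic attracting disc-like domains outside of which every limit set reduces to a fixed point. Thus either $\mathcal{C}$ lies in the complement and consists of fixed points (the trivial periodic case with $n=1$ and $C$ a singleton), or $\mathcal{C}$ enters the cyclic orbit of a renormalization domain $D^{(1)}$ of period $p_1$. In the latter case, the restriction $f^{p_1}|_{D^{(1)}}$ is again a mildly dissipative disc diffeomorphism with zero entropy, so the theorem applies recursively, yielding a nested sequence $D^{(1)}\supset D^{(2)}\supset\cdots$ with divisible periods $p_1\mid p_2\mid\cdots$. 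If the recursion stops after $k$ steps, $\mathcal{C}\cap D^{(k)}$ is a fixed-point class for $f^{p_k}$; using that $D^{(k)}$ is a topological disc and that mild dissipation forces stable manifolds to separate $\mathbb{D}$, one checks that this set is connected, obtaining the compact connected $C$ and the integer $n=p_k$.

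If the recursion does not stop, let $\mathcal{K}$ be the Cantor set obtained as the inverse limit of the cyclic groups $\mathbb{Z}/p_k\mathbb{Z}$ with its canonical odometer $h$. I would define $\pi\colon\mathcal{C}\to\mathcal{K}$ by taking $\pi(x)_k$ to be the unique index $i\in\{0,\dots,p_k-1\}$ with $x\in f^i(D^{(k)})$. Well-definedness follows from pairwise disjointness of the iterates at each level, continuity from $D^{(k)}$ being a trapping region, and surjectivity from chain-transitivity of $\mathcal{C}$, which forces it to meet every cyclic component. The equivariance $\pi\circ f=h\circ\pi$ is built into the construction.

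The main obstacle is the almost-everywhere injectivity of $\pi$. The odometer carries a unique invariant probability $\nu$, which lifts to an $f$-invariant probability $\mu$ on $\mathcal{C}$. This measure cannot be supported on a hyperbolic sink, since its image on $\mathcal{K}$ is non-atomic, so mild dissipation provides, for $\mu$-a.e.\ $x$, a stable curve $W^s_{\mathbb{D}}(x)$ separating $\mathbb{D}$. Two distinct points in the same fiber $\pi^{-1}(z)$ have identical forward itineraries through the $D^{(k)}$ at every level, hence lie in the same stable leaf almost surely; the separation property of $W^s_{\mathbb{D}}(x)$ is then used to show that each of the two components of $\mathbb{D}\setminus W^s_{\mathbb{D}}(x)$ contains at most one point of the fiber, and ergodicity of $\nu$ propagates the conclusion. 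Pushing this last argument through rigorously, with the correct measure-theoretic formulation of ``the fiber on one side'', is the delicate technical point I expect to be the main difficulty.

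The remaining assertions are corollaries of the construction. For each $k$, the open neighborhood $U_k=\interior\bigl(\bigcup_{i=0}^{p_k-1} f^i(D^{(k)})\bigr)$ satisfies $f(\overline{U_k})\subset U_k$ and the $U_k$ form a basis of neighborhoods of the corresponding odometer class, which is therefore a quasi-attractor. Summing over classes, the set $\Lambda$ of points admitting infinitely many nested renormalizations is a decreasing intersection of compact invariant sets, hence itself compact and invariant. Outside a given neighborhood of $\Lambda$, compactness bounds the depth of renormalization attainable, and therefore only finitely many periods of periodic orbits can occur there.
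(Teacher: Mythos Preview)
Your overall architecture matches the paper's: iterate Theorem~\ref{t.renormalize-prime}, obtain a decreasing sequence of trapped open sets, and read off the dichotomy. The construction of $\pi$ as the itinerary map to the inverse limit of cyclic groups, the quasi-attractor property, and the compactness of $\Lambda$ are all handled as in the paper.

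The gap is in your argument for almost-everywhere injectivity of $\pi$. Your claim that two points with identical itineraries through all the $D^{(k)}$ ``lie in the same stable leaf almost surely'' is not justified and is in fact the heart of the matter: a priori the nested domains $f^i(D^{(k)})$ need not shrink at all, so a fiber $\pi^{-1}(z)$ could be a nontrivial continuum and there is no reason its points should share a stable manifold. Your subsequent use of the separation property of $W^s_{\mathbb D}(x)$ to bound the fiber to two points does not work either, since nothing prevents many points of the fiber from sitting on one side.

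What the paper actually does (Proposition~\ref{p.odometer}) is prove directly that for $\mu$-almost every $x$ the diameter of the domain $f^{m_i+k_i}(D_i)$ containing $x$ tends to zero. The mechanism is geometric: on a Pesin block one takes large backward iterates $f^{-n}(x)$, so that the forward image $f^n(W^s_{\mathbb D}(f^{-n}(x)))$ is short; then, using nearby points of the block on each side of $W^s_{\mathbb D}(x)$, one builds a small rectangle $R$ around $x$ bounded by two stable curves and two arcs of $\partial f^n(D)$. For $i$ large the trapped disc through $x$ lies inside $f^n(D)$ and avoids the two bounding stable curves, hence is confined to $R$. This forces the connected component of $x$ in $\mathcal C$ to be a singleton, which is exactly the a.e.\ injectivity you need. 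Your outline does not contain this shrinking argument, and without it the proof does not close.
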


\begin{figure}
\begin{center}
\includegraphics[width=7cm,angle=0]{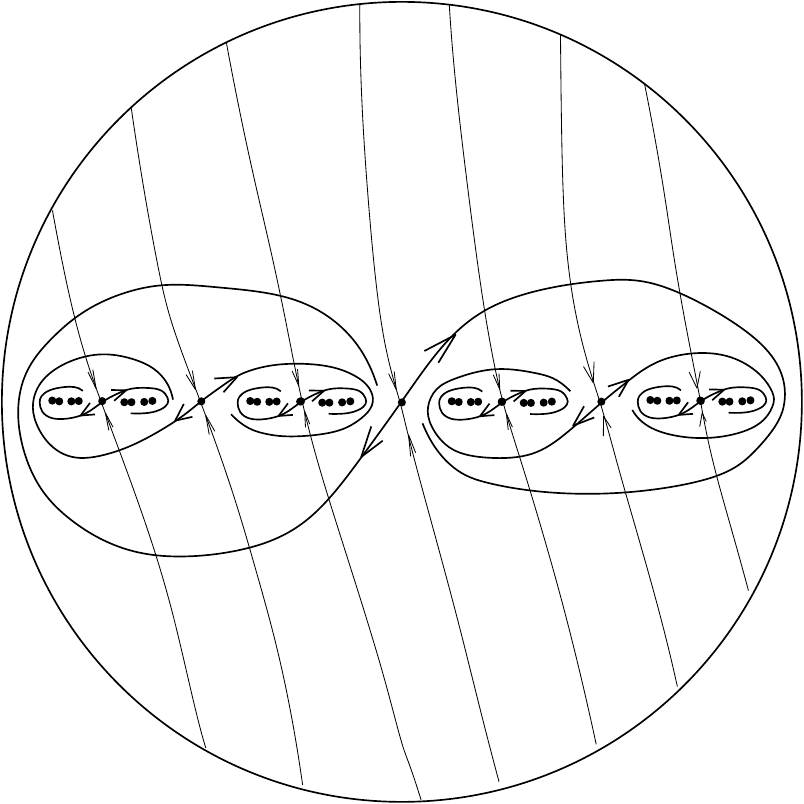}
\end{center}
\caption{Dynamics exhibiting saddle orbits of each period $2^n$, $n\in \mathbb{N}$,
and one odometer.\label{f.odometer}}
\end{figure}

Corollary~\ref{c.structure} can be compared to a recent result by Le Calvez and Tal~\cite{lecalvez-tal} about transitive sets of homeomorphisms
of the $2$-sphere with zero entropy. The methods there are quite different from ours.
Note that the dissipation hypothesis is essential: for conservative systems with zero entropy, the dynamics is modeled on integrable systems,
see~\cite{franks-handel}.
\medskip

We do not know if there exist examples of systems exhibiting generalized odometers
which are not conjugate to odometers (\emph{i.e.,} such that the map $\pi$ is not injective).
Another problem concerns the cardinality of these classes:

\begin{question}
Does there exist a mildly dissipative diffeomorphism of the disc with zero entropy
and infinitely many generalized odometers?\footnote{A degenerate $C^1$ example can be extracted from the Denjoy-like example in \cite{BoGamLiTr}.}
\end{question}

The answer to this question is not known for general one-dimensional $C^r$-endo\-mor\-phism. However for multimodal endomorphisms of the interval, the {\color{black} number of} nested sequences of infinitely renormalizable domains is bounded by the number of critical points. In particular, generically the number of nested renormalizable domains is finite. This type of result is not known for surface diffeomorphisms.

\paragraph{e -- Periods of renormalizable domains.}

For one-dimensional multimodal maps with zero entropy, Sharkovskii's Theorem~\cite{sharkovskii} implies that the period of the renormalizable domains are powers of $2.$ In the context of mildly dissipative diffeomorphisms this cannot be true, but a similar result holds when one considers renormalizable domains with ``large period'':

\begin{theorem}\label{t.period}
Let $f$ be a mildly dissipative diffeomorphism of $\DD$ with zero topological entropy
and infinitely renormalizable.
There exist an open set $W$ and $m\geq 1$ such that:
\begin{itemize}
\item[--] $W$ is a finite disjoint union of topological discs that are trapped by $f^m$,
\item[--] the periodic points in $\DD\setminus W$ have period bounded by $m$,
\item[--] any renormalizable domain $D\subset W$ of $f^m$ has period
of the form $2^k$;
$D$ is associated to a sequence of renormalizable domains
$D=D_k\subset\dots \subset D_1\subset  W$ of $f^m$ with period $2^{k},\dots, 2$.
\end{itemize}
\end{theorem}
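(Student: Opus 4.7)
The plan is to combine the chain-recurrence class decomposition from Corollary~\ref{c.structure} with an inductive Sharkovskii-type argument showing that, under mild dissipation and zero entropy, every nontrivial renormalization period must equal $2$.

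\textbf{Step 1 (construction of $W$ and $m$).} By Corollary~\ref{c.structure} the union $\Lambda$ of the generalized odometers is an invariant compact set, each component is a quasi-attractor, and outside any neighborhood of $\Lambda$ the periods of periodic orbits form a finite set. Using the basis of trapped neighborhoods supplied by the quasi-attractor property together with compactness of $\Lambda$, I select a neighborhood $U$ of $\Lambda$ consisting of finitely many pairwise disjoint topological discs, each trapped by some iterate of $f$. Let $N$ bound the periods of periodic orbits in $\DD\setminus U$, and take $m$ to be the least common multiple of $N$ and those trapping iterates, arranging moreover that each component of $U$ is mapped into itself by $f^m$. Then $W:=U$ is a finite disjoint union of topological discs trapped by $f^m$, and every periodic point of $f$ in $\DD\setminus W$ has period bounded by $m$, yielding the first two bullets.

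\textbf{Step 2 (first-level renormalization has period $2$).} Fix a connected component $W_0$ of $W$. Since $f^m(W_0)\subset W_0$, the restriction $g:=f^m|_{W_0}$ is a mildly dissipative zero-entropy diffeomorphism of a disc. The central claim is: if $D\subset W_0$ is renormalizable for $g$ of minimal period $p>1$, then $p=2$. This is a Sharkovskii-type rigidity: in one dimension it follows immediately from Sharkovskii's theorem, since a renormalizable interval of period $p$ carries a periodic orbit of period $p$ and zero entropy forces $p$ to be a power of $2$. In our two-dimensional setting, mild dissipation supplies stable manifolds that separate the disc, and together with zero entropy these permit reducing the combinatorics of the hypothetical cyclic family $\{g^i(D)\}_{0\le i<p}$ to a one-dimensional combinatorial model on an arc, to which Sharkovskii's theorem applies.

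\textbf{Step 3 (iteration) and the main obstacle.} Granted Step~2, any renormalization $D_1\subset W_0$ of $g$ has minimal period $2$. The map $g^2=f^{2m}$ then restricts to a mildly dissipative zero-entropy diffeomorphism of the trapped disc $D_1$; applying Step~2 again, if a further renormalization exists, yields $D_2\subset D_1$ of period $2$ for $g^2$, hence of period $4$ for $g$. Iterating, any renormalizable domain $D\subset W_0$ of $g$ with minimal period $p$ fits into a nested sequence $D=D_k\subset D_{k-1}\subset\cdots\subset D_1\subset W_0$ of renormalizable domains of $g$ with periods $2^k,2^{k-1},\dots,2$; in particular $p=2^k$. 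This establishes the third bullet. The technical heart is clearly Step~2: the one-dimensional analogue is immediate, but transferring the forcing argument to the disc requires careful use of the separating stable manifolds from mild dissipation. I expect this to proceed by collapsing along stable arcs to obtain a one-dimensional combinatorial quotient of the cyclic family $\{g^i(D)\}$, on which zero entropy forbids any period other than a power of $2$.
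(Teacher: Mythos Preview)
Your Step~1 is fine and mirrors the setup in the paper. The problem is Step~2: what you call ``the technical heart'' is not proved, and the direction you sketch---collapsing along stable arcs to a one-dimensional quotient and then invoking Sharkovskii---is not how the argument goes, nor is it clear it can be made to work. Mild dissipation gives separating stable leaves pointwise for generic points of ergodic measures, but there is no canonical global projection to an interval on which the cyclic family $\{g^i(D)\}$ becomes a one-dimensional map; the tree quotient from~\cite{CP} exists, but Sharkovskii's theorem does not apply to tree maps in the form you need.

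The paper's mechanism for Step~2 is a \emph{rigidity} argument, not a combinatorial reduction. Suppose by contradiction that arbitrarily deep renormalization discs have relative period $\geq 3$. The nested discs limit onto a generalized odometer $\cC$ (Proposition~\ref{p.odometer}), which is uniquely ergodic and aperiodic. Proposition~\ref{p.gamma-strong} then says $f$ is $\gamma$-dissipative on $\cC$ for \emph{every} $\gamma\in(0,1)$; feeding this into Theorem~\ref{t.stable} with $\varepsilon<1/4$ produces a compact set $A$ with $\mu(A)>3/4$ on which the local stable manifolds $W^s_\DD(x)$ vary continuously in the $C^1$ topology. Now take a stabilized orbit $\cO_i$ of relative period $\geq 3$ inside a deep renormalization disc. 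A counting argument (using that most iterates of the small discs have small diameter, again from Proposition~\ref{p.odometer}) finds a single small disc containing three points $x,y,z\in\cO_i\cap A$. Their stable curves are $C^1$-close, hence locally graphs over a common axis, so one of them separates the other two in $\DD$. But stabilized orbits are decorated (Proposition~\ref{p.stab-decorate}), which precisely forbids any $W^s_\DD$ of a point in the orbit separating two other points of the orbit. That is the contradiction.

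So the missing ingredients you need are: the $\gamma$-dissipation criterion for aperiodic uniquely ergodic sets (Proposition~\ref{p.gamma-strong}), the uniform $C^1$ stable-leaf geometry it buys (Theorem~\ref{t.stable}), and the observation that decoration is incompatible with three nearby points having nearly parallel stable leaves. None of this is visible from a Sharkovskii-style reduction.
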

In other words, the period of a renormalizable domain is eventually a power of $2$, meaning that, after replacing $f$ by an iterate, the period of all the renormalizable domains are powers of $2$. As explained in the paragraph {\em summary of the proof} below, the proof of Theorem \ref{t.period} uses some rigidity argument. 

This implies an  analogue of Sharkovskii's theorem for surface diffeomorphisms:

\begin{corollary}\label{c.period}
Let $f$ be a mildly dissipative diffeomorphism of the disc with zero topological entropy.
{\color{black} If the set of periods $\operatorname{Per}(f)$\index{period, set of periods $\operatorname{Per}(f)$} of the periodic orbits of $f$ is infinite,
there exist two finite families of integers $\{n_1,\dots,n_k\}_{k\geq 1}$
and $\{m_1,\dots,m_\ell\}_{\ell \geq 1}$}
\begin{equation}\label{e.period}
\operatorname{Per}(f)=\{n_1,\dots,n_k\}\cup \left\{m_i.2^j, \; 1\leq i\leq \ell \text{ and } j\in \mathbb{N}\right\}.
\end{equation}
\end{corollary}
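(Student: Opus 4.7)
The plan is to invoke the dichotomy of Corollary~\ref{c.dichotomy0} and, in the non-trivial case, to read off the arithmetic structure of the periods from Theorem~\ref{t.period}. If $f$ is generalized Morse-Smale, every period is bounded by Definition~\ref{d.generalizedMS}, and (\ref{e.period}) holds trivially with $\ell=0$ and $\{n_1,\dots,n_k\}=\operatorname{Per}(f)$. I may therefore assume that $f$ is infinitely renormalizable, and I let $W\subset\DD$ and $m\geq 1$ be as supplied by Theorem~\ref{t.period}.

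I would then classify the periodic orbits of $f$ according to whether they intersect $\DD\setminus W$. Those that do have period at most $m$ by the second bullet of the theorem, so the values that occur form a finite set $\{n_1,\dots,n_k\}\subseteq\{1,\dots,m\}$. For orbits entirely contained in $W$, let $x$ be such an $f$-periodic point of period $n$, lying in one connected component of $W$, and write $d$ for its $f^m$-period. The identities $f^n(x)=x$ and $(f^m)^d(x)=x$, combined with the minimality of each period, force $d\mid n$ and $n\mid md$; writing $n=td$, the integer $t$ must divide $m$. I take $\{m_1,\dots,m_\ell\}$ to consist of those divisors of $m$ that are actually realized as such a $t$.

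The decisive remaining step is to show that $d$ is a power of $2$. I would prove it by exhibiting, around the finite orbit $\{x,f^m(x),\dots,f^{m(d-1)}(x)\}$, a renormalizable domain $D\subset W$ of $f^m$ whose period is exactly $d$; the third bullet of Theorem~\ref{t.period} then immediately yields $d=2^j$. Such a $D$ would be produced by thickening each iterate of the orbit into a small topological disc and checking that the resulting collection is trapped by $f^{md}$ with pairwise disjoint images under $f^m,\dots,f^{m(d-1)}$; the mild dissipation, the zero-entropy hypothesis and the forward invariance of $W$ under $f^m$ are what allow the thickenings to contract cleanly rather than collide. This gives the inclusion $\operatorname{Per}(f)\subseteq\{n_1,\dots,n_k\}\cup\{m_i\cdot 2^j\}$, and the reverse inclusion follows from the nested sequence $D_k\subset\cdots\subset D_1\subset W$ provided by Theorem~\ref{t.period}, which exhibits $f$-periodic orbits of every dyadic order $2^j$ attached to each base period $m_i$.

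The main obstacle is precisely this geometric construction: I must produce, from an arbitrary $f^m$-periodic orbit in $W$, a renormalizable domain of $f^m$ whose period agrees exactly with that of the orbit, neither collapsing it into a smaller period nor absorbing it into a renormalizable domain of higher period. It is here that mild dissipation, zero topological entropy, and the delicate trapping structure provided by Theorem~\ref{t.period} must be combined carefully in order to apply the dyadic rigidity of that theorem at the correct renormalization level.
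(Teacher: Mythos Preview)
Your overall strategy---splitting into the generalized Morse--Smale case and the infinitely renormalizable case, and then applying Theorem~\ref{t.period}---matches the paper. But the step you yourself flag as ``the main obstacle'' is a genuine gap, and the paper does not attempt anything like your thickening construction.

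The problem is that around a saddle periodic orbit of $f^m$, small topological discs are \emph{not} trapped by $f^{md}$: points near the local unstable manifold escape. So ``thickening each iterate of the orbit into a small topological disc'' cannot produce a renormalizable domain in general, regardless of mild dissipation or zero entropy. Building a trapped disc containing a given periodic orbit is exactly what the heavy machinery of the paper (Pixton discs, chains, Theorem~\ref{t.renormalize2}) is for, and it cannot be replaced by a local argument.

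The paper avoids this obstacle altogether. Instead of starting from a periodic orbit and trying to wrap it in a renormalizable domain, it applies Theorem~\ref{t.renormalize-prime} \emph{inside} each component $D_i$ of $W$ to the mildly dissipative zero-entropy map $f^{m_i}|_{D_i}$. This produces renormalization domains whose periods, by Theorem~\ref{t.period}, are all equal to $2$; every periodic point of $f^{m_i}|_{D_i}$ outside those domains has period $1$ or $2$. Iterating this argument shows that the set of $f^{m_i}$-periods in $D_i$ is exactly $\{2^n:n\geq 0\}$, hence every $f$-period arising from $D_i$ has the form $m_i\cdot 2^n$. No ad hoc construction around individual orbits is needed.

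A secondary issue: your reverse inclusion is also not quite justified. The nested domains from Theorem~\ref{t.period} guarantee $f^m$-periodic points of every period $2^j$, but their $f$-periods need not all share the same base factor $t$. The paper's formulation, with one $m_i$ per component of $W$, handles this cleanly.
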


In particular, in the setting of mildly dissipative diffeomorphisms, we get an affirmative answer to the following conjecture that was formulated by one of us in 1983, and mentioned verbally since then, but appeared in a text (see~\cite{GT}) only a few years after.

\begin{conjecture}[Tresser]
In the space of $C^k$ orientation preserving embeddings of the $2$-disk, with $k>1$,
which are area contracting, generically,
maps which belong to the boundary of positive topological entropy have a set of periodic orbits which, except for a finite subset, is made of an infinite number of periodic orbits with periods, $m.2^k$ for a given $m$ and all $k \geq 0.$
\end{conjecture}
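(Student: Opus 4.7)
The plan is to deduce Tresser's conjecture almost immediately from the chain Corollary~\ref{c.infinitely-renormalizable} $\to$ Corollary~\ref{c.period}; the real substance lives in those earlier results. First, I would fix a mildly dissipative area-contracting $C^r$ embedding $f\colon\DD\to f(\DD)\subset\operatorname{Interior}(\DD)$ ($r > 1$) lying in the boundary of positive topological entropy. Since the set of positive-entropy diffeomorphisms is $C^1$-open by Katok's theorem (cited in the introduction), $f$ must itself have vanishing entropy while being a $C^1$-limit of positive-entropy maps, so it sits in the boundary of zero entropy in the paper's sense. Corollary~\ref{c.infinitely-renormalizable} then forces $f$ to be infinitely renormalizable, and in particular to possess periodic points of arbitrarily large period.

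Next, I would invoke Corollary~\ref{c.period} to obtain finite lists $\{n_1,\dots,n_k\}$ and $\{m_1,\dots,m_\ell\}$ (with $\ell\geq 1$ because $\operatorname{Per}(f)$ is unbounded) with
$$\operatorname{Per}(f) = \{n_1,\dots,n_k\} \cup \{m_i \cdot 2^j : 1\leq i\leq \ell,\ j\in\NN\}.$$
Discarding the finite set $\{n_1,\dots,n_k\}$ together with all but one cascade leaves an infinite subset of the form $\{m \cdot 2^j : j\geq 0\}$ for a specific base $m$, which is exactly the shape predicted by the conjecture: outside a finite exceptional subset, the periods of $f$ are organized into doubling cascades with common base $m$. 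The hypothesis of mild dissipation plays the role of the ``generically'' clause in Tresser's original wording, since it is a natural and widely satisfied strengthening of area contraction (it is $C^2$-open and covers the Hénon family up to Jacobian $1/4$, as recalled after Definition~\ref{SD defi}).

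The real difficulty has been absorbed into Theorem~\ref{t.period} and its corollary: one must establish, through the rigidity argument the authors allude to in the \emph{summary of the proof}, that after passing to a suitable iterate $f^m$ and restricting to a trapping region $W$, every renormalizable domain has period a power of $2$ and is nested in a chain of renormalizable domains with periods $2^k,2^{k-1},\dots,2$. Once that is in hand, the combinatorics of Corollary~\ref{c.period}---bookkeeping between the finitely many trapped components of $W$ (which deliver the bases $m_i$) and the periodic points that remain outside $W$ (whose periods form $\{n_1,\dots,n_k\}$)---yields Tresser's conjecture with essentially no further work. I would expect this rigidity step, and not the short deduction above, to be the main obstacle, and I would organize the write-up so that the proof of the conjecture is presented as a single-paragraph remark following Corollary~\ref{c.period}.
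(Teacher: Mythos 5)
Your proposal is correct and takes essentially the same route as the paper: there the conjecture is presented as an immediate consequence of Corollary~\ref{c.period} (itself resting on Theorem~\ref{t.period} and Corollary~\ref{c.infinitely-renormalizable}), with no further argument, and your deduction via the period structure~\eqref{e.period} is exactly that one-paragraph remark. The only point to note is that Corollary~\ref{c.period} allows several bases $m_1,\dots,m_\ell$, so the passage to the single base $m$ in the conjecture's literal wording is handled no more tightly in your write-up than in the paper's own statement.
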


We note that it is possible to realize any set of the form~\eqref{e.period}
as the set of periods a mildly dissipative diffeomorphism of the disc having zero entropy,
whereas a diffeomorphism with positive entropy has a different set of periods
(it always contains a set of the form $k.\NN^*$).
In a more general framework, Theorem~\ref{t.period} is  false if the dynamics is
conservative (an integrable twist in the disc may admit all the periods and {\color{black} have}
vanishing entropy).

Previous works in the direction to develop a forcing theory as it follows from Shar\-kov\-skii's theorem  (see \cite{GST}) used the ideas and language of braids.  For surface diffeomorphisms, a periodic orbit defines a braid type that in turns can or not, force the positivity of topological entropy
(the complement of an orbit of period three or larger in the disc
can be equipped with a hyperbolic structure from where Nielsen-Thurston theory can be developed).
In that sense, permutations are replaced by braids, but the discussion in braid terms cannot be reduced to a discussion in terms of periods as the conjecture formulates.

\paragraph{f -- H\'enon family.}
Given any $C^r$ endomorphisms $h$ of an interval $I\subset \RR$ and $b_0>0$, there exists a disc $\DD=I\times (-\eps,\eps)$
such that the maps defined by
\begin{equation}\label{e.extension}
f_{b}(x,y)=(h(x)+y, -bx),\quad
\text{for}\quad 0<|b|<b_0
\end{equation}
are dissipative diffeomorphisms of $\DD$. 
The (real) H\'enon family is a particular case where $h$ is a quadratic polynomial.\footnote{After studying the Lorenz model for large values of the ``Rayleigh number" $r$ on the advice of David Ruelle, Yves Pomeau presented this joint work at the observatory of Nice where Michel H\'enon  was working. He showed in particular that the time-$t$ map, for $t$ varying from 0 to 1, transforms a well chosen rectangle to an incomplete horseshoe. That night, the legend tells, H\'enon extracted a model of that from his former studies of the conservative case while Pomeau and Ibanez had preferred to focus to a full double covering for which the mathematics are much simpler. ``The most recognition for the least work" 
H\'enon told to Tresser. Later, Coullet and Tresser realized that the H{\'e}non map appears to be in the same universality class for period doubling than the one -dimensional quadratic map: this led them to conjecture in 1976 (see~\cite{CT}) that universal period doubling should be observed in fluids, since H{\'e}non map was built to imitate a Poincar{\'e} map of the Lorenz flow in some parameters ranges, and the quadratic map is the limit as the dissipation goes to infinity, of the H{\'e}non map.}
As mentioned before, the H\'enon family  is  mildly dissipative~\cite{CP} for $0<|b|<1/4$ in restriction to a trapped disc. One can easily analyse the dynamics outside this trapped disc,
therefore all the theorems mentioned above can be applied to the these parameters of the H\'enon family and in particular one gets the following corollary which describes the dynamics on the whole plane $\RR^2$.
Note that this contrasts with the usual global dynamical descriptions of the H\'enon family which suppose $|b|\ll 1$ (see~\cite{BC,dCLM,LM}).

\begin{corollary}\label{c.henon}
Let $f_{b,c}\colon (x,y)\mapsto(x^2+c+y, -bx)$  be a H\'enon map with $|b|\in(0,1/4)$ and $c\in \RR$.
If the topological entropy vanishes, then:
\begin{itemize}
\item[--] for any forward (resp. backward) orbit one of the following cases occurs:
\begin{enumerate}
\item it escapes at infinity, \emph{i.e.,} it leaves any compact set,
\item it converges to a periodic orbit,
\item it accumulates to (a subset of) a generalized odometer;
\end{enumerate}
\item[--] the set of periods has the form described in~\eqref{e.period}.
\end{itemize}
\end{corollary}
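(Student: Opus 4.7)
The plan is to reduce everything to the previously proved results on mildly dissipative diffeomorphisms of the disc, handling orbits that may escape at infinity separately. First I would invoke the result of \cite{CP} cited in paragraph~(f): for every $b\in(0,1/4)$ and $c\in\RR$ there is a topological disc $\DD_{b,c}\subset\RR^2$ with $f_{b,c}(\DD_{b,c})\subset\interior(\DD_{b,c})$ on which $f_{b,c}$ restricts to a mildly dissipative diffeomorphism. A one-line estimate on $f_{b,c}(x,y)=(x^2+c+y,-bx)$ shows that for $|(x,y)|$ large enough one has $|f_{b,c}(x,y)|\geq 2|(x,y)|$, together with an analogous bound for $f_{b,c}^{-1}$; hence any point whose forward (resp.\ backward) orbit stays bounded must eventually fall into $\DD_{b,c}$, and the non-wandering set of $f_{b,c}$ is captured by $f_{b,c}|_{\DD_{b,c}}$.

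For the first bullet, let $z\in\RR^2$ and consider its forward orbit. Either it escapes every compact set, giving case~1; or it is bounded, eventually enters $\DD_{b,c}$, and its $\omega$-limit set is a compact invariant subset of $\DD_{b,c}$ contained in a single chain-recurrence class $\cC$ of $f_{b,c}|_{\DD_{b,c}}$. By corollary~\ref{c.structure}, $\cC$ is either a generalized odometer, in which case the orbit accumulates on a subset of it (case~3), or else ``periodic'' in the sense of that corollary. In the periodic case I would argue that the orbit converges to a single periodic orbit by iterating theorem~\ref{t.theoremA}: because $\cC$ is periodic rather than a generalized odometer, the nested sequence of renormalizable domains containing the orbit terminates after finitely many steps; at the bottom of the nesting the first-return map is non-renormalizable, and theorem~\ref{t.theoremA} then forces the orbit to converge to a fixed point of that return map, which is a periodic point of $f_{b,c}$ — case~2. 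The statement for backward orbits is symmetric, using that $f_{b,c}$ is a global polynomial diffeomorphism of $\RR^2$ so backward orbits are well-defined, and that the $\alpha$-limit of a bounded backward orbit is a compact invariant set contained in $\Omega(f_{b,c})=\Omega(f_{b,c}|_{\DD_{b,c}})$.

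For the second bullet, every periodic orbit is bounded in both time directions and therefore contained in $\DD_{b,c}$, so $\operatorname{Per}(f_{b,c})=\operatorname{Per}(f_{b,c}|_{\DD_{b,c}})$; applying corollary~\ref{c.period} to the restricted diffeomorphism yields the description~\eqref{e.period}. The main obstacle of this plan is not any deep analysis, but the delicate upgrade in the periodic case from ``the $\omega$-limit is contained in a periodic chain-recurrence class'' (which by corollary~\ref{c.structure} may a priori be a nontrivial continuum of periodic points of a common iterate) to ``the $\omega$-limit is a single periodic orbit''; this is precisely the content one needs to extract from the refined renormalization statement theorem~\ref{t.renormalize-prime} rather than from corollary~\ref{c.structure} alone.
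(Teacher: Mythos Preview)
Your overall strategy---reduce to a mildly dissipative disc diffeomorphism, handle escape separately, then invoke corollaries~\ref{c.structure} and~\ref{c.period}---is exactly the paper's approach, and your identification of the subtle passage from ``$\omega$-limit lies in a periodic chain-recurrence class'' to ``converges to a single periodic orbit'' (requiring the generalized Morse-Smale structure from theorem~\ref{t.renormalize-prime}) is well observed.

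However, there is a genuine gap in your escape argument. The claimed one-line estimate $|f_{b,c}(x,y)|\geq 2|(x,y)|$ for large $|(x,y)|$ is false: take $(x,y)=(N,-N^2-c)$ with $N$ large, so $|(x,y)|\asymp N^2$, yet $f_{b,c}(x,y)=(0,-bN)$ has norm $bN\ll N^2$. The analogous claim for $f_{b,c}^{-1}$ fails similarly. The H\'enon map is not uniformly expanding at infinity; escape only occurs in appropriate cone regions. Consequently you cannot conclude directly that every bounded forward orbit enters a fixed trapped disc, nor even that such a disc exists for all parameters (when $f_{b,c}$ has no fixed point there is none).

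The paper handles this via proposition~\ref{p.reduction}: rather than finding a trapped disc for $f_{b,c}$ itself, it constructs a rectangle $\Delta=\Delta_1\cup\Delta_2$ with $f_{b,c}(\Delta_1)\subset\interior(\Delta)$, modifies $f_{b,c}$ on $\Delta_2$ to obtain a genuine dissipative disc diffeomorphism $g$ conjugate to $f_{b,c}$ on $\Delta_1$, and proves escape for orbits outside $\Delta_1$ using the explicit cones $U^\pm$. The mild dissipation of $g$ then follows from Wiman's theorem as in~\cite{CP}. Your argument needs this construction (or the alternative proposition~\ref{p.quadritomie}) in place of the incorrect norm estimate.
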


\paragraph{g -- Small Jacobian.}

For diffeomorphisms of the disc sufficiently close to an endomorphism of the interval
and whose entropy vanishes, Section \ref{ss.close-endo} proves that the periods of all renormalizable domains (and so the periods of all periodic orbits) are powers of two. 

More precisely, given a  $C^r$ endomorphism of the  interval $f_0$, there exists $b_0>0$ such that for any $0<|b|<b_0$ the diffeomorphism $f_{b}$ is mildly dissipative.  In particular
all the theorems mentioned before can be applied.
Assuming the Jacobian sufficiently small, a stronger property holds:

\begin{theorem}\label{t.small jacobian} Given a
family $(f_b)$ associated to a $C^2$ endomorphism of the  interval
as in~\eqref{e.extension}, there exists $b_0>0$ such that,
for any $b\in (0,b_0)$ and for any diffeomorphism $g$ with zero entropy in a $C^2$-neighborhood of $f_b$,
there exists $n_0\in\NN\cup\{\infty\}$ satisfying
\begin{equation}
\operatorname{Per}(g)=\{2^n, \; n<n_0\}.
\end{equation}
\end{theorem}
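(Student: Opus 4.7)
The plan is to reduce Theorem~\ref{t.small jacobian} to Sharkovskii's theorem applied to a one-dimensional factor, using smallness of the Jacobian to make $g$ essentially one-dimensional. By corollary~\ref{c.dichotomy0}, $g$ is either generalized Morse-Smale or infinitely renormalizable; in the latter case theorem~\ref{t.period} already confines the periods of a cofinal sequence of renormalizable domains to powers of $2$, but the full task is to force \emph{every} periodic orbit to have power-of-$2$ period and to obtain the downward closure of the set of exponents.

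First I would establish a strong dominated splitting. Since $f_b(x,y)=(h(x)+y,-bx)$ has constant Jacobian $\pm b$, taking $b_0$ small enough produces a $Df_b$-invariant cone field with very strong contraction in a nearly vertical direction $E^{ss}$, dominating the other direction by a factor of order $\sqrt{b}$. Domination being $C^2$-robust, every $g$ in a small enough $C^2$-neighborhood of $f_b$ inherits a splitting $T\DD=E^{ss}\oplus E^{cu}$ with analogous constants. Integrating $E^{ss}$ gives a (H\"older) strong-stable lamination of $\DD$ by curves that cross the disc uniformly transversely to the horizontal; collapsing each leaf produces a continuous surjection $\pi\colon \DD\to I$ onto an interval and a continuous map $\bar g\colon I\to I$ with $\pi\circ g=\bar g\circ\pi$. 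Factors do not increase topological entropy, so $h_{\operatorname{top}}(\bar g)\le h_{\operatorname{top}}(g)=0$.

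Next I would show that periodic orbits correspond: two distinct periodic points of $g$ on the same strong-stable leaf would become asymptotic under forward iteration, which is impossible; conversely each periodic point of $\bar g$ of period $k$ lifts to a fixed point of the contraction $g^k$ on the corresponding leaf, by Brouwer. Hence $\operatorname{Per}(g)=\operatorname{Per}(\bar g)$, and Sharkovskii's theorem applied to the zero-entropy interval map $\bar g$ gives $\operatorname{Per}(\bar g)=\{2^n : n<n_0\}$ for some $n_0\in\NN\cup\{\infty\}$, which is the conclusion.

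The main obstacle is the global construction of $\pi$. The strong-stable lamination need not foliate the whole disc --- its leaves may have endpoints in the interior and the transverse regularity is only H\"older --- so care is needed to identify the leaf-space with an interval and to verify that $\bar g$ is a bona fide continuous interval endomorphism to which Sharkovskii applies. A secondary subtle point is the correspondence at sinks, which may lie in wandering regions of the lamination; handling them requires using uniform transversality of leaves to the horizontal. An alternative renormalization-based route would show instead that for $b$ small every renormalizable domain of $g$ has period exactly $2$ (by $C^2$-closeness to a renormalization of $h$, which must be period-doubling by Sharkovskii in dimension~$1$), and iterate using that the renormalized map has Jacobian $\le b^2$ and so remains in the same regime; but this route would still need to treat the downward closure and the non-renormalizable dynamics separately.
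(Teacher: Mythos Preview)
Your main route has a genuine gap that you underestimate. For $f_b(x,y)=(h(x)+y,-bx)$ one has
\[
Df_b(x,y)=\begin{pmatrix} h'(x) & 1 \\ -b & 0\end{pmatrix},
\]
and near a critical point of $h$ (where $h'(x)=0$) both eigenvalues have modulus $\sqrt{|b|}$. There is \emph{no} dominated splitting on a neighborhood of the critical set, hence no globally integrable $E^{ss}$ bundle, and the strong-stable ``lamination'' you want to collapse simply does not exist across the whole disc. This is not a regularity issue that ``care'' will fix: the obstruction is dynamical, not technical. Consequently the quotient $\pi$ and the interval map $\bar g$ are not defined, and Sharkovskii cannot be invoked in this way. (Compare the remark after Question~6: the quotient dynamics associated in~\cite{CP} lands on a real tree, not an interval, precisely because of the folding near the critical region.)

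The paper avoids this by never asking for a global lamination. Proposition~\ref{p.improving-onedim} provides only a compact set $S$, disjoint from a neighborhood of the critical region, on which the stable curves exist and form a \emph{parallel} family in the sense of definition~\ref{d.paralell}; crucially $\mu(S)>1-\delta$ for every ergodic $\mu$ not supported on a sink, with $\delta<1/3$. The structural input is then proposition~\ref{p.stab-decorate}: a stabilized orbit is decorated, so for three iterates on the same orbit no local stable curve separates the other two. If a stabilized orbit had period $\geq 3$, at least three of its points would lie in $S$ (since at most a $\delta$-fraction miss $S$), and among three parallel curves one always separates the remaining two --- a contradiction. Hence every stabilized orbit has period $1$ or $2$, theorem~\ref{t.renormalize-prime} forces the renormalization period to be $2$, and one iterates the same argument inside each renormalization disc (a stabilized orbit of $g^2$ in $D$ is still decorated in $\DD$, and the set $S$ is fixed once and for all). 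Your ``alternative route'' gestures at this inductive scheme but supplies neither the mechanism (parallel stable curves versus decoration) nor the reason the argument survives renormalization (the global set $S$), and the claim that the renormalized map is $C^2$-close to a renormalization of $h$ is not what is used and would itself require justification.
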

In particular, the previous theorem can be applied to the H\'enon family and one recovers one of the results in~\cite{dCLM,LM}.

\paragraph{h -- Some differences with the one-dimensional approach.} In the context of one-dimensional dynamics of the interval, and in particular  for unimodal maps, the  renormalization intervals are built using the dynamics around the turning point: the boundary of the interval contains the closest iterate to the turning point of a repelling orbit (whose period is a power of two) and a preimage of that iterate. For H\'enon maps with small Jacobian,  although there is no notion of turning point, renormalization domains are built  in \cite{dCLM,LM}  using the local stable manifold of a saddle periodic point  of index $1$
and its preimages (those points are the analytic continuations of the repelling points of the one-dimensional map). 

Our approach can rely neither of the notion of turning point nor on being close to well understood one-dimensional dynamics. Our construction is different and uses the structure of the set of periodic points. Following the unstable branches, {\color{black} we build} a skeleton
{\color{black} for} the dynamics that allows {\color{black} one} to construct the trapping regions and the renormalization domains.

\paragraph{i -- The renormalization operator.} In \cite{dCLM,LM} {\color{black} it} is  proved that infinitely renormalizable real H\'enon-like maps with sufficiently small Jacobian 
admit an appropriately defined renormalization operator  {\color{black} (see also~\cite{H} for renormalization with other combinatorics).}
After proper affine rescaling, the dynamics (at the period) on the renormalizable attracting domain converge to a smooth quadratic unimodal map which is nothing {\color{black} but} the hyperbolic fixed point of the renormalization operator for the one-dimensional dynamics.

It is not difficult to construct mildly dissipative diffeomorphisms with zero entropy which are not a priori close to a unimodal map on the interval (for instance, when the first renormalization domain has period larger than two) and in this case the renormalization scheme developed for H\'enon-like maps with small Jacobian would need to be {\color{black} recast}.  Although the present paper does not provide a well-defined renormalization operator
for mildly dissipative diffeomorphism of the disk, it gives the existence of nested renormalization domains and deep renormalizations seem to drive the
system towards the one-dimensional model.
Indeed the renormalization domains eventually have (relative) period two;
moreover the return dynamics on these domains recover
certain smooth properties that are satisfied by diffeomorphism close to the one-dimensional endomorphisms (see Section \ref{ss.uniform stable});
\cite{CP} associates a quotient dynamic which, on these ``deep domains'', induces  an endomorphism of a real tree. That raises the following question:

\begin{question}
Given a sequence of nested renormalizable domains, 
is it true that (after proper rescalings)   
the sequence of return maps generically converges to an unimodal map?
\rm One does not expect to replace ``generic" by ``general" 
because of the expected possible alternate convergence of the renormalizations to more that one fixed point: this happens in dimension 1, see \emph{e.g.,} \cite{MiTr} and also 
\cite{OETr}.
\end{question}

When $f$ is mildly dissipative, the larger Lyapunov exponent
of each generalized odometer $\mathcal{C}$ vanishes, hence the iterates of the derivative of $f$ on $\mathcal{C}$
do not grow exponentially; but one can ask if a stronger property holds: {\em
given a nested sequence of renormalization domains $(D_n)$ and their induced maps $(f_n)$,
are the derivatives $\|Df_n|_{D_n}\|$  uniformly bounded?}

\paragraph{j -- New general tools.} Some of the new results obtained in the present paper hold for any mildly dissipative diffeomorphism of the disk. 
\begin{description}
\item[\it Closing lemma.]
One of them is a new version of the closing lemma proved in \cite{CP} which states that for mildly dissipative diffeomorphisms of the disk, the support of any measure is contained in the closure of periodic points. Our improvement (Theorem \ref{t.measure local}) localizes the periodic points: given  an invariant cellular connected compact set $\Lambda$, the support of any invariant probability on the set is contained in the closure of the periodic points in $\Lambda$. In that sense, Theorem \ref{t.measure local} is an extension of a well known result by Cartwright and Littlewood about the existence of fixed points for invariant cellular sets (see Proposition \ref{p.CL}).
\item[\it No cycle.]
Another one is a generalization of the result proved by Pixton~\cite{pixton} (improving a previous work by Robinson~\cite{robinson1}:
it states that for $C^\infty$-generic diffeomorphisms of the sphere,
a cyclic accumulation between stable and unstable branches of periodic points
can be perturbed to produce a homoclinic connection and positive entropy.
Theorem \ref{t.cycle} shows that the generic hypothesis is not needed for mildly dissipative diffeomorphisms of the disc: there is no finite sequence of fixed points such that the unstable manifold of each one accumulates on the next point and the unstable manifold of the last one accumulates on the first point (Theorems \ref{t.cycle} and  \ref{t.cycle2} in Section \ref{no cycle section}).
This is clear when the intersections between unstable and stable manifolds are transversal but when they just accumulate, it is more difficult.
The strategy consists in building special Jordan domains (that we call \emph{Pixton discs}) from the accumulation of unstable branches on stable manifolds.
\end{description}

\paragraph{k -- Summary of the proof.} 
In order to present the envisioned proof strategy, we first present a class of examples of infinitely renormalizable dissipative homeomorphisms of the disc (inspired by the examples in~\cite{GvST}) and we explain their main dynamical features. We use them as a prototype model for maps with zero entropy. The proofs below will show that these features (essentially) apply also for infinitely renormalizable mildly dissipative diffeomorphisms.

\paragraph{\it Prototype models.}
Let $f_0, f_1$ be two Morse-Smale dissipative diffeomorphisms of the disc. The limit set of $f_0$ is given by a fixed saddle whose unstable branches are interchanged and an attracting orbit of period two that revolves around the fixed point: the fixed point is then said to be {\it stabilized} and the attracting orbit is analogous to a period doubling sink for interval maps. The limit set of $f_1$  is given by a fixed attracting periodic point, a saddle of period three (also said to be {\it stabilized})  that revolves around the fixed point which anchors one of the unstable {\color{black} branches} of the saddle periodic points,  and an attracting periodic orbit (also of period three) that attracts the other unstable branch of the saddles. Both diffeomorphisms are depicted in Figure~\ref{MSpdf}.
\begin{figure}
\begin{center}
\includegraphics[width=15cm,angle=0]{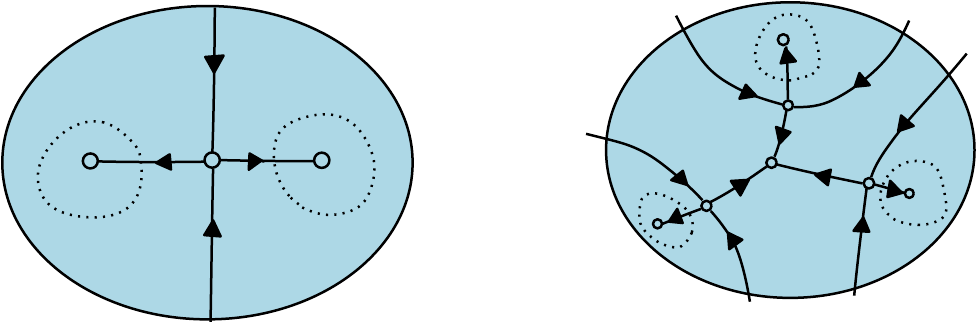}
\end{center}
\caption{The diffeomorphisms $f_0$ (left) and $f_1$ (right). The attracting domains are depicted with a dash boundary.\label{MSpdf}}
\end{figure}
Observe that $f_0$ has an attracting disc of period $2,$ whose iterates belong to two different regions bounded by the local stable manifold of the saddle; $f_1$ has an attracting disc of period three contained inside the disjoint regions bounded by the local stable manifolds of the saddle of period three (these regions, in both cases, are called {\it decorated regions}).

Given a sequence $(k_i)\in\{0,1\}^\NN$, one can build a sequence of dissipative diffeomorphisms  $g_i= f_{k_i}\sqcupplus f_{k_{i-1}}\sqcupplus\dots\sqcupplus f_{k_0}$
with a sink of period $\tau_i:=\Pi_{j=1}^i(2+k_j)$. The symbol $\sqcupplus$ means that the diffeomorphism $f_{k_j}$ is pasted in the basin of the sink of $ g_{j-1}$ (by writing $f_{k_j}$ as the composition of $\tau_{j-1}$ diffeomorphisms).
In that way, $g_i$ has a nested sequence of attracting discs $D_0\supset D_1\supset\dots\supset D_j$ of periods $\tau_0,\dots,\tau_i$.
Each diffeomorphism $g_i$ is Morse-Smale and the sequence $(g_i)$ converges to a homeomorphism whose limit set is made of
periodic points and of an odometer supported on a Cantor set (the intersection of the nested sequence of attracting domain).
We make some remarks: (i) The construction shows that there exist diffeomorphisms with vanishing entropy and with periodic points whose period is not $2^n$.
(ii) The sequence can converge to a mildly dissipative diffeomorphism if $k_i=0$ for $i$ large
(the convergence towards a diffeomorphism is more difficult, see \cite{GvST}).
(iii) The previous construction can be performed with more pasted diffeomorphisms:
the period of the saddle and the non-fixed sink may be larger; one can also consider more complicated Morse-Smale systems.

\paragraph{\it Pixton discs.}
The unstable branches connect the periodic points of $g_i$ and form a {\it chain} with a tree structure, see Figure \ref{treepdf}.
The tree branches land at points that are:
\begin{itemize}
\item[--] either attracting and may anchor unstable manifolds of points of larger period,
\item[--] or saddles whose unstable branches are exchanged at the period.
\end{itemize}
\begin{figure}[h]
\begin{center}
\includegraphics[width=12cm, angle=0]{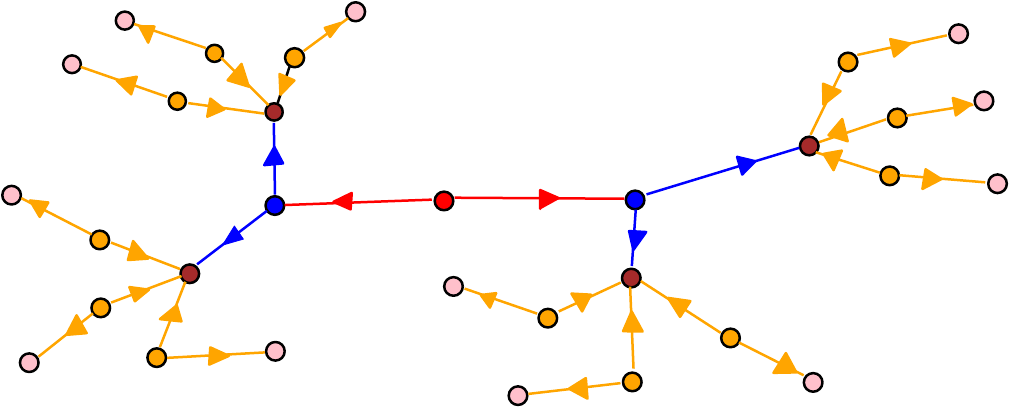}
\end{center}
\caption{Chain of periodic points associated to $f_1\sqcupplus f_0\sqcupplus f_0$: there is
one saddle fixed point, a saddle of period two (at the period its unstable branches are exchanged), a sink of period four, a saddle of period twelve, and a sink of period twelve.
The arrows indicate if the periodic points are saddles or sinks (on the one-dimensional structure a saddle appears as a sink).
\label{treepdf}}
\end{figure}
This observation will allow us to reconstruct the attracting discs, see Figure~\ref{Pixtpdf}.
In the first case (left of the figure), the unstable manifold of a fixed point $p$ accumulates on a fixed sink which anchors a stabilized revolving saddle with larger period: the unstable branch of $p$ has to cross the stable manifolds of the iterates of the saddle; this defines an attracting disc which contains all the periodic points attached to the sink. In the second case (right of the figure), the unstable manifold of the fixed point $p$ accumulates on a fixed saddle whose unstable branches are exchanged by the dynamics and accumulate on a sink of period $2$: the unstable branch of $p$ has to cross the stable manifold of the fixed saddle; this also defines an attracting disc which contains all the attached periodic points. We call the domains built in this way, {\em Pixton discs}.
\begin{figure}[h]
\begin{center}
\includegraphics[width=15cm,angle=0]{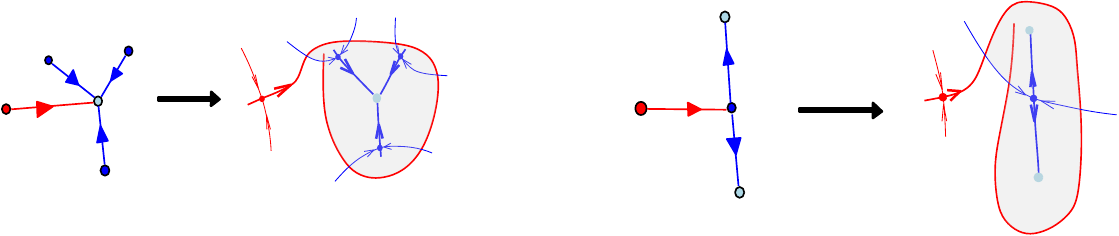}
\put(-63,44){\small $p$}
\put(-323,44){\small $p$}
\end{center}
\caption{Attracting discs obtained from an unstable branch and stable manifolds.  \label{Pixtpdf}}
\end{figure}
\medskip
 
\paragraph{\it When all the periodic points are fixed.}
We now explain how to handle a general mildly dissipative diffeomorphism with zero entropy.
In order to prove Theorem \ref{t.theoremA}, one first has to show that if all the periodic points are fixed,
then the limit set of the dynamics consists of only fixed points. The ``no-cycle property" is crucial.
Another ingredient is to prove that the $\omega-$limit set of any orbit contains a fixed point: this follows from our closing lemma (Theorem \ref{t.measure local}). With these tools, one builds a filtration associated to the fixed points and conclude that the limit set of the dynamics is reduced to the set of fixed points.

\paragraph{\it Periodic structure.}
When there are periodic points which are not fixed, we prove that the unstable branches induce a structure as in the previous examples: they form {\it chains} (see Definition \ref{d.chain})  that branch at points of low period to which are attached saddles of larger or equal period.
A special role is played by \emph{stabilized points}: these are saddles that either are fixed and whose unstable branches are exchanged,
or are not fixed but whose unstable manifold is anchored by a fixed point (see Definition \ref{d.stabilization} and Propositions \ref{p.chain} and \ref{p.stab-decorate}).
The local stable manifolds of the stabilized points bound domains called {\it decorated regions} (see Definition \ref{d.decorated region}) which are pair-wise disjoints: indeed if two such regions intersect, the unstable manifold of a stabilized point
has to cross the stable manifold of another iterate in order to accumulate on the anchoring fixed point, contradicting the fact that the entropy
vanishes. The decorating regions contain all the periodic point of larger period (see Definition \ref{d.decreasing-chain}, Proposition \ref{p.chain-decreasing} and \ref{p.decreasing-chain}).

\paragraph{\it Construction of trapping discs.}
For each unstable branch $\Gamma$, fixed by an iterate $f^n$, we build a disc that is trapped by $f^n$ and contains all of the accumulation set of the branch $\Gamma$ (Theorems \ref{t.renormalize} and  \ref{t.renormalize2}). To each saddle accumulated by $\Gamma$ one associates a {\it Pixton disc} which is a candidate to be trapped. These discs are bounded by arcs in $\Gamma$ and stable manifolds of saddles in the accumulation set,
as in the previous examples (see Lemma \ref{l.highperiod}).
A finite number of these Pixton discs is enough to cover the accumulation set, implying the trapping property.
The closing lemma mentioned above (Theorem \ref{t.measure local}) is a key point for proving the finiteness.

\paragraph{\it Finiteness of the renormalization domains.}
A stronger version of the renormalization (Theorem \ref{t.renormalize-prime}) implies corollaries \ref{c.infinitely-renormalizable} and \ref{c.structure}. It asserts that the number of renormalization domains required to cover the dynamics is finite.
Since the renormalization discs are related to decorated regions, we have to show that the periods of the stabilized saddles is bounded (see Theorem \ref{t.finite}).

\paragraph{\it Bound on the renormalization period.}
In order to  show that after several renormalization steps, the renormalization periods eventually equal two (Theorem \ref{t.period}), we develop a {\em rigidity argument}: the limit attractors (the generalized odometers obtained as intersection of nested renormalizable domains of an infinitely renormalizable diffeomorphism) induce a stable lamination whose leaves vary continuously in the $C^1$-topology over sets with measure  arbitrary close to one. This property follows from a \emph{$\gamma-$dissipation property} (see Section \ref{ss.gamma-dissipation}).
In particular, for a large proportion of points, the leaves of the lamination by local stable manifolds are ``parallel". Since the renormalization domains (inside a renormalization disc obtained previously) are contained in a (relative) decorated region, and since the measure is equidistributed between the different renormalization components, a relative renormalization period larger than two would contradict that a large proportion of local stable manifolds are parallel. A simple heuristic of the argument is the following: at small scale, the quotient by the local stable manifolds provides an interval that contains a large proportion of the points of the odometer, which  is enough to recover the period doubling mantra that permeates the renormalization scheme for zero entropy maps of the interval.

\paragraph{l -- Other attracting domains.} One can wonder about the transition to chaos for dissipative diffeomormorphisms on other attracting domains, such as  the annulus.  The transition to chaos is already much more complicated on the circle than on the interval (see \emph{e.g.,} \cite{FrTr} and references therein), as a result in particular of the non-triviality of the circle at the homotopy level. A prototype family that plays the role of the H\'enon maps for the circle, is an annulus  version of the Arnold family. Results related to the transition to chaos in that context can be found in \cite{CKKP} and \cite{GY}.

\paragraph{m -- Organization of the paper.} The  next three sections present preliminary results: Section~\ref{preliminaries} describes how fixed points may be rearranged
inside finitely many fixed curves, recalls the Lefschetz formula and a fixed point criterion due to Cartwright and Littlewood; in Section \ref{s.quantitative} we revisit the notion of $\gamma-$dissipation introduced in \cite{CP} and we present a few results that allow to improve the lower bound on $\gamma$; in Section \ref{ss.closing} we state a new closing lemma. 

Section~\ref{no cycle section} proves that (under the hypothesis of zero entropy and mild dissipation) there is no cycle between periodic points. This is essential to show in Section \ref{decoration section} that periodic points are organized in chains; also in that section we introduce the notions of decoration and stabilization that provides a hierarchical organization of the chains.
Section~\ref{s.gms} discusses the notion of generalized Morse-Smale diffeomorphisms.

In Section \ref{ss.trapping} we prove that the accumulation set of an unstable branch of a fixed point is contained in an arbitrarily small attracting domain and in Section \ref{ss.local renormalization} we conclude the proof of the local renormalization (Theorem \ref{t.theoremA}). A global version of that theorem (Theorem \ref{t.renormalize-prime}) is obtained in Section \ref{s.renormalize}; this requires to first show that the periods of the stabilized points are bounded (this is proved in Section \ref{ss.finitness}).

The proof of Theorem \ref{t.period} is provided in Section \ref{s.period}: it uses the description of the chain-recurrent set (Corollary \ref{c.structure}) which is proved in Section \ref{s.odometers}.

In the last two sections, we prove the results about dynamics close to interval maps and about the H\'enon maps
(Corollary~\ref{c.henon} and Theorem~\ref{t.small jacobian}).

{\color{black} We have included an index at the end of the paper in order to help to navigate between the definitions.}

\paragraph{Acknowledgements.}
We are indebted to Mikhail Lyubich for the discussions we exchanged during the preparation of this work, to Damien Thomine for his remarks on a first version of the paper, {\color{black}  and to the referee for his comments.
We also thank Bryce Gollobit and Axel Kodat for their careful reading which helped to improve the text.}

\section{Periodic orbits}
\label{preliminaries}

In  Section  \ref{ss.periodic}, we analyze the different types of periodic points that could exist
for a dissipative diffeomorphism of the disc. When there exist infinitely many periodic points of a given period, we rearrange them inside finitely many periodic arcs. In Section  \ref{ss.index arc} we recall the Lefschetz formula. In Section  \ref{ss.accumulation of unstable} we present a kind of topological $\lambda-$lemma that is useful to describe the accumulation set of unstable manifolds of periodic points. In Section \ref{ss.fixedcriterion} we recall a classical result by Cartwright and Littlewood about the existence of fixed points.

\subsection{Dynamics near a periodic point}\label{ss.periodic}
We describe the dynamics in the neighborhood of a periodic orbit.
Note that up to {\color{black} replacing} $f$ by an iterate, {\color{black} it is enough to consider the dynamics in a neighborhood of a fixed point}.
When $p$ is fixed, the eigenvalues $\la^-_p, \la^+_p$ of $D_pf$ verify
$|\la^-_p|\leq |\la^+_p|$ and $|\la^-_p\la_p^+|<1$.

\paragraph{\it Hyperbolic sink.} When $|\la^+_p|<1$, the point $p$ is a hyperbolic sink.
This covers in particular all the cases where $|\la^-_p|= |\la^+_p|$.
We now describe the other cases.

\paragraph{\it Stable curve, \color{black} stable branch.}
When $|\lambda^-_p|<|\lambda^+_p|$, there exists a well defined (strong) stable manifold which is a $C^1$ curve.
The connected component containing $p$ is denoted by $W^s_\mathbb{D}(p)$.
For orbits with higher period $\cO$, we denote by $W^s_\mathbb{D}(\cO)$ the union of the curves $W^s_\mathbb{D}(p)$, $p\in \cO$.
{\color{black} A connected component $\mathcal{L}$ of $W^s_\mathbb{D}(p)\setminus \{p\}$ is called a \emph{stable branch}.}
\index{branch (stable or unstable)}

\paragraph{\it Local stable set.} The local stable set of $p$, \emph{i.e.,} the set of points whose forward orbit converges to $p$
and remains in a small neighborhood of $p$, is either a neighborhood of $p$ (a sink),
a subset of $W^s_\mathbb{d}(p)$, or a half neighborhood of $p$ bounded by $W^s_\mathbb{D}(p)$.

\paragraph{\it Center manifold.}
When $|\lambda^-_p|<|\lambda^+_p|$,
the center manifold theorem asserts that there exists a $C^1$ curve $\gamma$ which contains $p$, is tangent to the eigendirection of $D_pf$
associated to $\lambda^+_p$ and is locally invariant:
there exists $\varepsilon>0$ such that $f(\gamma\cap B(p,\varepsilon))\subset \gamma$.
The two components of $\gamma\setminus \{p\}$ are either preserved or exchanged (depending if the eigenvalue $\lambda^+_p$ is positive or negative).
Along each component $\Gamma$ of $\gamma\setminus \{p\}$, the dynamics (under $f$ or $f^2$) is either attracting, repelling, or neutral
(in which case $p$ is accumulated by periodic points inside $\Gamma$).
{\color{black} The center manifold $\gamma$ is a priori not unique, but the type of dynamics in each component of
$\gamma\setminus \{p\}$ does not depend on the curve $\gamma$ that has been taken.}

\paragraph{\it Unstable branches.}
The \emph{unstable set} $W^u(p)$ of $p$ is the set of points $x$ such that
the distance $d(f^{-n}(x),f^{-n}(p))$ decreases to $0$ as $n\to +\infty$.
When it is not reduced to $p$, it is a $C^1$ curve which contains $p$. The local unstable set is defined as  the set of points whose backward orbit converges to $p$
and remains in a small neighborhood of $p$ and observe that they are  contained in the center manifold $\gamma$.
Each connected component $\Gamma$ of $W^u(p)\setminus \{p\}$ is called an unstable branch of $p$. 
\index{branch (stable or unstable)}

\paragraph{\it Hyperbolic saddle.} When $|\la^-_p|<1< |\la^+_p|$, the point $p$ is a hyperbolic saddle.
It admits two unstable branches.

\paragraph{\it Indifferent fixed point.} When $|\la^-_p|<1= |\la^+_p|$, the point $p$ is indifferent.
We then consider the dynamics (under $f$ or $f^2$) on each side of a center manifold.
When $p$ is isolated among points of period $1$ and $2$, it is either a sink (both components are attracting),
a saddle (both components are repelling) or a saddle-node (the components are fixed, one is attracting, one is repelling):
the type does not depend on the choice of the center curve $\gamma$.

\paragraph{\it Saddle with {\color{black} reflection}.} When the unstable branches of a fixed saddle $p$ are exchanged by the dynamics,
we say that $p$ is a \emph{(fixed) saddle with {\color{black} reflection}}.\index{saddle with (or with no) reflection}
{\color{black} Some authors also call them \emph{flip saddles}.}

\paragraph{\it Index.}\index{index, Lefschetz formula} For an isolated fixed point, one can define the index of that fixed point as the winding number of the vector field $f(x)-x$ around the fixed point. For dissipative diffeomorphisms the index of an isolated fixed point is:
\begin{itemize}
\item $1$ for a sink or a saddle with {\color{black} reflection},
\item $0$ for a saddle-node,
\item $-1$ for a saddle with no {\color{black} reflection}.
\end{itemize}
By the classical Lefschetz formula, when the number of fixed points is finite
the sum of the index of the fixed points in the disc is equal to $1.$

\begin{remark}\label{r.degenerated}
A saddle-node can be considered as the degenerated case of a sink and a saddle of index $-1$ that have collided.
Similarly, a fixed point with an eigenvalue less or equal to $-1$ can be considered as the collision of a fixed sink with the points
of a $2$-periodic orbit with positive eigenvalues. In particular, fixed saddles of index $1$ may be considered as the union
of a fixed sink with a saddle of period $2$.
\end{remark}

\subsection{Normally hyperbolic periodic arcs}\label{ss.arc}
When the number of fixed points is infinite, they appear inside normally hyperbolic arcs.
\begin{definition}\label{d.fixed-arc}
A \emph{fixed arc}\index{arc, fixed arc, half arc} is a compact $f$-invariant $C^1$ curve $I$ whose endpoints are fixed
and which admits an invariant splitting $T_x\mathbb{D}|_{x\in I}=E^s\oplus F$ satisfying:
\begin{itemize}
\item[--] $T_xI\subset F_x$ for each $x\in I$,
\item[--] there is $k\geq 1$ such that ${|Df^k_{E^s_x}|} < |Df^k_{F_x}|$ and $|Df^k_{E^s_x}|<1$ for each $x\in I$,

\end{itemize} 
It is \emph{isolated} if all the fixed points in a neighborhood are contained in $I$.

 \end{definition}

A fixed point is a fixed arc: for a hyperbolic sink, the splitting is trivial $F=\{0\}$. When $I$ has two distinct endpoints $p_1,p_2$,
the forward orbit of any point in the strip $W^s_{\mathbb D}(I)$ bounded by $W^s_{\mathbb D}(p_1)$ and $W^s_{\mathbb D}(p_2)$
converges to a fixed point in $I$. 
When $I$ is not reduced to a sink, $\mathbb{D}\setminus W^s_{\mathbb D}(I)$ has two connected components.
\medskip

{\color{black} An \emph{$f$-invariant unstable branch} of the fixed arc $I$ is an unstable branch $\Gamma$ of an endpoint of $I$,
that is fixed by $f$. Note that} the unstable set of $I$ is contained in the {\color{black} union of the} unstable branches of $I$.
\medskip

\begin{definition}\label{d.type-arc}
Four cases may occur for an isolated fixed arc $I$. It has the \emph{type of}:
\begin{itemize}
\item \emph{a sink}, if the orbit of any point in a neighborhood converges to a fixed point in $I$,
\item \emph{a saddle with {\color{black} reflection}}, if $I$ is a single fixed point $p$ with an eigenvalue $\lambda^+_p\leq -1$,\index{saddle with (or with no) reflection}
\item \emph{a saddle-node}, if the arc has one $f$-invariant unstable {\color{black} branch},
\item \emph{a saddle with no {\color{black} reflection}}, if the arc has two $f$-invariant unstable branches.\index{saddle with (or with no) reflection}
\end{itemize}
See Figure~\ref{f.type}.
\end{definition}

\begin{figure}
\begin{center}
\includegraphics[width=11cm,angle=0]{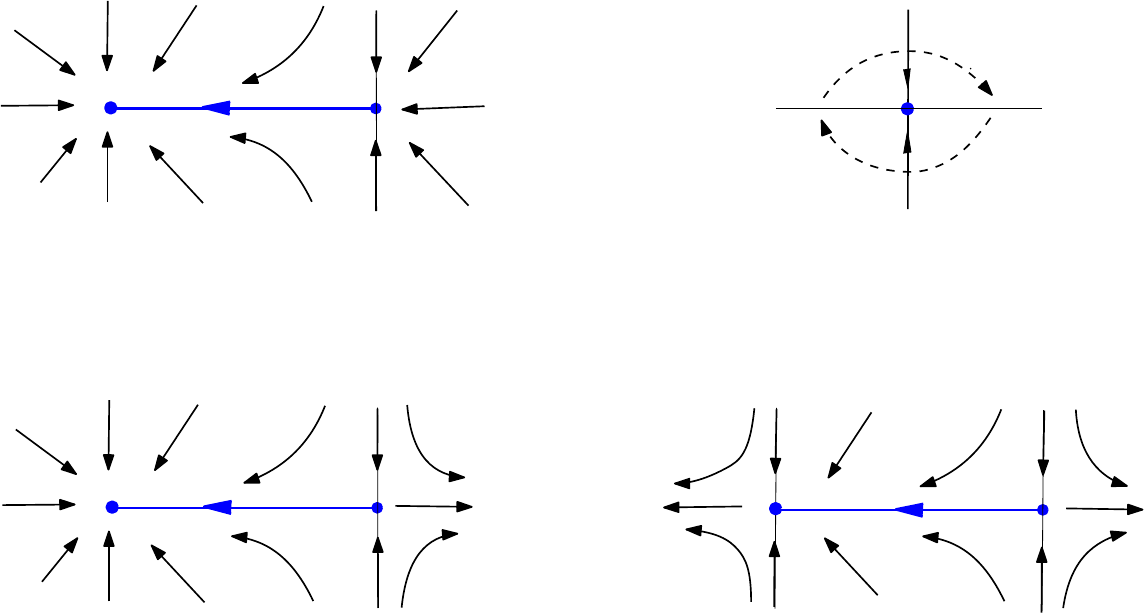}
\put(-260,90){sink}
\put(-270,-15){saddle-node}
\put(-110,90){saddle with reflection}
\put(-120,-15){saddle with no reflection}
\end{center}
\caption{\color{black} The four types of an isolated fixed arc.\label{f.type}}
\end{figure}

\begin{remark}
Note that if an isolated fixed arc $I$ contains a fixed point $p$ with an eigenvalue less or equal to $-1$,
then $I=\{p\}$ (since the endpoints of $I$ are fixed points).
This is the only case where there may exists periodic orbits in arbitrarily small neighborhoods of $I$.
The arc $I$ is isolated since $p$ may be accumulated only by points with period $2$.
\end{remark}
\medskip

\begin{proposition}\label{p.group}
If $f$ is a dissipative diffeomorphism of the disc, there exists a finite collection $\cI$
of disjoint isolated fixed arcs whose union contains all the fixed points of $f$.
\end{proposition}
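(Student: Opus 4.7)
The plan is to split the fixed points of $f$ according to whether they are isolated in $\operatorname{Fix}(f)$, and to realize the accumulated ones as subsets of finitely many normally attracting invariant arcs.

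Let $K=\operatorname{Fix}(f)$ and let $K'\subset K$ denote the subset of non-isolated fixed points; both are compact. For any $p\in K'$, taking a sequence $q_n\in K\setminus\{p\}$ with $q_n\to p$ and subtracting $f(q_n)=q_n$ from $f(p)=p$ gives $(Df_p-I)(q_n-p)=o(|q_n-p|)$, so $\ker(Df_p-I)$ is nontrivial. Combined with $|\det Df_p|<1$, the eigenvalues of $Df_p$ must be real and of the form $\lambda^-_p\in(-1,1)$ and $\lambda^+_p=1$, yielding a continuous $Df$-invariant splitting $T_p\DD=E^s_p\oplus F_p$ over $K'$. Compactness of $K'$ combined with dissipation forces $\sup_{p\in K'}|\lambda^-_p|<1$ — otherwise a limit point would carry two eigenvalues of modulus one, contradicting $|\det Df|<1$ — so the splitting is uniformly dominated with $E^s$ uniformly contracted.

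I would then invoke the normally hyperbolic invariant manifold theorem applied to each connected component $C$ of $K'$, producing an $f$-invariant $C^1$ curve $L_C$ tangent to $F$ along $C$. Since $E^s$ is strongly contracted and there is no expanding direction, $L_C$ coincides locally with the center-stable manifold of $C$, namely the set of points whose forward orbit remains in a small neighborhood of $C$. Any fixed point $q$ close to $C$ trivially belongs to this set, hence lies on $L_C$. Letting $I_C$ be the minimal closed subarc of $L_C$ containing $C$, its endpoints belong to $C$ hence are fixed, and $f(I_C)=I_C$ follows from the invariance of $L_C$ together with fixedness of the endpoints. The dominated splitting on $K'$ extends to all of $I_C$ by continuity.

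For finiteness, uniform domination provides a transverse scale $\delta>0$: two components of $K'$ closer than $\delta$ would lie on a common center leaf, hence inside a single $L_C$. This bounds the number of components of $K'$ by compactness of $\DD$, yielding finitely many arcs $I_1,\dots,I_N$. Fixed points in $K\setminus K'$ are isolated in $K$; those outside a fixed neighborhood of $K'$ form a closed discrete subset of a compact set, hence finite, and enter $\mathcal{I}$ as singleton arcs, while those accumulating on $K'$ already sit on the center curves and can be absorbed into the relevant $I_j$ by enlarging it along $L_C$ (preserving invariance since the new endpoint is itself a fixed point). A final adjustment of arc lengths makes each arc contain all fixed points in a uniform neighborhood of itself, giving the isolation property. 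The main difficulty will be invoking the normally hyperbolic manifold theorem in a form that yields uniform $C^1$-regularity and transverse continuity of the leaves $L_C$ — with the uniform contraction bound $\sup_{K'}|\lambda^-_p|<1$ established in the first paragraph being the key input.
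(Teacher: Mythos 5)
Your overall strategy --- separate the fixed points where $Df-\mathrm{Id}$ is invertible, show the remaining ones carry the eigenvalue $1$ with a uniformly contracted complementary direction, and capture them inside normally contracted, locally invariant $C^1$ curves whose subarcs bounded by fixed points become the elements of $\cI$ --- is essentially the paper's, which invokes the center manifold theorem for exactly this purpose. The problem is your argument for \emph{finiteness}, which is the whole content of the statement. You claim that uniform domination gives a transverse scale $\delta>0$ such that two components of $K'$ within distance $\delta$ lie on a common center leaf, and that ``this bounds the number of components of $K'$.'' It does not: $K'$ can perfectly well be a Cantor set sitting inside a single normally contracted invariant curve (every point of a Cantor set of fixed points is non-isolated, so $K'$ is that whole Cantor set), in which case $K'$ has uncountably many components, all lying on one leaf and all within $\delta$ of one another. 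Your prescription ``one arc $I_C$ per component $C$'' then produces uncountably many (mostly degenerate) arcs, and the transverse-scale argument gives no bound. The correct route --- and the one the paper takes --- is not to count components of $K'$ but to build, around \emph{each} point $p$ of the degenerate set, an arc with fixed endpoints containing \emph{all} fixed points in a uniform-size neighborhood of $p$, extract a finite subfamily by compactness, and only afterwards merge the overlapping arcs into disjoint isolated ones.

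A second, smaller slip: you identify $L_C$ with the local center-\emph{stable} manifold, ``the set of points whose forward orbit remains in a small neighborhood.'' With $E^s$ contracted and no expanding direction, that set is an entire two-dimensional neighborhood, not a curve. The curve is the center (equivalently, center-\emph{unstable}) manifold, and the characterization that forces nearby fixed points onto it involves \emph{backward} orbits staying close --- or, more simply, the fact that any invariant set contained in a small enough neighborhood lies on every local center-unstable leaf. The conclusion you need (nearby fixed points lie on the center curve) is true and is exactly what the paper quotes from the center manifold theorem, but your justification of it is inverted.
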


\begin{proof}
By the implicit function theorem,
the set of fixed points of $f$ is the union of a finite set of isolated fixed points
and of a compact set $K$ of fixed points $p$ having one eigenvalue $\lambda^+_p\geq 1/2$.
Each isolated fixed point is an isolated fixed arc and it remains to cover $K$
by finitely many pairwise disjoint isolated fixed arcs.

To each fixed point $p$ having an eigenvalue $\lambda^+_p\geq 1/2$,
the center manifold theorem (see~\cite{BoCr})
associates a $C^1$ curve $\gamma$ which contains $p$,
is tangent to the eigenspace $F_p$ associated to the eigenvalue $\lambda^+_p$,
and is locally invariant: $f(\gamma)\cap \gamma$ contains a neighborhood of $p$ in $\gamma$;
moreover, any periodic point  in a neighborhood of $p$ is contained in $\gamma$.
One can thus build an arc $I\subset \gamma$ bounded by two fixed points,
which is invariant by $f$, normally contracted and which contains all the fixed points in a neighborhood of $p$:
it is a fixed arc, as in Definition~\ref{d.fixed-arc}.

By compactness, there exists a finite family of such fixed arcs.
Let us choose $\varepsilon>0$ small.
By decomposing the arcs, one can assume that each such arc $I$
has diameter smaller than $\varepsilon$,
is contained in a $C^1$ curve $J$ such that $J\setminus I$ has two connected components,
both of diameter larger than $2\varepsilon$ and such that any fixed point in the $2\varepsilon$-neighborhood of $I$
is contained in $J$.

If there exists two arcs $I,I'$ which intersect,
one {\color{black} considers} the larger curve associated to $J$.
We note that all the fixed points of $I'$ are contained in $J$.
One can thus reduce $I'$ as an arc $\widetilde I'$ such that
all the fixed points of $K\cap I'\cup I$ are contained in $I\cup \widetilde I'$
and $I\cup \widetilde I'$ is a $C^1$ curve.
One repeats this argument for all pairs of fixed intervals.
This ensures that the union of all the fixed intervals $I$
contains $K$ and is a union of disjoint $C^1$ curves.
By construction, each of these curves is an isolated fixed arc.
\end{proof}

The choice of the collection $\cI$ is in general not unique.
Note that for any distinct $I,I'\in \cI$ which are not sinks, the strips $W^s_{\mathbb D}(I)$, $W^s_{\mathbb D}(I')$
are disjoint.

\paragraph {\it Partial order:} The finite collection of fixed arcs $\cI$ can be partially ordered in such a way that at least one {\color{black} of} the unstable branches of the extremal points of $I_j$ {\color{black} accumulates} on $I_{j+1}.$

\subsection{Lefschetz formula and arcs}
\label{ss.index arc}
In the present section we recall the definition of index for isolated invariant arcs and ``half'' arcs.\index{arc, fixed arc, half arc}
\paragraph{\it Index of an arc.}\index{index, Lefschetz formula}  To any simple closed curve $\sigma\subset \mathbb{D}\setminus \operatorname{Fix}(f)$,
one associates an index $i(\sigma,f)$, which is the winding number of the 
vector field $f(x)-x$ along the curve.
This defines for any isolated fixed arc $I$ an index $index(I,f)$: this is the index $i(\sigma,f)$
associated to any simple closed curve contained in a small neighborhood of $I$ and surrounding $I$.
For arcs as described in Definition~\ref{d.type-arc}, the index takes a value in $\{-1,0,1\}$, equal to $1$ for a sink and a saddle with {\color{black} reflection},  $0$ for a saddle-node, 
and $-1$ for a saddle with no {\color{black} reflection}.
In particular, the index is $1$ exactly when the arc has no $f$-invariant unstable branch.
When $I$ is reduced to an isolated fixed point, $index(I,f)$ coincides with the usual index.

\paragraph{\it Index of a half arc.}
Let us consider a fixed arc $I$ which contains a fixed point $p$ having an eigenvalue $\lambda^+_p\geq 1$
and a connected component $V$ of $\mathbb{D}\setminus W^s_\mathbb{D}(p)$.
Let us assume that $I$ is \emph{isolated in $V$}, \emph{i.e.,} that any fixed point in a neighborhood of $I$ that belongs to $V$,
also belongs to $I$. Then, one can associate an index $index(I,V,f)$:
it has the value $-1/2$ if $I$ has a (local) unstable branch in $V$ that is fixed by $f$ and the value $1/2$
otherwise (in which case $I$ is semi-attracting in $V$). {\color{black} See Figure~\ref{f.half-index}.}
When $I$ is isolated {\color{black} in $\mathbb{D}$}, the index of $I$ is equal to the sum of the two indices associated to the two connected components of $\mathbb{D}\setminus W^s_\mathbb{D}(p)$.

\begin{figure}
\begin{center}
\includegraphics[width=9cm,angle=0]{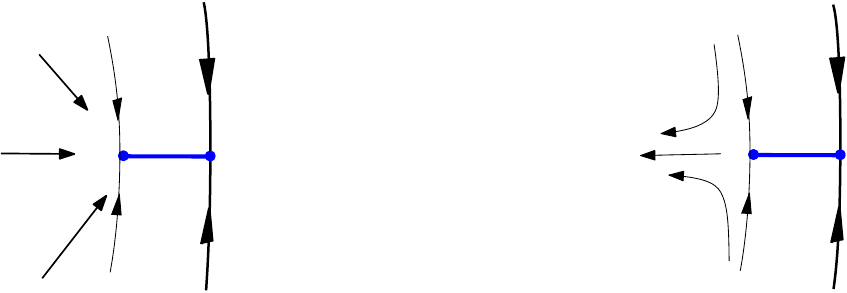}
\put(5,90){$p$}
\put(-185,90){$p$}
\end{center}
\caption{\color{black} Half arcs with index $1/2$ and $-1/2$.\label{f.half-index}}
\end{figure}

The next proposition restates the Lefschetz formula\index{index, Lefschetz formula} in our setting.
\begin{proposition}\label{p.lefschetz}
Let $f$ be a dissipative diffeomorphism of the disc
and $\cI$ a set of isolated fixed arcs as in Proposition~\ref{p.group}.
Then the sum of the indices $index(I,f)$ of the arcs $I\in \cI$ is $1$.
\end{proposition}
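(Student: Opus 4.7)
The plan is to compute the winding number of the vector field $V(x):=f(x)-x$ along $\partial \mathbb{D}$ in two different ways: globally (it equals $1$) and locally (it equals the sum $\sum_{I\in \cI} index(I,f)$). The argument is essentially a version of the classical Lefschetz/Poincaré–Hopf computation adapted to the isolated fixed arcs provided by proposition~\ref{p.group}.

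\textbf{Step 1: The global index is $1$.} Since $f$ is a dissipative diffeomorphism of the disc, $f(\mathbb{D})\subset \operatorname{Interior}(\mathbb{D})$; in particular $|f(x)|<1$ for every $x\in \partial\mathbb{D}$. Hence the homotopy $h_t(x):=tf(x)-x$, $t\in[0,1]$, satisfies $h_t(x)\neq 0$ on $\partial\mathbb{D}$: indeed $tf(x)=x$ on $\partial\mathbb{D}$ would force $1=|x|=t|f(x)|<t\leq 1$, a contradiction. Thus the winding number $i(\partial\mathbb{D},f)$ of $V$ along $\partial\mathbb{D}$ coincides with the winding number of $-\operatorname{id}$, which is $1$.

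\textbf{Step 2: Isolating the arcs by Jordan curves.} By proposition~\ref{p.group}, $\cI$ is a finite collection of pairwise disjoint isolated fixed arcs whose union contains all fixed points of $f$. For each $I\in \cI$, pick a small open neighborhood $U_I$ whose boundary $\sigma_I$ is a simple closed curve disjoint from $\operatorname{Fix}(f)$; shrinking if necessary, we may assume the $\overline{U_I}$ are pairwise disjoint and all contained in $\operatorname{Interior}(\mathbb{D})$. By definition, $index(I,f)=i(\sigma_I,f)$, and this value does not depend on the choice of $\sigma_I$ (since the winding number is locally constant on simple closed curves contained in the complement of the zero set of $V$).

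\textbf{Step 3: Additivity of the winding number.} Consider the compact domain $R:=\overline{\mathbb{D}}\setminus \bigcup_{I\in\cI}U_I$. Because $\cI$ covers all fixed points of $f$, the vector field $V$ has no zero on $R$. The boundary of $R$, oriented as the boundary of a region where $V\neq 0$, is $\partial\mathbb{D}$ (positively oriented) together with the curves $\sigma_I$ (negatively oriented). By the additivity of the winding number of a non-vanishing vector field on a multiply connected planar region (standard argument principle / integration of $d\arg V$),
\[
i(\partial\mathbb{D},f)-\sum_{I\in \cI} i(\sigma_I,f)=0.
\]
Combining with Step~1 yields $\sum_{I\in\cI} index(I,f)=1$.

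\textbf{Main obstacle.} The only non-routine point is justifying Step~2 for arcs that are not reduced to points: we must check that each isolated fixed arc $I$ admits an arbitrarily small Jordan neighborhood whose boundary avoids $\operatorname{Fix}(f)$. This follows from the fact that $I$ is a compact $C^1$ arc, isolated among fixed points, so a thin tubular neighborhood of $I$ in $\mathbb{D}$ works; any two such choices give the same value of $i(\sigma_I,f)$ by the homotopy invariance of the winding number in the complement of $\operatorname{Fix}(f)$. Everything else is a direct application of the classical argument principle.
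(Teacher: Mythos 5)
Your proof is correct, but it follows a genuinely different route from the paper's. You compute the index directly: the winding number of $V=f-\mathrm{id}$ along $\partial\mathbb{D}$ equals $1$ via the homotopy $h_t=tf-\mathrm{id}$ (valid because $f(\mathbb{D})\subset\operatorname{Interior}(\mathbb{D})$), and this global value equals $\sum_I i(\sigma_I,f)$ by the vanishing of the total winding of a non-vanishing vector field along the oriented boundary of the compact region $\overline{\mathbb{D}}\setminus\bigcup U_I$; the only point needing care, which you correctly flag, is that each isolated fixed arc admits a small Jordan neighborhood whose boundary misses $\operatorname{Fix}(f)$, and that the resulting value is independent of the choice. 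The paper instead perturbs $f$ inside each arc $I\in\cI$ to a diffeomorphism $g$ whose fixed points are all hyperbolic and contained in $\bigcup_I I$, observes that consecutive saddles in an arc are separated by sinks so that $index(I,f)$ equals the sum of the indices of the hyperbolic fixed points of $g$ in $I$, and then invokes the classical Lefschetz fixed point formula (via Dold) for the map $g$, which is homotopic to a constant. Your argument is more elementary and self-contained (no perturbation, no external citation beyond the standard argument principle); the paper's detour through the perturbation buys the additional identification of $index(I,f)$ with a sum of hyperbolic fixed-point indices, which corroborates the values $\{-1,0,1\}$ attributed to the arcs in section~\ref{ss.index arc}. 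Either route proves the proposition.
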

\begin{proof}
One can {\color{black} perform a local modification of} the dynamics {\color{black} near the set of indifferent fixed points} and
obtain in this way a diffeomorphism $g$ satisfying:
\begin{itemize}
\item[--] each $I\in \mathcal{I}$ is still a fixed arc for $g$,
\item[--] the fixed points of $g$ are all hyperbolic and contained in the union of the arcs $I$.
\end{itemize}
In particular $g$ has only finitely many fixed points.
In an arc $I$, two saddles are separated by a sink.
This proves that the index of any arc $I\in \mathcal{I}$ for $f$ coincides with the
sum of the indices of the fixed points of $g$ that are contained in $I$.

Consequently, the sum of the indices of the arcs $I\in \cI$ for $f$
is equal to the sum of the indices of the fixed points of $g$.
From~\cite[Proposition VII.6.6]{dold}, this sums equal $1$ since $g$ is a map homotopic to a constant.
\end{proof}

\subsection{The accumulation set of unstable branches}
\label{ss.accumulation of unstable}

Let $\Gamma$ be a $f$-invariant unstable branch of a fixed point $p$
and $\gamma\subset \Gamma$ be a curve which is a fundamental domain
{\color{black} (\emph{i.e.,} which meets any orbit contained in $\Gamma$).}
The \emph{accumulation set} of $\Gamma$ is
the limit set of the iterates of $f^n(\gamma)$ as $n\to +\infty$.
We say that $\Gamma$ accumulates on a set $X$ if $X$ intersects the accumulation set of $\Gamma$.
These definitions naturally extend to unstable branches of periodic points. The next proposition, is  a kind of topological version of the  classical $\la-$lemma for mildly dissipative diffeomorphisms without assuming homoclinic intersections.

\begin{proposition} \label{p.transitive}
Let $p$, $q$ be two fixed points of a mildly dissipative diffeomorphisms and let
$\Gamma_p$, $\Gamma_q$ be two $f$-invariant unstable branches such that
$\Gamma_p$ accumulates on a point of $\Gamma_q$.
Then {\color{black} $\Gamma_q$ is included in} the accumulation set of $\Gamma_p$.
\end{proposition}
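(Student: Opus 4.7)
Write $\Omega_p$ and $\Omega_q$ for the accumulation sets of $\Gamma_p$ and $\Gamma_q$; both are closed and $f$-invariant subsets of $\DD$. Since $\Omega_p$ is closed and $\Omega_q\subseteq\overline{\Gamma_q}$, it suffices to prove the stronger inclusion $\Gamma_q\subseteq\Omega_p$. The plan is a topological $\lambda$-lemma in two moves: first transport the hypothesized accumulation of $\Gamma_p$ at some $x\in\Gamma_q\cap\Omega_p$ \emph{backward} to the fixed point $q$, then \emph{forward} along $\Gamma_q$ to cover the entire branch.

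\emph{Backward move.} The case $\Gamma_p=\Gamma_q$ is trivial, so I assume otherwise. Then any $x\in\Gamma_q\cap\Omega_p$ lies outside $\Gamma_p\cup\{p\}$: indeed a coincidence $W^u(p)\cap W^u(q)\neq\emptyset$ at a non-fixed point would force $p=q$ by comparing the two backward limits $f^{-n}(x)\to p$ and $f^{-n}(x)\to q$, and the two branches would then coincide. Hence $x$ is approximated by points $\widetilde\zeta_k\in\Gamma_p$ at iterates $M_k\to+\infty$ along a fundamental domain of $\Gamma_p$. Since $\Omega_p$ is closed and $f$-invariant, the whole two-sided orbit closure of $x$ lies in $\Omega_p$; in particular $f^{-n}(x)\to q$ yields $q\in\Omega_p$. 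A standard diagonal extraction then produces $\zeta_k:=f^{-m_k}(\widetilde\zeta_k)\in\Gamma_p$ with $\zeta_k\to q$, with $\Gamma_p$-iterate $M_k-m_k\to+\infty$, and — since $\widetilde\zeta_k$ approaches $x\in\Gamma_q$ — with $\zeta_k$ on the same side of $W^s(q)$ as $\Gamma_q$ in a local chart around $q$.

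\emph{Forward move.} Near $q$, $Df_q$ admits a strongly contracting eigendirection (from the dissipation $|\la^-_q\la^+_q|<1$) transverse to $\Gamma_q$, which is tangent to the remaining center-unstable eigendirection. I would then invoke a local $\lambda$-lemma at $q$ — classical if $q$ is hyperbolic, and extended via the center manifold carrying $\Gamma_q$ if $|\la^+_q|=1$ — to conclude that small $C^1$ arcs of $\Gamma_p$ through the $\zeta_k$ are contracted toward $\Gamma_q$ in the stable direction and stretched along $\Gamma_q$ by forward iteration. Consequently, for every compact sub-arc $J\subset\Gamma_q$ there exist integers $n_k\to+\infty$ such that $f^{n_k}$ of a suitable small arc of $\Gamma_p$ through $\zeta_k$ accumulates on $J$; these images remain in $\Gamma_p$ at iterates $M_k-m_k+n_k\to+\infty$, so $J\subseteq\Omega_p$. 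Hence $\Gamma_q\subseteq\Omega_p$, and taking closures gives $\Omega_q\subseteq\overline{\Gamma_q}\subseteq\Omega_p$.

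\emph{Main obstacle.} I expect the delicate step to be the local $\lambda$-lemma at $q$ in the indifferent case $|\la^+_q|=1$: there, linearization fails, stretching along $\Gamma_q$ is only polynomial in $n$, and one has to work on a $C^1$ center manifold carrying $\Gamma_q$, exploiting that $\Gamma_q$ is precisely the backward-convergent part of that manifold while the transverse direction remains uniformly contracted thanks to the mildly dissipative hypothesis. A secondary subtlety is to rule out pathological tangencies of $\Gamma_p$ with $W^s(q)$ at $q$, which should be possible because $\Gamma_p$ is itself an unstable manifold and cannot entirely collapse into $W^s(q)$ near $q$.
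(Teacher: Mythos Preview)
Your reduction to $\Gamma_q\subseteq\Omega_p$ and the backward move are fine, but the forward move has a real gap. Applying the $\lambda$-lemma to ``small $C^1$ arcs of $\Gamma_p$ through the $\zeta_k$'' requires those arcs to be transverse to the stable direction at $q$, and nothing you have written guarantees this: the fact that $\Gamma_p$ is an unstable manifold constrains its tangent direction only near $p$, not near $q$, so far-along pieces of $\Gamma_p$ may perfectly well be tangent to the stable foliation there. Your dismissal of this as a ``secondary subtlety'' is unwarranted --- it is the heart of the matter. If instead you abandon arcs and work with the points $\zeta_k$ alone, their forward orbits shadow only the discrete orbit $\{f^n(x):n\in\ZZ\}\subset\Gamma_q$, recovering nothing beyond the trivial $f$-invariance of $\Omega_p$. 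The indifferent case, which you rightly flag, only compounds the difficulty. A further warning sign: your argument invokes mild dissipation only for the transverse contraction at $q$, but that already follows from plain dissipation $|\det Df|<1$.

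The paper bypasses the $\lambda$-lemma with a purely topological device that uses mild dissipation essentially. It first observes that backward iterates (staying in a small neighborhood $U$ of $q$) of the approximants $y_k\in\Gamma_p$ near $y\in\Gamma_q$ accumulate on some $x\in W^s_\DD(q)$, then builds nested rectangles $R_n$ bounded by two fixed transversals $l^u_1,l^u_2$ to $W^s_\DD(q)$ through the endpoints of a compact arc $\gamma^s\ni x$ (chosen so that each component of $\gamma^s\setminus\{x\}$ contains a full fundamental domain of the stable branch) and by the $n$-th backward iterates of two transversals to $\Gamma_q$; thus $R_n\to\gamma^s$. Choosing $y_n\in\Gamma_p\cap R_n$ with $y_n\to x$ and arcs $l_n\subset\Gamma_p$ joining $y_n$ to $y_{n+2}$, one faces a dichotomy: either some connected piece $l'_n\subset l_n\cap\bigcup_{m\ge n}R_m$ reaches $l^u_1\cup l^u_2$ --- forcing $\Omega_p$ to contain a fundamental domain of a stable branch of $q$, which is impossible since by mild dissipation that branch reaches $\partial\DD$ and hence contains a point of $\DD\setminus f(\DD)$ --- or some piece $l'_n$ crosses $R_{n+1}$ away from $l^u_1\cup l^u_2$, and then $f^n(l'_n)$ Hausdorff-converges to a fundamental domain of $\Gamma_q$. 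The second alternative yields $\Gamma_q\subseteq\Omega_p$ without any transversality or hyperbolicity at $q$, and mild dissipation is precisely what rules out the first.
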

\begin{proof} Let $U$ be a small simply connected neighborhood of $q$.
There are points $y_k\in\Gamma_p$ arbitrarily close to a point $y\in \Gamma_q\cap U$
having iterates $f^{-1}(y_k)$, $f^{-2}(y_k)$,\dots , $f^{-m_k}(y_k)$ in $U$ such that
$f^{-m_k}(y_k)$ converge to a point $x\in U\cap W^s_\mathbb{D}(q)$.
Let $\gamma^s\subset U\cap W^s_\mathbb{D}(q)\setminus \{q\}$ be a compact curve
containing $x$ and such that both connected components of $\gamma^s\setminus\{x\}$ properly contain a fundamental domain and its iterate.
Let also $\gamma^u\subset \Gamma^u(q)$ be a compact curve which properly contains a fundamental domain and its iterate.
Now we take two curves $l^u_1, l^u_2$ transversal to $W^s_{\mathbb{D}}(p)$ through the extremal points of $\gamma^s$ and two curves $l^s_1, l^s_2$ transversal to $\Gamma_q$ through the extremal points of $\gamma^u$. We construct the rectangles $R_n$ bounded by $l^u_1, l^u_2$ and the connected components of $f^{-n}(l^s_1), f^{-n}(l^s_2)$ inside $U\cap f^{-1}(U)\cap\dots\cap f^{-n}(U)$ that intersect $l^u_1$ and  $l^u_2.$ Observe that those rectangles converge to $\gamma^s.$ 

Let $y_n\in W^u(p)\cap R_n$ converging to $x.$ Let $l_n$ be a connected arc inside $W^u(p)$ that joins $y_n$ and $y_{n+2}$. It follows that either 
\begin{enumerate}
\item there is a connected subsegment $l_n'$ inside  $l_n\cap (R_{n}\cup R_{n+1}\cup R_{n+2}\cup\dots  )$ that contains $y_{n+2}$ and intersects either $l^u_1$ or $l^u_2,$ or
\item there is a subsegment $l_n'$ inside  $l_n$ that crosses $R_{n+1}$ and is disjoint from $l^u_1\cup l^u_2.$
\end{enumerate}
In the first case, the accumulation set of $\Gamma_p$ contains a fundamental domain of a stable branch of $q$: this is a contradiction since each stable branch
of $q$ contains a point in $\mathbb{D}\setminus f(\mathbb{D})$.
In the second case,  $f^n(l_n')$ converge to $\gamma^u$ in the Hausdorff topology and from there, the proposition is concluded.
\end{proof}

\subsection{Decoration}\label{ss.decoration}

The geometry described in the next definition is essential in this work.

\begin{definition}\label{d.decoration}
 Let $f$ be a mildly dissipative diffeomorphism of the disc. A periodic orbit  $\cO$ which is not a sink is \emph{decorated}\index{decoration} if for each $p\in \cO$, one connected component of $\mathbb{D}\setminus W^s_\mathbb{D}(p)$
does not intersects $\cO$ (see Figure~\ref{f.decoration}).
 
\end{definition}

\begin{figure}
\begin{center}
\includegraphics[width=5cm,angle=0]{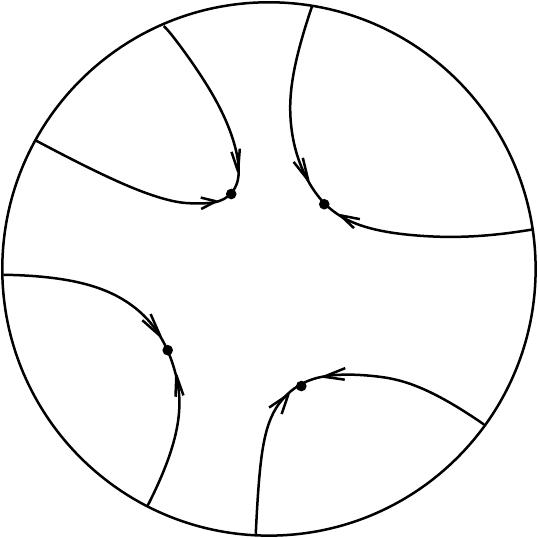}
\put(-62,30){$f^2(p)$}
\put(-53,92){$f(p)$}
\put(-90,95){$p$}
\put(-124,43){$f^4(p)$}
\end{center}
\caption{A decorated periodic orbit.\label{f.decoration}}
\end{figure}

\subsection{A fixed point criterion}
\label{ss.fixedcriterion}

The following result refines Brouwer fixed point theorem inside the disc.

\begin{proposition}[Cartwright-Littlewood~\cite{CL}]\label{p.CL}
Let $f$ be an orientation-preserving homeomorphism of the plane $\mathbb{R}^2$
and let $C$ be an invariant compact set whose complement $\mathbb{R}^2\setminus C$ is connected.
Then $f$ has a fixed point in $C$.
\end{proposition}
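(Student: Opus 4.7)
The plan is to argue by contradiction using Brouwer's plane translation theorem. Suppose $f$ has no fixed point in $C$. By compactness of $C$ and closedness of the fixed-point set, there is an open neighborhood $V$ of $C$ on which $f$ is fixed-point-free.

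The first step is to promote this local property to a global one: construct an orientation-preserving homeomorphism $g$ of $\mathbb{R}^2$ that coincides with $f$ on a slightly smaller open neighborhood $V'\subset V$ of $C$ and has no fixed points anywhere in the plane. To do so, choose a large closed topological disc $D$ containing $\overline{V}\cup f(\overline{V})$, declare $g$ to be $f$ on $V'$, a non-trivial translation $x\mapsto x+v_0$ outside $D$, and interpolate on the intervening annulus through orientation-preserving embeddings chosen so that no point is mapped to itself. Such an interpolation exists since the relevant space of isotopies is large enough to avoid the codimension-two diagonal in $\mathbb{R}^2\times\mathbb{R}^2$.

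By construction $g|_C=f|_C$, so $C$ is still $g$-invariant. Applying Brouwer's plane translation theorem to $g$, every point $x\in\mathbb{R}^2$ admits a translation arc $\alpha$ from $x$ to $g(x)$ whose iterates $\{g^n(\alpha)\}_{n\in\mathbb{Z}}$ are pairwise disjoint (except for shared endpoints of consecutive arcs) and whose union is a properly embedded copy of $\mathbb{R}$ in the plane, escaping to infinity in both directions. In particular every $g$-orbit is unbounded in $\mathbb{R}^2$. Choosing $x\in C$ then contradicts the compactness of $C$, completing the proof.

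The main obstacle is the extension step: one has to carry out the interpolation while simultaneously preserving orientation, injectivity, and fixed-point-freeness. An alternative, more intrinsic, route altogether sidesteps this obstacle by invoking the prime-end compactification of $U:=\mathbb{R}^2\setminus C$, which is simply connected since $C$ is a non-separating continuum. By Carath\'eodory's theorem, $f|_U$ conjugates to an orientation-preserving homeomorphism of the closed disc, whose boundary rotation number either pinpoints a fixed prime end --- whose impression in $C$ contains the desired fixed point via a direct index computation --- or, in the irrational case, yields an invariant probability on $\partial\mathbb{D}$ that pulls back to an invariant measure on $C$ from which the fixed point is extracted.
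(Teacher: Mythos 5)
The paper does not prove this proposition at all: it is quoted as a classical result of Cartwright and Littlewood with a citation to~\cite{CL}, so there is no internal argument to compare yours with. Judged on its own, your first route has a fatal gap at the extension step. You want a homeomorphism $g$ of the plane that agrees with $f$ on a neighborhood $V'$ of $C$ and is fixed-point-free everywhere. But such a $g$ would still leave the compact set $C$ invariant, and Brouwer's translation theorem forces every orbit of a fixed-point-free orientation-preserving homeomorphism of the plane to escape to infinity; hence no such $g$ can exist whenever $C\neq\emptyset$. In other words, the entire difficulty of the theorem has been relocated into an ``interpolation'' lemma that is provably false, and the general-position argument you invoke (avoiding the diagonal is a codimension-two condition) cannot repair it: fixed points of the interpolating maps are governed by a global index/Brouwer obstruction, not by transversality. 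A concrete illustration: let $f$ be the rotation by $\pi$ about the origin and $C=\{(1,0),(-1,0)\}$; then $f$ is fixed-point-free on a neighborhood of $C$, yet no fixed-point-free extension exists, since it would carry a periodic orbit. (This example also shows that the proposition as literally stated requires $C$ to be connected — i.e.\ a non-separating continuum, which is how it is used in the paper — otherwise it is false.)

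Your alternative route via prime ends is the skeleton of the genuine classical proof, but the two steps you describe as routine are exactly the hard content. A fixed prime end does not yield a fixed point in its impression by ``a direct index computation''; the impression is merely an invariant subcontinuum, and passing from a fixed (or periodic) prime end to a fixed point of $f$ in $C$ is the core of Cartwright and Littlewood's argument. In the irrational rotation number case, there is no natural map between the circle of prime ends and $C$ along which an invariant measure could be ``pulled back'', and even an invariant measure on $C$ would not produce a fixed point (an invariant measure can be carried by a periodic orbit of period $\geq 2$). As written, neither route constitutes a proof; if you want a short self-contained argument you should look instead at M.~Brown's 1977 proof, which collapses $C$ to a point via Moore's theorem and then applies Brouwer's lemma on translation arcs to the induced homeomorphism of the quotient plane.
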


\section{Quantitative dissipation}
\label{s.quantitative}

We recall  a quantitative version of the dissipation that was introduced in \cite{CP}:
\begin{definition}
Let $f$ be a dissipative diffeomorphism of the disc and
$K$ be a $f$-invariant compact set.
For $\gamma\in (0,1)$, we say that
the diffeomorphism $f$ is \emph{$\gamma$-dissipative on $K$}\index{dissipation, mild and $\gamma$-dissipation}
if there is $n\geq 1$ such that for any $x,y\in K$ and any unit vector $u\in T_x\mathbb{D}$,
$$|\det Df^n(y)|< \|Df^n(x).u\|^\gamma.$$
\end{definition}
With this definition in mind, it is possible to get a uniform geometry of stable manifolds for all ergodic measures as  presented in Section ~\ref{ss.uniform stable}. In Section ~\ref{ss.gamma-dissipation} it is shown that $\gamma-$dissipation holds for uniquely ergodic aperiodic compact invariant sets.

\subsection{Criterion for $\gamma$-dissipation}
\label{ss.gamma-dissipation}

The next proposition provides sufficient conditions for $\gamma-$dissipation. Observe that the hypothesis are satisfied by odometers.

\begin{proposition}\label{p.gamma-strong}
Let $f$ be a dissipative $C^r$ diffeomorphism, $r>1$. Let $K$ be an invariant compact set which does not contain
any periodic point, is uniquely ergodic, and does not intersect any transitive compact set with positive entropy.
Then $f$ is $\gamma$-dissipative on $K$ for all $\gamma\in (0,1)$.
\end{proposition}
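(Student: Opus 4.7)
The plan is to reduce $\gamma$-dissipation on $K$ to a comparison of Birkhoff averages under the unique invariant measure $\mu$ on $K$, which is ergodic and, since $K$ carries no periodic orbit, has no atom. Set $L:=-\int\log|\det Df|\,d\mu$; dissipation and compactness give $L>0$. Let $\chi^-\leq\chi^+$ be the Lyapunov exponents of $\mu$, so that $\chi^++\chi^-=-L$. The starting elementary bound, valid for every unit $u\in T_x\mathbb{D}$, is
\[
\|Df^n(x)\cdot u\|\;\geq\; m(Df^n(x))\;=\;\frac{|\det Df^n(x)|}{\|Df^n(x)\|},
\]
so $\gamma$-dissipation reduces to producing $n$ with
\[
-\log|\det Df^n(y)|\;>\;\gamma\bigl(-\log|\det Df^n(x)|+\log\|Df^n(x)\|\bigr)
\]
uniformly in $x,y\in K$ (the case $\|Df^n(x)\cdot u\|\geq 1$ is automatic because $|\det Df^n(y)|<1$).

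Next I would upgrade these asymptotics to uniform estimates using the unique ergodicity of $\mu$. For the continuous observable $\log|\det Df|$, the Oxtoby uniform ergodic theorem gives $\tfrac{1}{n}\log|\det Df^n(y)|\to -L$ uniformly on $K$. For the continuous subadditive cocycle $\phi_n(x)=\log\|Df^n(x)\|$, Kingman's theorem yields $\tfrac{1}{n}\int\phi_n\,d\mu\to\chi^+$, and Furman's upper-semicontinuity principle for uniquely ergodic subadditive cocycles upgrades this to
\[
\limsup_{n\to\infty}\tfrac{1}{n}\log\|Df^n(x)\|\leq\chi^+\quad\text{uniformly in }x\in K.
\]
For any $\varepsilon>0$ and $n$ large, these two facts yield, uniformly on $K\times K$,
\[
-\log|\det Df^n(y)|>n(L-\varepsilon),\qquad -\log|\det Df^n(x)|+\log\|Df^n(x)\|<n(L+\chi^++2\varepsilon),
\]
so every $\gamma<L/(L+\chi^+)$ is admissible.

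The main obstacle is then to show $\chi^+\leq 0$, which makes $L/(L+\chi^+)\geq 1$ and hence all $\gamma\in(0,1)$ admissible. I would argue by contradiction: if $\chi^+>0$ then dissipation forces $\chi^-<0$, so $\mu$ is hyperbolic; being atom-free, it is not carried by a periodic orbit. Katok's theorem for $C^{1+\alpha}$ surface diffeomorphisms then produces, in every neighborhood of $\supp\mu$, transitive hyperbolic basic sets (horseshoes) of positive topological entropy, obtained as closures of transverse homoclinic orbits via Pesin--Katok closing applied to Pesin-regular recurrent points of $\supp\mu\subset K$. The delicate point, which I expect to be the heart of the argument, is to arrange the closing so that the resulting horseshoe actually meets $K$, thereby contradicting the standing hypothesis that $K$ is disjoint from every transitive compact set with positive entropy. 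Once $\chi^+\leq 0$ is secured, the uniform bounds of the second paragraph combine with the reduction of the first paragraph to give $-\log|\det Df^n(y)|>\gamma(-\log m(Df^n(x)))$ uniformly on $K$, which rearranges to $|\det Df^n(y)|<\|Df^n(x)\cdot u\|^\gamma$ for every unit $u$, i.e.\ $\gamma$-dissipation for every $\gamma\in(0,1)$.
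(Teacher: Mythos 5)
Your route is the paper's route: reduce $\gamma$-dissipation to the co-norm, use unique ergodicity to make the Birkhoff and subadditive estimates uniform over $K$, and show $\chi^+\leq 0$ by contradiction via Katok's theorem. The quantitative endgame differs only cosmetically (the paper bounds $\|Df^n(x)u_0\|$ for the most contracted vector by $b^{n/2}$ using $\|Df^n(x)u_0\|^2\leq|\det Df^n(x)|\leq b^n$, and proves the uniform subadditive bound by hand with a fixed $\ell$ and $\frac1\ell\int\log\|Df^\ell\|\,d\mu<\varepsilon/4$ rather than invoking Furman), and both versions are correct.

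The one step you leave open — ``arrange the closing so that the resulting horseshoe actually meets $K$'' — is a genuine gap as stated, but it dissolves once you notice that the hypothesis does not require the horseshoe itself to intersect $K$: it requires \emph{some} transitive compact set with positive entropy to intersect $K$, and it suffices to produce one containing $\supp(\mu)\subset K$. The paper's argument: Katok's theorem gives periodic points $p_n$ whose orbits equidistribute towards $\mu$ and which lie in a single compact block of points with uniformly large local stable and unstable manifolds varying continuously; two such points that are close enough have transverse heteroclinic intersections, yielding a horseshoe, and enlarging the collection of periodic points gives an \emph{increasing} sequence of horseshoes whose union accumulates on $\supp(\mu)$. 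The closure $\Lambda$ of this increasing union is compact, invariant, transitive (any two open sets meeting $\Lambda$ already meet a common horseshoe of the sequence), has positive entropy, and contains $\supp(\mu)$, hence intersects $K$ — contradicting the hypothesis. So no delicate placement of the closing is needed; accumulation of the horseshoes on $\supp(\mu)$, which Katok's equidistribution gives for free, is enough. With that, your proof closes and coincides with the paper's.
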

\begin{proof} 

We first claim that if a $C^r$ diffeomorphism $f$ of a surface (with $r>1$)  admits a hyperbolic ergodic measure $\mu$ with no atom,
then $\supp(\mu)$ is contained in a transitive set with positive topological entropy.
A theorem of Katok~\cite{Ka} asserts the existence of periodic points $p_n$ whose orbits equidistribute towards $\mu$.
Note that the points are distinct and their period goes to $+\infty$ since $\mu$ has no atom.
Moreover, from the proof one can check that all the points $p_n$ belong to a compact set
of points having uniform local stable and local unstable manifolds
which vary continuously with the points
(this is not necessarily true for all the iterates of the points $p_n$).
One may thus find two distinct points $p_n$ and $p_m$ close, so that their stable and unstable manifolds
have transverse intersections. This implies that $f$ has a horseshoe $\Lambda$. Taking $n$ larger, one gets an increasing sequence
of horseshoes whose union accumulates on $\supp(\mu)$.
\medskip

We now turn to the proof itself. Since $f$ is dissipative, there exists $b\in (0,1)$ such that
$|\det Df(x)|<b$ for any $x\in K$. Let us fix $\gamma\in (0,1)$ and any $\varepsilon\in (0,-\frac 1 4(1-\gamma).\log b)$.

Let $\mu$ be the ergodic probability on $K$.
Note that its upper Lyapunov exponent is non positive:
otherwise, $\mu$ would be hyperbolic with no atom and the claim above would imply
that $K$ intersects a transitive set with positive entropy, contradicting the assumptions of the proposition.
Consequently, there exists $\ell\geq 1$ such that
$\frac 1 \ell \int \log \|Df^\ell\| d\mu<\varepsilon/4$.
For $n\geq 1$ large enough and any $x\in K$,
the distribution of the iterates $x,\dots, f^n(x)$ is close to $\mu$, implying that for any $x,y\in K$,
$$\frac 1 n \log \|Df^n(x)\|\leq \varepsilon/4+\frac 1 n \sum_{j=0}^{n-1} \frac 1 \ell\log\| Df^\ell(f^j(x))\|
\leq \frac 1 2 \varepsilon + \frac 1 \ell \int \log \|Df^\ell\| d\mu \leq\frac 3 4 \varepsilon,$$
$$\text{and }\quad e^{-\frac{n\varepsilon}{4}}|\det Df^n(y)|\leq |\det Df^n(x)|.$$
For any $x,y\in K$ and any unit vector $u\in T_x\mathbb{D}$, we thus have:
$$e^{-\frac{n\varepsilon}{4}}|\det Df^n(y)|\leq |\det Df^n(x)|\leq \|Df^n(x)\| . \|Df^n(x).u\|\leq e^{n.\frac{3\varepsilon}4}\|Df^n(x).u\|.$$
If $u_0$ is the unit vector in $T_x\mathbb{D}$
which is the most contracted under $Df^n(x)$, we also have
$$\|Df^n(x).u_0\|^2\leq |\det Df^n(x)|\leq b^n.$$
Hence
$$|\det Df^n(y)|\leq e^{n.\varepsilon}\|Df^n(x).u_0\|\leq e^{n.\varepsilon}b^{n.(1-\gamma)/2}.\|Df^n(x).u_0\|^\gamma<
 \|Df^n(x).u_0\|^\gamma.$$
This gives as required
$$|\det Df^n(y)|< \|Df^n(x).u_0\|^\gamma\leq \|Df^n(x).u\|^\gamma.$$
So $f^n$ is $\gamma$-dissipative on $K$,
provided $n$ is chosen large enough.
\end{proof}

\subsection{Uniform geometry of the strong stable leaves}
\label{ss.uniform stable}

The next theorem was essentially proved in~\cite{CP}. However, it has to be recasted to get a precise quantitative estimate.

\begin{theorem}\label{t.stable}
For any $\alpha\in(0,1]$ and $\varepsilon>0$, there exists $\gamma\in (0,1)$ with the following property.
If $f$ is a $C^{1+\alpha}$ diffeomorphism which is $\gamma$-dissipative on an invariant
compact set $K$ which does not contain any sink, then there exists a compact set $A\subset K$ such that:
\begin{itemize}
\item[--] For any ergodic measure $\mu$ supported on $K$, we have $\mu(A)>1-\varepsilon$.
\item[--] Each point $x\in A$ has a stable manifold $W^s_\mathbb{D}(x)$ which varies continuously with $x$
in the $C^1$ topology.
\end{itemize}
\end{theorem}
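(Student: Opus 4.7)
The plan is to exploit the $\gamma$-dissipation condition to obtain uniform Pesin-type estimates on $K$, and then construct the stable manifolds by a graph-transform argument whose constants depend only on $\gamma$ and $\alpha$ (not on the measure $\mu$). The first step is to extract the key quantitative consequence of $\gamma$-dissipation: fix $n$ as in the definition and let $u_s(x)$ denote the most contracted unit vector of $Df^n(x)$. Writing $\sigma_{\min}\leq\sigma_{\max}$ for the singular values of $Df^n(x)$, the inequality $|\det Df^n(x)|<\|Df^n(x)u_s(x)\|^\gamma$ becomes $\sigma_{\min}\sigma_{\max}<\sigma_{\min}^\gamma$, i.e.\ $\sigma_{\max}<\sigma_{\min}^{\gamma-1}$. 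Combined with the bound $\sup_K|\det Df|\leq b<1$ (valid since $K$ is compact and $f$ is dissipative), iterating along the orbit yields a uniform contraction rate $\sigma_{\min}(Df^{nk}(x))\leq b^{nk/\gamma}$ for every $x\in K$ and $k\geq 1$, together with the a priori bound $\sigma_{\max}\leq\sigma_{\min}^{\gamma-1}$ on the aspect ratio of $Df^{nk}$.

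Second, I would define an explicit family of Pesin blocks, independent of $\mu$. For parameters $\lambda\in(0,-\log b/\gamma)$ and $C\geq 1$, let $A(C,\lambda)$ be the set of $x\in K$ such that for every $j\geq 0$,
\begin{equation*}
\|Df^{nk}(f^j x)\,u_s(f^j x)\|\leq Ce^{-\lambda nk}\quad\text{for all } k\geq 1.
\end{equation*}
A tempered-distortion estimate (following e.g.\ the arguments recalled in~\cite{CP}) shows that this holds at some shift for $\mu$-almost every point: since the uniform estimate $\sigma_{\min}\leq b^{nk/\gamma}$ already provides the exponential rate on average, a Borel--Cantelli / maximal-function argument gives $\mu(A(C,\lambda))>1-\varepsilon$ once $C$ is large, \emph{uniformly in the ergodic measure} $\mu$ on $K$. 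The absence of sinks in $K$ is used to rule out the degenerate case where $u_s$ fails to be well-defined.

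Third, on each $A(C,\lambda)$ I would run the standard Hadamard--Perron graph transform for the strong stable direction, arguing in suitable Lyapunov charts at $x$ in which $Df^{nk}$ looks diagonal with contraction rate at most $Ce^{-\lambda nk}$ in the vertical direction and with aspect ratio controlled by $\sigma_{\min}^{\gamma-1}$ horizontally. The size of the chart depends only on $C$, $\lambda$, $\gamma$ and the $C^{1+\alpha}$ norm of $f$. Provided $\gamma$ is close enough to $1$, the ratio $\sigma_{\max}/\sigma_{\min}\leq\sigma_{\min}^{\gamma-1}$ stays below the threshold needed for the graph transform to act as a $C^1$-contraction on Lipschitz graphs with Hölder derivatives; its fixed point is a $C^1$ curve $W^s_\mathbb{D}(x)$ of uniform size, and it is precisely this contraction-on-graphs property that delivers continuous $C^1$-dependence of the graph on $x\in A(C,\lambda)$. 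Setting $A:=A(C,\lambda)$ then yields the theorem.

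The main difficulty is the quantitative coupling between $\gamma$ and $\alpha$ in the last step: to obtain $C^1$-continuity of the stable leaves (rather than just mere existence and $C^0$-continuity of the tangent line) the Hölder exponent $\alpha$ has to be absorbed against $(1-\gamma)$ in the contraction estimate for the graph transform on $C^{1,\alpha}$-graphs. Concretely, one needs the bunching condition $\sigma_{\min}^{1-\gamma}\cdot\sigma_{\min}^{-\alpha}$ to remain small along orbits, which forces $\gamma$ to be chosen close enough to $1$ (depending on $\alpha$ and $\varepsilon$). Once this bunching is in place, the rest is a routine Pesin construction, and the uniformity in $\mu$ follows automatically from the fact that the estimates depend only on the orbit-wise rates supplied by the $\gamma$-dissipation condition.
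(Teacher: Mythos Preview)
Your overall strategy—extract uniform Pesin-type estimates from $\gamma$-dissipation and then run a graph-transform with constants independent of $\mu$—is the right one and matches the paper. But two concrete steps are off. First, the inequality you extract goes the wrong way: from $|\det Df^n(x)|<\sigma_{\min}(x)^\gamma$ you get $\sigma_{\min}(x)>|\det Df^n(x)|^{1/\gamma}$, a \emph{lower} bound on the least singular value, not the upper bound $\sigma_{\min}\leq b^{nk/\gamma}$ you claim (the only pointwise upper bound available is $\sigma_{\min}^2\leq|\det Df^n|\leq b^n$). Moreover, for a product $Df^{nk}(x)=Df^n(f^{(k-1)n}x)\cdots Df^n(x)$ there is no submultiplicativity of $\sigma_{\min}$, so ``iterating along the orbit'' does not give a pointwise uniform contraction rate; your set $A(C,\lambda)$ cannot be filled by a direct pointwise argument.

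Second, the uniformity in $\mu$ does not come from a Borel--Cantelli or maximal-function argument (both tied to a fixed measure) but from the \emph{Pliss lemma}, which is deterministic and applies orbit by orbit. This is exactly what the paper does: it fixes explicit constants $\tilde\sigma=m$, $\sigma=\rho=D^{1-\alpha/3}$, $\tilde\rho=m^2/D$ (with $D=\sup_K|\det Df|$, $m=\inf_K\|Df^{-1}\|^{-1}$), checks the bunching condition $\tilde\rho\tilde\sigma/(\rho\sigma)>\sigma^\alpha$ once $(1+\alpha/9)\gamma>1$, and then applies Pliss to each of the two defining inequalities of the block $A_{\tilde\sigma\sigma\tilde\rho\rho}$ to obtain $\mu(A_{\tilde\sigma\sigma\tilde\rho\rho})>1-\varepsilon$ for \emph{every} ergodic $\mu$, with a density of Pliss times depending only on $\alpha$ and $\gamma$. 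The stable-manifold construction itself is not redone but quoted from~\cite{CP} (Theorem~5 there), which already yields $C^1$ continuity on that block; your bunching discussion in step three is correct in spirit, but it is the Pliss step that actually delivers the measure estimate uniformly in $\mu$.
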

 
For the proof, we refer to  \cite{CP} and the following slight changes in the results therein  that we need here.
Let $\tilde \sigma, \sigma, \tilde \rho, \rho \in (0,1)$ satisfying
\begin{equation}\label{e.pesin}
{\textstyle \frac{\tilde \rho\tilde \sigma}{\rho \sigma}} >\sigma^\alpha,
\end{equation}
and $A_{\tilde \sigma\sigma\tilde \rho\rho }(f)$ be the set of points $x$ having a direction $E\subset T_x{\color{black} \DD}$
such that for each $n\geq 0$
\begin{equation}\label{e.stable}
 \tilde \sigma^n\leq \|Df^n(x)_{|E}\|\leq  \sigma^{n},\;
\text{ and }\;  \tilde \rho^n\leq \frac{\|Df^n(x)_{|E}\|^2}{|\det Df^n(x)|}\leq \rho^{n}.
\end{equation}
We recall Theorem 5 and Remark 2.1 in \cite{CP}:

\begin{theorem}[Stable manifold at non-uniformly hyperbolic points]\label{t.stablecp}
Consider a $C^{1+\alpha}$ diffeomorphism $f$ with $\alpha\in (0,1]$.
Provided~\eqref{e.pesin} holds, the points in  $A_{\tilde \sigma\sigma\tilde \rho\rho }(f)$ 
have a one-dimensional stable manifold
which varies continuously for the $C^1$ topology with $x\in {\color{black} \DD}$.
\end{theorem}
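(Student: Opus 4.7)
The strategy is a quantitative, non-asymptotic version of Pesin's stable manifold theorem adapted to the set $A_{\tilde\sigma\sigma\tilde\rho\rho}(f)$. The key observation is that~\eqref{e.stable} is imposed for \emph{every} $n\geq 0$ (not only for $n$ larger than some $N(x)$), so the set behaves like a uniform Pesin block rather than a nested family of blocks. First I would choose, along each forward orbit $(f^n(x))_{n\geq 0}$, adapted charts $\phi_n\colon (B_{r_0},0)\to (\mathbb{D},f^n(x))$ aligned with the given direction $E_{f^n(x)}$ and a transverse complement $F_{f^n(x)}$. In these charts, $f_n := \phi_{n+1}^{-1}\circ f\circ \phi_n$ has linear part $A_n$ contracting the horizontal direction by a factor in $[\tilde\sigma,\sigma]$, and, by the second bound of~\eqref{e.stable}, expanding the vertical direction by a factor at least $\sigma\tilde\rho/\rho$ (since $\|Df|_F\|\asymp |\det Df|/\|Df|_E\|$). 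The gap $\frac{\sigma\tilde\rho/\rho}{\sigma}=\tilde\rho/\rho$ between these two rates is what drives the graph transform.

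Second, I would set up the graph transform on vertical Lipschitz disks of diameter $r_0$. Using the $C^{1+\alpha}$ regularity, one has $Df_n(y)=A_n+O(|y|^\alpha)$ uniformly in $n$, so the nonlinear perturbation at scale $r$ is bounded by $r^\alpha$. For the graph transform to preserve a horizontal cone field and contract it strictly, one needs the loss $\sigma^\alpha$ coming from the nonlinearity to be dominated by the gap $\tilde\rho\tilde\sigma/(\rho\sigma)$ — and this is exactly the Pesin condition~\eqref{e.pesin}. Because the rates $\tilde\sigma,\sigma,\tilde\rho,\rho$ are universal over $A_{\tilde\sigma\sigma\tilde\rho\rho}(f)$, the chart radius $r_0$ and the contraction constant of the graph transform can be chosen uniformly in $x$. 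The fixed point of the inverse graph transform (iterating $f_n^{-1}$ and pulling back horizontal graphs) yields a Lipschitz horizontal curve $\Sigma(x)\subset \phi_0(B_{r_0})$, which by the exponential contraction of $Df|_E$ is the local strong stable manifold $W^s_{\mathrm{loc}}(x)$.

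Third, I would upgrade regularity and obtain the $C^1$ continuity in $x$. By the contraction-on-cones picture the tangent line $T_y\Sigma(x)$ is the limit of the image under $Df_n^{-1}$ of arbitrary horizontal lines, and this limit is the unique line invariant under the backward dynamics inside the contracting cone. Uniform contraction of this projective dynamics, together with the $C^\alpha$ dependence of $Df$ on the base point, gives Hölder dependence of $T_y\Sigma(x)$ on $y$, hence $\Sigma(x)$ is $C^{1+\alpha'}$ for some $\alpha'>0$. For the continuity in $x$, I would regard the graph transform as a uniform contraction on the Banach space of Lipschitz horizontal graphs, parametrized continuously by $x\in A_{\tilde\sigma\sigma\tilde\rho\rho}(f)$ via the charts $\phi_n$; the fixed point therefore depends continuously on $x$ in the Lipschitz topology, and the same argument applied to the tangent line field yields $C^1$-continuity of $W^s_{\mathrm{loc}}(x)$.

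The main obstacle is the book-keeping behind the second step: one must propagate the $C^{1+\alpha}$ nonlinear error across all iterates so that at every scale the perturbation remains dominated by the hyperbolic gap. The inequality~\eqref{e.pesin} is tight precisely for this reason — it is the unique condition that both controls the cone contraction at each step and lets the uniformity in $n$ and $x$ be preserved — and getting a genuinely \emph{uniform} size for the local stable curves (rather than a size depending on $x$, as in standard Pesin theory) is what justifies the non-asymptotic formulation of~\eqref{e.stable} and is the delicate part of the argument.
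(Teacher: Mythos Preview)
Your sketch is a reasonable outline of how such a uniform Pesin-type stable manifold theorem is proved, and the role you assign to condition~\eqref{e.pesin} is correct. However, the paper does not prove this statement: it is explicitly recalled from~\cite{CP} (theorem~5 and remark~2.1 there) and used as a black box, so there is no proof in the paper to compare against.
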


To conclude Theorem \ref{t.stable}, observe that it is enough to prove following proposition:

\begin{proposition}\label{p.uniform-gamma} Given $\varepsilon>0$ and $\alpha\in (0,1]$, there is $\gamma>0$ with the following property.

Let us consider a diffeomorphism $f$ and an invariant compact set $K$ which does not contain any sink and where
$f$ is $\gamma-$dissipative.
Then there exist $\tilde \sigma, \sigma, \tilde \rho, \rho \in (0,1)$ satisfying~\eqref{e.pesin} such that
for any ergodic measure $\mu$ supported on $K$,
$\mu(A_{\tilde \sigma\sigma\tilde \rho\rho }(f))>1-\varepsilon.$
\end{proposition}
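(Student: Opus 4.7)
The strategy is to deduce from the $\gamma$-dissipation hypothesis a uniform spectral gap on the Lyapunov exponents of every ergodic measure on $K$, then to select Pesin parameters $\sigma,\tilde\sigma,\rho,\tilde\rho$ independent of the measure, and finally to recover a Pesin block of large $\mu$-measure through a Pliss-type argument. Let $n$ be the dissipation time from the definition of $\gamma$-dissipation. By iterating the one-step inequality along orbits (taking $x=y=f^{jn}(z)$ and the unit direction aligned with $Df^{jn}(z)\cdot u_0$, then multiplying for $j=0,\dots,k-1$) one obtains the pointwise cocycle bound
\[
|\det Df^{kn}(z)|\;<\;\|Df^{kn}(z)\cdot u_0\|^{\gamma}
\]
for every $z\in K$, every unit $u_0\in T_z\DD$ and every integer $k\geq 1$. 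Evaluated at a $\mu$-generic $z$ with $u_0$ in the Oseledets stable direction $E(z)$, taking logarithms, dividing by $kn$ and sending $k\to\infty$, Birkhoff's theorem yields $\chi^+(\mu)+\chi^-(\mu)\leq\gamma\,\chi^-(\mu)$, where $\chi^-(\mu)\leq\chi^+(\mu)$ are the Lyapunov exponents of $\mu$.

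Since $K$ contains no sink, $\chi^+(\mu)\geq 0$: otherwise both exponents would be negative and surface Pesin theory would force $\mu$ to be supported on a hyperbolic sink of $K$, contradicting the hypothesis. Fix $b\in(0,1)$ with $|\det Df|\leq b$ on $K$; then $\chi^+(\mu)+\chi^-(\mu)\leq\log b$ forces $\chi^-(\mu)\leq\tfrac12\log b<0$ uniformly in $\mu$, and combining with the previous inequality I obtain the uniform spectral gap
\[
0\;\leq\;\chi^+(\mu)\;\leq\;(1-\gamma)\,|\chi^-(\mu)|,\qquad \chi^-(\mu)\;\leq\;\tfrac12\log b,
\]
valid for every ergodic $\mu$ on $K$. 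I then choose $\gamma>\alpha/(1+\alpha)$, so that the asymptotic bunching $(1+\alpha)\chi^+(\mu)<|\chi^-(\mu)|$ holds, and fix a small margin $\delta>0$ depending on $\alpha$, $\varepsilon$ and the $C^1$-norms of $f$. Setting
\[
\sigma:=\rho:=b^{1/2}e^{\delta}\in(0,1),\qquad \tilde\sigma:=\inf_K m(Df)>0,\qquad \tilde\rho:=\inf_K m(Df)/\|Df\|\in(0,1),
\]
the upper bounds $\sigma,\rho$ dominate $e^{\chi^-(\mu)}$ and $e^{\chi^-(\mu)-\chi^+(\mu)}$ uniformly in $\mu$ with margin $\delta$, while $\tilde\sigma^n$ and $\tilde\rho^n$ bound from below $\|Df^n(x)_{|E}\|$ and $\|Df^n(x)_{|E}\|^2/|\det Df^n(x)|$ pointwise on $K$ whenever $E$ is chosen in a minimum-singular-value direction. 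The bunching condition~\eqref{e.pesin}, after taking logarithms, reduces to $(1+\alpha)\log\sigma<\log\tilde\rho+\log\tilde\sigma-\log\rho$, which follows from the spectral gap and the choice of $\delta$.

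It remains to establish the measure estimate. For each ergodic $\mu$ let $E(x)$ be the Oseledets stable direction and define the cocycles $\phi_1(x)=\log\|Df(x)_{|E(x)}\|$ and $\phi_2(x)=2\phi_1(x)-\log|\det Df(x)|$; by Birkhoff, $\int\phi_1\,d\mu\leq\log\sigma-\delta$ and $\int\phi_2\,d\mu\leq\log\rho-\delta$, both with strict margin $\delta$. A simultaneous Pliss-type selection applied to $\phi_1,\phi_2$ produces a subset $A_\mu\subset K$ of $\mu$-measure $>1-\varepsilon$ on which both upper bounds in~\eqref{e.stable} hold for every $n\geq 0$ with the direction $E(x)$; the matching lower bounds hold pointwise on $K$ by construction of $\tilde\sigma,\tilde\rho$. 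Hence $A_\mu\subset A_{\tilde\sigma\sigma\tilde\rho\rho}(f)$, which is the desired conclusion.

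The main obstacle lies in arranging the Pliss selection so that the bound $\mu(A_\mu)>1-\varepsilon$ is genuinely uniform in $\mu$: the classical Pliss lemma only yields density $\delta/(\sup\phi_i-\inf\phi_i)$, typically far from $1$. This is circumvented by first replacing $f$ by a sufficiently high iterate (so that the relative oscillation of $\phi_i$ shrinks compared to the gap $\delta$), or equivalently by working in a Lyapunov/Mather norm adapted to the spectral gap and interpreting $E(x)$ as a pullback direction that absorbs the finite-time fluctuations, in the spirit of Theorem~5 of~\cite{CP}. Either route converts the uniform asymptotic control on Lyapunov exponents into uniform Pesin-block estimates of measure close to~$1$, completing the proof.
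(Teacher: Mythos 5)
Your overall skeleton (Oseledets directions, Birkhoff averages, a Pliss selection with measure-independent constants) is the same as the paper's, but two quantitative steps fail, and both failures stem from the same omission: you use the $\gamma$-dissipation only through its averaged consequence (the spectral gap $\chi^+\leq(1-\gamma)|\chi^-|$), never through its pointwise consequence $D<m^\gamma$, where $D=\sup_K|\det Df|$ and $m=\inf_K\|Df(x)^{-1}\|^{-1}$ (for the appropriate iterate). First, your constants violate~\eqref{e.pesin}. With $\tilde\sigma\approx m$ and $\tilde\rho\approx m^2/D$, your choice $\sigma=\rho=D^{1/2}e^{\delta}$ gives $\tilde\rho\tilde\sigma/(\rho\sigma)\approx m^3D^{-2}e^{-2\delta}$ and $\sigma^{\alpha}\approx D^{\alpha/2}$, so~\eqref{e.pesin} demands $m^3>D^{2+\alpha/2}$ up to the $\delta$'s. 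But $\gamma$-dissipation with $\gamma$ close to $1$ allows $D$ to be comparable to $m$ (take $Df\equiv\mathrm{diag}(\mu,\lambda)$ with $\lambda=\mu^{-(1-\gamma)/2}>1$ and $K$ the fixed saddle: then $D=m^{(1+\gamma)/2}\to m$ as $\gamma\to 1$), and in that regime $m^3>D^{2+\alpha/2}$ forces $3<2+\alpha/2$, i.e.\ $\alpha>2$, which is excluded since $\alpha\leq 1$. The exponent $1/2$ in $\sigma$ is simply too large: the paper takes $\sigma=\rho=D^{1-\alpha/3}$, barely above $D$ (which still dominates $e^{\chi^-}$ because $\chi^+\geq 0$ gives $\chi^-\leq\log D$), and then~\eqref{e.pesin} reduces, via $D<m^\gamma$, to an inequality of the form $\gamma(1+c_\alpha)\geq 1$ that holds for $\gamma$ close to $1$.

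Second, you correctly flag that the naive Pliss density is far from $1$, but your proposed repairs do not close the gap. Passing to a high iterate $f^N$ does not shrink the relative oscillation: both the oscillation of $\log\|Df^N_{|E}\|$ over $K$ and the margin $N\delta$ grow linearly in $N$ (two saddles in $K$ with different stable rates already force oscillation $\geq cN$), so the Pliss density stays bounded away from $1$; a Lyapunov norm only controls fluctuations along a single orbit up to a tempered, non-uniform comparison constant, so it does not yield a uniform measure bound either. What actually pushes the density above $1-\varepsilon$ is once more the pointwise bound $D<m^\gamma$: the pointwise floor $\log m$ and the measure-theoretic ceiling $\log D\geq\chi^-$ then differ by a factor at most $1/\gamma$, and the Pliss density for the target $\log\sigma=(1-\alpha/3)\log D$ is at least $\frac{\alpha/3}{1/\gamma-1+\alpha/3}$, which tends to $1$ as $\gamma\to 1$. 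You need to feed this pointwise inequality into both the choice of the four constants and the density estimate; as written, neither step goes through.
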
   
\begin{proof} 
This is proved in \cite[Proposition 3.2]{CP} in the case $\varepsilon= 5/6$, $r=2$ and $\gamma=9/10$.
We explain how to adapt the proof by modifying the constants.
Let us take
$$D = \sup_{x\in K} |det Df(x)|,\,\,\, m = \inf_{x\in K} \|Df^{-1}(x)\|^{-1},$$
$$\tilde \sigma = m,\,\,\,  \tilde \rho = m^2/D,\,\,\,  \sigma  = D^{1-\alpha/3},\,\,\,  \rho = D^{1-\alpha/3}.$$
Since $f$ is $\gamma-$dissipative, $D< m^\gamma$ and the condition~\eqref{e.pesin} is satisfied provided $(1+\alpha/9)\gamma>1$.
Using Pliss lemma (as stated in \cite[Lemma 3.1]{CP}), the first condition in~\eqref{e.stable}, holds on a set with $\mu-$measure larger than $\frac{(1-\alpha/3)\log(D)-\log(D)}{(1-\alpha/3)\log(D)-\log(m)}>\frac{-\alpha/3}{(1-\alpha/3-\frac{1}{\gamma})}$.
Similarly, the second condition in~\eqref{e.stable} holds on a set with $\mu-$measure larger than
$\frac{(1-\alpha/3)\log(D)-\log(D)}{(1-\alpha/3)\log(D)-2\log(m)+\log(D)}>\frac{-\alpha/3}{(2-\alpha/3-\frac 2 \gamma)}$.
Hence~\eqref{e.stable} holds on a set with measure larger than $1-\varepsilon$ provided $\gamma$ is chosen close to $1$ so that
$\frac{-\alpha/3}{(2-\alpha/3-\frac 2 \gamma)}>1-\varepsilon/2$.
\end{proof}

\section{Closing lemmas}
\label{ss.closing}\index{closing lemma}

The following theorem is proved in~\cite{CP}.
\begin{theorem}\label{t.measure revisited}
For any mildly dissipative diffeomorphism of the disc, the support of any $f$-invariant probability {\color{black} measure} is contained in the closure of the set of periodic points.
\end{theorem}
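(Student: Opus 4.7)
My plan is to reduce to the case of an ergodic invariant measure $\mu$ and produce, for $\mu$-almost every $x$, periodic points of $f$ arbitrarily close to $x$. By the ergodic decomposition it suffices to treat ergodic $\mu$; if $\mu$ has an atom then $\mu$ is supported on a periodic orbit and there is nothing to prove, so I assume $\mu$ is atomless. Since $f$ is dissipative, $\int\log|\det Df|\,d\mu<0$, hence the lower Lyapunov exponent $\lambda^-$ is strictly negative; since $\mu$ is not supported on a sink, definition~\ref{SD defi} gives that for $\mu$-a.e.\ $x$ the stable curve $W^s_\DD(x)$ exists and separates $\DD$. I would then split on the sign of the upper exponent $\lambda^+\geq 0$.

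If $\lambda^+>0$ the measure $\mu$ is hyperbolic, and I would apply Katok's periodic approximation theorem~\cite{Ka} (as invoked in the proof of proposition~\ref{p.gamma-strong}) to produce hyperbolic saddle periodic orbits whose empirical measures converge weakly to $\mu$; these orbits accumulate on $\supp(\mu)$, which settles the case. If $\lambda^+=0$, I would pass to a minimal subset $K\subset \supp(\mu)$ so that $\mu|_K$ is uniquely ergodic with no periodic points and no transitive subset of positive entropy, then invoke proposition~\ref{p.gamma-strong} to make $f$ be $\gamma$-dissipative on $K$ for $\gamma$ arbitrarily close to $1$, and theorem~\ref{t.stable} to obtain a compact set $A\subset K$ with $\mu(A)>1-\varepsilon$ on which the separating leaves $W^s_\DD(\cdot)$ vary continuously in the $C^1$ topology.

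Next I would pick a $\mu$-generic $x\in A$ (so Birkhoff-recurrent in $A$) and use Poincaré recurrence to find $n_k\to\infty$ with $f^{n_k}(x)\in A$ and $f^{n_k}(x)\to x$. The curves $W^s_\DD(x)$ and $W^s_\DD(f^{n_k}(x))$ are both separating and $C^1$-close on compact fundamental domains. Choosing a small fundamental arc $\sigma\subset W^s_\DD(x)$ through $x$, its $f^{n_k}$-image $f^{n_k}(\sigma)\subset W^s_\DD(f^{n_k}(x))$ is a nearby arc; together with two short transverse arcs joining the endpoints of $\sigma$ to those of $f^{n_k}(\sigma)$, they bound a thin Jordan region $D\subset \DD$. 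The strong contraction along $W^s$ and the $C^1$-proximity of the two stable curves force $f^{n_k}(\overline D)\subset D$, and then Brouwer's theorem (or, more robustly on non-smooth boundaries, Cartwright--Littlewood, proposition~\ref{p.CL}) produces a fixed point of $f^{n_k}$ in $D$, i.e.\ a periodic point of $f$ within $O(\varepsilon)$ of $x$.

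The main obstacle is this last geometric step: verifying that the candidate $D$ is genuinely $f^{n_k}$-trapped. One must orient $\sigma$ and $f^{n_k}(\sigma)$ consistently, choose the transverse joining arcs short enough and in a position where $f^{n_k}$ pushes them strictly into $D$, and treat separately the degenerate subcase $W^s_\DD(x)=W^s_\DD(f^{n_k}(x))$ where $D$ wraps as a thin annular strip along a single leaf. The quantitative stable-geometry estimates in theorem~\ref{t.stable}, made available by $\gamma$-dissipation with $\gamma$ close to $1$ from proposition~\ref{p.gamma-strong}, are precisely what is needed to close the trapping argument despite $\lambda^+$ being only zero.
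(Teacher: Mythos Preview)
The paper does not prove this theorem here; it is quoted from~\cite{CP}. The paper does prove the local refinement, theorem~\ref{t.measure local}, and that proof (together with the argument in~\cite{CP}) shows why your trapping step cannot close.

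The genuine gap is the claim $f^{n_k}(\overline D)\subset D$. Your region $D$ has $\sigma\subset W^s_\DD(x)$ and $f^{n_k}(\sigma)\subset W^s_\DD(f^{n_k}(x))$ among its sides. Under $f^{n_k}$, the side $f^{n_k}(\sigma)$ is sent to a short arc inside $W^s_\DD(f^{2n_k}(x))$, and nothing forces $f^{2n_k}(x)$, let alone its stable curve, to lie in the thin strip between $W^s_\DD(x)$ and $W^s_\DD(f^{n_k}(x))$; on an odometer it typically does not. The two transverse joining arcs are worse: with $\lambda^+=0$ there is no contraction transverse to the stable lamination, and the estimates of theorem~\ref{t.stable} govern only the geometry of the stable leaves, not the transverse drift of such arcs under $f^{n_k}$. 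No choice of $\sigma$ and joining arcs makes $D$ genuinely trapped. (A secondary issue: ``pass to a minimal subset $K\subset\supp(\mu)$ so that $\mu|_K$ is uniquely ergodic'' is ill-posed, since minimality does not imply unique ergodicity and, for ergodic $\mu$, either $K=\supp(\mu)$ or $\mu(K)=0$; but one could bypass proposition~\ref{p.gamma-strong} by working directly on a Pesin block.)

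The argument in~\cite{CP}, reflected in the proof of theorem~\ref{t.measure local} in this paper (see the second and third cases there), does not attempt to trap a small disc by a return map. One instead chooses recurrent iterates $x',x''=f^j(x')$ on opposite sides of $W^s_\DD(x)$, so that the strip $R$ bounded by $W^s_\DD(x')$ and $W^s_\DD(x'')$ is a small neighbourhood of $x$, and then selects a specific iterate $f^\ell$ for which one can show, using only the separating property of the stable curves and connectedness of the relevant pieces, that $f^\ell$ has no fixed point in either of the two complementary half-discs. Cartwright--Littlewood (proposition~\ref{p.CL}), applied to a suitable cellular invariant set, then forces an $f^\ell$-fixed point into $R$. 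The periodic point is produced by \emph{excluding} it from the complement of $R$, not by building a trapped neighbourhood.
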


We state now a local version of that result. 
Let us recall that a compact connected set of the plane is \emph{cellular}
if its complement is connected. Equivalently it is the decreasing intersection
of sets homeomorphic to the unit disc.

\addtocounter{theorem}{-1}
\renewcommand{\thetheorem}{\Alph{theorem}'}
\begin{theorem}[Local version]\label{t.measure local}
Let $f$ be a mildly dissipative diffeomorphism of the disc,
and $\Lambda$ an invariant cellular connected compact set.
Then the support of any $f$-invariant probability {\color{black} measure} on $\Lambda$ is contained in the closure of the periodic points in $\Lambda$.
\end{theorem}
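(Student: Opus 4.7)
The strategy is to follow the proof of theorem~\ref{t.measure revisited} from~\cite{CP}, and to insert a single new topological step at the end which uses the cellularity of $\Lambda$ to force the periodic point produced to lie inside $\Lambda$. By ergodic decomposition, one reduces to the case where $\mu$ is ergodic; if $\mu$ has an atom then that atom is a periodic orbit, which is contained in the invariant set $\Lambda$, and the conclusion is immediate. So assume $\mu$ is ergodic and non-atomic, fix $z\in\supp(\mu)$, and let $V$ be an arbitrarily small open neighborhood of $z$: the goal is to produce a periodic point of $f$ in $V\cap\Lambda$.

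The next step is to reproduce near $z$ the trapping-disc construction of~\cite{CP}. Theorem~\ref{t.stable} combined with the definition of mild dissipation produces a compact set $A\subset\supp(\mu)$ of $\mu$-measure arbitrarily close to $1$ on which the stable manifolds $W^s_\DD(\cdot)$ vary continuously in the $C^1$ topology and separate $\DD$. Since any neighborhood of $z$ meets $A$ in positive $\mu$-measure, Poincar\'e recurrence provides $x\in A$ close to $z$ and an integer $N\geq 1$ with $f^N(x)\in A$ close to $x$. As in~\cite{CP}, the two separating stable curves $W^s_\DD(x)$ and $W^s_\DD(f^N(x))$, joined by a short transversal segment, bound a topological disc $D\subset V$ satisfying $f^N(\overline D)\subset \interior(D)$. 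The set
\[
K \;=\; \bigcap_{m\geq 0} f^{mN}(\overline D)
\]
is then a decreasing intersection of topological discs, hence a non-empty cellular $f^N$-invariant compact set.

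The new topological step is to apply Cartwright--Littlewood to $K\cap\Lambda$ rather than to $K$. Since $x\in\Lambda\cap\overline D$, the invariance of $\Lambda$ together with the trapping property of $D$ forces $\omega_{f^N}(x)\subset K\cap\Lambda$, so $K\cap\Lambda$ is a non-empty, compact, $f^N$-invariant set. Moreover, by De Morgan,
\[
\RR^2\setminus(K\cap\Lambda)\;=\;(\RR^2\setminus K)\cup(\RR^2\setminus\Lambda),
\]
which is a union of two connected open sets (by the cellularity of $K$ and of $\Lambda$) whose intersection contains the unbounded region $\RR^2\setminus\DD$, hence is connected. Applying proposition~\ref{p.CL} to the orientation-preserving homeomorphism $f^N$ (replacing $N$ by $2N$ if needed to ensure orientation-preservation) and to the compact invariant set $K\cap\Lambda$ yields a fixed point $p$ of $f^N$ in $K\cap\Lambda$. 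Then $p$ is a periodic point of $f$ whose full $f$-orbit lies in $\Lambda$ by invariance, and $p\in D\subset V$, which completes the proof.

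The main obstacle will be the construction of the trapping disc $D$ in the second paragraph: this is the technical heart of the closing lemma in~\cite{CP}, and essentially the same input is needed here, requiring careful use of Pesin theory and mild dissipation to ensure that $D$ can be chosen inside any prescribed neighborhood $V$ of $z$. By contrast, the genuinely new ingredient is light and purely topological: once the trapping disc and its maximal invariant cellular compact set $K$ are available, intersecting with $\Lambda$ preserves the hypotheses of Cartwright--Littlewood, thanks to the cellularity of both sets, and this is precisely what allows us to localize the produced periodic point inside $\Lambda$.
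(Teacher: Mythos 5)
Your argument collapses at the second step. You claim that the two stable curves $W^s_\DD(x)$ and $W^s_\DD(f^N(x))$, closed off by a short transversal, bound a small disc $D\subset V$ with $f^N(\overline D)\subset \interior(D)$. Neither assertion is justified, and both fail in general. First, since each stable curve separates $\DD$ (it runs from boundary to boundary), the region they bound is a strip crossing the whole disc, not a set contained in a small neighborhood $V$ of $z$. Second, and more seriously, a region bounded by stable manifolds is not forward-trapped: the theorem carries no zero-entropy hypothesis, so $\mu$ may be a hyperbolic measure with a positive exponent, and points near a typical point then escape from any small neighborhood along unstable directions. A genuine trapping disc around a $\mu$-typical point would force $\supp(\mu)$ to lie in an attractor, which is simply not available. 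Without the trapping property your set $K=\bigcap_m f^{mN}(\overline D)$ is not a nested intersection, is not cellular, and the whole final step has nothing to stand on. This is exactly the difficulty the paper's proof is built to circumvent: it never traps. In the case where $\Lambda$ crosses both stable branches of $x$ it closes the strip with arcs of $\Lambda$ itself, invokes the \emph{global} closing lemma (theorem~\ref{t.measure revisited}) to produce a periodic point $q$ in the resulting region $R$, and concludes $q\in\Lambda$ only because area and stable lengths decay, so $f^{k\ell}(R)\to\Lambda$ in Hausdorff distance. In the cases where $\Lambda$ crosses one or no stable branch (a trichotomy your proposal never confronts, cf.\ lemma~\ref{l.trichotomy}), it must build an auxiliary forward-invariant cellular set $\Delta$ and a modified map $g=f^\ell\circ\varphi$ displacing $\Delta'$ and $\Delta''$ before Cartwright--Littlewood can be applied.

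A secondary gap: even granting $K$, your application of proposition~\ref{p.CL} to $K\cap\Lambda$ is not licit. Cartwright--Littlewood requires the invariant compact set to be a \emph{continuum} (without connectedness the statement is false: take two points swapped by $f$), and the intersection of two cellular continua need not be connected, so the connectedness of $\RR^2\setminus(K\cap\Lambda)$ that you correctly verify is not enough.
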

\renewcommand{\thetheorem}{\Alph{theorem}}

This section is devoted to the proof of Theorem~\ref{t.measure local}.

We may assume that $\mu$ is ergodic and that $\mu$ is not supported on a finite
set since otherwise the conclusion of the theorem holds trivially.
We have to find a periodic point in $\Lambda$ arbitrarily close to $x$.
Note that one can replace $f$ by $f^2$ and reduce to the case where $f$ preserves the orientation. Also, by a slight modification of the boundary of the disk, it can be assumed that for almost every point the complement of the local stable manifold in the disc has two connected components.

\begin{definition}\label{d.crossing}
For $\mu$-almost every point $x$, the connected components of $W^s_\mathbb{D}(x)\setminus \{x\}$
are called \emph{stable branches of $x$}.
We say that the connected compact set $\Lambda$ \emph{crosses} a stable branch $\sigma$ of $x$
if there exists a connected compact set $C\subset \Lambda$ which intersects both connected components
of $\mathbb{D}\setminus W^s_\mathbb{D}(x)$ and is disjoint from
$W^s_\mathbb{D}(x)\setminus \sigma$.
\end{definition}

\begin{remark}\label{r.small}
One can build connected compact sets $C'\subset C$ satisfying the definition and contained in arbitrarily small
neighborhoods of $W^s_\mathbb{D}(x)$.
{\rm If this were not the case there would exist a small neighborhood $U$ of $W^s_\mathbb{D}(x)$
such that each connected component of $C\cap U$ is disjoint from one of the
connected components of $\mathbb{D}\setminus W^s_\DD(x)$.
Hence the points in $W^s_\DD(x)\cap C$ would not be accumulated by points of $C$
from both components of $\mathbb{D}\setminus W^s_\DD(x)$:
there would be a continuous partition of $C$ as points to the ``left" or to the ``right" of $W^s_\DD(x)$,
contradicting the connectedness.
}
\end{remark}

\begin{lemma}\label{l.trichotomy}
Three cases occur.
\begin{itemize}
\item[--] for $\mu$-almost every point $x$, the set $\Lambda$ crosses both stable branches of $x$,
\item[--] for $\mu$-almost every point $x$, the set $\Lambda$ crosses one stable branch of $x$
and is disjoint from the other one,
\item[--] for $\mu$-almost every point $x$, the set $\Lambda$ is disjoint from both stable branches of $x$.
\end{itemize}
\end{lemma}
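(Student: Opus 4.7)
The plan is to use ergodicity of $\mu$ to reduce the trichotomy to a single assertion: that the count
\[
n(x) \;:=\; \#\bigl\{\text{stable branches of }x\text{ crossed by }\Lambda\bigr\} \;\in\; \{0,1,2\},
\]
defined for $\mu$-a.e.\ $x$, is $f$-invariant modulo $\mu$-null sets. Ergodicity will then force $n$ to be almost surely constant equal to one of $0$, $1$, or $2$, which is exactly the trichotomy.

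First I would verify that $n$ is measurable, which follows from the measurable dependence of the local stable manifold $W^s_\DD(\cdot)$ on the base point (Pesin theory), together with the Borel nature of the crossing predicate in the Hausdorff topology on compact subsets of $\Lambda$ (approximating by a countable dense family).

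The main step is $f$-invariance of $n$. Since $\mu(\DD\setminus f(\DD))=0$ by invariance, $\mu$ is carried by $Y := \bigcap_{k\geq 0} f^k(\DD)$, on which $f$ is bijective. For $\mu$-a.e.\ $x\in Y$, both $W^s_\DD(x)$ and $W^s_\DD(f(x))$ separate $\DD$, and $f(W^s_\DD(x))$ is a connected subset of $W^s(f(x))\cap \DD$ through $f(x)$, hence a compact sub-arc of $W^s_\DD(f(x))$. Each branch $\sigma$ of $x$ therefore maps into a unique branch $\sigma'$ of $f(x)$, and the two local sides of $W^s_\DD(x)$ correspond to the two local sides of $W^s_\DD(f(x))$. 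Given a witness $C\subseteq\Lambda$ for ``$\Lambda$ crosses $\sigma$'', Remark~\ref{r.small} lets me take $C$ in an arbitrarily thin tube around $W^s_\DD(x)$; then $f(C)\subseteq\Lambda$ is a connected set in a thin tube around $f(W^s_\DD(x))$, from which (after extracting a suitable connected subpiece) I would produce a witness for ``$\Lambda$ crosses $\sigma'$''. The reverse inequality comes symmetrically from $f^{-1}$ acting on $Y$, yielding $n\circ f = n$ $\mu$-almost everywhere.

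The hard part will be to show that this transported witness does not spuriously cross $W^s_\DD(f(x))$ along the ``wrong'' branch: $W^s_\DD(f(x))$ may strictly contain $f(W^s_\DD(x))$, with extensions emanating from the images $f(y_1), f(y_2)$ of the endpoints $y_1, y_2 \in \partial \DD$ of $W^s_\DD(x)$, and a priori $f(C)$ could meet such an extension and thereby appear to cross the other branch. The way around this is that $\Lambda$ sits compactly in the interior of $\DD$, so $C$ stays uniformly away from $\{y_1, y_2\}\subset \partial \DD$ and $f(C)$ stays uniformly away from $\{f(y_1), f(y_2)\}$, from which the extensions emanate tangentially to $f(W^s_\DD(x))$. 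A $C^1$ estimate then lets me shrink the tube around $W^s_\DD(x)$ so that its $f$-image meets $W^s_\DD(f(x))$ only inside $f(W^s_\DD(x))$ together with small balls around the $f(y_i)$ that $f(C)$ avoids. Consequently $f(C)\cap W^s_\DD(f(x))\subseteq f(\sigma)\subseteq \sigma'$, giving the desired witness.
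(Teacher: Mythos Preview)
Your argument establishes only half of the lemma. You show that the count $n(x)$ of branches \emph{crossed} by $\Lambda$ is $f$-invariant, hence $\mu$-almost surely constant; this is correct and in fact parallels the first paragraph of the paper's proof (which simply asserts forward invariance of $\{n=2\}$ and $\{n=1\}$ without the careful transport-of-witnesses discussion you provide). But the lemma claims more than constancy of $n$: in the case $n=1$ it asserts that the non-crossed branch is \emph{disjoint} from $\Lambda$, and in the case $n=0$ that both branches are disjoint from $\Lambda$. ``Not crossed'' does not a priori mean ``disjoint'': the branch $\sigma$ could meet $\Lambda$ without any connected witness $C\subset\Lambda$ crossing from one side of $W^s_\DD(x)$ to the other through $\sigma$ alone. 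Your proposal never addresses this, so it does not prove the stated trichotomy.

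The paper closes this gap with an additional argument that uses the constancy of $n$ already obtained. Working on a Pesin block where $W^s_\DD(\cdot)$ varies continuously in $C^1$, one arranges that each $x\in X$ is accumulated, on both sides of $W^s_\DD(x)$, by points $y\in X$ with $W^s_\DD(y)$ $C^1$-close to $W^s_\DD(x)$. Now suppose $z\in\sigma\cap\Lambda$ for some branch $\sigma$ of $x$. Using that $\Lambda$ is connected and not contained in $W^s_\DD(x)$, one finds a small connected piece $C\subset\Lambda$ near $z$ containing a point $\zeta\notin W^s_\DD(x)$. Choosing $y$ on the $\zeta$-side of $W^s_\DD(x)$, the corresponding branch $\sigma_y$ of $y$ is then crossed by $\Lambda$ (via $C$). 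Moreover, if the other branch $\sigma'$ of $x$ is crossed, the $C^1$-closeness forces the corresponding $\sigma'_y$ to be crossed as well. Since $n(y)=n(x)$, one deduces that $\sigma$ itself must be crossed. Contrapositively, any branch not crossed is disjoint from $\Lambda$. This is the missing ingredient in your proposal.
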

\begin{proof}
We first note that the set of points such that both stable branches $W^s_\mathbb{D}(x)$ are crossed by $\Lambda$
is forward invariant, hence is $f$-invariant on a set with full $\mu$-measure.
Similarly for the set of points having only one stable branch crossed by $\Lambda$.
By ergodicity, three cases occur on a set $X$ with full measure:
$\Lambda$ crosses both branches of each point, or exactly one branch, or none of them.

Pesin theory gives the continuity of $W^s_\mathbb{D}(x)$ for the $C^1$ topology on a set
with positive $\mu$-measure.
Up to removing from $X$ a set with zero measure, one can thus assume that each point $x\in X$,
is accumulated by points $y$ of $X$ in each component of $\mathbb{D}\setminus W^s_\DD(x)$
such that $W^s_\mathbb{D}(x)$ and $W^s_\mathbb{D}(y)$ are arbitrarily close for the $C^1$ topology.

Let us consider a stable branch $\sigma$ of $x\in X$ and assume that there exists $z\in \sigma\cap \Lambda$.
Since $\Lambda$ is compact and invariant, there exists $z_1,z_2\in \sigma\setminus \Lambda$
such that $z$ belongs to the subarc $[z_1,z_2]$ of $\sigma$ connecting $z_1$ to $z_2$.
Since $\Lambda$ is connected and is not contained in $W^s_\mathbb{D}(x)$, there exists a compact connected set $C\subset \Lambda$
that intersects $[z_1,z_2]$, that is contained in a small neighborhood of $[z_1,z_2]$,
and that contains a point $\zeta\in \Lambda\setminus W^s_\mathbb{D}(x)$.
Considering a point $y$ as above in the same component of $\mathbb{D}\setminus W^s_\DD(x)$ as $\zeta$,
one deduces that the stable branch $\sigma_y$ of $y$ close to $\sigma$ is crossed by $\Lambda$.
Not also that if the other stable branch $\sigma'$ of $x$ is crossed by $\Lambda$, then
the stable branch $\sigma'_y$ of $y$ that is close to $\sigma_y$ is also crossed by $\Lambda$.
Since $x$ and $y$ have the same number of stable branches crossed by $\Lambda$,
one deduces that $\sigma$ is crossed by $\Lambda$.
\end{proof}

For proving Theorem~\ref{t.measure local},
the three cases of Lemma~\ref{l.trichotomy} have to be addressed. 

In the first case, using that both branches of local stable manifolds intersects $\Lambda$,  for a point $x$ in a hyperbolic block of the measure $\mu$, we build a rectangle that contains $x$ in its interior and the boundary of the rectangle are given by two local stable manifolds of generic points of the measure and two connected arcs contained in $\Lambda$ (this is done in Claim \ref{neighborhood R}). By Theorem~\ref{t.measure revisited} there is a periodic point close to $x$ and so in the interior of the rectangle; on the other  hand, by the construction of the rectangle, forward iterates of it converge to $\Lambda$;  therefore, the periodic point in the interior of the rectangle, has to be in the intersection of $\Lambda$ with the rectangle (this is explained immediately after the proof of Claim \ref{neighborhood R}).

In the second case, a similar rectangle (with boundaries given by two local stable manifolds of generic points of the measure and two connected arcs contained in $\Lambda$) can be built. However, that rectangle  does not contain points of $\Lambda$ in its interior and so Theorem \ref{t.measure revisited} does not guarantee the existence of a periodic point {\color{black} inside} (the periodic points provided by that theorem accumulate on the boundary of the rectangle). So a different strategy has to be formulated, which is described after the preparatory Claim \ref{c.reduction}. 

For the third case, we use a slight variation of the strategy developed for the second case.

\paragraph{\it First case:
$\Lambda$ crosses both stable branches of $x$.}
We select a neighborhood of $x$ verifying:

\begin{claim}\label{neighborhood R}
There is a neighborhood $R$ of $x$
whose boundary is contained in $\Lambda\cup W^s_\mathbb{D}(x')\cup W^s_\mathbb{D}(x'')$
where $x',x''$ are iterates of $x$.
\end{claim}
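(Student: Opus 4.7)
The plan is to build $R$ as a topological rectangle with two ``horizontal'' sides consisting of arcs of local stable manifolds of nearby iterates of $x$ and two ``vertical'' sides consisting of compact connected subsets of $\Lambda$ that traverse from one stable manifold to the other. The ingredients are three: Pesin regularity (giving $C^1$-continuity of the stable manifolds on a set of positive $\mu$-measure, as already used in the proof of Lemma~\ref{l.trichotomy}), Poincar\'e recurrence (to find iterates of $x$ returning arbitrarily close to $x$), and the crossing hypothesis combined with Remark~\ref{r.small} (to get connected pieces of $\Lambda$ lying in arbitrarily thin tubular neighborhoods of the stable branches of $x$).

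First I would fix a Pesin block $X\subset\Lambda$ of $\mu$-measure close to one on which $W^s_\mathbb{D}(\cdot)$ varies continuously for the $C^1$-topology and on which $\Lambda$ crosses both stable branches. By Poincar\'e recurrence applied inside the two connected components of $\mathbb{D}\setminus W^s_\mathbb{D}(x)$, one selects two iterates $x'=f^{n_1}(x)$, $x''=f^{n_2}(x)$ in $X$, arbitrarily close to $x$ and lying on opposite sides of $W^s_\mathbb{D}(x)$. By continuity, $W^s_\mathbb{D}(x')$ and $W^s_\mathbb{D}(x'')$ are $C^1$-close to $W^s_\mathbb{D}(x)$, pairwise disjoint, and sandwich $x$ in a thin strip $S\subset\mathbb{D}$.

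Next, let $\sigma_+,\sigma_-$ be the two stable branches of $x$. Using the crossing hypothesis together with Remark~\ref{r.small}, one obtains connected compact sets $C_+,C_-\subset\Lambda$, disjoint from $x'$ and $x''$, contained in tubular neighborhoods of $\sigma_+$ and $\sigma_-$ so thin that each $C_\pm$ meets both components of $\mathbb{D}\setminus W^s_\mathbb{D}(x')$ and both components of $\mathbb{D}\setminus W^s_\mathbb{D}(x'')$, and such that the traces of $C_\pm$ on $S$ are compact connected arcs joining $W^s_\mathbb{D}(x')$ to $W^s_\mathbb{D}(x'')$ far from $x$ on each side. I would then define $R$ as the connected component containing $x$ of
\[
\mathbb{D}\setminus\bigl(W^s_\mathbb{D}(x')\cup W^s_\mathbb{D}(x'')\cup C_+\cup C_-\bigr).
\]
By construction $R$ is an open neighborhood of $x$, and its boundary is contained in $W^s_\mathbb{D}(x')\cup W^s_\mathbb{D}(x'')\cup C_+\cup C_-\subset W^s_\mathbb{D}(x')\cup W^s_\mathbb{D}(x'')\cup\Lambda$.

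The main obstacle is the planar bookkeeping ensuring that $R$ is really a bounded neighborhood of $x$ with the advertised boundary. This amounts to checking that the two crossing sets $C_+$ and $C_-$ cut the strip $S$ into (at least) three pieces, with $x$ lying in a bounded middle piece, and that no spurious boundary components are introduced by possible accidental intersections of $C_\pm$ with $W^s_\mathbb{D}(x')\cup W^s_\mathbb{D}(x'')$ inside the middle region. This will be taken care of by choosing the tubular neighborhoods of $\sigma_\pm$ much thinner than the separation between $W^s_\mathbb{D}(x')$ and $W^s_\mathbb{D}(x'')$ and by localizing $C_\pm$ near points of $\sigma_\pm$ sufficiently far from $x$; once this is done, the topological picture is the expected one.
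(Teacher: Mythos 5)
Your construction coincides with the paper's: a Pesin block plus non-atomicity of $\mu$ to select forward iterates $x',x''$ of $x$ on opposite sides of $W^s_\mathbb{D}(x)$, two connected compact crossing sets in $\Lambda$ (the paper's $C_1,C_2$, your $C_\pm$) capping off the two ends of the strip, and $R$ taken as the connected component of the complement containing $x$. The paper simply removes all of $\Lambda$ rather than just $C_+\cup C_-$ and settles the ``planar bookkeeping'' you flag by the same connectedness argument (an arc from $x$ to $\partial\mathbb{D}$ inside the strip would have to cross $C_1$ or $C_2$), so this is essentially the paper's proof.
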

\begin{proof}
From Pesin theory, there exists a set $X$ with positive measure for $\mu$
such that $W^s_\mathbb{D}(z)$ exists and varies continuously with $z\in X$ in the $C^1$ topology.
Since $\mu$ has no atom,
one can furthermore require that any point $z\in X$ is accumulated from  both components
of $\mathbb{D}\setminus W^s_\mathbb{D}(z)$
by forward iterates of $z$ in $X$.
Without loss of generality, one can assume that $x$ belongs to $X$
and consider two forward iterates $x',x''\in X$ of $x$, arbitrarily close to $x$
and separated by $W^s_\mathbb{D}(x)$. See Figure~\ref{f.localization}.
Since $\Lambda$ crosses both stable branches of $x$,
there exists two connected compact sets 
$C_1,C_2\subset \Lambda$ which intersect both curves $W^s_\mathbb{D}(x')$, $W^s_\mathbb{D}(x'')$
and which do not contain $x$.
The connected component $R$ of $\mathbb{D}\setminus (W^s_\mathbb{D}(x')\cup W^s_\mathbb{D}(x'')\cup \Lambda)$
containing $x$ has its closure contained in the interior of $\mathbb{D}$:
otherwise, there would exist an arc connecting $x$ to the boundary of $\mathbb{D}$,
contained in the strip bounded by $W^s_\mathbb{D}(x')\cup W^s_\mathbb{D}(x'')$,
and disjoint from $\Lambda$, contradicting the connectedness of $C_1$ and $C_2$.
\end{proof}

\begin{figure}
\begin{center}
\includegraphics[width=5.3cm,angle=0]{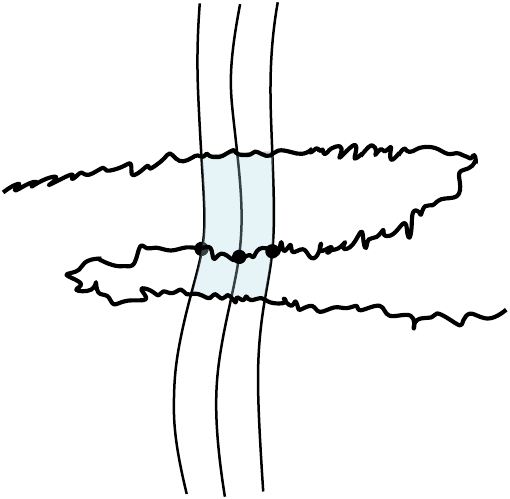}
\put(-33,36){\small $\Lambda$}
\put(-79,65){\small $x$}
\put(-100,63){\small $x'$}
\put(-68,62){\small $x''$}
\put(-95,-10){\small $W^s_{\DD}(x)$}
\put(-89,79){\small $R$}
\end{center}
\caption{$\Lambda$ crosses both stable branches of $x$.\label{f.localization}}
\end{figure}

The volume of the iterates $f^k(R)$ and the length of the iterates $f^k(W^s_\mathbb{D}(x'))$
and $f^k(W^s_\mathbb{D}(x''))$ decreases to zero as $k\to +\infty$.
Hence the distance between $f^k(R)$ and $\Lambda$ goes to zero when $k$ goes to $+\infty$.
By applying Theorem~\ref{t.measure revisited}, there exists a periodic point $q$ in $R$.
Let $\ell$ denote its period.
This periodic point also belongs to $f^{k\ell}(R)$ for $k$ arbitrarily large,
hence it also belongs to $\Lambda$ by our construction. The theorem follows in that case.

\paragraph{\it Second case: $\Lambda$ crosses only one stable branch of almost every point $x$.}
As in the proof of the previous claim,
we introduce a compact Pesin block $X\subset \Lambda$ for $\mu$ with no isolated point,
containing $x$ and with positive $\mu$-measure. One can replace $x$ by another point close in $X$
and require that $x$ is accumulated by $X$ in each  components of $\mathbb{D}\setminus W^s_\mathbb{D}(x)$.

\begin{claim}\label{c.reduction}
There exists $N\geq 0$ (arbitrarily large) and two points $x',x''\in X$ such that
\begin{itemize}
\item[--] $W^s_\mathbb{D}(f^{-N}(x))$ separates $f^{-N}(x')$ and $f^{-N}(x'')$ in $\mathbb{D}$,
\item[--] the image by $f^N$ of the strip bounded by $W^s_\mathbb{D}(f^{-N}(x'))$ and $W^s_\mathbb{D}(f^{-N}(x''))$ in $\mathbb{D}$
is an arbitrarily small neighborhood $R$ of $x$,
\item[--] $x''$ is a forward iterate $f^j(x')$ of $x'$,
\item[--] for any $n\geq 1$, the point $x''$ is accumulated by its forward iterates under $f^n$
in both components $\mathbb{D}\setminus W^s_\mathbb{D}(x'')$.
\end{itemize}
\end{claim}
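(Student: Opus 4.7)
The plan is to produce $N, x', x''$ by refining the Pesin block, then using backward Poincar\'e recurrence followed by forward recurrence on both sides of the stable manifold.

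First I would construct a compact $f$-invariant set $Y\subset X$ of positive $\mu$-measure satisfying (i) the map $y\mapsto W^s_\DD(y)$ is $C^1$-continuous on $Y$ (using theorem~\ref{t.stable} together with Pesin theory), and (ii) for every integer $n\geq 1$ and every $y\in Y$, the forward $f^n$-orbit of $y$ accumulates on $y$ from both components of $\DD\setminus W^s_\DD(y)$. Property (ii) is obtained as follows: for each $n$, the measure $\mu$ decomposes into finitely many $f^n$-ergodic components whose supports are cyclically permuted by $f$; locally near a $\mu$-generic point $y$, the support of the $f^n$-ergodic component containing $y$ coincides with $\supp(\mu)$, which by the hypothesis of case~2 accumulates on $y$ from both sides of $W^s_\DD(y)$. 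Combining this with Birkhoff's theorem applied to $f^n$ gives, for $\mu$-a.e.\ $y$, the desired accumulation with positive frequency on each side. A countable intersection over $n$ preserves full measure; intersecting with a Pesin block from theorem~\ref{t.stable} and replacing the result by a compact $f$-invariant subset of positive measure yields $Y$. Up to replacing $x$ by a nearby point in $Y$ (allowed by the hypothesis that $x$ is accumulated in $X$ on both sides of $W^s_\DD(x)$), I may assume $x\in Y$.

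By Poincar\'e recurrence applied to $f^{-1}$ restricted to $Y$, I may choose $N$ arbitrarily large with $f^{-N}(x)\in Y$. Applying property (ii) at $n=1$ to $f^{-N}(x)$, I pick $z_1\in Y$ on the prescribed side of $W^s_\DD(f^{-N}(x))$ and at distance less than any chosen $\delta>0$ from $f^{-N}(x)$. Applying property (ii) at $n=1$ to $z_1$, and using the $C^1$-closeness of $W^s_\DD(z_1)$ to $W^s_\DD(f^{-N}(x))$ from (i), I find $j\geq 1$ such that $z_2:=f^j(z_1)\in Y$ lies at distance less than $\delta$ from $f^{-N}(x)$ on the opposite side of $W^s_\DD(f^{-N}(x))$. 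Setting $x':=f^N(z_1)$ and $x'':=f^N(z_2)=f^j(x')$, conditions~1 and~3 are immediate from the construction, and condition~4 follows since $x''\in Y$ inherits property (ii). For condition~2, property (i) implies that, for $\delta$ small, the curves $W^s_\DD(z_1)$ and $W^s_\DD(z_2)$ are $C^1$-close to $W^s_\DD(f^{-N}(x))$ globally in $\DD$, so the middle strip between them lies in an $O(\delta)$-tubular neighborhood of $W^s_\DD(f^{-N}(x))$. Under $f^N$ the stable length contracts exponentially while the transverse thickness is stretched by at most a factor $\|Df^N\|$; therefore picking $\delta<\varepsilon/\|Df^N\|$ for any prescribed $\varepsilon>0$ places $f^N$ of the strip in the $\varepsilon$-neighborhood of $f^N(W^s_\DD(f^{-N}(x)))\subset W^s_\DD(x)$, itself a short curve through $x$ by stable contraction.

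The main obstacle is the \emph{global} nature of the $C^1$-comparison between $W^s_\DD(z_i)$ and $W^s_\DD(f^{-N}(x))$ used to conclude condition~2: theorem~\ref{t.stable} provides $C^1$-continuity on $Y$ only for local stable manifolds, whereas $W^s_\DD$ denotes the full connected component in the disc. The local estimate must be propagated along forward iterates using the uniform contraction of stable length in the Pesin block, so that thinness of the strip persists along its whole length. A secondary subtlety, in the construction of $Y$, is establishing property (ii) for \emph{all} $n$ simultaneously, which relies on the structure of the $f^n$-ergodic decomposition and the fact that the local picture of $\supp(\mu)$ near a generic point is not affected by the cyclic splitting.
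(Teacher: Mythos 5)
Your overall scheme --- a refined Pesin block with two-sided accumulation by forward iterates, a backward-recurrence time $N$, and two points of the same orbit straddling the stable manifold --- matches the paper's, but you transplant the choice of the two points to time $-N$, and this creates two concrete problems. First, the claim requires $x',x''\in X$, and this membership (together with the fact that $x$ is accumulated on both sides by forward iterates of $x$ lying \emph{in} $X$) is what the rest of the proof uses (the set $A$ of iterates in $f^m(X)$, lemma~\ref{l.delta}, etc.). Your points are $x'=f^N(z_1)$, $x''=f^N(z_2)$ with $z_1,z_2\in Y$, and nothing forces $f^N(z_i)$ to lie in $X$: a Pesin block is not forward invariant, and your request that $Y$ be a \emph{compact $f$-invariant} subset of positive measure inside a block is in general unachievable (and unnecessary --- Poincar\'e recurrence only needs positive measure). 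The two conditions ``$f^{j-N}(x)$ is $\delta$-close to $f^{-N}(x)$ in $Y$'' and ``$f^{j}(x)\in X$'' each hold on a positive-density set of $j$'s, but these sets need not meet, so the conflict is not cosmetic. Second, the selection of $z_2$ is mis-derived: applying property~(ii) to $z_1$ yields iterates accumulating on $z_1$ from both sides of $W^s_\DD(z_1)$, not of $W^s_\DD(f^{-N}(x))$; an iterate on the ``far'' side of $W^s_\DD(z_1)$ may still fall in the sliver between $W^s_\DD(z_1)$ and $W^s_\DD(f^{-N}(x))$, i.e.\ on the same side as $z_1$. The repair is to apply the two-sided accumulation to the base point itself and take $z_1=f^{j_1}(f^{-N}(x))$, $z_2=f^{j_2}(f^{-N}(x))$ on opposite sides with $j_2>j_1$, so that $z_2$ is a forward iterate of $z_1$ --- which is exactly the paper's device, only performed at time $0$.

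The paper sidesteps both issues by choosing $x',x''$ directly as forward iterates of $x$ lying in $X$ and close to $x$ at time $0$, on opposite sides of $W^s_\DD(x)$. The smallness of $R$ then does not come from a thin strip at time $-N$ pushed forward with a Lipschitz bound $\|Df^N\|$ (your mechanism), but from two facts: the arc $f^N(W^s_\DD(f^{-N}(x)))\subset W^s_\DD(x)$ is arbitrarily short (uniform decay of the lengths of iterated stable curves), so $f^N(\partial\DD)$ crosses both stable branches of $x$ near $x$ and cuts the strip between $W^s_\DD(x')$ and $W^s_\DD(x'')$ down to a small rectangle around $x$; and the transverse width is controlled by the $C^1$-continuity of $z\mapsto W^s_\DD(z)$ on $X$. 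Your pushforward argument can be made to work, but it is what forces the problematic time-$(-N)$ choice of the points. Finally, the ``main obstacle'' you single out --- local versus global $C^1$-comparison of the stable curves --- is not an obstacle in this paper's framework: theorem~\ref{t.stable} and the Pesin blocks used here provide continuity of the full separating curves $W^s_\DD(z)$, not merely of local stable manifolds, so no propagation along the leaf is needed.
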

\begin{proof}
Since the length of the iterates $f^n(W^s_\mathbb{D}(z))$ decreases uniformly to $0$ as $n$ goes to $+\infty$,
the curve $f^N(W^s_\mathbb{D}(f^{-N}(x)))$ is arbitrarily small for $N$ large enough.
Note that $f^N(\partial \mathbb{D})$ crosses both stable branches of $x$.
Considering points $x',x''$ close to $x$ in $X$, one defines a rectangle $R$ bounded
by $f^N(\partial \mathbb{D})\cup W^s_\mathbb{D}(x')\cup W^s_\mathbb{D}(x'')$.
Since $x'$ and $x''$ can be chosen in different components of $\mathbb{D}\setminus W^s_\mathbb{D}(x)$,
the point $x$ belongs to the interior of $R$.

Since $X$ has positive measure, one can choose $x',x''$ in the same orbit.
Moreover, up to removing a set with zero measure, one can choose $x'$ (and $x''$)
to be accumulated by its forward iterates under $f^n$ (for any $n\geq 1$) inside both components $\mathbb{D}\setminus W^s_\mathbb{D}(x'')$.
\end{proof}

In the following, one replaces $\mathbb{D}$ by $f^N(\mathbb{D})$ and $f$ by $f^j$.
Hence without any loss of generality one reduces to the case
where:
\begin{itemize}
\item[--] $W^s_\mathbb{D}(x)$ separates $x'$ and $x''$,
\item[--] $R$ is the strip in $\mathbb{D}$ bounded by $W^s_\mathbb{D}(x')$ and $W^s_\mathbb{D}(x'')$,
\item[--] $f(x')=x''$.
\end{itemize}
We now have to find a periodic point $q$ in $R\cap \Lambda$.
The ergodicity of the measure will not be used anymore.
We denote by $D'$ (resp. $D''$) the (open) component of $\mathbb{D}\setminus W^s_{\mathbb{D}}(x')$
(resp. of $\mathbb{D}\setminus W^s_{\mathbb{D}}(x'')$) which does not contain $W^s_{\mathbb{D}}(x'')$
(resp. $W^s_{\mathbb{D}}(x')$). See Figure~\ref{f.single-cross}.
\medskip

\begin{figure}
\begin{center}
\includegraphics[width=7cm,angle=0]{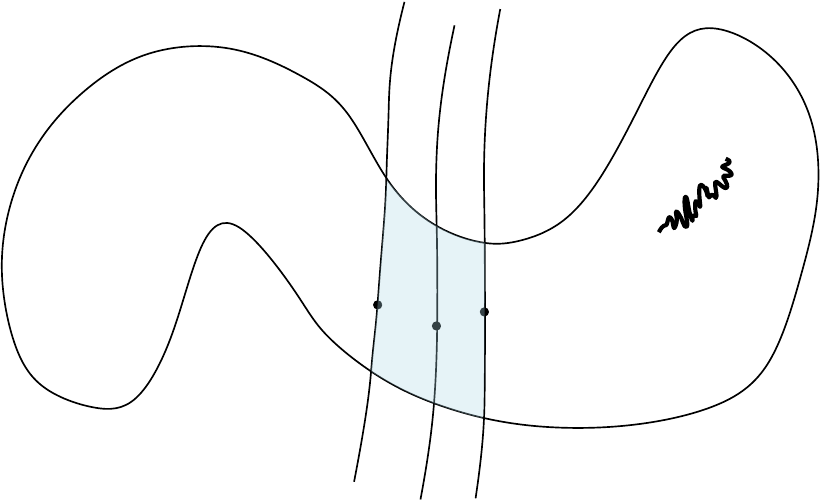}
\put(-33,34){\small $D''$}
\put(-33,85){\small $A$}
\put(-170,45){\small $D'$}
\put(-91,40){\small $x$}
\put(-117,45){\small $x'$}
\put(-80,50){\small $x''$}
\put(-112,-10){\small $W^s_{\DD}(x)$}
\put(-105,60){\small $R$}
\put(-150,115){\small $f^N(\mathbb{D})$}
\end{center}
\caption{Localization when $\Lambda$ crosses one or no stable branch of $x$.\label{f.single-cross}}
\end{figure}

The strategy now consist in using the stable manifolds of generic points of the measure to build a forward invariant cellular set $\Delta$ that contains $\Lambda$ and such that its forward iterates converge to $\Lambda$ (see Lemma \ref{l.delta}). Then, after  considering the following three sets, $\Delta'=\Delta\cap D', \Delta''=\Delta\cap D''$ and $\Delta\cap R$, we show that it is possible to build a continuous map $g$ that sends $\Delta$ into itself, coincides with an iterate of $f$ in $R$ and satisfies $g(\Delta')\cap   \Delta'=\emptyset$ and $g(\Delta'')\cap   \Delta''=\emptyset$ (see Lemma \ref{l.g}). From Proposition \ref{p.CL} it follows that $g$ has a fixed point in $\Delta;$ since that fixed point can not be neither in $\Delta'$ nor in $\Delta''$, it has to be in $R\cap \Delta$ and  so it is a periodic point for $f$; since the forward iterates of $\Delta$ converges to $\Lambda$, it follows that it has to be in $\Lambda.$ 

The last item of the Claim~\ref{c.reduction} implies that  there exists a compact set $A\subset D''$
which contains arbitrarily large iterates of $x'$ and $x''$,
which are contained in $f^m(X)$ for some $m\geq 1$ such that
$\Lambda$ crosses a stable branch of each point $z\in A$ (and is disjoint from the other one).
The stable curves $W^s_\mathbb{D}(z)$ vary continuously with $z\in A$ for the $C^1$ topology.

\begin{lemma}\label{l.delta}
There exists a connected compact set $\Delta$ which has the following properties:
\begin{itemize}
\item[i.] $\Delta$ is cellular (\emph{i.e.,} its complement is connected).
\item[ii.] $\Delta$ is forward invariant: $f(\Delta)\subset \Delta$.
\item[iii.] The forward orbit of any point in $\Delta$ accumulates on $\Lambda$.
\item[iv.] One stable branch of $x'$ is disjoint from $\Delta$,
the other one intersects $\Delta$ along an arc;
moreover there exists a (non-empty) arc in $W^s_\mathbb{D}(x')$
which contains $x'$ in its closure and is included in the interior of $\Delta$.
The same holds for the stable branches of $x''$.
\item[v.] There is $\varepsilon>0$ such that for any forward iterate $z\in A$ of $x'$,
there exists a curve of size $\varepsilon$ in $W^s_\mathbb{D}(z)$
containing $z$ in its closure and included in the interior of $\Delta$.
\end{itemize}
\end{lemma}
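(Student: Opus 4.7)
The plan is to build $\Delta$ as the forward-invariant hull of $\Lambda$ together with finitely many thin tubular neighborhoods of stable arcs coming from a Pesin block. First, I would fix a compact Pesin block $X\subset\Lambda$ of positive $\mu$-measure on which $z\mapsto W^s_\mathbb{D}(z)$ is $C^1$-continuous, and choose a finite collection of points $z_1,\dots,z_k$ including $z_1=x'$, $z_2=x''$, and enough forward iterates of $x'$ lying in $A$ to ensure that every $z\in A$ has $W^s_\mathbb{D}(z)$ uniformly $C^1$-close on scale $\varepsilon$ to some $W^s_\mathbb{D}(z_i)$. For each $z_i$, let $\sigma_i$ be a compact subarc of the stable branch of $z_i$ that meets $\Lambda$ (existence comes from the second-case hypothesis in lemma~\ref{l.trichotomy}), extending slightly past its first intersection with $\Lambda$, and let $T_i$ be a very thin closed planar tubular neighborhood of $\sigma_i$.

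I then set $\Delta_0=\Lambda\cup T_1\cup\dots\cup T_k$ and define
$$\Delta=\overline{\bigcup_{n\geq 0} f^n(\Delta_0)}.$$
Forward invariance (property ii) is immediate from $f(\overline{S})\subset \overline{f(S)}$. Property (iv) holds by construction: the stable branch of $x'$ not meeting $\Lambda$ is disjoint from every $T_i$ by thinness and from every forward iterate by the trapping argument below, while the other branch carries an arc accumulating at $x'$ that lies in the interior of $T_1$. Property (v) follows analogously from the approximation of $W^s_\mathbb{D}(z)$ by some $W^s_\mathbb{D}(z_i)$ and from $C^1$-continuity on the Pesin block: the corresponding $\varepsilon$-arc at $z$ sits inside a forward iterate $f^m(T_i)\subset \Delta$.

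The two delicate verifications are cellularity (property i) and uniform accumulation of forward orbits on $\Lambda$ (property iii). The strategy for (iii) is to combine the uniform stable contraction along $\sigma_i$ (from Pesin theory on $X$) with dissipation: the length of $f^n(\sigma_i)$ decays uniformly, and for $T_i$ thin enough the transverse growth is dominated by the Jacobian, so $\operatorname{diam}(f^n(T_i))\to 0$ and $f^n(T_i)$ is eventually contained in any prescribed neighborhood of $\Lambda$. For (i), I choose the widths of the $T_i$ so small that $\Delta_0$ is a topological disc (each $T_i$ is a 2-cell glued to the cellular set $\Lambda$ along a single arc). Then, because the forward iterates $f^n(T_i)$ become arbitrarily thin and short, none of them can wrap around a complementary region of $\DD\setminus\Lambda$, so the complement of $\Delta$ remains connected.

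The main obstacle is precisely this control of $f^n(T_i)$: tubes are two-dimensional and a priori the derivative $Df$ could expand transversely, destroying cellularity or leaving a residual set away from $\Lambda$. Dissipation alone only gives area contraction, not shape control, so the crucial input is the uniform stable contraction rate and the $C^1$-continuous dependence of local stable manifolds on the Pesin block, combined with the choice of $T_i$ as a $\rho_i$-neighborhood of $\sigma_i$ with $\rho_i$ chosen much smaller than the local stable scale. Once this shrinking is established, the four global properties fall out, and the fixed-point argument sketched after the statement will locate the desired periodic point of $f$ in $R\cap\Lambda$.
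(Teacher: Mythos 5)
The step you yourself flag as ``the main obstacle'' is indeed where the argument breaks, and the resolution you propose does not work. Dissipation bounds $|\det Df^n|$, i.e.\ the \emph{product} of the two singular values of $Df^n$; it says nothing about the larger one. Along a Pesin stable manifold the transverse exponent of $\mu$ is non-negative (the measure is not supported on a sink), so the direction transverse to $\sigma_i$ typically expands under iteration, and no choice of width $\rho_i$ prevents $f^n(T_i)$ from acquiring macroscopic transverse size: the only a priori bound is $\rho_i\|Df\|^n$, which diverges. Worse, a point of $T_i$ that does not lie on the stable manifold of a point of $\Lambda$ is subject to no control at all --- it may perfectly well lie in the basin of a hyperbolic sink disjoint from $\Lambda$ --- so property (iii) fails for your $\Delta$, and with it the arguments you give for cellularity (the tubes ``not wrapping around'') and for the disjointness of one stable branch of $x'$ in (iv). The area-contraction argument cannot rescue the tubes either: small area forces points of $f^n(T_i)$ to be close to $f^n(\partial T_i)$, but $\partial T_i$ lies off the stable lamination and its iterates are just as uncontrolled as the interior of the tube.

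The paper avoids two-dimensional tubes altogether. It takes as skeleton the one-dimensional set $\Delta_0=\Lambda\cup\bigcup_{n\geq 0} f^n(I_z)$, where $I_z$ is the \emph{maximal} arc of $W^s_\DD(z)$ bounded by points of $\Lambda$, $z\in X$; every point of $\Delta_0$ then has forward orbit accumulating on $\Lambda$ by construction, and $\Delta$ is obtained by filling the bounded complementary components of $\Delta_0$. For a filled hole $C$ the boundary lies in $\Delta_0$, so area contraction forces every point of $f^n(C)$ to be close to $f^n(\partial C)$, hence close to $\Lambda$ --- this is exactly the step that fails for your tubes, because your two-dimensional pieces have boundaries outside the controlled set. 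Finally, the interior statements in (iv) and (v) are not obtained from thickened neighborhoods but from the crossing hypothesis of the second case: two connected compact sets $C_1,C_2\subset\Lambda$ crossing the stable curves of Pesin points $y_l,y_r$ on either side of $W^s_\DD(x')$ enclose the stable arc $\gamma$ in a region bounded away from $\partial\DD$, which therefore gets filled. To repair your scheme you would have to replace the tubes by these one-dimensional maximal arcs and recover the interior via the filling and crossing argument.
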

Let us denote $\Delta':=\Delta\cap D'$ and $\Delta'':=\Delta\cap D''$.
Note that it is enough now to obtain a periodic point $q\in R\cap \Delta$.
Indeed, since the accumulation set of the forward orbit of $q$ coincides with the orbit of $q$,
the item (ii) ensures that $q\in\Lambda$ as required.

\begin{proof}
We consider for each $z\in X\subset \Lambda$
the maximal curve $I_z$ in $W^s_{\mathbb{D}}(z)$ bounded by points of $\Lambda$
(possibly reduced to a point). The union $\Delta_0$ of $\Lambda$ with all the forward iterates
of the curves $I_z$, $z\in X$, is a forward invariant set which is compact (since the set $X$ is compact,
 the curves $W^s_{\mathbb{D}}(z)$ vary continuously with $z\in X$ in the $C^1$ topology
and  the length of their iterates decreases uniformly) and is connected (since $\Lambda$ is connected).
The set $\Delta$ is obtained by filling the union $\Delta_0$, \emph{i.e.,} it coincides with the complement
of the connected component of $\mathbb{D}\setminus \Delta_0$ which contains the boundary of $\mathbb{D}$.
Properties (i) and (ii) are satisfied.

In order to prove Property (iii), we consider a point $y\in \Delta$.
Note that if $y$ belongs to $\Lambda$ or to some $W^s_{\mathbb{D}}(z)$ with $z\in X$,
the conclusion of (iii) holds trivially. We thus reduce to the case where $z$ belongs to
a connected component $C$ of $\Delta\setminus \Delta_0$.
Note that the boundary of this component decomposes as the union of a subset of $\Lambda$
and a set contained in the union of the $f^n(W^s_\mathbb{D}(z))$ with $z\in X$ and $n\geq 0$.
Since the volume decreases under forward iterations,
for $n$ large enough the point
$f^n(z)$ gets arbitrarily close to the boundary of $f^n(C)$. Since the length of stable manifolds
$f^n(W^s_\mathbb{D}(z))$ gets uniformly arbitrarily small as $n\to +\infty$,
any point in $f^n(C)$ is arbitrarily close to $\Lambda$ provided $n$ is large enough, proving (iii).

By construction of $\Delta$, for any point $z\in X$,
the intersection $\Delta\cap W^s_\mathbb{D}(z)$ is an arc bounded by two points of $\Lambda$
(and not reduced to $z$).
This is the case in particular for the intersections $\Delta\cap W^s_\mathbb{D}(x')$
and $\Delta\cap W^s_\mathbb{D}(x'')$.
Since one stable branch of $x'$ (resp. $x''$) does not meet $\Lambda$, the first part of item (iv) follows.

Let $\sigma$ be the stable branch of $x'$ that is crossed by $\Lambda$
and let us choose a connected compact set $C_1\subset \Lambda$ as in Definition~\ref{d.crossing}.
One can choose another set $C_2$ which is contained in an arbitrarily small neighborhood of $x$.
Indeed, let $f^{-k}(x')$ be a backward iterate of $x'$ in $X$.
By choosing $k$ large, the image $f^k(W^s_\mathbb{D}(f^{-k}(x')))$
gets arbitrarily small. Let $C'_2$ be a connected set crossing a stable branch of $f^{-k}(x')$
as in Definition~\ref{d.crossing}. One can choose it in a small neighborhood of $W^s_\mathbb{D}(f^{-k}(x'))$
(by Remark~\ref{r.small}), hence the image $C_2:=f^k(C'_2)$ is contained in a small neighborhood of $x$.

One deduces that the smallest arc $\gamma$ connecting $C_1$ to $C_2$ inside $W^s_\mathbb{D}(x')$
is contained (after removing its endpoints) in the interior of $\Delta$.
Indeed, one can choose two points $y_l,y_r\in X$ close to $x'$,
separated by $W^s_\mathbb{D}(x')$ and with stable curves close to $W^s_\mathbb{D}(x')$ for the $C^1$ topology.
These curves are crossed by $C_1$ and $C_2$, hence the connected components of
$\mathbb{D}\setminus (C_1\cup C_2 \cup W^s_\mathbb{D}(y_l) \cup W^s_\mathbb{D}(y_r))$
containing $\gamma$ is bounded away from $\partial \mathbb{D}$ and contained in $\Delta$ as claimed.
See Figure~\ref{f.single-cross2}.

\begin{figure}
\begin{center}
\includegraphics[width=2cm,angle=0]{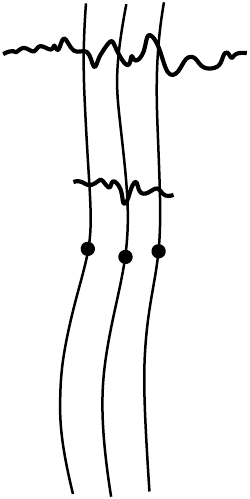}
\put(-68,100){\small $C_1$}
\put(-52,71){\small $C_2$}
\put(-35,50){\small $y$}
\put(-48,50){\small $y_l$}
\put(-20,50){\small $y_r$}
\put(-50,-10){\small $W^s_{\DD}(x)$}
\end{center}
\caption{The interior of $\Delta$ contains stable arcs. \label{f.single-cross2}}
\end{figure}

Since $C_2$ can be chosen in an arbitrarily small neighborhood of $x'$,
one deduces that the interior of $\Delta$
contains a (non-empty) arc in $W^s_\mathbb{D}(x')$ which contains $x'$ in its closure.
The same holds for the point $x''$ and (iv) is satisfied.

As a consequence, for any $z$ in a forward iterate of $X$,
the intersection $\Delta\cap W^s_\mathbb{D}(z)$ is a finite union of arcs bounded by points of $\Lambda$.

In order to check (v), one notices that by the same argument as in the previous paragraph,
for any point $z_0\in A$, there exists a non-trivial curve $\alpha_{z_0}\subset W^s_\mathbb{D}(z_0)$ contained in
the interior of $\Delta$ and containing $z_0$ in its closure.
By construction, the length of $\alpha_z$ is bounded from below for any $z\in A$ close to $z_0$.
by compactness of $A$, there exists a uniform bound $\varepsilon>0$ for all $\alpha_z$ with $z\in A$,
proving (v).
\end{proof}

Let $I':=\Delta\cap W^s_{\mathbb{D}}(x')$
and $I'':=\Delta\cap W^s_{\mathbb{D}}(x'')$.
By (iv), these are arcs.

\begin{claim}
For $\ell\geq 1$ large, $f^\ell(I'\setminus \{x'\})$ and $f^\ell(I''\setminus \{x''\})$
are in the interior of $\Delta$.
\end{claim}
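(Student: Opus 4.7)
The plan is to establish the claim by finding \emph{one} integer $\ell_1$ (respectively $\ell_2$) large enough that $f^{\ell_1}(I'\setminus\{x'\})$ (respectively $f^{\ell_2}(I''\setminus\{x''\})$) lies in $\interior(\Delta)$, and then propagating forward using that $f$ preserves $\interior(\Delta)$. Taking $\ell\geq\max(\ell_1,\ell_2)$ then yields both inclusions.

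First I would verify forward invariance of the interior: since $f\colon\DD\to f(\DD)\subset\interior(\DD)$ is a homeomorphism onto the open set $f(\DD)$, the image $f(\interior(\Delta))$ is open in $\DD$; combined with $f(\Delta)\subset\Delta$ from lemma~\ref{l.delta}(ii), this gives $f(\interior(\Delta))\subset\interior(\Delta)$. Hence the propagation step is automatic once the initial waiting times $\ell_1,\ell_2$ have been produced.

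To produce $\ell_1$, I would use property~(v) of lemma~\ref{l.delta}. Let $\varepsilon>0$ be the uniform constant it supplies. Because $A$ contains arbitrarily large forward iterates of $x'$, and because the length of $f^n(I')$ measured in $W^s_\DD(f^n(x'))$ tends to $0$ uniformly in $n$ (as $I'\subset W^s_\DD(x')$ sits on a stable leaf), we may choose $\ell_1$ with $f^{\ell_1}(x')\in A$ and $|f^{\ell_1}(I')|<\varepsilon$. Property~(v) then yields a curve $J'\subset W^s_\DD(f^{\ell_1}(x'))$ of length $\varepsilon$ containing $f^{\ell_1}(x')$ in its closure and included in $\interior(\Delta)$. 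By the construction of $A$ (see the paragraph preceding lemma~\ref{l.delta}), $\Lambda$ crosses exactly one stable branch of $f^{\ell_1}(x')$, the other being disjoint from $\Lambda$ and hence from $\Delta$; so $J'$ lies on the unique branch of $W^s_\DD(f^{\ell_1}(x'))$ meeting $\Delta$. Forward invariance of $\Delta$ forces the nontrivial arc $f^{\ell_1}(I')\subset\Delta$ onto this same branch, and since $|f^{\ell_1}(I')|<|J'|=\varepsilon$ one concludes that $f^{\ell_1}(I'\setminus\{x'\})\subset J'\subset\interior(\Delta)$. The construction of $\ell_2$ for $I''$ is identical.

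The main obstacle is the branch-matching step: verifying that the short image arc $f^{\ell_1}(I')$ lies on the specific stable branch of $W^s_\DD(f^{\ell_1}(x'))$ along which $\Delta$ extends. This is where one needs the fact, built into $A$, that $\Lambda$ crosses exactly one branch of $W^s_\DD(z)$ for $z\in A$, so that $\Delta$ is ``one-sided'' along $W^s_\DD(z)$ and the invariant arc $f^{\ell_1}(I')$ is forced to coincide with a subarc of the interior curve $J'$. Once this alignment is secured, the forward invariance of $\interior(\Delta)$ obtained in the first step removes any need for a Birkhoff-type simultaneity argument on the returns of $x'$ and $x''$ to $A$.
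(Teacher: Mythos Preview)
Your argument is correct and follows essentially the same route as the paper's own proof. The paper's version is terser: it picks a large iterate $f^k(x')\in A$, observes $f^k(I'\setminus\{x'\})$ is arbitrarily short, and simply says ``hence by (iv) and (v) is contained in the interior of $\Delta$'', then invokes forward invariance of $\interior(\Delta)$ from (ii). You have unpacked the two points the paper leaves implicit --- the forward invariance of $\interior(\Delta)$ (via $f$ being an open map) and the branch-matching at $f^{\ell_1}(x')$ --- and handled both correctly; in particular your use of the one-sidedness of $\Delta$ along $W^s_\DD(z)$ for $z\in A$ is exactly what is hiding behind the paper's invocation of (iv) and (v).
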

\begin{proof}
Let us consider a large forward iterate $f^k(x')\in A$. The image $f^k(I'\setminus \{x'\})$ is arbitrarily small,
hence by (iv) and (v) is contained in the interior of $\Delta$.
By (ii) the interior of $\Delta$ is forward invariant.
This shows that for any integer $\ell$ large, the image
$f^\ell(I'\setminus \{x'\})$ is contained in the interior of $\Delta$.
And the same holds for $f^\ell(I''\setminus \{x''\})$.
\end{proof}

We choose $\ell\geq 1$ large such that $f^\ell(x')\in D''$.
Since the stable manifolds are disjoint or coincide,
this gives $f^\ell(I')\subset D''$.
Note that since $\mu$ is not a periodic measure, the large forward iterate of $x'$
do not intersect $W^s_\mathbb{D}(x')$.
One can thus choose $\ell$ such that we have also $f^{\ell+1}(x')\not\in \overline{D''}$:
this gives $f^{\ell+1}(I'')\subset \mathbb{D}\setminus \overline{D''}$.
We fix such an iterate $f^\ell$.

\begin{lemma}\label{l.g}
There exists a continuous map $g$ which:
\begin{itemize}
\item[(a)] maps $\Delta$ inside itself,
\item[(b)] is the restriction of an orientation-preserving homeomorphism of the plane,
\item[(c)] satisfies $g(\Delta')\cap\Delta'=\emptyset$ and $g(\Delta'')\cap\Delta''=\emptyset$,
\item[(d)] coincides with $f^\ell$ on $R$.
\end{itemize}
\end{lemma}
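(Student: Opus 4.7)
The plan is to define $g = f^\ell \circ h$, where $h$ is an orientation-preserving homeomorphism of the plane that equals the identity on $\overline R \cup \DD^c$ and squeezes $\Delta'$ (resp.\ $\Delta''$) into an arbitrarily small neighborhood of $I'$ in $\overline{D'}$ (resp.\ of $I''$ in $\overline{D''}$). Property (d) is then immediate since $h|_{\overline R} = \mathrm{id}$, and (b) is inherited from the factors. Properties (a) and (c) will follow by making the neighborhoods small enough.

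First I locate the iterates of $I'$ and $I''$. Since $f^\ell(x') \in D''$ and distinct stable leaves of $\DD$ are disjoint or coincide, the arc $f^\ell(I')$, a connected subset of $W^s_\DD(f^\ell(x'))$ containing $f^\ell(x')$, is contained in $\overline{D''}$, and in fact in $D''$ since its endpoints are interior to $\DD$. By (iv), the interior of $f^\ell(I')$ lies in $\interior(\Delta)$, while forward invariance of $\Lambda \subset \Delta$ places its endpoints in $\Delta$; thus $f^\ell(I') \subset \Delta \cap D''$. Symmetrically, from $f^\ell(x'') \notin \overline{D''}$ one gets $f^\ell(I'') \subset \Delta \cap(\DD \setminus \overline{D''})$. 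Using the forward invariance $f^\ell(\Delta)\subset \Delta$ and continuity of $f^\ell$, I pick neighborhoods $U'$ of $I'$ in $\overline{D'}$ and $U''$ of $I''$ in $\overline{D''}$ satisfying
\[
f^\ell(U' \cap \Delta) \subset \Delta \cap D'', \qquad f^\ell(U'' \cap \Delta) \subset \Delta \cap (\DD \setminus \overline{D''}).
\]

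For the construction of $h$, the closure $\overline{D'}$ is, by the Schoenflies theorem, a closed topological disc whose boundary is the union of an arc of $\partial \DD$ and the simple arc $W^s_\DD(x')$. The compact set $\Delta \cap \overline{D'} = \Delta' \cup I'$ has $I' \subset W^s_\DD(x')$ in its boundary and, by (iv), contains a half-neighborhood of a piece of $I'$ on the $D'$ side. A classical planar isotopy argument---for instance the time-one map of a smooth vector field on $\overline{D'}$ pointing toward $I'$ and vanishing on $\partial \overline{D'}$---then produces an orientation-preserving homeomorphism $h_1$ of $\overline{D'}$ fixing $\partial \overline{D'}$ pointwise and satisfying $h_1(\Delta') \subset U' \cap \Delta$. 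Analogously build $h_2$ on $\overline{D''}$, and extend by the identity on $\overline R \cup \DD^c$ to obtain the plane homeomorphism $h$.

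It remains to verify (a) and (c). For (a), $g(\Delta\cap\overline R) = f^\ell(\Delta\cap\overline R) \subset \Delta$ by (ii), while $g(\Delta') = f^\ell(h_1(\Delta')) \subset f^\ell(U' \cap \Delta) \subset \Delta$, and symmetrically for $\Delta''$. For (c), $g(\Delta') \subset D''$ is disjoint from $\Delta' \subset D'$, and $g(\Delta'') \subset \DD \setminus \overline{D''}$ is disjoint from $\Delta'' \subset D''$. The chief technical hurdle is arranging that the compressing isotopy keeps $h_1(\Delta')$ inside $\Delta \cap \overline{D'}$ and not merely inside the ambient neighborhood $U'$: one must verify that $\Delta \cap \overline{D'}$ is tame enough to deformation retract onto $I'$, which follows from the cellularity of $\Delta$ combined with property (iv).
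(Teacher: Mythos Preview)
Your overall strategy matches the paper's: define $g=f^\ell\circ\varphi$ where $\varphi$ is an orientation-preserving planar homeomorphism equal to the identity on $R$ and near $\partial\DD$, and which squeezes $\Delta'$ (resp.\ $\Delta''$) toward $I'$ (resp.\ $I''$). Your localization of $f^\ell(I')\subset\Delta\cap D''$ and $f^\ell(I'')\subset\Delta\setminus\overline{D''}$ is correct, and the verification of (b), (c), (d) is fine.

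The gap is in property (a), precisely at the point you flag as the ``chief technical hurdle''. You demand that the compressing homeomorphism $h_1$ satisfy $h_1(\Delta')\subset U'\cap\Delta$, i.e.\ that $h_1$ keep $\Delta'$ \emph{inside} $\Delta'$ while pushing it into the thin strip $U'$. Your justification---``cellularity of $\Delta$ combined with property (iv)''---does not establish this. Property (iv) only gives local information near $x'$ (one branch disjoint from $\Delta$, and a half-neighborhood of a short arc of $I'$ lying in $\interior(\Delta)$); it says nothing about the shape of $\Delta'$ away from $x'$, and cellularity of $\Delta$ does not imply that an ambient isotopy of $\overline{D'}$ rel boundary can compress the possibly wild set $\Delta'$ into $\Delta'\cap U'$. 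Moreover, a deformation retract of $\Delta\cap\overline{D'}$ onto $I'$ (even if it exists) is not the same as an ambient homeomorphism with the required property.

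The paper avoids this difficulty by a near/far decomposition that you are missing. It takes $U'$ to be a small neighborhood of the \emph{point} $x'$ (not of all of $I'$), and treats the two pieces of $\Delta'$ separately. Near $x'$ it uses both parts of (iv) to arrange only the local condition $\varphi(\Delta'\cap U')\subset\Delta'$, which then gives $f^\ell(\varphi(\Delta'\cap U'))\subset f^\ell(\Delta)\subset\Delta$ by (ii). Away from $x'$ it does \emph{not} require $\varphi(\Delta'\setminus U')\subset\Delta$ at all: it only needs $\varphi(\Delta'\setminus U')$ to lie in a small neighborhood of the compact arc $I'\setminus U'$, and then invokes the preparatory claim that $f^\ell(I'\setminus\{x'\})\subset\interior(\Delta)$ to conclude $f^\ell(\varphi(\Delta'\setminus U'))\subset\interior(\Delta)$ by continuity. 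This split is exactly what lets one bypass the global taming problem you ran into.
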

\begin{proof}
One chooses two small neighborhoods $U',U''$ of $x'$ and $x''$.
One builds a homeomorphism $\varphi$ which coincides with the identity on $R$ and near the boundary of $\mathbb{D}$
and which sends $\Delta'$ in a small neighborhood of $I'$ and $\Delta''$ in a small neighborhood of $I''$.

More precisely,
from Property (iv) of Lemma~\ref{l.delta}, the curve $(U'\cap I')\setminus \{x'\}$ is contained in the interior of $\Delta$
and the stable branch of $x'$ which does not meet $I'$ is disjoint from $\Delta$.
One can thus require that $\varphi(\Delta'\cap U')\subset \Delta'$ so that $f^\ell\circ \varphi(\Delta'\cap U')\subset \Delta$.
One can furthermore require that $\Delta'$ is sent in a small neighborhood of $I'$.
By our choice of $\ell$,
the compact arc $f^\ell(I' \setminus U')$ is contained in the interior of $\Delta$, hence one gets 
$f^\ell\circ \varphi(\Delta'\setminus U')\subset \Delta$.
This shows that $g:=f^\ell\circ \varphi$ satisfies $g(\Delta')\subset \Delta$.
A similar construction in $D''$, implies that $g(\Delta'')\subset \Delta$.
Since $f^\ell(\Delta)\subset \Delta$ by (ii), this implies $g(\Delta)\subset \Delta$, hence (a).
The properties (b) and (d) follows from the definition of $\varphi$ and $g$.

Since $\varphi(\Delta')$ is contained in a small neighborhood of $I'$ and since $f^\ell(I')\subset D''$,
one gets $g(\Delta')\subset D''$. Similarly $g(\Delta'')\subset \Delta'$.
Hence Property (c) holds.
\end{proof}

From (a), the sequence $g^n(\Delta)$ is decreasing and their  intersection
$\widetilde \Delta$ is $g$-invariant. From Property (i) and as the intersection of a decreasing sequence of cellular sets, it is cellular.
Together with (b), one can apply Cartwright-Littlewood's theorem (Proposition~\ref{p.CL}): the orientation preserving homeomorphism of the plane
$g$ has a fixed point $q\in \widetilde \Delta\subset \Delta$.
From (c), the fixed point does not belong to $\Delta'\cup \Delta''$, hence it belongs to $R\cap \Delta$.
From (d), it is an $\ell$-periodic point of $f$, as we wanted.
The proof of the theorem follows in the second case.

\paragraph{\it Third case: $\Lambda$ is disjoint from the two stable branches of almost every $x$.}
We adapt the proof done in the second case.
We can first reduce to the setting of the Figure~\ref{f.single-cross}:
$W^s_\mathbb{D}(x)$ separates two points $x'$ and $x''=f(x')$;
$R$ is the strip bounded by $W^s_\mathbb{D}(x')$ and $W^s_\mathbb{D}(x'')$;
we have to find a periodic point $q$ in $R\cap \Lambda$.

In this case, for any iterate $f^k(x')$, the set $\Lambda$ intersects $W^s_\mathbb{D}(f^k(x'))$
only at $x'$.
We choose $\ell\geq 1$ large such that $f^\ell(x')\in D''$ and $f^{\ell+1}(x')\in \mathbb{D}\setminus \overline{D''}$.
Note that the sets $(\Lambda\cap D')\cup \{x'\}$,
$(\Lambda\cap R)\cup \{x',x''\}$, $(\Lambda\cap D'')\cup \{x''\}$ are compact, connected,
and only intersect at $x'$ or $x''$.
The image by $f^\ell$ of the second intersects both $D''$ and $\mathbb{D}\setminus \overline{D''}$:
consequently it contains $x''$.
One deduces that the image $f^\ell(\Lambda\cap \overline{D'})$ does not intersect $W^s_\mathbb{D}(x'')$,
hence is contained in $D''$.
For the same reason the image $f^\ell(\Lambda\cap \overline{D''})$ does not intersect $W^s_\mathbb{D}(x'')$,
hence is contained in $\mathbb{D}\setminus \overline{D''}$.
This proves that $f^\ell$ has no fixed point in $(D'\cup D'')\cap \Lambda$.
By Cartwright-Littlewood's theorem (Proposition~\ref{p.CL}) it has a fixed point in the cellular set $\Lambda$,
hence in $\Lambda\cap R$ as wanted.
The proof of Theorem~\ref{t.measure local} is now complete. \qed

\section{No cycle}
\label{no cycle section}
One says that a diffeomorphism $f$ admits a \emph{cycle of periodic orbits}\index{cycle, no cycle} if there exists a {\color{black} sequence} of periodic orbits
$\cO_0$, $\cO_1$, $\dots,$ $\cO_n=\cO_0$ such that for each $i=0,\dots,n-1$, the unstable set of $\cO_i$
accumulates on $\cO_{i+1}$.
The goal of this section is to prove the following:

\begin{theorem}\label{t.cycle}
A mildly dissipative diffeomorphisms of the disc has zero topological entropy if and only if it does not admit any cycle of periodic orbit.
\end{theorem}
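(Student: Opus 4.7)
The easy direction holds for any $C^{1+\alpha}$ surface diffeomorphism: if $f$ has positive topological entropy, Katok's theorem (as invoked in the proof of Proposition~\ref{p.gamma-strong}) produces a hyperbolic horseshoe, hence a periodic saddle $p$ admitting a transverse homoclinic orbit; the forward iterates of the homoclinic point converge to $\cO(p)$ along $W^u(\cO(p))$, giving a cycle of length one $\cO_0 = \cO_1 = \cO(p)$. The substantive direction is the converse, which I would prove by contradiction.

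Assume $f$ is mildly dissipative, has zero entropy, and admits a cycle $\cO_0, \dots, \cO_n = \cO_0$ of minimal length $n$. Passing to a sufficiently high iterate of $f$, each $\cO_i$ becomes a single fixed point $p_i$; I then fix an $f$-invariant unstable branch $\Gamma_i$ of $p_i$ accumulating on $p_{i+1}$. A topological $\lambda$-lemma-type step, based on Proposition~\ref{p.transitive} combined with the observation that $W^u(p_{i+1})$ is realized as the Hausdorff limit of forward iterates of arbitrarily short arcs through $p_{i+1}$, shows that $\Gamma_i$ accumulates on every point of $\Gamma_{i+1}$; chaining through the cycle yields $p_0 \in \overline{\Gamma_0}$, reducing the problem to the case $n=1$, i.e.\ a fixed point $p := p_0$ with an unstable branch $\Gamma := \Gamma_0$ satisfying $p \in \overline{\Gamma}$.

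Two subcases then arise. If some forward iterate of a compact arc in $\Gamma$ crosses $W^s_{\mathbb D}(p)$ transversally, the Smale--Birkhoff homoclinic theorem furnishes a horseshoe and hence positive entropy, contradicting the standing hypothesis. Otherwise $\Gamma$ accumulates on $p$ purely tangentially, without transverse intersection with $W^s_{\mathbb D}(p)$, and I would build a \emph{Pixton disc} as sketched in paragraph k of the introduction: using that $W^s_{\mathbb D}(p)$ separates $\mathbb{D}$ by Definition~\ref{SD defi} (the mild dissipation hypothesis) and that $\Gamma$ returns arbitrarily close to $p$, concatenate a carefully chosen sub-arc of $\Gamma$ with a short arc near $p$ to form a Jordan curve bounding a cellular domain $D$. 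The $f$-invariance of $\Gamma$ together with the local classification of fixed-point dynamics of Section~\ref{ss.periodic} then let me verify $f^{\ell}(\overline{D}) \subset D$ for some $\ell \geq 1$, so $\Lambda := \bigcap_{k \geq 0} f^{k\ell}(\overline{D})$ is a non-empty $f^{\ell}$-invariant cellular compact set contained in $D$.

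The contradiction finally comes from combining the local closing lemma (Theorem~\ref{t.measure local}), which forces every ergodic $f^{\ell}$-invariant probability on $\Lambda$ to have support in the closure of the periodic points of $f^{\ell}$ in $\Lambda$, with a Lefschetz index accounting (Proposition~\ref{p.lefschetz}) on a covering family of isolated fixed arcs inside $D$. A careful analysis produces inside $\Lambda$ either a strictly shorter cycle, contradicting minimality of $n$, or a transverse homoclinic intersection, contradicting zero entropy. The hardest step will be the pure-accumulation subcase: mild dissipation is essential to build the Pixton disc (without the separating property of stable manifolds there is no canonical Jordan curve to construct), and the local closing lemma is essential to convert the purely topological trapping of $\Lambda$ into concrete periodic recurrence that can be measured against the Lefschetz indices.
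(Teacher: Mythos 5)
Your proposal matches the paper on the easy direction and on the reduction (Lemma~\ref{l.cycle-fixed}) to a single fixed point $p$ with an $f$-invariant unstable branch $\Gamma$ contained in its own accumulation set, and you correctly sense that Pixton discs and the closing lemma enter the hard case. But the endgame you describe has genuine gaps. First, your dichotomy is incomplete: you dispose of transverse intersections $\Gamma\cap W^s_\DD(p)$ via Smale--Birkhoff, but a non-transverse (tangential) intersection is neither a transverse homoclinic point nor ``pure accumulation''. The paper's Lemma~\ref{l.heteroclinic-cycle} covers \emph{any} intersection of $\Gamma$ with $W^s_\DD(p)$ by a quantitative crossing-rectangle argument that exploits the dissipation $\lambda\mu<1$; you would need this (or an equivalent) before restricting to the case where $\Gamma$ is disjoint from $W^s_\DD(p)$. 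Second, the trapping claim $f^{\ell}(\overline D)\subset D$ for your Pixton disc is false in general: the boundary of $D$ contains an arc $\gamma\subset\Gamma$ with $f^{-1}(\gamma)\subset\gamma$, so $f(\gamma)$ strictly contains $\gamma$ and, since $\Gamma$ accumulates on itself, the continuation of $\Gamma$ past the closing arc $\delta$ can exit and re-enter $D$. The paper never claims Pixton discs are trapped here; the delicate substitute is Claim~\ref{c.back}, which only controls the backward orbit of one well-chosen point $x\in W^s_\DD(p_i)$.

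Third, your proposed contradiction --- Lefschetz accounting on $\Lambda$ yielding ``a strictly shorter cycle or a transverse homoclinic intersection'' --- is both internally inconsistent (after reducing to $n=1$ there is no shorter cycle to produce) and implausible: extracting a transverse homoclinic intersection from a purely topological accumulation without perturbing is exactly what Pixton's theorem says one cannot do in general, and the whole point of the paper's argument is to bypass it. What is actually needed, and absent from your sketch, is (i) Lemma~\ref{l.pixton-cycle}, producing an \emph{aperiodic} ergodic measure $\mu$ supported in $\overline\Gamma$ that gives zero mass to every Pixton disc (this is where the maximal cycle, the set $K=\alpha(x)$, and Proposition~\ref{p.heteroclinic} are used), and (ii) the direct entropy-production mechanism of Lemmas~\ref{l.separated} and~\ref{l.separation}: using stable curves $W_n$ of $\mu$-generic points to build transversals $\alpha,\alpha'$ inside a Pixton disc $D$ such that every arc of $\Gamma$ joining $\alpha$ to $\alpha'$ contains two $\varepsilon$-separated subarcs with the same property after $k_1$ iterates, whence $h_{top}(f)\geq \log 2/k_1>0$. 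Your closing-lemma-plus-Lefschetz route does not substitute for this construction.
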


This result can be localized.
A set $U$ is \emph{filtrating}\index{filtrating set} for $f$ if it may be written as the intersection of two open sets $U=V\cap W$
such that $f(\overline V)\subset V$ and $f^{-1}(\overline W)\subset W$.

\addtocounter{theorem}{-1}
\renewcommand{\thetheorem}{\Alph{theorem}'}
\begin{theorem}[Local version]\label{t.cycle2}
Let $f$ be mildly dissipative diffeomorphisms of the disc and $U$ be a filtrating set.
The restriction of $f$ to $U$ has zero topological entropy
if and only if it does not admit any cycle of periodic orbits contained in $U$.
\end{theorem}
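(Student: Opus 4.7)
The plan is to establish the two implications separately. The direction ``positive entropy implies cycle'' is straightforward. Since $f$ is of class $C^r$ with $r>1$, Katok's theorem produces, for any ergodic invariant measure with positive metric entropy, hyperbolic horseshoes accumulating on its support. Because $U$ is filtrating, $f(\overline{V})\subset V$ and $f^{-1}(\overline{W})\subset W$ force any $f$-invariant compact subset of $U$ to remain inside $U$, so a horseshoe sits inside $U$. Any periodic orbit $\mathcal{O}$ inside such a horseshoe is a saddle with transverse homoclinic intersections, hence its unstable manifold accumulates on $\mathcal{O}$ itself: the length-one sequence $\mathcal{O}_0=\mathcal{O}_1=\mathcal{O}$ is a cycle contained in $U$.

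For the converse (cycle implies positive entropy) I argue by contradiction: assume $f|_U$ has zero entropy and admits a cycle $\mathcal{O}_0,\dots,\mathcal{O}_n=\mathcal{O}_0$ inside $U$. Replacing $f$ by an appropriate iterate, I reduce to the case where every $\mathcal{O}_i$ is a fixed point $p_i$ carrying an $f$-invariant unstable branch $\Gamma_i$ whose accumulation set contains $p_{i+1}$. Since each $p_{i+1}$ is the source of the unstable branch $\Gamma_{i+1}$, any sequence in $\Gamma_i$ converging to $p_{i+1}$ is pushed by the forward dynamics along $\Gamma_{i+1}$; as $\Gamma_i$ is forward invariant, its accumulation set meets $\Gamma_{i+1}$. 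The topological $\lambda$-lemma (proposition~\ref{p.transitive}) then gives $\operatorname{acc}(\Gamma_{i+1})\subset\operatorname{acc}(\Gamma_i)$, and iterating around the cycle forces all these accumulation sets to coincide with a single compact invariant set $K\subset U$ containing every $p_i$.

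Because $f$ is mildly dissipative and none of the $p_{i+1}$ is a hyperbolic sink, $W^s_\mathbb{D}(p_{i+1})$ separates $\mathbb{D}$; the accumulation of $\Gamma_i$ on $p_{i+1}$ therefore forces $\Gamma_i$ to cross $W^s_\mathbb{D}(p_{i+1})$ infinitely often. From two well-chosen consecutive crossings I build a \emph{Pixton disc} $D_i$: a Jordan domain bounded by the corresponding subarc of $\Gamma_i$ together with the subarc of $W^s_\mathbb{D}(p_{i+1})$ joining the two crossings. Applying the localized closing lemma (theorem~\ref{t.measure local}) to the cellular connected pieces of $K$, I ensure that the accumulation set carries periodic data in the right relative position to the $D_i$, and that a suitable iterate of $D_i$ topologically covers $D_{i+1}$. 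Composing these coverings around the cycle produces a Jordan domain $D\subset D_0$ and an integer $N\geq 1$ such that $f^N(D)$ crosses $D_0$ in a topological horseshoe pattern. The filtrating hypothesis keeps all the iterates inside $U$, so this horseshoe contributes positive entropy to $f|_U$, contradicting the standing assumption.

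The principal obstacle is the construction of the Pixton discs in the absence of transversality: when $\Gamma_i$ merely accumulates on $W^s_\mathbb{D}(p_{i+1})$ without crossing it transversely, the selection of ``consecutive crossings'' bounding $D_i$ must be done carefully, and it is not automatic that the forward image of $D_i$ tracks $\Gamma_{i+1}$ rather than escaping along another branch. Overcoming this requires combining the topological $\lambda$-lemma (to control the geometry of the accumulation) with the localized closing lemma (to place periodic points inside the cellular pieces of $K$, where proposition~\ref{p.CL} applies), so that each $D_i$ fits correctly into the next link of the chain.
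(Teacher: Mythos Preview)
Your easy direction is fine, and your reduction to a cycle of fixed unstable branches follows the paper. The gap is in the hard case. You assert that ``the accumulation of $\Gamma_i$ on $p_{i+1}$ forces $\Gamma_i$ to cross $W^s_{\mathbb D}(p_{i+1})$ infinitely often,'' and you build your Pixton disc from two such crossings. But accumulation on $p_{i+1}$ does \emph{not} force any intersection with $W^s_{\mathbb D}(p_{i+1})$: the branch may approach $p_{i+1}$ entirely from one side of its stable curve. When actual intersections exist, the paper disposes of the case directly (lemmas~\ref{l.heteroclinic-cycle} and~\ref{l.heteroclinic-cycle2}) by a classical rectangle-crossing horseshoe using the dissipation inequality $\lambda\mu<1$; your construction is essentially that argument, and it is fine \emph{there}. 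The substance of the theorem is precisely the case with no intersection, and your last paragraph misidentifies the obstacle as ``absence of transversality'' rather than absence of intersection altogether. In that regime there are no ``consecutive crossings'' to choose, and your Pixton discs $D_i$ do not exist as described.

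The paper's route through this case is quite different from the covering/horseshoe scheme you sketch. After reducing (lemma~\ref{l.cycle-fixed}) to a single fixed branch $\Gamma$ contained in its own accumulation set, it shows (lemma~\ref{l.pixton-cycle}) that there is an \emph{aperiodic} ergodic measure $\mu$ supported on $\overline{\Gamma}$ with $\mu(D)=0$ for every Pixton disc $D$ associated to $\Gamma$ (here the Pixton disc has a third ``closing arc'' $\delta$, not just pieces of $\Gamma$ and $W^s_{\mathbb D}(p)$). Then, using stable curves $W^s_{\mathbb D}(x)$ of $\mu$-generic points pulled back near $p$, it builds two separating arcs $\alpha,\alpha'$ inside a Pixton disc and proves a branching lemma (lemma~\ref{l.separation}): any arc $\beta\subset\Gamma$ joining $\alpha$ to $\alpha'$ contains two subarcs whose $k_1$-th images are $\varepsilon$-separated and again join $\alpha$ to $\alpha'$. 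Iterating gives $2^\ell$ separated orbit segments of length $\ell k_1$, hence entropy $\ge \log 2/k_1$. The localized closing lemma (theorem~\ref{t.measure local}) plays no role in this argument; it is used elsewhere in the paper. For theorem~\ref{t.cycle2} the paper simply runs the same proof inside the filtrating set $U$.
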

\renewcommand{\thetheorem}{\Alph{theorem}}

The non-existence of {\color{black} a} cycle of periodic orbits {\color{black} extends} to fixed arcs. One says that a diffeomorphism $f$ admits a \emph{cycle of fixed arcs} if there is  sequence of disjoint fixed arcs $I_0,I_1,\dots, I_{n}=I_0$
such that {\color{black} for $i=0,\dots,n-1$ the} arc $I_i$ admits a $f$-invariant unstable branch which accumulates
on $I_{i+1}.$
\begin{corollary}\label{c.cycle}
Consider a mildly dissipative diffeomorphism $f$ of the disc.
If $f$ has zero topological entropy, then it does not admit any cycle of fixed arcs.

The same property holds inside any filtrating set $U$.
\end{corollary}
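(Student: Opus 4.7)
The plan is to argue by contradiction: assume $f$ has zero entropy (or zero entropy on the filtrating set $U$ in the local version) and admits a cycle of fixed arcs $I_0, I_1, \dots, I_n = I_0$ with $f$-invariant unstable branches $\Gamma_i$ accumulating on $I_{i+1}$, and to extract from this data a degenerate cycle of periodic orbits, contradicting theorem~\ref{t.cycle} (resp.\ theorem~\ref{t.cycle2}). The first observation is that interior fixed points of a fixed arc have neutral tangential eigenvalue and contracting normal eigenvalue, so carry no non-trivial unstable branch; hence each $\Gamma_i$ emanates from an endpoint $p_i$ of $I_i$. Write $\Gamma_i = \Gamma_{p_i}$ and set $p_n := p_0$.

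The core claim I would establish is that $\Gamma_{p_i}$ accumulates not merely on some point of $I_{i+1}$, but on the specific branch $\Gamma_{p_{i+1}}$. Granting it, proposition~\ref{p.transitive} yields the cyclic chain
\[
\mathrm{acc}(\Gamma_{p_0}) \supset \mathrm{acc}(\Gamma_{p_1}) \supset \cdots \supset \mathrm{acc}(\Gamma_{p_{n-1}}) \supset \mathrm{acc}(\Gamma_{p_0}),
\]
forcing these accumulation sets to coincide. The last inclusion provides a point $y \in \Gamma_{p_0} \cap \mathrm{acc}(\Gamma_{p_{n-1}})$; since the accumulation set of an unstable branch is both forward and backward $f$-invariant, the backward orbit $\{f^{-k}(y)\}_{k \ge 0}$, which converges to $p_0$, lies entirely in the common accumulation set. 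Hence $p_0 \in \mathrm{acc}(\Gamma_{p_0})$, producing the degenerate cycle $\mathcal{O}_0 = \mathcal{O}_1 = \{p_0\}$ of periodic orbits and the desired contradiction with theorem~\ref{t.cycle}. The local version follows verbatim with theorem~\ref{t.cycle2} in place of theorem~\ref{t.cycle}, the cycle remaining inside $U$ by forward/backward invariance of the two open sets $V, W$ defining the filtrating set.

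The hard part will be the core claim. I would use the normal hyperbolicity of $I_{i+1}$: its stable strip $W^s_{\DD}(I_{i+1})$ is bounded by the stable curves $W^s_{\DD}(p_{i+1})$ and $W^s_{\DD}(p'_{i+1})$ of the two endpoints. By hypothesis, an infinite tail of $\Gamma_{p_i}$ enters this strip. A topological $\lambda$-lemma argument in the spirit of the proof of proposition~\ref{p.transitive} splits into two alternatives: either $\Gamma_{p_i}$ exits the strip by crossing the boundary curve $W^s_{\DD}(p_{i+1})$, in which case the classical hyperbolic $\lambda$-lemma applied at the saddle endpoint $p_{i+1}$ forces the forward iterates of the crossing points in $\Gamma_{p_i}$ to align with $W^u(p_{i+1})$, yielding the accumulation on $\Gamma_{p_{i+1}}$; or $\Gamma_{p_i}$ is eventually trapped in the strip, and one must rule out that its accumulation set in $I_{i+1}$ is a strict subset missing $p_{i+1}$, by exploiting that $\Gamma_{p_i}$ is a one-dimensional connected invariant curve transverse to the stable foliation of $I_{i+1}$ and that the two boundary stable manifolds $W^s_{\DD}(p_{i+1})$ and $W^s_{\DD}(p'_{i+1})$ control how $\Gamma_{p_i}$ can project onto the arc. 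The technical heart of the proof lies in this case analysis.
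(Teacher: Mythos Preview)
Your overall plan is sound, and once the core claim is granted the chain of inclusions via proposition~\ref{p.transitive} does yield a $1$-cycle at $p_0$ and the contradiction with theorem~\ref{t.cycle}. The gap is in the core claim itself: there is no reason, at the level of generality you are working at, that $\Gamma_{p_i}$ should accumulate on $\Gamma_{p_{i+1}}$ rather than on the other endpoint $p'_{i+1}$ of $I_{i+1}$ or on an interior sub-arc of $I_{i+1}$. Your first case already presupposes the answer --- you assume $\Gamma_{p_i}$ exits the strip through $W^s_{\DD}(p_{i+1})$, but the strip has two boundary curves $W^s_{\DD}(p_{i+1})$ and $W^s_{\DD}(p'_{i+1})$, and nothing you have said prevents $\Gamma_{p_i}$ from entering and exiting on the $p'_{i+1}$ side. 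In the second case you correctly identify that the accumulation set could be a strict sub-arc of $I_{i+1}$ missing $p_{i+1}$, but your appeal to transversality and connectedness does not rule this out: the accumulation set of a branch trapped in the stable strip is simply a closed $f$-invariant sub-arc of $I_{i+1}$, and such a sub-arc can perfectly well sit near $p'_{i+1}$ and away from $p_{i+1}$.

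The paper closes this gap by a global argument rather than a local one. It first takes the cycle of fixed arcs of \emph{minimal} length. Using theorem~\ref{t.cycle} (no cycle of fixed points), each $\Gamma_i$ is disjoint from $W^s_{\DD}(p_i)$ and hence lies in a single component $C_i$ of $\DD\setminus W^s_{\DD}(I_i)$; minimality then forces \emph{all} the other arcs $I_j$ to lie in $C_i$ (otherwise the first offending $\Gamma_{j-1}$ would cross $W^s_{\DD}(I_i)$ and shortcut the cycle). In particular $p_{i-1}$ sits in the component of $\DD\setminus W^s_{\DD}(I_i)$ bounded by $W^s_{\DD}(p_i)$, so $\Gamma_{i-1}$ must cross $W^s_{\DD}(p_i)$ to reach $I_i$, and therefore accumulates on the specific endpoint $p_i$. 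This gives directly a cycle of fixed points $p_0,\dots,p_{n-1},p_0$, contradicting theorem~\ref{t.cycle}. The minimality hypothesis is what pins down \emph{which} boundary curve of the strip is crossed --- precisely the information your argument lacks.
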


\begin{proof} Let us assume that $f$ admits {\color{black} a cycle} of fixed {\color{black} arcs $I_0,\dots,I_n$}:
for each $i$, there exists a fixed point $p_i\in I_i$ with a $f$-invariant unstable branch $\Gamma_i$ that accumulates on $I_{i+1}$.
We may assume that the length $n$ is minimal: {\color{black} there is no cycle with smaller length.}

{\color{black} \begin{claim}
For each arc $I_i$, one component $V_i$ of $\mathbb{D}\setminus W^s_\mathbb{D}(I_i)$ contains all the other arcs $I_j$.
Moreover $W^s_\mathbb{D}(I_{i})$ is included in the boundary of $V_i$.
\end{claim}}
\begin{proof}
Since $f$ has no cycle of fixed points, {\color{black} the unstable branch} $\Gamma_i$ is disjoint from $W^s_{\mathbb{D}}(p_i)$.
As a consequence, $\Gamma_i$ is contained in a component {\color{black} $V_i$} of $\mathbb{D}\setminus W^s_\mathbb{D}(I_i)$,
which also contains $I_{i+1}$.
{\color{black} Let us assume by contradiction that $V_i$ does not contain all the arcs $I_j$, $j\neq i$.
One can thus find an arc $I_{j-1}\subset V_i$ such that $I_j$ is not contained in $V_i$.}
Consequently the unstable branch {\color{black} $\Gamma_{j-1}$} crosses $W^s_\mathbb{D}(I_i)$ and
{\color{black} there exists a cycle with smaller length.} The length $n$ of the cycle {\color{black} can} not be minimal.
{\color{black} By construction $W^s_\mathbb{D}(I_{i})$ is included in the boundary of $V_i$.}
\end{proof}

{\color{black} The component $V_i$ of $\mathbb{D}\setminus W^s_\mathbb{D}(I_i)$ contains the point $p_{i-1}$.
One deduces that $W^s_\mathbb{D}(I_i)$ separates $p_{i-1}$ from the arc $I_i$. Since $\Gamma_i$ accumulates on $I_i$,
it accumulates on $p_i$.  We have thus showed that} the {\color{black} sequence of} fixed points $p_0,p_1,\dots,p_n$ {\color{black} defines} a cycle and
by Theorem~\ref{t.cycle} $f$ has positive topological entropy.

Using Theorem~\ref{t.cycle2}, one gets the same property inside filtrating sets.
\end{proof}

{\color{black} As an example, one may consider a cycle reduced to a single fixed point:}
the unstable manifold of the fixed point accumulates on the stable one. In that context, \cite{pixton} proved that either there exists a transversal homoclinic point (and so the topological entropy is positive) or that {\color{black} a transverse homoclinic} intersection can be created by a smooth perturbation. Under the hypothesis of mild dissipation, we prove that if such a cycle {\color{black} of length $1$} exists, the topological  entropy is positive even if there is {\color{black} no} transverse intersection between the invariant manifolds of the fixed point.

The end of this section is devoted to the proof of Theorems~\ref{t.cycle} and~\ref{t.cycle2}.

\subsection{Homoclinic orbit of a fixed point}

We first consider the case of a cycle of a unique fixed point with a homoclinic orbit. 

\begin{lemma}\label{l.heteroclinic-cycle}
Let $f$ be a dissipative diffeomorphism of the disc.
If there exists a point $p$ with a fixed unstable branch $\Gamma$
which intersects $W^s(p)$,
then the topological entropy of $f$ is positive.
\end{lemma}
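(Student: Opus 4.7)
The strategy is to extract a topological horseshoe from the structure provided by an intersection of $\Gamma$ with $W^s_\mathbb{D}(p)$, thereby forcing positive entropy. I fix any point $q\in \Gamma\cap W^s_\mathbb{D}(p)$ with $q\neq p$. Because $\Gamma$ is $f$-invariant, the backward iterates $f^{-n}(q)$ stay on $\Gamma$ and converge to $p$ (since $q$ is not in the local unstable set by construction), while $f^n(q)\in W^s_\mathbb{D}(p)$ also converges to $p$. Thus $q$ is a homoclinic point for $p$. Let $\gamma^u\subset \Gamma$ be the compact sub-arc joining $p$ to $q$ and $\gamma^s\subset W^s_\mathbb{D}(p)$ the sub-arc joining $q$ back to $p$; together these form a Jordan curve $J$ bounding a topological disc $D\subset \mathbb{D}$. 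Replacing $q$ by the intersection point closest to $p$ along $\gamma^u$, one may assume that the interior of $\gamma^u$ meets $W^s_\mathbb{D}(p)$ only at its endpoints, so that $J$ is an embedded ``petal''.

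The next step is to upgrade the intersection at $q$ to a \emph{topologically transverse} one. Mild dissipation at $p$ (together with the fact that $p$ is not a sink, which follows from the existence of a non-trivial unstable branch) implies that $W^s_\mathbb{D}(p)$ separates $\mathbb{D}$ into two components. I claim that $\Gamma$ meets both components in every neighborhood of $q$: indeed, if $\Gamma$ stayed on one side of $W^s_\mathbb{D}(p)$ near $q$, then proposition~\ref{p.transitive} applied to successive iterates, combined with the $f$-invariance of $\Gamma$ and the fact that $f^n(q)\to p$ along $W^s_\mathbb{D}(p)$, would force $\Gamma$ to be entirely contained in the closed half-disc on one side, contradicting the presence of iterates of $q$ accumulating $p$ from along the unstable branch on the other side (or one argues directly from the center-manifold description of section~\ref{ss.periodic} that the branch of $\Gamma$ emanating from $p$ lies initially on the opposite side of $W^s_\mathbb{D}(p)$ than some iterate $f^{-n}(q)$).

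Given topologically transverse crossings of $\Gamma$ with $W^s_\mathbb{D}(p)$ accumulating at $p$, I can then mimic Smale's horseshoe construction in its topological form. Concretely, I take a small ``box'' $R$ around a fundamental domain of $W^s_\mathbb{D}(p)$ near $p$, bounded by two short arcs transverse to $W^s_\mathbb{D}(p)$ and two short arcs of iterates of $\Gamma$ that cross $W^s_\mathbb{D}(p)$ on either side of a chosen iterate of $q$; iterating $R$ forward by a large power $f^N$ and using the forward invariance of the branch $\Gamma$ along with contraction along $W^s_\mathbb{D}(p)$, I obtain two disjoint topological sub-rectangles of $R$ whose $f^N$-images each stretch across $R$. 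This produces a compact invariant set on which $f^N$ is semi-conjugate to the full 2-shift, yielding $h_{\mathrm{top}}(f)\geq (\log 2)/N>0$.

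The main obstacle is the second step: promoting the possibly non-transverse intersection at $q$ to a topological crossing. A direct transversality hypothesis is not available, and one cannot appeal to $C^1$-generic perturbations here. The key input is the separation property of $W^s_\mathbb{D}(p)$ guaranteed by mild dissipation, together with the topological $\lambda$-type lemma (proposition~\ref{p.transitive}) which ensures that the accumulation of $\Gamma$ on itself through the homoclinic point $q$ distributes on both sides of $W^s_\mathbb{D}(p)$; once this is established, the horseshoe extraction is a standard planar topological argument.
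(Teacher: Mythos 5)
Your reduction to a ``petal'' bounded by $\gamma^u\cup\gamma^s$ and your final horseshoe extraction are in the right spirit, but the pivotal step~2 --- upgrading the intersection at $q$ to a topologically transverse crossing --- contains a genuine gap, and in fact attempts to prove something that is false in general. The local unstable branch $\Gamma_{loc}\setminus\{p\}$ lies in \emph{one} component $U^+$ of $\mathbb{D}\setminus W^s_\mathbb{D}(p)$, and all backward iterates $f^{-n}(q)$ lie on $\Gamma$, hence accumulate $p$ from that \emph{same} side; the forward iterates $f^n(q)$ accumulate $p$ along $W^s_\mathbb{D}(p)$ itself, i.e.\ along the boundary of $U^+$. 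So the configuration in which $\Gamma\subset\overline{U^+}$ and $\Gamma$ merely touches $W^s_\mathbb{D}(p)$ tangentially at the orbit of $q$ is perfectly consistent with all the constraints you list; there is no ``other side'' from which iterates of $q$ approach $p$, and neither mild dissipation nor proposition~\ref{p.transitive} rules this out. This one-sided case is exactly the difficulty the lemma is meant to overcome: as the paper stresses before the proof, Pixton's theorem only handles the topologically transverse case (or creates transversality by perturbation), and the content of the lemma is that positive entropy holds \emph{even without} a transverse intersection.

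The paper's proof therefore never establishes a crossing of $\Gamma$ over $W^s_\mathbb{D}(p)$. Instead it builds a thin rectangle $R$ along a fundamental domain of $W^s_\mathbb{D}(p)$, bounded transversally by backward iterates $f^{-n}(\beta),f^{-n}(\beta')$ of small transversals to $\Gamma$, and compares two quantities: the distance of these boundary arcs to $W^s_\mathbb{D}(p)$, which is at least $C^{-1}(1+\varepsilon)^{-n}\mu^{-n}$, and the distance of $f^n(R)$ to the unstable arc $\gamma$, which is at most $C(1+\varepsilon)^n\lambda^n$. The volume contraction $\lambda\mu<1$ makes the second quantity eventually smaller than the first, forcing $f^n(R)$ to cross $R$ and, crucially, to cross it \emph{twice} (the returning strip must enter and leave $R$ even when it returns from one side only, as a parabolic tongue does at a tangency); this doubling of curves joining $B$ to $B'$ is what yields entropy at least $\log 2/n$. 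Note also that the paper only assumes $\mu\geq 1$ (the point $p$ need not be hyperbolic), so your appeal to a ``standard'' inclination-lemma horseshoe would not apply even after a transversality upgrade. To repair your argument you would have to replace step~2 by this quantitative dissipation estimate and rework step~3 to extract two crossings from a one-sided return.
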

\begin{proof}
Let us fix $x\in \Gamma$ which also belongs to the local manifold $W^s_{loc}(p)$.
We denote by $\gamma$ the arc of $\Gamma$ which connects $p$ to $x$.
Up to {\color{black} replacing} $f$ by $f^2$, one can assume that the eigenvalues of $Df(p)$ are $0<\lambda<1\leq \mu$.
Since $f$ contract the volume, $\lambda\mu<1$.

We claim that there exists some point $z^s\in W^s_{loc}(p)$ such that $x$
belongs to the interior of the segment $[z^s,f(z^s)]$ in $W^s_{loc}(p)$
and such that $z^s$ is not accumulated by $\Gamma$.
Indeed let $D$ be the disc bounded by $\gamma$ and the arc connecting $x$ to $p$ inside $W^s_{loc}(x)$.
One may assume that $\Gamma$ does not cross $W^s_{loc}(x)$ (otherwise one can immediately
conclude that the entropy is positive), hence $\Gamma$ is contained in $D$.
Since $f$ is dissipative, $f(D)$ is strictly contained in $D$,
hence there exists $z^s\in W^s_{loc}(x)\setminus D$ such that $f(z^s)\in D$.
The required property follows.

Similarly, one can choose some point $z^u\in \Gamma$
such that $x$
belongs to the interior of the segment $[z^u,f(z^u)]$ in $\Gamma$
and such that the orbit of $z^u$ does not intersect the stable arc
$W^s_{loc}(p)$.
One chooses two small $C^1$ arcs $\alpha$, $\alpha'$ transverse to  $W^s_{loc}(p)$
at $z^s$ and $f(z^s)$. And one fixes two small arcs $\beta$, $\beta'$ transverse to $\gamma$
at $z^u$ and $f(z^u)$.
For $n$ large, there exist four arcs
$B\subset f^{-n}(\beta)$, $B'\subset f^{-n}(\beta')$
and $A\subset\alpha$, $A'\subset \alpha'$ which bound a rectangle $R$
whose $n$ first iterates remain close to the forward orbit of $x$
and the backward orbit of $x$.
See Figure~\ref{f.cross}.
\begin{figure}[ht]
\begin{center}
\includegraphics[scale=0.37]{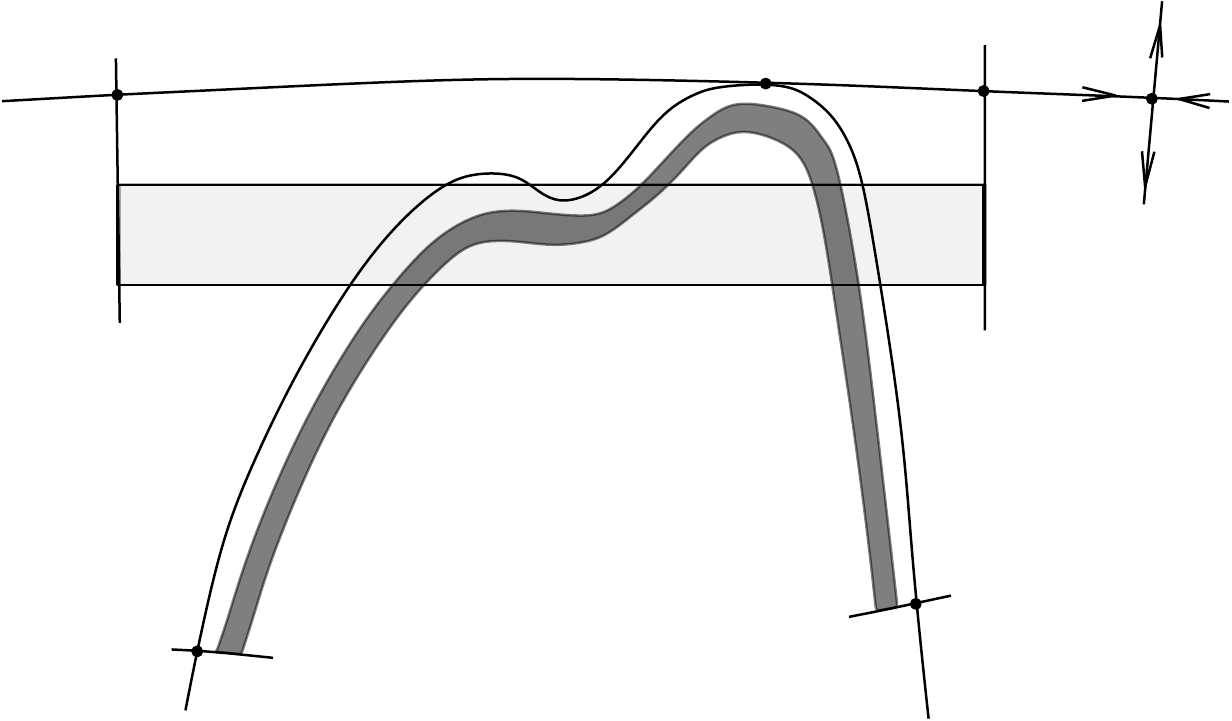}
\begin{picture}(0,0)
\put(-240,114){$W^s_{loc}(p)$}
\put(-198,115){$\alpha$}
\put(-47,115){$\alpha'$}
\put(-15,115){$p$}
\put(-55,0){$\Gamma$}
\put(-80,7){$\beta'$}
\put(-170,8){$\beta$}
\put(-190,85){$R$}
\put(-190,98){$\color{black} B$}
\put(-190,67){$\color{black} B'$}
\put(-162,40){$f^{n}(R)$}
\put(-95,120){$x$}
\end{picture}
\end{center}
\vspace{-0.5cm}
\caption{ Proof of Lemma~\ref{l.heteroclinic-cycle}.\label{f.cross}}
\end{figure}

For any $\varepsilon>0$ {\color{black} satisfying $(1+\varepsilon)^2\lambda\mu<1$} there is $C>0$ such that
if $n$ is large enough,
$$\min\big(d(B, W^s_\mathbb{D}(p)),d(B', W^s_\mathbb{D}(p))\big)\geq
C^{-1}(1+\varepsilon)^{-n}\mu^{-n},$$
$$\max \big(d(f^{n}(A),\gamma),
d(f^{n}(A'),\gamma)\big)
\leq C(1+\varepsilon)^{n}\lambda^{n}.$$
One chooses the integer $n$ such that
$$C^{-1}(1+\varepsilon)^{-n}\mu^{-n}>C(1+\varepsilon)^{n}\lambda^{n}.$$
In particular $f^{n}(R)$ ``crosses" $R$.

One deduces that for any curve $\delta$ in $R$ which connects the arcs $B,B'$,
the image $f^n(\delta)$ contains two curves $\delta'_1,\delta'_2\subset R$ which also
connect the arcs $B,B'$, and which are {\color{black} $\eta$}-separated for some ${\color{black} \eta}>0$
independent from $\delta$.
One can thus iterate $\delta$ and apply the property inductively. This implies
that the topological entropy is positive.
\end{proof}

\subsection{Periods and heteroclinic orbits.}

The following proposition  allows to get (topological) transverse heteroclinic intersections
between periodic orbits with different periods and will be used again in other sections.
\begin{proposition}\label{p.heteroclinic}
Let $f$ be a mildly dissipative diffeomorphism of the disc which preserves the orientation and {\color{black} has}  zero topological entropy.
Let $p$ be a fixed point having a real eigenvalue larger or equal to $1$
and $q$ be a periodic point with an unstable branch $\Gamma_q$ which is not fixed by $f$.
If $\Gamma_q$ accumulates on $p$,
then it intersects both components of $\mathbb{D}\setminus W^s_\mathbb{D}(p)$.
\end{proposition}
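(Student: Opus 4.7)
My plan is to proceed by contradiction: suppose $\Gamma_q\subset\overline V$, where $V$ is one of the two components of $\mathbb D\setminus W^s_{\mathbb D}(p)$, and derive positive topological entropy, contradicting the zero-entropy hypothesis. First, because $f$ is orientation preserving, $\mu\geq 1$ is a positive real eigenvalue, and dissipation keeps $\det Df(p)>0$, the other eigenvalue of $Df(p)$ is also a positive real of modulus $<1$. Hence $f$ locally preserves each side of $W^s_{\mathbb D}(p)$ near $p$, and each $f$-invariant local unstable branch of $p$ (or center-unstable branch when $\mu=1$) lies on a single side; let $\Gamma_p^+\subset V$ denote the one on the $V$-side.

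A local unstable/center manifold argument then shows that $\Gamma_q$ accumulates on points of $\Gamma_p^+$: since $\Gamma_q\subset\overline V$ is $f^n$-invariant (where $n$ is the period of $\Gamma_q$) and accumulates on $p$, any point of $\Gamma_q$ close enough to $p$ is eventually ejected from a small neighborhood of $p$ along the local unstable direction under forward $f^n$-iteration, and local side-preservation forces the ejection along $\Gamma_p^+$. Passing to an iterate $f^N$ under which both $\Gamma_q$ and $\Gamma_p^+$ are invariant, proposition~\ref{p.transitive} then gives $\operatorname{acc}(\Gamma_p^+)\subset\operatorname{acc}(\Gamma_q)\subset\overline V$. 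Iterating, theorem~\ref{t.measure revisited} produces a periodic point $p_1\in\operatorname{acc}(\Gamma_p^+)\subset\overline V$; if $p_1=p$ then $\Gamma_p^+$ is homoclinic and lemma~\ref{l.heteroclinic-cycle} (applied to $f^N$) yields positive entropy, a contradiction. Hence $p_1\neq p$, and if $p_1$ is not a sink one iterates to produce a chain $p,p_1,p_2,\ldots$ of distinct periodic points in $\overline V$, each with an unstable branch accumulating on the next; by proposition~\ref{p.group} this chain is finite and either closes up into a cycle of fixed arcs (under $f^N$) or terminates at a sink.

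Finally, I exploit the non-$f$-invariance of $\Gamma_q$: the orbit of $q$ supplies $n\geq 2$ distinct branches $f^i(\Gamma_q)$, $0\leq i<n$, all accumulating on $p$ and, by the local-ejection argument applied to each, all shadowing $\Gamma_p^+$ near $p$. Combined with the chain above, these additional branches allow the construction of a Pixton-type Jordan domain (in the spirit of the constructions mentioned in the introduction), bounded by fragments of the unstable set of the orbit of $q$ and by stable manifolds along the chain. The existence of such a trapping domain, together with the accumulation of $\Gamma_q$ on $p$ from outside it, would force a heteroclinic intersection of an unstable branch with some $W^s_{\mathbb D}(p_j)$, and a lemma~\ref{l.heteroclinic-cycle}-style horseshoe argument then yields positive entropy---the desired contradiction. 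The hard part is precisely this last step: making the Pixton-disc construction rigorous from the multiplicity of branches $f^i(\Gamma_q)$ and the stable manifolds along the chain requires a delicate planar-topology argument and is where the proof is most technically involved.
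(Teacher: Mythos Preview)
Your proposal has a genuine gap and takes a different route from the paper's argument, and the route you sketch does not obviously close.

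First, the proof is not complete: you yourself flag that ``the hard part is precisely this last step'' (the Pixton-type construction from the several branches $f^i(\Gamma_q)$ and the stable manifolds along your chain), and you do not carry it out. As written, it is not clear what Jordan domain you intend to build, why the accumulation of $\Gamma_q$ on $p$ would force a heteroclinic intersection with some $W^s_{\mathbb D}(p_j)$, or why that intersection would produce a horseshoe; the analogy with the Pixton constructions elsewhere in the paper is suggestive but not an argument. There is also a structural concern: at this point in the paper the no-cycle results (theorem~\ref{t.cycle}, corollary~\ref{c.cycle}) are not yet available---indeed, proposition~\ref{p.heteroclinic} is used in their proof---so if your chain ``closes up into a cycle'' you cannot simply invoke those results to conclude. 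A further concrete issue is your reliance on the unstable branch $\Gamma_p^+\subset V$ of $p$: the hypothesis only gives an eigenvalue $\geq 1$, so $p$ may be a saddle-node that is \emph{attracting} on the $V$-side, in which case there is no $\Gamma_p^+$ and your ``ejection along $\Gamma_p^+$'' picture collapses.

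The paper's proof is quite different and avoids all of this. It never uses an unstable branch of $p$, nor does it build chains of auxiliary periodic points. Instead it works entirely with the stable manifolds of the iterates of $q$. Assuming $\Gamma_q$ stays on one side $U$ of $W^s_{\mathbb D}(p)$, one first observes (using that the eigenvalues at $p$ are positive) that all iterates $f^k(q)$ lie in $U$. Each $W^s_{\mathbb D}(f^k(q))$ cuts off a region $V_k\subset U$ not containing $p$, and these regions are pairwise disjoint, hence linearly ordered by their traces on the arc $\partial U\setminus\{p\}$. One then builds an oriented arc $\sigma$ from $q$ to a point $x$ (in $W^s_{\mathbb D}(p)$ or in a nearby strong stable leaf) and shows, via an algebraic intersection-number computation between $\sigma$ and $f(\sigma)$, that the ordering of the $V_k$ must be strictly monotone under $f$. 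This contradicts the periodicity of $q$. The argument is a self-contained piece of planar topology and needs none of the machinery (closing lemma, chains of saddles, Pixton discs) that your outline invokes.
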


\begin{proof} First observe that the period of $q$ is larger than one: if it is fixed, since the branch $\Gamma_q$ that accumulates on the fixed points is not invariant, then both unstable branches in each component of $\DD\setminus W^s_\DD(q)$ accumulates on the same fixed point; a contradiction.

Let us assume now by contradiction that $\Gamma_q$ intersects only one component
of $\mathbb{D}\setminus W^s_\mathbb{D}(p)$.
Since the largest eigenvalue at $p$ is positive,
the components are locally preserved by $f$.
Hence each unstable branch $f^k(\Gamma_q)$ intersects the same components as
$\Gamma_q$. This proves that all the iterates of $q$ are contained in a same
component $U$ of $\mathbb{D}\setminus W^s_\mathbb{D}(p)$.

The set $(\partial U)\setminus \{p\}$ is an arc that may be parametrized by $\mathbb{R}$,
hence may be endowed with an order $<$.
{\color{black} For} each iterate $f^k(q)$, {\color{black} note that}  the other iterates $f^j(q)$, $j\neq k$, {\color{black} belong to}
the connected component of $\mathbb{D}\setminus W^s_\mathbb{D}(f^k(q))$ that contains $p$:  otherwise the unstable branch of some $f^j(q)$ {\color{black} would} cross the stable of $f^k(q)$, {\color{black} implying that the entropy is positive, a contradiction.}
{\color{black} Let $V_k$ be the component}
of $\mathbb{D}\setminus W^s_\mathbb{D}(f^k(q))$
which does not contain $p$, nor the other iterates of $q$.
The components $V_k$ are disjoint, hence ordered by their
prints on the boundary of $\partial U$.
This induces an ordering on the iterates of $q$.

\begin{figure}
\includegraphics[width=5cm,angle=0]{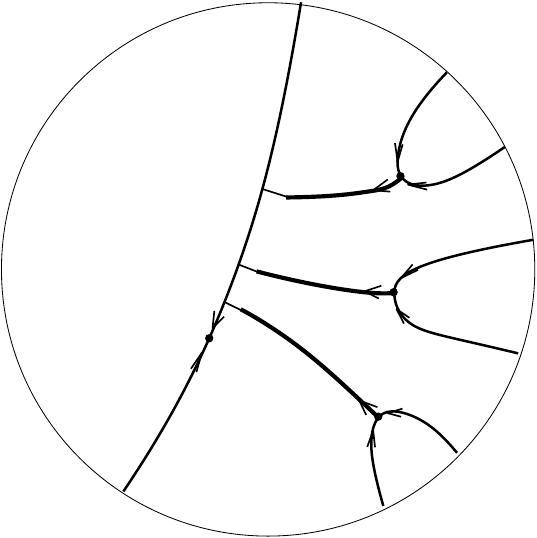}
\hspace{2cm}
\includegraphics[width=5cm,angle=0]{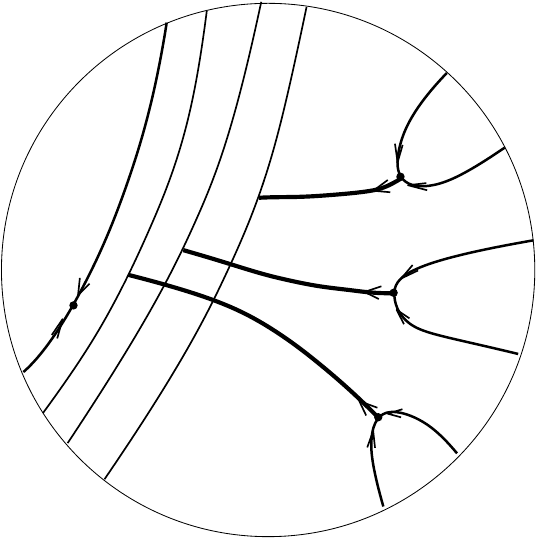}
\put(-300,57){\small $p$}
\put(-250,98){\small $q$}
\put(-233,105){\small $\color{black} V_0$}
\put(-256,70){\tiny $f(q)$}
\put(-225,65){\small $\color{black} V_1$}
\put(-286,91){\small $x$}
\put(-275,84){\small $y$}
\put(-62,145){\small $z$}
\put(-130,60){\small $p$}
\put(-45,98){\small $q$}
\put(-27,102){\small $\color{black} V_0$}
\put(-49,70){\tiny $f(q)$}
\put(-30,61){\small $\color{black} V_1$}
\put(-73,84){\small $y$}
\caption{Cases of the proof of Proposition~\ref{p.heteroclinic}: $W^u(q)$
accumulates $W^s(x)\setminus \{x\}$ ornot.\label{f.period-and-cycle}}
\end{figure}

\paragraph{\it First case. The branch $\Gamma_q$ accumulates on a point
$x$ of $W^s_{\mathbb{D}}(p)$ which is different from $p$.}
The iterates $f^k(x)$ converge to $p$ as $k\to +\infty$. Since $f$ preserves the orientation,
these iterates belong to the same branch of $W^s_\mathbb{D}(p)$. Up to
{\color{black} modifying} the parametrization of the boundary of $U$, one can assume that
the sequence $f^k(x)$ is increasing for the order$<$.

We choose $y\in \Gamma_q$ close to $x$, a small arc $\delta$
connecting $y$ to $x$ and consider the arc $\gamma\subset \Gamma_q$
connecting $q$ to $y$.
This gives an oriented arc $\sigma:=\delta\cup\gamma$ connecting $q$ to $x$
in $U$.

The set $U\setminus (\overline V_0\cup \sigma)$
has two connected components that are Jordan domains.
One of them (denoted by $O$) contains in its boundary all the forward iterates of $x$ and the point $p$.
Up to {\color{black} replacing} $q$ by another point in its orbit,
one can assume that $O$ contains all the iterates $f^k(q)\neq q$, that is $f^k(q)<q$.
See Figure~\ref{f.period-and-cycle}.

Since the endpoints of $\sigma$ (resp. of $f(\sigma)$)
do not belong to $f(\sigma)$ (resp. to $\sigma$),
the algebraic intersection number between $\sigma$ and $f(\sigma)$ is well defined.
Since $\sigma$ is contained in the boundary of $O$
and since the endpoints of $f(\sigma)$ belong to $O$,
the algebraic intersection number between $\sigma$ and $f(\sigma)$ is zero.

This implies that
for any $k\geq 0$, the intersection number between $f^k(\sigma)$ and $f^{k+1}(\sigma)$ is zero.
This proves that in $\overline U\setminus (f^k(\sigma)\cup W^s_\mathbb{D}(f^k(q)))$,
the points $f^{k+1}(x)$ and $f^{k+1}(q)$ are in the same connected component.
Since $f^k(x)<f^{k+1}(x)$,
one deduces that $f^{k+1}(q)<f^{k}(q)$ for any $k\geq 0$. This is a contradiction
since when $k+1$ coincides with the period of $q$, we have
$f^{k+1}(q)=q>f^k(q)$.

\paragraph{\it Second case. The accumulation set of $\Gamma_q$
is disjoint from $W^s_{\mathbb{D}}\setminus \{p\}$.}
We modify the previous argument.
Note that in this case the stable set of $p$ contains a neighborhood of $W^s_\mathbb{D}(p)$ in
$\overline U$. Moreover this neighborhood is foliated by strong stable curves, that we still denotes
by $W^s_\mathbb{D}(z)$.
{\color{black} Up to replacing $q$ by one of its iterates, one can assume that
$V_i<V_0$ for all $i$ such that $f^i(q)\neq q$.}

We choose $y\in \Gamma_q$ in the stable set of $p$ and consider the oriented arc $\sigma\subset \Gamma_q$
connecting $q$ to $y$. One can choose $y$ such that the arc $\sigma$ does not intersects
the component of $U\setminus W^s_{\mathbb{D}}(y)$ containing $p$.
Let $L$ be the half curve in $W^s_\mathbb{D}(y)$ connecting $y$ to a point $z$ in $\partial U$. We can choose the endpoint $z$ such that $V_0<z$. Since $V_1<V_0$, one deduces that $f(q)$ and $f(y)$ belong to the same
connected component of $U\setminus (V_0\cup \sigma\cup L)$. See Figure~\ref{f.period-and-cycle}.
In particular the algebraic intersection number between $\sigma$ and $f(\sigma)$ is zero.

For any $k\geq 0$,
let $L_k$ be the half curve in $W^s_\mathbb{D}(f^k(y))$ connecting $f^k(y)$ to a point $z_k$ in $\partial U$
such that $V_k<z_k$. Since $f^{k+1}(y)$ belongs to the strip
bounded by $W^s_{\mathbb{D}}(p)$ and $W^s_{\mathbb{D}}(f^k(y))$,
we have $z_k<z_{k+1}$. Since the algebraic intersection number between $f^k(\sigma)$
and $f^{k+1}(\sigma)$ is zero, one deduces that
$V_{k+1}$ and $f^{k+1}(y)$ belong to the same component of
$U\setminus (V_k\cup {\color{black} f^k(\sigma)}\cup L_k)$.
In particular $V_{k+1}<V_k$ for any $k\geq 0$. As in the previous case, this is a contradiction.
\end{proof}

\begin{remark}
In the case where $f$ does not preserves the orientation, the
same statement applies if one assumes that the period of $\Gamma_q$ is {\color{black} strictly} larger than $2$
(one applies the previous proposition to $f^2$).
\end{remark}

{\color{black}
\begin{proposition}\label{p.heteroclinic2}
Let $f$ be a mildly dissipative diffeomorphism of the disc which preserves the orientation and {\color{black} has}  zero topological entropy.
Let $\Gamma_p,\Gamma_q$ be unstable branches of some periodic points $p,q$
and $\mathcal{L}_p$ be a stable branch of $p$
such that $\Gamma_q$ intersects $\mathcal{L}_p$.
If $\Gamma_p,\Gamma_q$ have different periods, then
for any tubular neighborhood $T$ of $\mathcal{L}_p$,
the branch $\Gamma_q$ intersects both components of
$T\setminus \mathcal{L}_p$.
\end{proposition}}

{\color{black}
\begin{proof}
We argue by contradiction.
If the periods of $\Gamma_p,\Gamma_q$ do not coincide, there exists an iterate
which fixes one unstable branch $\Gamma_p$, or $\Gamma_q$ and not the other one.
If $\Gamma_p$ is fixed and $\Gamma_q$ is not, the proof is the same as the first case of Proposition~\ref{p.heteroclinic}.
We are thus reduced to the case where $\Gamma_q$ is fixed and $\Gamma_p$ is not.

Let $\tau$ be the period of $\Gamma_p$ and fix some $x\in \Gamma_q\cap \mathcal{L}_p$. Let $\sigma\subset \mathcal{L}_p$, $\gamma\subset \Gamma_q$ be the arcs connecting
$x$ to $f^\tau(x)$ and let $\mathcal{L}_{f(p)}$ be the stable branch of $f(p)$ containing $f(\mathcal{L}_p)$.
Since $\mathcal{L}_{p}$, $\mathcal{L}_{f(p)}$ do not cross $\Gamma_q$ and 
since $f$ is orientation preserving and fixes $\Gamma_q$,
the arcs $\mathcal{L}_{p}$, $\mathcal{L}_{f(p)}$ land at $x, f(x), f^\tau(x)$ from the same half tubular neighborhood of $\Gamma_q$.
Hence $\mathcal{L}_{f(p)}$ meets the interior of the region bounded by $\sigma\cup\gamma$; by mild dissipation it also meet its complement.
See Figure~\ref{f.period-branches}.
This is a contradiction since $\mathcal{L}_{f(p)}$ is disjoint from $\mathcal{L}_{p}$ and does not cross~$\Gamma_q$.
\end{proof}
\begin{figure}[ht]
\begin{center}
\includegraphics[scale=0.5]{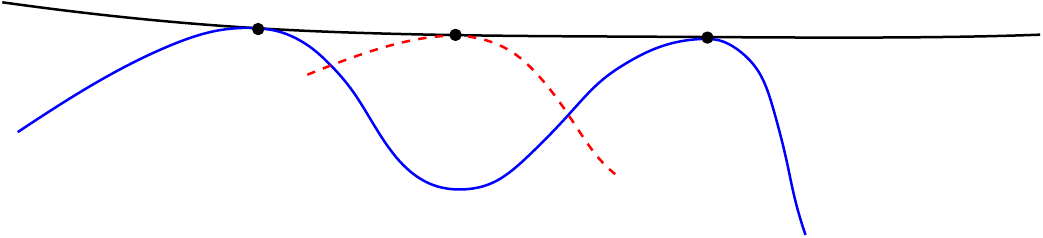}
\begin{picture}(0,0)
\put(-200,41){\small $x$}
\put(-150,35){\small $f(x)$}
\put(-98,35){\small $f^\tau(x)$}
\put(-110,10){$\mathcal{L}_{f(p)}$}
\put(-247,17){$\mathcal{L}_p$}
\put(-250,59){$\Gamma_q$}
\end{picture}
\end{center}
\vspace{-0.5cm}
\caption{Proof of Proposition~\ref{p.heteroclinic2}.  \label{f.period-branches}}
\end{figure}
}

\subsection{Cycles of fixed points}

\begin{lemma}\label{l.cycle-fixed}
If $f$ has a cycle of periodic orbits,
there is an iterate $f^m$, $m\geq 1$ which has a cycle of fixed points.
More precisely, there exists a fixed point $p$ for $f^m$ with a fixed unstable branch $\Gamma$
whose accumulation set contains $\Gamma$.
\end{lemma}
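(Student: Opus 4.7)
First I would reduce to fixed points by passing to a large iterate. Setting $m=2\cdot\operatorname{lcm}(|\cO_0|,\dots,|\cO_{n-1}|)$, every point of every orbit $\cO_i$ becomes a fixed point of $f^m$ and every unstable branch of every such point is invariant under $f^m$ (the factor $2$ absorbs possible reflexion of branches). The hypothesis that the unstable set of $\cO_i$ accumulates on $\cO_{i+1}$ lets me extract a fixed point $p_i\in\cO_i$ and an $f^m$-invariant unstable branch $\Gamma_i$ of $p_i$ such that $\Gamma_i$ accumulates on some $p_{i+1}\in\cO_{i+1}$, with the indices understood modulo $n$ so that $p_n=p_0$. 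Mild dissipation is preserved under iteration, so proposition~\ref{p.transitive} applies to $f^m$.

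The core of the argument is a cascading accumulation statement, proved by induction on $i$: namely, that $\Gamma_0$ accumulates on $p_{i+1}$ for each $0\le i\le n-1$. The base case is given. For the inductive step, the key intermediate claim is that $\Gamma_0$ accumulating on the base point $p_{i+1}$ actually forces $\Gamma_0$ to accumulate on a point of the branch $\Gamma_{i+1}$ itself. Granting this, proposition~\ref{p.transitive} (applied to $f^m$) gives that the accumulation set of $\Gamma_{i+1}$ is contained in that of $\Gamma_0$; since $p_{i+2}$ lies in the accumulation set of $\Gamma_{i+1}$ by the cycle hypothesis, it also lies in that of $\Gamma_0$, closing the induction. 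To prove the intermediate claim I would use a topological $\lambda$-lemma: pick a sequence $x_k\in\Gamma_0$ converging to $p_{i+1}$, discard the (countably many) $x_k$ lying on the one-dimensional local strong stable manifold $W^s_\DD(p_{i+1})$ granted by mild dissipation; the remaining $x_k$ leave a small neighborhood of $p_{i+1}$ under forward $f^m$-iteration and are driven along the unstable direction, so some $f^{mj_k}(x_k)\in\Gamma_0$ falls within any prescribed neighborhood of a chosen point of $\Gamma_{i+1}$.

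After $n$ inductive steps, $\Gamma_0$ accumulates on $p_n=p_0$, its own base point. The same topological $\lambda$-lemma applied now at $p_0$ itself shows that the accumulation set of $\Gamma_0$ contains the whole of $\Gamma_0$: taking $x_k\in\Gamma_0$ with $x_k\to p_0$ off $W^s_\DD(p_0)$, suitable forward iterates of small arcs of $\Gamma_0$ around $x_k$ shadow any prescribed compact piece of $\Gamma_0$. Thus $p:=p_0$ is a fixed point of $f^m$ whose fixed unstable branch $\Gamma:=\Gamma_0$ has accumulation set containing $\Gamma$, as required.

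The hardest part will be the topological $\lambda$-lemma substep, both inside the induction and at the final conclusion. For a hyperbolic saddle $p_{i+1}$ the classical smooth $\lambda$-lemma suffices. The delicate case is when $p_{i+1}$ (or $p_0$) is indifferent, i.e.\ $|\lambda^+|=1$: one cannot invoke a smooth inclination lemma, and instead must argue topologically, combining the existence of the one-dimensional strong stable manifold furnished by mild dissipation (see subsection~\ref{ss.periodic}) with a rectangle construction of the type used inside the proof of proposition~\ref{p.transitive}, in order to ensure that the forward orbits of the $x_k$ genuinely track the unstable side rather than being caught in a neutral resonance.
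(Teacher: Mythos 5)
There are two genuine gaps here, and the second one breaks the induction as you have organized it.

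First, the reduction. Passing to $f^m$ makes every point of every $\cO_i$ fixed, but the hypothesis only provides, for each $i$, \emph{some} point of $\cO_i$ with a branch accumulating on \emph{some} point of $\cO_{i+1}$; by applying iterates of $f$ you may prescribe one endpoint of each such arrow, but not both simultaneously. So when you chain $p_0\to p_1\to\cdots\to p_n$ you land on a point of $\cO_0$ that need not be $p_0$: the phrase ``indices understood modulo $n$ so that $p_n=p_0$'' is precisely the assertion to be proved. The paper closes the loop by extending the sequence of orbits periodically, following the chain for $\ell n$ steps, and applying the pigeonhole principle to the finitely many landing points in $\cO_0$; the resulting cycle of fixed points may have length a proper multiple of $n$. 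This is fixable, but it cannot be skipped.

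Second, and more seriously, your key intermediate claim is false as stated: $\Gamma_0$ accumulating on the \emph{point} $p_{i+1}$ does not force it to accumulate on the \emph{branch} $\Gamma_{i+1}$. The curve $W^s_\DD(p_{i+1})$ separates $\DD$, the two unstable branches of $p_{i+1}$ lie in the closures of the two \emph{different} components, and $\Gamma_{i+1}$ is the one on the side of $p_{i+2}$. If $\Gamma_0$ reaches $p_{i+1}$ from the other component without crossing $W^s_\DD(p_{i+1})$, your $\lambda$-lemma only yields accumulation on the opposite branch of $p_{i+1}$, whose accumulation set has no reason to contain $p_{i+2}$, and the induction halts. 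No refinement of the $\lambda$-lemma (hyperbolic or indifferent) repairs this, because it is a separation issue, not a linearization issue. This is exactly why the paper does not induct along the given cycle: it first takes a cycle of fixed points of \emph{minimal length}, which forces all the other points of the cycle --- hence both $\Gamma_{i+1}$ and the incoming branch --- into the closure of the \emph{same} component of $\DD\setminus W^s_\DD(p_{i+1})$, so the accumulated local branch is necessarily the right one; proposition~\ref{p.transitive} then shortens any cycle of length at least $2$, leaving a single fixed point with a self-accumulating branch. Your final step at $p_0$ is the one place where the side issue vanishes (a branch returning to its own base point without crossing $W^s_\DD(p_0)$ must return on its own side), but the intermediate steps need the minimality device or an equivalent substitute.
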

\begin{proof}
Let $\cO_0,\cO_1,\dots\cO_n=\cO_0$ be a cycle of periodic orbits. We extend periodically the sequence $(\cO_k)$ to any $k\in \mathbb{N}$.
By invariance of the dynamics, one deduces that for each $i$ and each $p_i\in \cO_i$, there exists an unstable branch
$\Gamma_i$ of $\cO_i$ which accumulates on a point of $\cO_{i+1}$.
One deduces that for any $p_0\in \cO_0$, there exists points $p_k\in \cO_k$, $k\geq 0$.
such that an unstable branch of $p_{k-1}$ accumulates on $p_{k}$ for each $k\geq 1$.
All the points $p_{\ell n}$ belong to $\cO_0$, hence two of them $p_{\ell_1n}, p_{\ell_2n}$ should coincide.

There exists $m\geq 1$ such that all the points in $\cup_k \cO_k$ are fixed.
The sequence $p_{\ell_1n},p_{\ell_1n+1},\dots,p_{\ell_2n}$ is a cycle of fixed points for $f^m$.
This proves the first assertion.
\medskip

Let us consider a cycle of fixed points for $g=f^m$ with minimal length.
Replacing $m$ by $2m$, one can also assume that all their unstable branches are fixed.
For each fixed point $p_i$ in the cycle, the other fixed points are all contained in a same component $U_i$ of
$\mathbb{D}\setminus W^s_\mathbb{D}(p_i)$: otherwise, one find a point $p_j\neq p_{i-1}$ with an unstable branch
which meets both connected components and one contradicts the minimality of the cycle.

Each fixed point $p_i$ has an unstable branch $\Gamma_i$ whose accumulation set contains $p_{i+1}$.
If $\Gamma_i$ intersects both components of $\color{black} \mathbb{D}\setminus W^s_\mathbb{D}(p_{i+1})$,
by Proposition~\ref{p.transitive} the accumulation set contains $\color{black} \Gamma_{i+1}$ and the second assertion of the lemma holds.
We are thus reduced to assume that $\Gamma_i$ is contained in the closure of $U_i$.
Since $p_i$ and $\Gamma_{i+1}$ are contained in $U_{i+1}$, one deduces that the accumulation set of $\Gamma_i$
contains a point of $\Gamma_{i+1}$;
by Proposition~\ref{p.transitive}, it contains $\Gamma_{i+1}$. Hence {\color{black} one can remove $P_{i+1}$ and} the cycle is not minimal, unless
$p_i=p_{i+1}$, \emph{i.e.,} the cycle {\color{black} is reduced to a unique} fixed point.
The second assertion holds.
\end{proof}

A sequence of fixed unstable branches $\Gamma_0,\Gamma_1,\dots,\Gamma_n=\Gamma_0$
associated to fixed points $p_0,\dots,p_n$ is a \emph{cycle of unstable branches} if for each
$0\leq i<n$, the accumulation set of $\Gamma_i$ contains $\Gamma_{i+1}$.
By Proposition~\ref{p.transitive}, this implies that for each $i,j$, the accumulation set of $\Gamma_i$ contains $\Gamma_j$.
We generalize Lemma~\ref{l.heteroclinic-cycle} to cycles of unstable branches.

\begin{lemma}\label{l.heteroclinic-cycle2}
Let $f$ be a mildly dissipative diffeomorphism of the disc.
If there exists a sequence of fixed unstable branches $\Gamma_0,\Gamma_1,\dots,\Gamma_n=\Gamma_0$
associated to fixed points $p_0,\dots,p_n$ such that
$\Gamma_i$ intersects $W^s_\mathbb{D}(p_{i+1})$ for each $0\leq i<n$, then the topological entropy of $f$ is positive.
\end{lemma}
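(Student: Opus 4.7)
The plan is to generalize the argument of Lemma~\ref{l.heteroclinic-cycle}: we construct a rectangle $R$ near $p_0$ whose image under a single traversal of the cycle Markov-crosses $R$, producing a topological horseshoe and hence positive entropy. Passing to a suitable iterate of $f$, we may assume the $p_i$ are fixed, each $\Gamma_i$ is $f$-invariant, and the eigenvalues $0<\lambda_i<1\leq \mu_i$ at $p_i$ are positive; dissipation gives $\lambda_i\mu_i<1$. By hypothesis each $\Gamma_i$ meets $W^s_\mathbb{D}(p_{i+1})$ at a heteroclinic point $x_i$. Fix for each $i$ a linearizing neighborhood $U_i$ of $p_i$; pick arcs $\alpha_i,\alpha'_i\subset U_i$ transverse to $W^s_\mathbb{D}(p_i)$ at consecutive points $z_i^s,f(z_i^s)$ of a fundamental domain of the stable dynamics (positioned so that $x_{i-1}$ lies between them along $W^s_\mathbb{D}(p_i)$), and arcs $\beta_i,\beta'_i\subset U_i$ transverse to $\Gamma_i$ at consecutive points $z_i^u,f(z_i^u)$ of a fundamental domain of the unstable dynamics on $\Gamma_i$, near $x_i$.

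Given large integers $m_0,\ldots,m_{n-1}$, pull $\beta_0,\beta'_0$ back by $f^{-m_0}$: they become curves very close to $W^s_\mathbb{D}(p_0)$, which together with $\alpha_0,\alpha'_0$ bound a rectangle $R\subset U_0$. After $m_0$ iterations, $f^{m_0}(R)$ is a thin strip whose unstable-transverse sides are $\beta_0,\beta'_0$, and which reaches a neighborhood of $x_0\in W^s_\mathbb{D}(p_1)$. A bounded number of further iterates transports this strip into $U_1$ along $\Gamma_1$, where the local dynamics at $p_1$ takes over for $m_1$ iterations and yields a still thinner strip near $\beta_1,\beta'_1$. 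Continuing through $p_2,\ldots,p_{n-1}$ and back to $p_0$, one obtains, after a total of $M=m_0+\cdots+m_{n-1}+T$ iterations (with $T$ the sum of bounded transition times), a long thin strip $f^M(R)$ that re-enters $U_0$ along $\Gamma_{n-1}$ and then stretches along $\Gamma_0$.

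Exactly as in the proof of Lemma~\ref{l.heteroclinic-cycle}, for any $\varepsilon>0$ there is $C>0$ (depending on the fixed transition regions but not on the $m_i$) such that the distance from the ``stable-direction'' sides of $f^M(R)$ to $W^s_\mathbb{D}(p_0)$ is at most $C(1+\varepsilon)^M\prod_i\lambda_i^{m_i}$, whereas the distance from the ``unstable-direction'' sides of $R$ to $\Gamma_{n-1}$ is at least $C^{-1}(1+\varepsilon)^{-M}\prod_i\mu_i^{-m_i}$. Since each $\lambda_i\mu_i<1$, one can choose the $m_i$ large enough that the first quantity is strictly smaller than the second; consequently $f^M(R)$ crosses $R$ in the topological Markov sense. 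Iterating this crossing property, exactly as in the last paragraph of Lemma~\ref{l.heteroclinic-cycle}, yields two uniformly separated sub-arcs of $R$ in the preimage under $f^M$ of any arc joining the ``horizontal'' sides of $R$, which produces a topological horseshoe and hence positive topological entropy.

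The main obstacle is the bookkeeping between the linearizing neighborhoods: outside the $U_i$ the derivative bounds are not governed by the eigenvalues $\lambda_i,\mu_i$, so the transition times must be fixed \emph{before} the $m_i$ are sent to infinity, and their contribution must be absorbed into the factor $(1+\varepsilon)^M$. A secondary subtlety is that $\Gamma_i$ may accumulate on $W^s_\mathbb{D}(p_{i+1})$ tangentially at $x_i$; however the existence of an actual heteroclinic intersection, combined with Proposition~\ref{p.transitive}, ensures that the iterates of $\beta_i$ really do cross $W^s_\mathbb{D}(p_{i+1})$ in the topological sense, which is all the construction needs.
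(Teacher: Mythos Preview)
Your proposal is correct and follows essentially the same strategy as the paper. The paper's proof is somewhat terser: after a short preliminary observation (using Lemma~\ref{l.heteroclinic-cycle} to confine all the $\Gamma_j$ and $p_j$ to a common region $U=\cap_i U_i$), it builds a separate rectangle $R_i$ at each heteroclinic point $x_i$, stretched along a fundamental domain of $W^s_\mathbb{D}(p_{i+1})$, and then simply invokes the estimate from Lemma~\ref{l.heteroclinic-cycle} to get $f^{n_i}(R_i)$ crossing $R_{i+1}$ for $n_i$ large. Composing around the periodic sequence $R_0,R_1,\dots,R_n=R_0$ yields the horseshoe. Your version tracks a single rectangle around the whole cycle and collects the contraction/expansion factors into the product $\prod_i(\lambda_i\mu_i)^{m_i}$; this is equivalent bookkeeping and makes the role of dissipation at each node more explicit. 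Your remark about tangential intersections is harmless but unnecessary: the crossing argument is purely topological and does not use transversality, so Proposition~\ref{p.transitive} is not needed here.
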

\begin{proof}
From Lemma~\ref{l.heteroclinic-cycle},
{\color{black} one can assume that}, for each point $p_i$, the unstable branch $\Gamma_i$ is contained in a connected component $U_i$ of
$\mathbb{D}\setminus W^s_\mathbb{D}(p_i)$. Since the accumulation set of $\Gamma_i$ contains the other unstable branches,
all the $p_j$ and the $\Gamma_j$ are contained in $\overline U_i$.
Let $U$ be the intersection of the sets $U_i$:
it is a connected component $U$ of $\mathbb{D}\setminus \cup_i W^s_\mathbb{D}(p_i)$ whose closure contains all
the $\Gamma_i$.

We now argue as for Lemma~\ref{l.heteroclinic-cycle}.
Let us assume that $\Gamma_i$ intersects $W^s_\mathbb{D}(p_{i+1})$ at some point $x_i$.
We build a rectangle $R_i\subset U$ that stretches along
a fundamental domain of $W^s_\mathbb{D}(p_{i+1})$ containing $x_i$.
One chooses $n_i\geq 1$ large such that $f^{n_i}(R_i)$ crosses $R_{i+1}$.
The same argument as before applies following the periodic sequence of rectangles
$R_0,R_1,\dots,R_n=R_0$.
\end{proof}

\subsection{Pixton discs}

Let $p$ be a fixed point with a fixed unstable branch $\Gamma$ which is contained in its accumulation set.
We introduce a notion similar to a construction in~\cite{pixton}, which improved~\cite{robinson1}.
A compact set $D\subset \mathbb{D}$ is a \emph{(topological) disc} if it is homeomorphic to the unit disc.

\begin{definition}\label{d.pixton0}
A \emph{Pixton disc associated to $\Gamma$}\index{Pixton disc} is a disc $D$ bounded by three $C^1$ arcs:\begin{itemize}
\item[--] an arc $\gamma\subset\Gamma$ such that $p$ is one endpoint,
\item[--] an arc $\sigma\subset W^s_\mathbb{D}(p)$ whose endpoints are $p$ and a point
$x\neq p$ accumulated by $\Gamma$,
\item[--] a closing arc $\delta$  disjoint from $f(\delta)$,  joining $\sigma$ and $\gamma$ such that $\delta\cap W^s_\mathbb{D}(p)=\{x\}$.
\end{itemize}
\end{definition}
Note that the last property implies that $f(\delta\setminus \{x\})$
is contained in the interior of $D$.

\begin{figure}[ht]
\begin{center}
\includegraphics[scale=0.24]{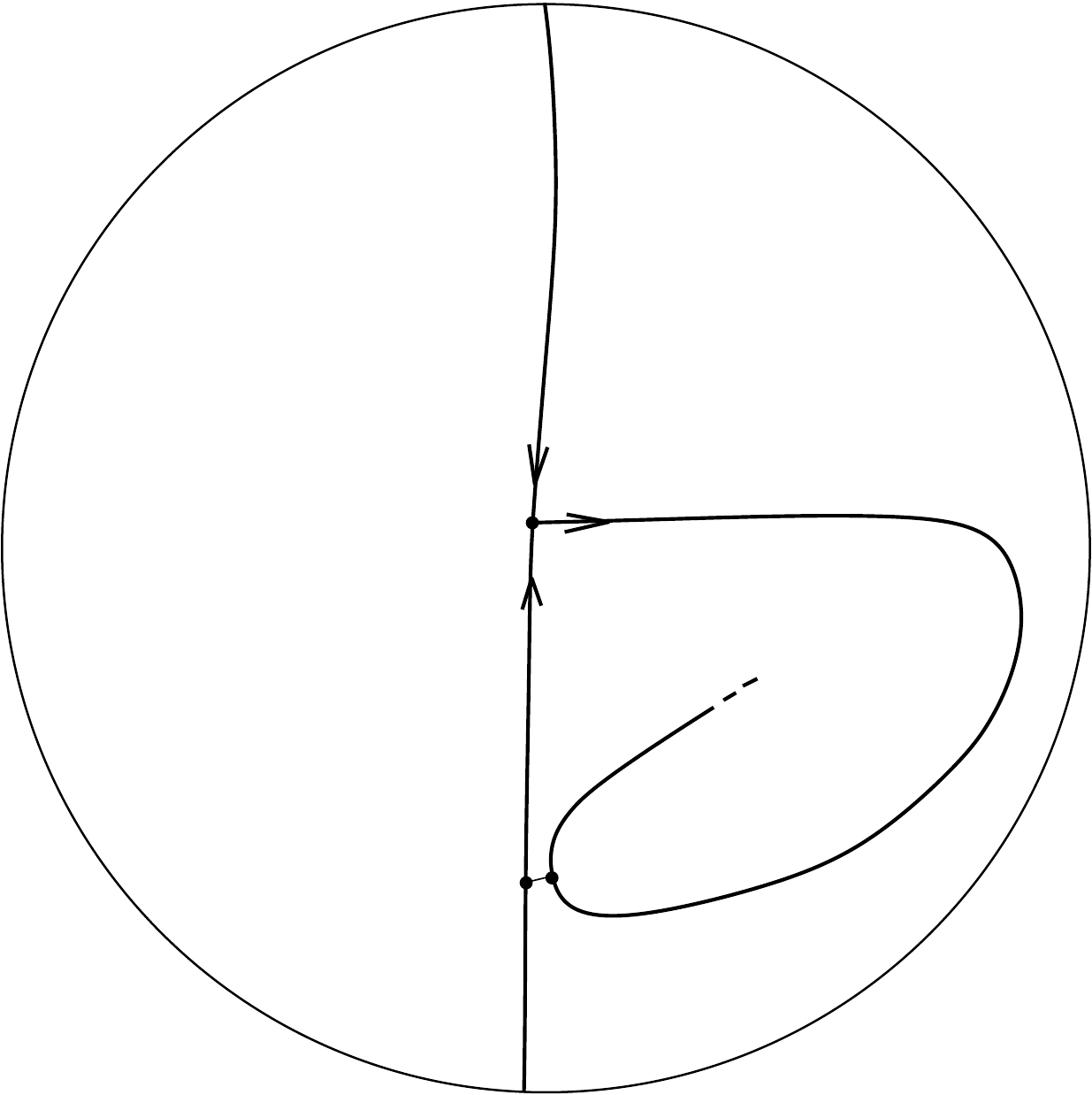}
\begin{picture}(0,0)
\put(-85,55){$\sigma$}
\put(-85,75){$p$}
\put(-55,80){$\gamma$}
\put(-76,12){$\delta$}
\put(-90,28){$x$}
\put(-40,50){$D$}
\put(-110,120){$\DD$}
\end{picture}
\end{center}
\vspace{-0.5cm}
\caption{A Pixton disc.  \label{f.disc}}
\end{figure}

\begin{lemma}\label{l.pixton-cycle}
Let $f$ be a mildly dissipative diffeomorphism of the disc with zero topological entropy
and let $p$ be a fixed point having a fixed unstable branch $\Gamma(p)$
which is contained in its own accumulation set.

Then, there exists an aperiodic ergodic measure $\mu$ such that
\begin{itemize}
\item[--] $\mu(D)=0$ for any Pixton disc $D$ associated to $\Gamma(p)$,
\item[--] the closure of $\Gamma(p)$ contains the support of $\mu$.
\end{itemize}
\end{lemma}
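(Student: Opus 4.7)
The plan is to construct an aperiodic ergodic $f$-invariant probability measure $\mu$ with $\operatorname{supp}(\mu)\subset\overline{\Gamma}$ and then to show $\mu(D)=0$ for every Pixton disc $D$ associated to $\Gamma$.

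First, for the existence of an aperiodic ergodic measure on $\overline{\Gamma}$: since $\Gamma$ is contained in its own accumulation set, $\overline{\Gamma}$ is an uncountable compact $f$-invariant set. I would pick $y_0\in\Gamma\setminus\{p\}$ and extract a weak-$*$ accumulation point $\nu$ of the Birkhoff averages $\frac{1}{n}\sum_{k=0}^{n-1}\delta_{f^k(y_0)}$. The measure $\nu$ is $f$-invariant, supported on $\overline{\Gamma}$, and satisfies $\nu(\{p\})=0$ because $y_0\notin W^s(p)\cap W^u(p)=\{p\}$. Its ergodic decomposition produces ergodic components concentrated on $\overline{\Gamma}\setminus\{p\}$; to pick an aperiodic one I would combine the topological $\lambda$-lemma (proposition~\ref{p.transitive}) with the self-accumulation of $\Gamma$ to rule out the scenario where every ergodic measure on $\overline{\Gamma}$ is supported on a periodic orbit. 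Indeed, otherwise the connected curve $\Gamma$ would be covered by stable sets of finitely many periodic orbits, which is incompatible with $\Gamma\subset A(\Gamma)$.

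Second, for $\mu(D)=0$, the key is the asymmetric structure of the three boundary arcs $\partial D=\gamma\cup\sigma\cup\delta$: one has $f(\sigma)\subsetneq\sigma$ (stable contraction), $f(\gamma)\supsetneq\gamma$ (unstable expansion), and $\delta$ wandering with $f(\delta\setminus\{x\})$ contained in the interior of $D$. Moreover the zero entropy hypothesis, via lemma~\ref{l.heteroclinic-cycle}, forbids $\Gamma$ from crossing $W^s_\mathbb{D}(p)$, so $\Gamma\cap D$ consists of $\gamma$ plus a family of arcs that enter and leave $D$ only through $\delta$. I would first show that the maximal forward-invariant subset $E:=\bigcap_{n\geq 0}f^{-n}(D)$ satisfies $E\cap\overline{\Gamma}\subset W^s(p)$: any orbit trapped in $D$ and not in $W^s(p)$ is either pushed out along the expanding direction of $\Gamma$, or would require a persistent recirculation through $\delta$, which is impossible by the wandering property. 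Since $\mu$ is aperiodic ergodic on $\overline{\Gamma}$, one has $\mu(W^s(p))=0$: otherwise Poincar\'e recurrence would force $\mu=\delta_p$, contradicting aperiodicity. Hence $\mu(E)=0$. A Poincar\'e first-return argument to $D$ then upgrades $\mu(E)=0$ to $\mu(D)=0$, using that any $\mu$-generic orbit visiting $D$ with positive frequency either lies in $E$ or re-enters $D$ through $\delta$, and the wandering of $\delta$ makes the latter inconsistent with a non-atomic invariant measure of positive mass in $D$.

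The main obstacle is the second step. The inclusion $E\cap\overline{\Gamma}\subset W^s(p)$ and the passage from $\mu(E)=0$ to $\mu(D)=0$ both require a careful analysis of the first-return map $F_D\colon D\cap\overline{\Gamma}\to D\cap\overline{\Gamma}$, exploiting the asymmetry between the dynamically ordered arcs $\gamma,\sigma$ (attached to the fixed point $p$) and the wandering closing arc $\delta$, together with the topological rigidity $\Gamma\cap W^s_\mathbb{D}(p)=\{p\}$ coming from zero entropy and lemma~\ref{l.heteroclinic-cycle}. Obtaining a clean contradiction from an assumed $\mu(D)>0$ is the delicate technical heart of the argument.
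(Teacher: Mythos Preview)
Your proposal has a genuine gap in the second step, and the overall strategy differs substantially from the paper's.

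\textbf{The gap in showing $\mu(D)=0$.} Your argument tries to prove that \emph{any} aperiodic ergodic measure on $\overline\Gamma$ gives zero mass to every Pixton disc, but this is stronger than what the lemma asserts and is not what you actually establish. The inclusion $E\cap\overline\Gamma\subset W^s(p)$ (with $E=\bigcap_{n\ge 0}f^{-n}(D)$) is not justified: a Pixton disc is not trapping, and its interior may well contain nontrivial invariant dynamics --- in fact the subsequent proof of theorem~\ref{t.cycle} constructs precisely such rich dynamics inside a Pixton disc in order to force positive entropy. More seriously, the passage from $\mu(E)=0$ to $\mu(D)=0$ via Poincar\'e recurrence does not work: recurrence only says that $\mu$-a.e.\ point of $D$ returns to $D$, not that it stays in $D$, so $\mu(D)>0$ and $\mu(E)=0$ are entirely compatible. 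Your attempt to close this by saying re-entry must occur ``through $\delta$'' and that the wandering of $\delta$ forbids this is not correct --- the condition $\delta\cap f(\delta)=\emptyset$ says nothing about orbits crossing near $\delta$ repeatedly, and in any case orbits can also exit and re-enter near $\gamma$ since $\gamma\subsetneq f(\gamma)$.

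\textbf{How the paper proceeds instead.} The paper does \emph{not} take an arbitrary aperiodic measure on $\overline\Gamma$; it manufactures a specific one. It first enlarges $\Gamma$ to a maximal cycle of fixed unstable branches $\Gamma_0,\dots,\Gamma_n$ and uses lemma~\ref{l.heteroclinic-cycle2} to select some $p_i$ whose stable curve $W^s_{\mathbb D}(p_i)$ is disjoint from all the $\Gamma_j$. It then picks a point $x\in W^s_{\mathbb D}(p_i)$ accumulated by $\Gamma$ and sets $K:=\alpha(x)$. The key geometric fact (claim~\ref{c.back}) is that the \emph{backward orbit of this particular $x$} never enters the interior of any Pixton disc whose closing arc is small enough to miss one side of $W^s_{\mathbb D}(p_i)$: otherwise iterating forward forces $\delta\setminus\{x_q\}$ to eventually cross both components of $\mathbb D\setminus W^s_{\mathbb D}(p_i)$, contradicting the choice of $p_i$. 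Combined with claim~\ref{c.modify-disc} (any Pixton disc, after iteration, sits inside one with arbitrarily small closing arc), this gives $\mu(\interior D)=0$ for any measure supported on $K$. The existence of an \emph{aperiodic} measure on $K$ is then obtained by a separate argument: if all ergodic measures on $K$ were periodic, one builds infinite chains of unstable branches of periodic points in $K$ and reaches a contradiction either via a periodic chain (producing a Pixton disc whose interior meets the backward orbit of $x$, against claim~\ref{c.back}) or via a non-periodic chain (producing a nested sequence of compact sets with periods going to infinity, hence an aperiodic measure after all).

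Your sketch for the existence of an aperiodic measure is also incomplete: the assertion that ``otherwise $\Gamma$ would be covered by stable sets of finitely many periodic orbits'' does not follow, since $\overline\Gamma$ could a priori contain infinitely many periodic orbits; the paper's chain argument is what handles this.
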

\begin{proof}
{\color{black}
The basic idea is to fix a point $y\in W^s_\mathbb{D}(p)\setminus \{p\}$ which is accumulated by $\Gamma(p)$ and
to consider its $\alpha$-limit set $\alpha(y)$. In good cases one may expect that $\alpha(y)$ does not intersect the interior of Pixton discs $D$
and it is a natural candidate for supporting the measure $\mu$.}
{\color{black}
In practice this is more delicate and we perform the following preparation:
\begin{itemize}
\item[(a)] Up to replacing $f$ by $f^2$, one reduces to the case where $f$ preserves the orientation. (Note that it is enough to prove the {\color{black} lemma} for $f^2$.)
\item[(b)] One considers a cycle of fixed unstable branches $\Gamma(p_0),\Gamma(p_1),\dots,\Gamma(p_n)=\Gamma(p_0)$
associated to fixed points $p_0,p_1,\dots,p_n=p_0$ such that $p_0=p$ and $\Gamma(p_0)=\Gamma(p)$. One supposes that the cardinality $n$ is maximal.
(This is possible since by Proposition~\ref{p.group} all the fixed points belong to finitely many disjoint fixed arcs.)
\item[(c)] There exists $p_i$ such that $W^s_\mathbb{D}(p_i)$ is disjoint from all the unstable branches
$\Gamma(p_j)$ for $1\leq j\leq n$. (This follows from Lemma~\ref{l.heteroclinic-cycle2}.)
\end{itemize}
By Lemma~\ref{p.transitive}, there is $y\in W^s_\mathbb{D}(p_i)\setminus \{p_i\}$ which is accumulated by each $\Gamma(p_j)$.
Let
$$K:=\alpha(y).$$
Indeed the preimages of $y$ are all well-defined in $\mathbb{D}$ (since $y$ is the limit of points in $\Gamma(p_i)$
having infinitely many preimages in $\mathbb{D}$ and since $f(\mathbb D)$ is contained in the interior of $\mathbb D$).
Note that $K$ is contained in the accumulation set of $\Gamma(p)$.
\medskip

The goal now is to show the following two properties:
\begin{itemize}
\item[(1)] {\it The interior of any Pixton disc associated to $\Gamma(p)$ is disjoint from the orbit of $y$.}
\item[(2)] {\it There exists an ergodic aperiodic measure $\mu$ whose support is included in $K$.}
\end{itemize}
They together imply Lemma~\ref{l.pixton-cycle}. Indeed, since $y$ is accumulated by the invariant curve $\Gamma(p)$, the set $K$ (and hence the support of $\mu$ by (2)) is contained in the closure of $\Gamma(p)$. By (1), the interior of any Pixton disc $D$ associated to $\Gamma(p)$
is disjoint from $K$, hence has zero measure for $\mu$. The boundary of $D$ is made of a fixed points $p$, some non-recurrent points
(in $\gamma$ and $\sigma$) and points (in $\delta$) whose orbit meet the interior of $D$, hence the aperiodic measure $\mu$
gives also zero mass to $\partial D$. We thus have $\mu(D)=0$.

Properties (1) and (2) are obtained in three steps that will be detailed below:
\begin{itemize}
\item[--] We give a criterion for a Pixton disc to have its interior disjoint from the orbit of $y$ (see Claim~\ref{c.back}).
We deduce the Property (1) in Corollary~\ref{c.check1}.
\item[--] We assume (by contradiction) that all the ergodic measures on $K$ are supported on periodic orbits:
we build an infinite sequence of such periodic points $(q_n)$ with unstable branches
$\Gamma_n$ accumulating on $\Gamma_{n+1}$ (see Corollary~\ref{c.existence} and Claim~\ref{prop-relation}).
We may choose $(\Gamma_n)$ such that the period of $\Gamma_n$ is minimal for each $n$.
\item[--] In the case where $(\Gamma_n)$ contains a cycle, we build a Pixton disc associated to $q_n$ which intersects $K$; this contradicts the criterion.
In the case where $(\Gamma_n)$ does not contain cycles,
the limit set of all the branches $\Gamma_n$ contains the unstable branch $\widetilde \Gamma$ of some periodic point $\widetilde q \in K$.
This contradicts the minimality of the periods of the branches in the sequence $(\Gamma_n)$.
In all cases we have obtained a contradiction, hence Property (2) holds.
\end{itemize}
\medskip

\noindent
{\bf Step 1. Proof of Property (1).} We first establish the mentioned criterion.}
\begin{claim}\label{c.back}
Let $D'$ be a Pixton disc for an iterate $g=f^m$, $m\geq 1$:
\begin{itemize}
\item[--] associated to an unstable branch $\Gamma_{D'}$ contained in the accumulation set of $\Gamma(p_i)$,
\item[--] whose closing arc $\delta\subset \partial D'$ does not meet both components of $\mathbb{D}\setminus W^s_\mathbb{D}(p_i)$.
\end{itemize}
Then the interior of $D'$ does not intersect the orbit of $\color{black} y$.
\end{claim}
\begin{proof}
Let $q$ be the periodic point associated to $D'$, which is fixed by $g$
and let $\gamma\cup\sigma\cup\delta$ be the boundary of $D'$.
Let $\{x_q\}=\delta\cap \sigma$.
If $f^{-k}(y)$ belongs to the interior of $D'$, {\color{black} the mild dissipation implies that} the stable manifold $W^s_\mathbb{D}(f^{-k}(y))$ intersects the interior of $D'$
and its complement.

Hence $\delta\cup \gamma$ intersects both components
of $\mathbb{D}\setminus W^s_\mathbb{D}(f^{-k}(y))$.
Note that $\gamma\subset \Gamma_{D'}$ does not  intersect both of these components:
since $\Gamma(p_i)$ accumulates on $\Gamma_{D'}$,
it would imply that $\Gamma(p_i)$ does also and then by iteration that
$\Gamma(p_i)$ intersects $W^s_\mathbb{D}(y){\color{black} =W^s_\mathbb{D}(p_i)}$ contradicting our {\color{black} assumption (c) above}.
As a consequence $\delta\setminus \{x_q\}$ intersects both components of
$\mathbb{D}\setminus W^s_\mathbb{D}(f^{-k}(y))$.
Since $g(\delta\setminus \{x_q\})=f^m(\delta\setminus \{x_q\})$ is contained in the interior of $D'$,
one deduces that $W^s_\mathbb{D}(f^{-k+m}(y))$ intersects the interior of $D'$.

By induction we get that $W^s_\mathbb{D}(f^{-k+\ell m}(y))$
intersects the interior of $D'$ for any $\ell\geq 0$ and that $\delta\setminus\{x_q\}$ intersects both components of $\mathbb{D}\setminus W^s_\mathbb{D}(f^{-k+\ell m}(y))$.
But for $\ell$ large $W^s_\mathbb{D}(f^{-k+\ell m}(y))=W^s_\mathbb{D}(p_i)$.
One deduces that
$\delta\setminus \{x_q\}$ intersects both components of $\mathbb{D}\setminus W^s_\mathbb{D}(p_i)$,
a contradiction with our assumptions.
\end{proof}

{\color{black}
\begin{corollary}\label{c.check1}
Pixton discs associated to $\Gamma(p)$ have interior disjoint from the orbit of~$y$.
\end{corollary}}
\begin{proof}
{\color{black} By definition,  if $D$ is a Pixton disc, its boundary decomposes as $\partial D:=\gamma\cup\sigma\cup\delta$
and one denotes by $x$ the intersection point between $\delta$ and $\sigma$.}
Replacing $D$ by $f(D)$ if necessary, one can suppose that
$f^{-1}(x)$ belongs to $W^s_\mathbb{D}(p)$.

Let $\delta_1$ be the connected component of $\delta\setminus f(\gamma)$
which contains $\{x\}$ and let $\gamma_1$ be the arc in $f(\gamma)$ connecting $\gamma$
to $\delta_1$. The disc bounded by $\sigma\cup\delta_1\cup \gamma_1$ is a Pixton disc
which contains $f(D)$. One can repeat that construction and define for each $n\geq 1$
a Pixton disc $D_n$ containing $f^n(D)$ with a closing arc $\delta_n\subset \delta$.
Note that $f^n(\gamma)\subset \partial f^n(D)$, has points arbitrarily close to $f^{-1}(x)$
as $n$ gets large. One can thus connect $f^n(\gamma)$ to $f^{-1}(x)$ by an arc $\delta'$ with small
diameter and build a Pixton disc $D'$ bounded by $\sigma'=f^{-1}(\sigma)$,
$\delta'$ and an arc $\gamma'\subset f^n(\gamma)$. By construction the disc $D'$ contains $f^n(D)$.

{\color{black} Since $\delta'$ can be chosen arbitrarily small, it does not meet both components of $\mathbb{D}\setminus W^s_\mathbb{D}(p_i)$.
Claim~\ref{c.back} can be applied and implies that the interior of $D'$ does not intersect the orbit of $y$.
Consequently the interior of $f^n(D)$ does not intersect the orbit of $y$.
The same holds for $D$, as required.}
\end{proof}


\bigskip

\noindent
{\bf Step 2. Chain of unstable branches.} Property (2) is obtained by contradiction:
\smallskip

\noindent
\textbf{We now assume that
ergodic measures on $K$ are supported on periodic orbits.}

\medskip

In this step we build a sequence of unstable branches of periodic points $q_n\in K$.


\paragraph{\it Quadrants.}
A quadrant of a periodic point $q$ is a pair $(\Gamma,\mathcal{L})$
of unstable and stable branches of $q$.
A sequence \emph{$(y(k))$ converges to $q$ in the quadrant $(\Gamma,\mathcal{L})$} if $y(k)\to q$ and if
for any neighborhood $U$ of the orbit of $q$, there exist $n'_k<0<n''_k$ satisfying:
\begin{itemize}
\item[--] for each $k$ large the piece of orbit $\{f^{n'_k}(y(k)),\dots, f^{n''_k}(y(k))\}$ is contained in $U$,
\item[--] $(f^{n'_k}(y(k)))$ converges to some point of $\mathcal{L}$ and $(f^{n''_k}(y(k)))$ to some point of $\Gamma$.
\end{itemize}

\paragraph{\it Quadruples.} We consider quadruples $(q,\Gamma,\mathcal{L},(y(k)))$ where:
\begin{itemize}
\item[--] $q\in K$ is a periodic point and $(\Gamma,\mathcal{L})$ is a quadrant of $q$,
\item[--] $(y(k))$ is a sequence of backward iterates $f^{-n_k}(y)$ of $y$ converging to $q$ in $(\Gamma,\mathcal{L})$.
\end{itemize}

\paragraph{\it A relation between quadruples.}
Let us write $(q,\Gamma,\mathcal{L},(y(k))) \prec (q',\Gamma',\mathcal{L}',(y'(\ell)))$ when:
\begin{itemize}
\item[--] for each $\ell$, there exist $n_\ell,k_\ell >0$ such that $f^{-n_\ell}(y'(\ell))=y(k_\ell)$,
\item[--] each neighborhood of the closure of $\cup_n f^n(\Gamma)$ contains
the pieces of orbit of the form $\{f^{-n_\ell}(y'(\ell)),\dots,y'(\ell)\}$ with $\ell$ large enough,
\item[--] $\Gamma$ contains a sequence converging to $q'$  in the quadrant $(\Gamma',\mathcal{L}')$.
\end{itemize}
We will say that a quadruple $(q,\Gamma,\mathcal{L},(y(k)))$ satisfies \emph{Property (*)} if the following holds:
\begin{description}
\item[(*)] There exists $z\in \Gamma$ such that, denoting $\gamma$ the arc in $\{q\}\cup\Gamma$ joining $q$ to $z$,
we have:
\end{description}
\begin{itemize}
\item[--] for any neighborhood $U$ of $\gamma$, there exist sequences $(n_\ell),(k_\ell)>0$ such that
$\{y(k_\ell),\dots, f^{n_\ell}(y(k_\ell))\}\subset U$ and $f^{n_\ell}(y(k_\ell))\to z$;
\item[--] if the orbit of $z$ meets a stable branch $\mathcal{L}'$, then for any tubular neighborhood $T$
of $\mathcal{L}'$, the unstable branch $\Gamma$ intersects both connected components of $T\setminus \mathcal{L}'$.
\end{itemize}


\begin{claim}\label{c.transitive}
The relation $\prec $ on quadruples is transitive.
\end{claim}
\begin{proof}
Let us consider three quadrants satisfying
$(q,\Gamma,\mathcal{L},(y(k))) \prec (q',\Gamma',\mathcal{L}',(y'(\ell)))$
and $(q',\Gamma',\mathcal{L'},(y'(\ell))) \prec (q'',\Gamma'',\mathcal{L}'',(y''(m)))$.
Applying successively the definition of the relation $\prec$, one finds preimages
$f^{-n'_m}(y''(m))$ of the points $y''(m)$
that belong to a subsequence of $(y'(\ell))$, and then further backwards iterates $f^{-n_m}(y''(m))$ that belong to a subsequence
of $(y(k))$. The first item of the definition of $\prec$ is thus satisfied between
$(q,\Gamma,\mathcal{L},(y(k)))$ and $(q'',\Gamma'',\mathcal{L}'',(y''(m)))$.

The pieces of orbit $\{f^{-n_m}(y''(m)),\dots,y''(m)\}$ (for $m$ large) are contained in the union of
small neighborhoods of the closures of $\cup_n f^n(\Gamma)$ and $\cup_n f^n(\Gamma')$.
Since $\cup_n f^n(\Gamma')$ is contained in the closure of $\cup_n f^n(\Gamma)$, one deduces
the second item of the definition of $\prec$ is satisfied.

Since $\cup_n f^n(\Gamma')$ contains a sequence of points converging to $q''$  in the quadrant $(\Gamma'',\mathcal{L}'')$,
the same property holds for $\cup_n f^n(\Gamma)$, giving the third item.
We have thus proved that $(q,\Gamma,\mathcal{L},(y(k))) \prec (q'',\Gamma'',\mathcal{L}'',(y''(m)))$.
\end{proof}

\begin{claim}\label{c.sequence}
For any periodic point $q\in K$, the unstable set $W^u(q)$ is disjoint from the orbit of $y$.
Moreover, $q$ admits an unstable branch which intersects $K$.
%
%
\end{claim}
\begin{proof}
Let us assume by contradiction that $y$ belongs to an unstable branch $\Gamma$ of $q$.
%
then $y$ belongs to an unstable branch $\Gamma$ of $q$.
Two cases occur.

We first assume that $\Gamma$ is fixed. Since $y\in \Gamma$ is accumulated by $\Gamma(p_i)$,
the branch $\Gamma$ is accumulated by $\Gamma(p_i)$. Since $\Gamma$ meets $W^s(p_i)$,
it accumulates on $\Gamma(p_i)$.
Since the periodic cycle $\Gamma(p_0),\dots,\Gamma(p_n)$ has maximal length
(assumption (b) made at the begining of the proof of Lemma~\ref{l.pixton-cycle}) the branch $\Gamma$ coincides with one of the $\Gamma(p_j)$.
This contradicts our assumption (c) since $y\in \Gamma\cap W^s_\mathbb{D}(p_i)$
but $W^s_\mathbb{D}(p_i)$ is disjoint from all the $\Gamma(p_j)$,
by our choice of $p_i$.

We then assume that $\Gamma$ has larger period.
From assumption (a), $f$ preserves the orientation.
Proposition~\ref{p.heteroclinic}
implies that $\Gamma$ intersects both components of $\mathbb{D}\setminus W^s_\mathbb{D}(p_i)$.
Since the accumulation set of $\Gamma(p_i)$ contains $\Gamma$,
one deduces that $\Gamma(p_i)$
intersects both components of $\mathbb{D}\setminus W^s_\mathbb{D}(p_i)$,
which contradicts the fact that it is disjoint from $W^s_\mathbb{D}(p_i)$.

In all cases we found a contradiction. This implies the first part of the claim.
Since $q$ belongs to $\alpha(y)$, there exists an unstable branch $\Gamma$
and some point $z$ in the closure of the backward orbit of $y$.
From the first part, $z$ is not on the orbit of $y$, hence is limit of a sequence of arbitrarily large backward iterates of $y$.
Hence $z\in K$.
%
%
%
%
%
%
\end{proof}

\begin{corollary}\label{c.existence}
There exists a quadruple $(q,\Gamma,\mathcal{L},(y(k)))$ satisfying Property (*).
\end{corollary}
\begin{proof} By our assumption at the beginning of Step 2, $K$ contains a periodic point $q_0$.
From Claim~\ref{c.sequence}, $q_0$ belongs to a quadruple $(q_0,\Gamma_0,\mathcal{L}_0,(y_0(k)))$.
If it does not satisfy (*), there exists a point $z\in \Gamma_0\cap K$ which belongs to a stable branch $\mathcal{L}_1$
such that $\Gamma_0$ does not meet both components $T_1\setminus \mathcal{L}_1$
for any tubular neighborhood $T_1$ of $\mathcal{L}_1$.
The branch $\Gamma_1$ belongs to a quadruple $(q_1,\Gamma_1,\mathcal{L}_1,(y_1(k)))$.
Proposition~\ref{p.heteroclinic2} then implies that $\Gamma_0$ and $\Gamma_1$ have the same period.
If one assumes by contradiction that no quadruple satisfies (*), one builds in this way in infinite sequence of quadruples
$(q_n,\Gamma_n,\mathcal{L}_n,(y_n(k)))$ such that $\Gamma_n$ intersects $\mathcal{L}_{n+1}$ for each $n$,
and such that all the $\Gamma_n$ have the same period.
In particular, a same unstable branch appears at least twice in this sequence.
We have thus obtained a cycle of unstable branches $\Gamma_n,\Gamma_{n+1},\dots,\Gamma_{n'}=\Gamma_n$
such that $\Gamma_{m}$ intersects $\Gamma_{m+1}$ for each $n\leq m<n'$.
Since the topological entropy vanishes, this contradicts Lemma~\ref{l.heteroclinic-cycle2}.
\end{proof}

\begin{claim}\label{prop-relation}
For any quadruple $(q,\Gamma,\mathcal{L},(y(k)))$ satisfying Property (*),
\begin{itemize}
\item[--] there exists $(q',\Gamma',\mathcal{L}',(y'(k)))$ such that $(q,\Gamma,\mathcal{L},(y(k))) \prec (q',\Gamma',\mathcal{L}',(y'(k)))$;
\item[--] given any $(q_1,\Gamma_1,\mathcal{L}_1,(y_1(k)))$ satisfying $(q,\Gamma,\mathcal{L},(y(k))) \prec (q_1,\Gamma_1,\mathcal{L}_1,(y_1(k)))$, there exists $(q_2,\Gamma_2,\mathcal{L}_2,(y_2(k)))$ satisfying $(q,\Gamma,\mathcal{L},(y(k))) \prec (q_2,\Gamma_2,\mathcal{L}_2,(y_2(k)))$ and Property (*) such that
$\Gamma_1$, $\Gamma_2$ have the same period.
\end{itemize}
\end{claim}
\begin{proof}
From Property (*), there exist $z\in \Gamma\cap K$ and sequences $(n_\ell),(k_\ell)$ such that
$f^{n_\ell}(y(k_\ell))\to z$ and $\{y(k_\ell), f(y(k_\ell)),\dots, f^{n_\ell}(y(k_\ell))\}$ is contained in an arbitrarily small neighborhood of $\cup_n f^n(\Gamma')$. By our assumption, the $\omega$-limit set of $z$ contains a periodic point $q'$.
One deduces that, up to taking a subsequence of $(y(k_\ell))$, there exist integers $m_\ell$
such that $y'(\ell):=f^{m_\ell}(y(k_\ell))$ converges to $q'$ inside a quadrant $(\Gamma',\mathcal{L}')$ of $q'$,
whereas $\{y(k_\ell), f(y(k_\ell)),\dots, f^{m_\ell}(y(k_\ell))\}$ is included in an arbitrarily small neighborhood of the closure of $\cup_n f^n(\Gamma')$
for $\ell$ large enough.
This shows that $(q,\Gamma,\mathcal{L},(y(k)))$ and $(q',\Gamma',\mathcal{L}',(y'(\ell)))$
satisfy the two first items in the definition of the relation $\prec$.

If the orbit of $z$ does not meet $\mathcal{L}'$, then a subsequence of the forward orbit of $z$
converges to $q'$ in the quadrant $(\Gamma',\mathcal{L}')$ so that $(q,\Gamma,\mathcal{L},(y(k))) \prec (q',\Gamma',\mathcal{L}',(y'(\ell)))$.
If the orbit of $z$ meets $\mathcal{L}'$, then Property (*)
directly implies that $\Gamma$ meets both components of $T\setminus \mathcal{L}'$
for any tubular neighborhood of $\mathcal{L}'$, implying the same conclusion. This proves the first property.
\medskip

We now turn to the second property of the claim and assume $(q,\Gamma,\mathcal{L},(y(k))) \prec (q_1,\Gamma_1,\mathcal{L}_1,(y_1(\ell)))$
for a quadruple $(q_1,\Gamma_1,\mathcal{L}_1,(y_1(\ell)))$ which does not satisfies Property (*).
In particular, up to taking a subsequence of $(y_1(\ell))$,
there is a point $z\in \Gamma_1\cap K$, which is the limit of a sequence of forward iterates
of $(y_1(\ell))$,
and whose orbit belongs to the stable branch $\mathcal{L}_2$ of a point
$q_2$. There also exists a small tubular neighborhood $T$ of $\mathcal{L}_2$ such that,
denoting $T^+,T^-$ the two connected components of $T\setminus \mathcal{L}_2$,
the branch $\Gamma_1$ intersects $T^-$, but it avoids $T^+$.
By Proposition~\ref{p.heteroclinic2}, one deduces that $\Gamma_1$ and $\Gamma_2$
have the same period $\tau$.
One can also extract a subsequence $(y_2(m))$ of forward iterates of $(y_1(\ell))$
and get a quadruple $(q_2,\Gamma_2,\mathcal{L}_2,(y_2(m)))$
such that $(q,\Gamma,\mathcal{L},(y(k)))$ and $(q_2,\Gamma_2,\mathcal{L}_2,(y_2(m)))$
satisfy the two first items in the definition of the relation $\prec$.

By construction the point $z$ is the limit of a sequence $f^{-n_m}(y_2(m))$ of backward iterates of the points $y_2(m)$.
We distinguish two cases:
\smallskip

\noindent
\emph{Case 1:} the points $f^{-n_m}(y_2(m))$ belong to $T^-$.
Note that $\Gamma$ also intersects $T^-$ since its closure contains $\Gamma_1$ which intersects $T^-$.
Hence $(q,\Gamma,\mathcal{L},(y(k))) \prec (q_2,\Gamma_2,\mathcal{L}_2,(y_2(m)))$.

\smallskip

\noindent
\emph{Case 2:} the points $f^{-n_m}(y_2(m))$ belong to $T^+$.
We consider the two arcs
$\gamma\subset \Gamma_1$, $\lambda\subset \mathcal{L}_2$ connecting $z$ to $f^\tau(z)$.
The arc $\lambda$ and the points $f^{n_m}(y_2(\ell_m))$ are contained on the same side of $\Gamma_1$.
By the definition of the quadrant $(q_1,\Gamma_1,\mathcal{L}_1,(y_1(\ell)))$,
the branch $\Gamma$ contains a sequence of points which accumulates on $\gamma\subset \Gamma_1$,
on the same side of $\Gamma_1$ as the points $f^{n_m}(y_2(\ell_m))$.
Since $\Gamma$ is disjoint from $\Gamma_1$, it has to cross the arc $\sigma\subset \mathcal{L}_2$.
See Figure~\ref{f.heredity}.
As a consequence $\Gamma$ accumulates on $q_2$ inside the quadrant $(\Gamma_2,\mathcal{L}_2)$.
Hence $(q,\Gamma,\mathcal{L},(y(k))) \prec (q_2,\Gamma_2,\mathcal{L}_2,(y_2(m)))$.
\smallskip
\begin{figure}[ht]
\begin{center}
\includegraphics[scale=0.5]{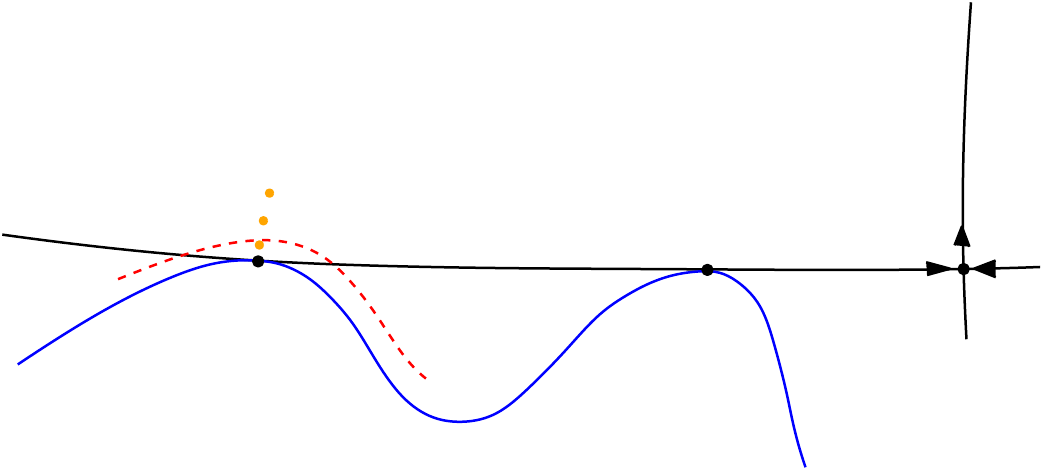}
\begin{picture}(0,0)
\put(-200,70){\scriptsize $f^{-n_m}(y_2(m))$}
\put(-120,50){$\sigma$}
\put(-127,12){$\gamma$}
\put(-200,41){\small $z$}
\put(-98,35){\small $f^\tau(z)$}
\put(-20,54){$q_2$}
\put(-50,54){$T^+$}
\put(-50,34){$T^-$}
\put(-20,100){$\Gamma_2$}
\put(-247,17){$\Gamma_1$}
\put(-235,40){$\Gamma$}
\put(-250,59){$\mathcal{L}_2$}
\end{picture}
\end{center}
\vspace{-0.5cm}
\caption{Proof of Claim~\ref{prop-relation}.  \label{f.heredity}}
\end{figure}

We have shown that $(q,\Gamma,\mathcal{L},(y(k))) \prec (q_2,\Gamma_2,\mathcal{L}_2,(y_2(m)))$
and that $\Gamma_1,\Gamma_2$ have the same period.
If one assumes by contradiction that the second item of the claim does not hold, then
$(q_2,\Gamma_2,\mathcal{L}_2,(y_2(m)))$ does not satisfy Property (*).
The previous construction made for $(q_1,\Gamma_1,\mathcal{L}_1,(y_1(m)))$
applies for this new quadruple. Arguing inductively,
one builds an infinite sequence of quadruples $(q_n,\Gamma_n,\mathcal{L}_n,(y_n(m)))$
such that $\Gamma_n$ intersects $\mathcal{L}_{n+1}$ for each $n$
and such that all the $\Gamma_n$ have the same period.
One concludes as in the proof of Corollary~\ref{c.existence} and get a contradiction.
This gives the second item.
\end{proof}

\bigskip

\noindent
{\bf Step 3. Conclusion of the proof of Lemma~\ref{l.pixton-cycle}.}
By Corollary~\ref{c.existence} and Claim~\ref{prop-relation}, there exists an infinite sequence of quadruples
$(q_n,\Gamma_n,\mathcal{L}_n,(y_n(k)))$ satisfying for each $n$ the relation
$(q_n,\Gamma_n,\mathcal{L}_n,(y_n(k))) \prec (q_{n+1},\Gamma_{n+1},\mathcal{L}_{n+1},(y_{n+1}(k)))$.

By the second item of Claim~\ref{prop-relation}, one can furthermore assume the following property:
\begin{description}
\item[(**)] For each $n$, if $(q_n,\Gamma_n,\mathcal{L}_n,(y_n(k)))\prec (q,\Gamma,\mathcal{L},(y(k)))$ holds for some
quadruple $(q,\Gamma,\mathcal{L},(y(k)))$ then the period of $\Gamma$ is larger or equal to the period of $\Gamma_{n+1}$.
\end{description}
By the transitivity in Claim~\ref{c.transitive}, the sequence of periods of the branches $\Gamma_n$ is non-decreasing with $n$.
In order to conclude the proof, we consider two different cases.

\paragraph{\it First case: the period of the branches $\Gamma_n$ is uniformly bounded.}
There exist at most finitely many periodic points in $K$ with a given period,
hence a same quadruple appears several times in the sequence.
By Claim~\ref{c.transitive}, there exists a quadruple satisfying
$(q_n,\Gamma_n,\mathcal{L}_n,(y_n(k)))\prec (q_n,\Gamma_n,\mathcal{L}_n,(y_n(k)))$.
Let $f^m$ be an iterate which fixes the branch $\Gamma_n$.
Since $q_n$ is accumulated by points of $\Gamma_n$ in the quadrant $(\Gamma_n,\mathcal{L}_n)$,
one can build a Pixton disc $D'$ for $f^m$ bounded by an arc $\gamma\subset \Gamma_n$,
an arc $\sigma\subset\mathcal{L}_n$ and a closing arc $\delta$ whose diameter is arbitrarily small,
so that it is is disjoint from one of the components of $\mathbb{D}\setminus W^s_\mathbb{D}(p_i)$.
Since $q_n$ is accumulated by the sequence $(y_n(k))_{k\in \NN}$ in the quadrant $(\Gamma_n,\mathcal{L}_n)$,
the interior of $D'$ contains iterates of $y$.
This contradicts the Claim~\ref{c.back}.

\paragraph{\it Second case: the period of the branches $\Gamma_n$ goes to $+\infty$
as $n\to +\infty$.}
There exists a subsequence of the family of periodic orbits of the points $q_n$ which converges toward an invariant
compact set $\Lambda\subset K$ for the Hausdorff topology.
By our assumption, there exists a periodic point $\widetilde q\in \Lambda$ which is the limit of a subsequence of $(q_n)$.
Let us take $n_0$ such that the period of $\Gamma_{n_0}$ is larger than twice the period of $\widetilde q$.
Up to taking a subsequence, there exists a quadrant $(\widetilde \Gamma,\widetilde{\mathcal{L}})$ of $\widetilde q$
such that $(q_n)$ converges to $\widetilde q$ in $(\widetilde \Gamma,\widetilde{\mathcal{L}})$.

Since $\prec$ is transitive,
we have $(q_{n_0},\Gamma_{n_0},\mathcal{L}_{n_0},(y_{n_0}(k))) \prec (q_{n},\Gamma_{n},\mathcal{L}_{n},(y_{n}(k)))$ for $n>n_0$.
Hence there exist sequences $k_\ell, n_\ell\to +\infty$ such that
$\widetilde y(\ell):=(f^{n_\ell}(y_{n_0}(k_\ell)))$ converges to $\widetilde q$ in $(\widetilde \Gamma,\widetilde{\mathcal{L}})$.
This implies $(q_{n_0},\Gamma_{n_0},\mathcal{L}_{n_0},(y_{n_0}(k))) \prec (\widetilde q,\widetilde \Gamma,\widetilde {\mathcal{L}},(\widetilde y_\ell(k)))$. From Property (**), the period of $\widetilde \Gamma$ is larger or equal to the period of $\Gamma_{n_0}$.
But this contradicts the choice of $n_0$: the period of $q_{n_0}$ is larger than twice the period of $\widetilde q$.
\medskip

In both cases we found a contradiction.
Hence the assumption made at the beginning of Step 2 does not hold.
This proves that $K$ supports an aperiodic ergodic measure.
The proof of Lemma~\ref{l.pixton-cycle} is now complete.
\end{proof}

\subsection{Proof of Theorems~\ref{t.cycle} and~\ref{t.cycle2}}
We first prove Theorem~\ref{t.cycle}.
It is well known that if $f$ has positive entropy, then it admits horseshoes~\cite{Ka}
and in particular a cycle of periodic orbits.

Conversely, let us assume that $f$ has a cycle of periodic orbits.
Up to {\color{black} replacing} $f$ by an iterate, one can suppose (by Lemma~\ref{l.cycle-fixed}) that $f$ has a fixed unstable branch $\Gamma$
which is contained in its accumulation set. Lemma~\ref{l.pixton-cycle} then gives an aperiodic ergodic measure $\mu$
supported on the closure of $\Gamma$ such that $\mu(D)=0$ for any Pixton disc $D$ associated to $\Gamma$.
Let us denote by $\mathbb{D}_\Gamma$ the closure of the connected component of $\mathbb{D}\setminus W^s_\mathbb{D}(p)$
which contains $\Gamma$. By our assumption,
the support of $\mu$ is contained in $\mathbb{D}_\Gamma$.

\begin{lemma}\label{l.support}
There exists a neighborhood $U$ of $p$ such that $\mu(U)=0$.
In particular, the support of $\mu$ is disjoint from $\Gamma$ and $W^s_\mathbb{D}(p)$.
\end{lemma}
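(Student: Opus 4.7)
The aim is to cover some neighborhood of $p$ (intersected with $\mathbb{D}_\Gamma$, where $\operatorname{supp}(\mu)$ lives) by finitely many Pixton discs associated to $\Gamma$. Since each such disc has $\mu$-measure zero by the standing hypothesis on $\mu$, this will yield $\mu(U)=0$ for some neighborhood $U$ of $p$. The ``in particular'' assertion is then automatic: $\operatorname{supp}(\mu)$ is closed and $f$-invariant, so any $y\in \Gamma\cap \operatorname{supp}(\mu)$ would have $f^{-n}(y)\to p$ contained in $\operatorname{supp}(\mu)$ and force $p\in \operatorname{supp}(\mu)$, contradicting $\mu(U)=0$; similarly for $y\in W^s_\mathbb{D}(p)\setminus\{p\}$ via forward iteration.

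The main geometric construction is as follows. I would produce two Pixton discs $D^+,D^-$ associated to $\Gamma$, whose stable arcs $\sigma^\pm\subset W^s_\mathbb{D}(p)$ lie on the two distinct local branches of $W^s_\mathbb{D}(p)\setminus\{p\}$ and have second endpoints $x^\pm\neq p$ arbitrarily close to $p$. In a local chart at $p$ with $W^s_\mathbb{D}(p)$ along the horizontal axis and $\Gamma$ issuing into $\mathbb{D}_\Gamma$ in the unstable direction, $D^+$ fills the wedge between $\sigma^+$ and the initial arc $\gamma$ of $\Gamma$, while $D^-$ fills the opposite wedge between $\gamma$ and $\sigma^-$; their union therefore contains a half-ball $\mathbb{D}_\Gamma\cap B(p,\varepsilon)$. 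Since $\mu$ is supported in $\mathbb{D}_\Gamma$ and vanishes on each $D^\pm$, this gives $\mu(B(p,\varepsilon))=0$.

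To produce $x^\pm$ arbitrarily close to $p$ it is enough to exhibit a single point $x\in W^s_\mathbb{D}(p)\setminus\{p\}$ accumulated by $\Gamma$ on each of the two branches: by $f$-invariance of the accumulation set of $\Gamma$, every forward iterate $f^n(x)\in W^s_\mathbb{D}(p)$ is then also accumulated by $\Gamma$ and tends to $p$ along the corresponding branch. The existence of such accumulation points uses the recurrence of $\Gamma$ granted by $\Gamma\subset\operatorname{acc}(\Gamma)$ together with the fact that $p\in \overline{\Gamma}$, and then a local analysis at the fixed point $p$ in the spirit of proposition~\ref{p.transitive}: iterates of a fundamental domain of $\Gamma$ which return to a small neighborhood of $p$ cannot remain confined to an arbitrarily thin tube around $W^u_{\mathrm{loc}}(p)$ without contradicting the recurrence of $\Gamma$ and the local hyperbolic geometry (contraction along $W^s_\mathbb{D}(p)$, expansion along $\Gamma$).

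The main obstacle is precisely ruling out the degenerate configuration $\overline{\Gamma}\cap W^s_\mathbb{D}(p)=\{p\}$, in which no Pixton disc associated to $\Gamma$ has its stable arc ending close to $p$. To handle that case I would argue that $\operatorname{supp}(\mu)$, being contained in $\overline{\Gamma}$ and carrying an aperiodic ergodic $\mu$, still cannot accumulate on $p$: by Pesin theory (as in section~\ref{ss.uniform stable}) one picks generic points $z_n\in \operatorname{supp}(\mu)\cap K$ on a Pesin block with $z_n\to p$, so that $W^s_\mathbb{D}(z_n)\to W^s_\mathbb{D}(p)$ in the $C^1$ topology; the curves $W^s_\mathbb{D}(z_n)$ must then meet the initial arc of $\Gamma$ transversally by a Jordan-curve argument, which produces an intersection point $w_n\in W^s_\mathbb{D}(z_n)\cap \Gamma$ close to $p$. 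Together with the closing lemma (theorem~\ref{t.measure local}) and Cartwright--Littlewood (proposition~\ref{p.CL}), this yields a periodic approximation of $\mu$ contradicting aperiodicity, thereby closing the lemma.
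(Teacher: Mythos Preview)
Your plan to cover a half-neighborhood of $p$ by two Pixton discs $D^+,D^-$ is more than is needed, and it forces you into difficulties you do not resolve. You need $\Gamma$ to accumulate on \emph{both} stable branches of $p$ to get $x^+$ and $x^-$; your argument for this (``returns cannot remain confined to a thin tube around $W^u_{\mathrm{loc}}(p)$'') is only a heuristic, and in any case might produce accumulation on just one branch, leaving one quadrant near $p$ uncovered. Your treatment of the degenerate case $\overline\Gamma\cap W^s_\mathbb{D}(p)=\{p\}$ then breaks down: finding intersections $w_n\in W^s_\mathbb{D}(z_n)\cap\Gamma$ does not produce periodic points, and neither the closing lemma nor Cartwright--Littlewood yields ``a periodic approximation of $\mu$ contradicting aperiodicity'' --- the existence of periodic orbits near $\operatorname{supp}(\mu)$ is entirely compatible with $\mu$ being aperiodic. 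So the lemma is not closed.

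The paper's proof avoids all of this by shifting the target from $p$ to a point of $\operatorname{supp}(\mu)$. Assume for contradiction that every neighborhood of $p$ has positive $\mu$-measure, so $p\in\operatorname{supp}(\mu)$. Since $\mu$ is aperiodic, $\mu(\{p\})=0$, hence there are points $y_n\in\operatorname{supp}(\mu)\setminus\{p\}$ with $y_n\to p$; the support is fully $f$-invariant, and the backward iterates of the $y_n$ (which stay in $\operatorname{supp}(\mu)\subset\mathbb{D}_\Gamma$) drift along the local stable manifold away from $p$, producing a point $x\in\operatorname{supp}(\mu)\cap(W^s_\mathbb{D}(p)\setminus\{p\})$. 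Now the crucial observation: since $\operatorname{supp}(\mu)\subset\overline\Gamma$, this $x$ is automatically accumulated by $\Gamma$. One then builds a \emph{single} Pixton disc $D$ by closing near $f^{-1}(x)$; its stable arc $\sigma=[p,f^{-1}(x)]$ contains $x$ in its interior, so $D$ contains a half-neighborhood of $x$ in $\mathbb{D}_\Gamma$, and therefore $\mu(D)>0$ --- contradicting the defining property of $\mu$. No case distinction, no need for both branches, and no Pesin blocks.
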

\begin{proof}
The measure $\mu$ is supported
inside $\mathbb{D}_\Gamma$. Moreover we have $\mu(p)=0$ since $\mu$ is not atomic.
Hence if one assumes that any neighborhood of $p$ has positive $\mu$-measure,
there exists some point $x\neq p$ in $W^s_\mathbb{D}(p)$
 which belongs to the support of $\mu$. One deduces that $x$ is accumulated by $\Gamma$.
One can thus build a Pixton disc $D$ by closing near $f^{-1}(x)$: the disc contains a neighborhood of
$x$ in $\mathbb{D}_\Gamma$, hence has positive measure. This contradicts Lemma~\ref{l.pixton-cycle}.
\end{proof}

Let $W^{s,+}_\mathbb{D}(p)$ be one of the components of $W^s_\mathbb{D}(p)\setminus \{p\}$ which
contains points accumulated by $\Gamma$.
Let $\Gamma_{loc}$ be a local unstable manifold of $p$, \emph{i.e.,} a neighborhood of $p$ inside $\Gamma$
for the intrinsic topology. It separates small neighborhoods of $p$ in $\mathbb{D}_\Gamma$ into two components:
we denote by $U^+$ the component which meets $W^{s,+}_\mathbb{D}(p)$. See Figure~\ref{f.quadrant}.

\begin{figure}[ht]
\begin{center}
\includegraphics[width=5cm]{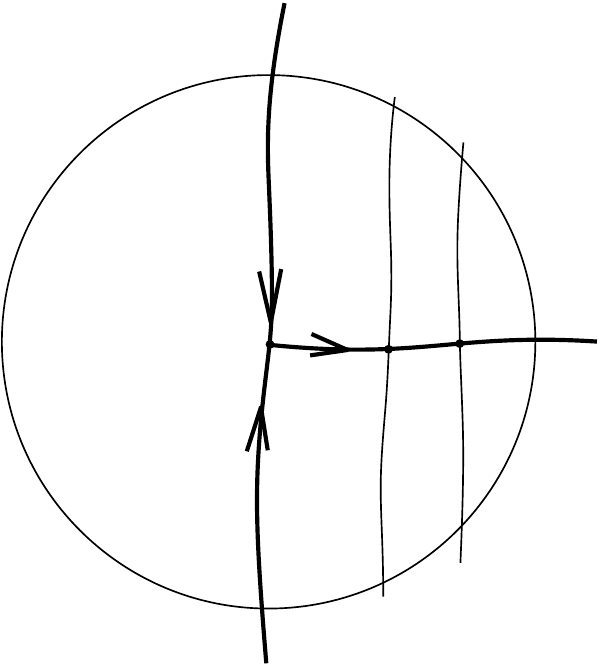}
\begin{picture}(0,0)
\put(-135,20){$U$}
\put(-92,70){$p$}
\put(-121,147){$W^{s,+}(p)$}
\put(-10,65){$\Gamma_{loc}$}
\put(-50,80){$U^+$}
\put(-40,20){$W_{n-1}$}
\put(-70,5){$W_n$}
\end{picture}
\end{center}
\caption{Quadrant separated by $\Gamma$ and $W^{s,+}(p)$.\label{f.quadrant}}
\end{figure}

Note that $\mu$-almost every point $x$ is accumulated by its orbit inside each component of
$\mathbb{D}\setminus W^s_\mathbb{D}(x)$. In particular $\Gamma$ meets these two components and
intersects $W^s_\mathbb{D}(x)$ at some point $z$.
Iterating backward $W^s_\mathbb{D}(z)$, one thus gets a sequence of stable curves $W_n\subset \DD^+$ such that $f(W_n)\subset W_{n-1}$,
$f^n(W_n)\subset W^s_\mathbb{D}(z)$, which converge to $W^s_\mathbb{D}(p)$ for the Haudsdorff topology.
We denote by $W_n^+$ a connected component of $W_n\setminus W^s_\mathbb{D}(x)$ which is close to $W^{s,+}_\mathbb{D}(p)$
for the Hausdorff topology. By choosing $n$ large enough, $W^+_n$ separates $W^+_{n-1}$ and $W^{s,+}_\mathbb{D}(p)$ in $U^+$. See Figure~\ref{f.quadrant}.
\bigskip

Let $x^s\in W^{s,+}_\mathbb{D}(p)$ be a point that is not accumulated by $\Gamma$ and let $\beta^s$
be a small $C^1$ arc transverse to $W^{s,+}_\mathbb{D}(p)$ at $x^s$.
We also choose $x^u\in \Gamma_{loc}$ and a small $C^1$ arc $\beta^u$ transverse to $x^u$ at $\Gamma$.
For $m\geq 1$ large, the arcs $f^{-m}(\beta^u)$, $\beta^s$, $f(\beta^s)$ and $W^{s,+}_\mathbb{D}(p)$ bound a rectangle $R$.
Similarly, the arcs $f^{-1}(\beta^u)$, $\beta^u$, $f^m(\beta^s)$ and $\Gamma_{loc}$ bound a rectangle $R'$.
We may choose $W^+_n$ and $W^+_{n-1}$ to separate $p$ from $\beta$ and $m$ large enough.
One thus get the following properties: 

\begin{itemize}
\item[(a)] $R'$ is separated from $R$ by $W^+_{n-1}$ in $U^+$,
\item[(b)] Any point in $R\setminus W^s_\mathbb{D}(p)$ has a forward iterate in $R'$.
\end{itemize}
Note that the forward iterates of $W_n$ and $W_{n-1}$ accumulate on the support of $\mu$.
As a consequence of Lemma~\ref{l.support}, if $R$ has been chosen small enough, we get:
\begin{itemize}
\item[(c)] The forward iterates of $W_n$ and $W_{n-1}$ do not meet $R$.
\end{itemize}
Let $D$ be a Pixton disc associated to $\Gamma$, whose
boundary is the union of three arcs:
$\sigma\subset W^{s,+}(p)$, $\gamma\subset \Gamma$ and a closing arc $\delta$.
We chose $D$ so that $\delta$ is contained in $R$
and $\gamma$ intersects $R$ in only one point.
Since $f^{-1}(\gamma)\subset \gamma$, this implies that the arc $\gamma$ is disjoint from the forward iterates of $R$.
In particular,
\begin{itemize}
\item[(d)] $R'$ is contained in $D$.
\end{itemize}

Let us consider the two curves $\alpha'\subset W_n^+$ and $\alpha\subset W_{n-1}^+$, contained in $U^+\cap D$
which connect $\Gamma_{loc}$ to another point of the boundary of $D$
(and intersecting the boundary of $D$ only at their endpoints, which by construction belong to $\Gamma$).
Note that $f(\alpha)\subset \alpha'$ by definition;
both are contained in $W_{n-1}$.
The curve $\alpha\cup\gamma$ bounds a disc $\Delta$
whereas the curve $\alpha'\cup\gamma$ bounds a disc $\Delta'$.
Since $W^+_n$ separates $W^+_{n-1}$ and $W^{s,+}_\mathbb{D}(p)$, the discs are nested: $\Delta'\subset \Delta$.
See Figure~\ref{f.disc2}.
\begin{figure}[ht]
\begin{center}
\includegraphics[width=10cm]{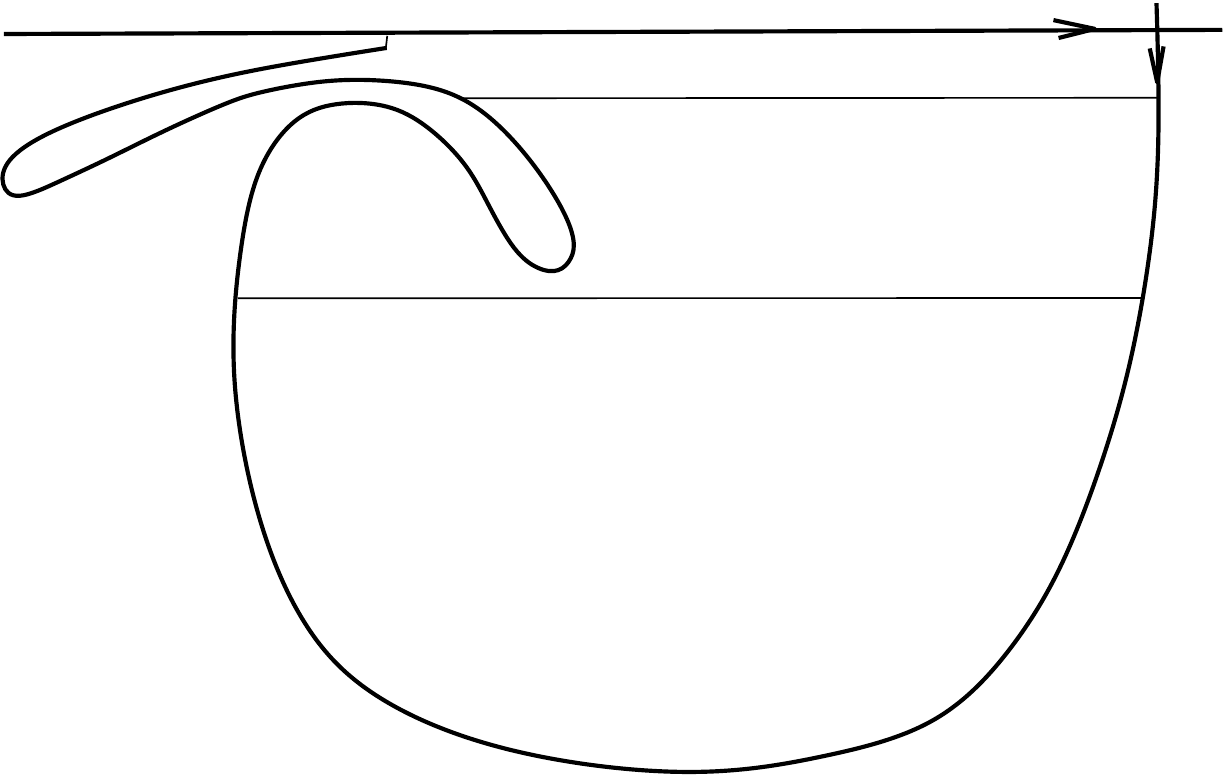}
\begin{picture}(0,0)
\put(-73,95){$\alpha'$}
\put(-100,70){$\Delta'$}
\put(-100,130){$\Delta$}
\put(-100,175){$\sigma$}
\put(-197,175){$\delta$}
\put(-10,160){$p$}
\put(-40,37){$\gamma$}
\put(-70,145){$\alpha$}
\end{picture}
\end{center}
\caption{Proof of Theorem~\ref{t.cycle}.\label{f.disc2}}
\end{figure}

Let $\overset{\circ}\alpha$ and $\overset{\circ}{\alpha}{}'$ denote the arcs $\alpha,\alpha'$ without their endpoints.
Using (c), that $f^{-1}(\gamma)\subset \gamma$ and that $\gamma$ is disjoint from $\overset{\circ}\alpha\cup \overset{\circ}{\alpha}{}'$,
one deduces that the forward iterates of $\overset{\circ}\alpha$, $\overset{\circ}{\alpha}{}'$ do not intersect $\partial D=\sigma\cup\gamma\cup\delta$.
Hence:
\begin{itemize}
\item[(e)] Any forward iterate of $\overset{\circ}\alpha$ or $\overset{\circ}{\alpha}{}'$ is either in the interior of $D$ or disjoint from $D$.
\end{itemize}
\bigskip

For $k$ large, the images $f^k(\alpha)$ and $f^k(\alpha')$ are contained in a small neighborhood of the support of $\mu$, hence are outside $D$.
Since $f(\alpha)\subset \alpha'$, one gets:
$f^{i+1}(\overset{\circ}\alpha)$ is disjoint from $D$ if and only if $f^{i}(\overset{\circ}{\alpha}{}')$ is disjoint from $D$.
Together with (e), one deduces that there exists $k_0$ such that $f^{k_0}(\overset{\circ}{\alpha}{}')$ is disjoint from $D$, $f^{k_0}(\overset{\circ}{\alpha})$ is in the interior of $D$ and all
the larger iterates are disjoint from $D$. This implies the following lemma.

\begin{lemma}\label{l.separated}
There exists $k_0\geq 1$
such that for any curve $\beta\subset \Gamma$ in the interior of $D$ and connecting $\alpha$ to $\alpha'$:
\begin{itemize}
\item[--] $f^{k_0}(\beta)$ meets $\delta$, has one endpoint in the interior of $D$ and another one outside $D$,
\item[--] all the forward iterates of the endpoints of $f^{k_0}(\beta)$ are outside $D$.
\end{itemize}
\end{lemma}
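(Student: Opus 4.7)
The plan is to deduce all three conclusions directly from the choice of $k_0$ made in the paragraphs preceding the statement, combined with the $f$-invariance of $\Gamma$ and a mild topological argument near $\partial D$. Recall that $k_0$ is fixed so that $f^{k_0}(\overset{\circ}\alpha)\subset\operatorname{int}(D)$, while $f^{k_0}(\overset{\circ}\alpha{}')\subset \DD\setminus D$, and every strictly later iterate of $\overset{\circ}\alpha$ or $\overset{\circ}\alpha{}'$ is also disjoint from $D$.

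First I would identify the endpoints of $\beta$: since $\beta$ sits in the interior of $D$ and connects $\alpha$ to $\alpha'$, its two endpoints are points $y\in\overset{\circ}\alpha$ and $y'\in\overset{\circ}\alpha{}'$. By $f$-invariance of $\Gamma$ one gets $f^{k_0}(\beta)\subset\Gamma$, and the definition of $k_0$ directly yields $f^{k_0}(y)\in\operatorname{int}(D)$ and $f^{k_0}(y')\in\DD\setminus D$, establishing the endpoint assertion in the first bullet. The second bullet is then immediate: for every $i\geq 1$, both $f^{k_0+i}(y)$ and $f^{k_0+i}(y')$ lie in $f^{k_0+i}(\overset{\circ}\alpha)\cup f^{k_0+i}(\overset{\circ}\alpha{}')$, which by the choice of $k_0$ is disjoint from $D$.

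It remains to show that $f^{k_0}(\beta)$ meets $\delta$. As a connected subarc of $\Gamma$ with one endpoint inside $D$ and one outside, $f^{k_0}(\beta)$ must cross $\partial D=\sigma\cup\gamma\cup\delta$. It cannot meet $\sigma$, since $\sigma\subset W^s_\DD(p)$ and $\Gamma\cap W^s_\DD(p)=\emptyset$: otherwise the fixed unstable branch $\Gamma$ would produce a homoclinic intersection and lemma~\ref{l.heteroclinic-cycle} would force positive topological entropy, contradicting our standing hypothesis. The only (and only delicate) remaining point is to rule out that the exit could happen entirely through $\gamma$. Here I would use the key observation that $\gamma\subset\Gamma$: since $f^{k_0}(\beta)$ and $\gamma$ are both arcs inside the simple $C^1$ curve $\Gamma$, they cannot cross transversally, and at any interior point of $\gamma$ the arc $f^{k_0}(\beta)$ simply runs along $\gamma$ on $\partial D$ without transitioning to the exterior of $D$. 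Therefore the effective transition from $\operatorname{int}(D)$ to $\DD\setminus D$ must take place either on $\delta\setminus\gamma$ or at the common endpoint $x=\gamma\cap\delta$ (the other endpoint $p$ of $\gamma$ does not belong to $\Gamma$, hence is not a point of $f^{k_0}(\beta)$). In either case $f^{k_0}(\beta)\cap\delta\neq\emptyset$. The principal obstacle is precisely this last subtlety; it is handled by the observation that $\gamma$ and $f^{k_0}(\beta)$ sit in the same simple curve.
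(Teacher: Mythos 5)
Your proof is correct and follows essentially the same route as the paper's: both bullets come straight from the choice of $k_0$ made in the preceding paragraph, and the intersection with $\delta$ comes from the fact that the connected arc $f^{k_0}(\beta)$ must cross $\partial D=\sigma\cup\gamma\cup\delta$ while the transition cannot occur through $\sigma$ or through $\gamma$. The one point where you diverge is the exclusion of $\gamma$: the paper simply observes that $f^{-1}(\gamma)\subset\gamma$ forces every forward iterate of $\beta\subset\Gamma\setminus\gamma$ to be disjoint from $\gamma$ altogether, which is a bit more direct than your overlap-of-subarcs argument, but your version is also valid.
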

\begin{proof}
Consider the iterate $k_0$ such that $f^{k_0}(\alpha)$ is inside, $f^{k_0}(\alpha')$ is outside and all
larger iterates of $\alpha'$ are outside also. Note that the forward iterates of $\beta\subset \Gamma\setminus \gamma$
do not intersects $\gamma$ nor $\sigma\subset W^s_\mathbb{D}(p)$.
\end{proof}

From a curve $\beta$, one gets two new ones $\beta_1,\beta_2$.

\begin{lemma}\label{l.separation}
There exists $k_1$ and $\varepsilon>0$
such that any curve $\beta$, in the interior of $D$ and connecting $\alpha$ to $\alpha'$,
contains two sub-curves $\beta_1$, $\beta_2$ such that:
\begin{itemize}
\item[] $f^{k_1}(\beta_1)$, $f^{k_1}(\beta_2)$ are $\varepsilon$-separated, contained in $\interior(D)$
and connect $\alpha$ to $\alpha'$.
\end{itemize}
\end{lemma}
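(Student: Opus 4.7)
The lemma is the doubling step in a Smale-horseshoe construction: iterating it yields $2^n$ pairwise $\varepsilon$-separated crossing subcurves in $f^{nk_1}(\beta)$, which feeds positive topological entropy for $f$ and thereby contradicts the zero-entropy assumption driving the proof of Theorem~\ref{t.cycle}. I would establish it by first extending Lemma~\ref{l.separated} from curves inside $\Gamma$ to arbitrary crossing curves, and then applying the resulting principle twice to harvest two crossings.

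First I would observe that the proof of Lemma~\ref{l.separated} used essentially only three properties of $\beta$: that it is contained in $\interior(D)$, that it connects $\alpha$ to $\alpha'$, and that all its forward iterates avoid $\sigma \cup \gamma$ (the first by forward-invariance of $\gamma \subset \Gamma$, the second by the fact that forward iterates of the relevant stable fundamental domains avoid $R$, property (c) from the theorem setup). For an arbitrary crossing curve $\beta$, the endpoints on $\alpha \subset W_{n-1}^+$ and $\alpha' \subset W_n^+$ have forward iterates that eventually leave $D$ (since $f(W_n)\subset W_{n-1}$ drives the endpoints into $W^s(z)\subset W^s_\mathbb{D}(p)$ and ultimately out of $\overline D$). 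For a suitable iterate $k_1$, $f^{k_1}(\beta)$ therefore has one endpoint inside $D$ and one outside, crossing $\delta$; restricting to the \emph{first} maximal subarc of $\beta$ whose image under $f^{k_1}$ lies in $\overline D$ and joins $\alpha$ to $\alpha'$ produces $\beta_1$.

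Second, to extract a second subcurve I would exploit that $\Gamma$ is contained in its own accumulation set: the iterate $f^{k_1}(\beta)$, after leaving $D$ through $\delta$, must re-enter $D$ along a neighborhood of $\gamma$ before its remaining endpoint definitively exits $D$. This is because $\beta$ cannot be wholly swallowed by the stable foliation of $p$ (it is not an arc of $W^s_\mathbb{D}(p)$), so its unstable-length is positive and grows under iteration; meanwhile its endpoints converge to $W^s_\mathbb{D}(p)$, forcing $f^{k_1}(\beta)$ to fold back and cross the rectangle $\Delta\setminus\Delta'$ a second time. This return defines $\beta_2 \subset \beta$. The $\varepsilon$-separation comes from the fact that the two crossings of $\delta$ happen near two distinct iterates of the base accumulation point $x \in W^s_\mathbb{D}(p_i)\cap \overline\Gamma$, whose orbit is aperiodic thanks to the aperiodic ergodic measure $\mu$ produced by Lemma~\ref{l.pixton-cycle}. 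The distinct iterates are separated by a fixed positive distance, so the $\beta_i$ are separated, and continuity of $f^{k_1}$ inside $\interior(D)$ transfers the separation to their images.

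The main obstacle will be \emph{uniformity}: the same $k_1$ and $\varepsilon$ must work for every crossing curve $\beta$, since the horseshoe argument applies the lemma inductively to ever-smaller subcurves generated at previous steps, and without uniformity one does not obtain the exponential growth of crossings. Uniformity rests on a compactness argument combined with the uniform exponential contraction of the stable leaves $W^+_j$ (which makes the dynamics of the endpoints of $f^k(\beta)$ on iterates of $\alpha, \alpha'$ predictable independently of $\beta$) and on the fact that the two distinct re-entry times of $f^{k_1}(\beta)$ into $D$ through $\delta$ are controlled by the (uniform, positive) spacing of nearby iterates of $x$ in a neighborhood of $W^s_\mathbb{D}(p_i)$, as guaranteed by properties (a)--(e) established before the statement of the lemma.
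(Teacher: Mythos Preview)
Your proposal misidentifies the doubling mechanism and the source of the $\varepsilon$-separation, and this is a genuine gap rather than a stylistic difference.

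In the paper, the two subcurves arise from a \emph{single} application of Lemma~\ref{l.separated}: at time $k_0$ one endpoint of $f^{k_0}(\beta)$ lies in $\interior(D)$ and the other lies outside $D$, so $\beta$ splits into two subarcs $\bar\beta_1,\bar\beta_2$ near each endpoint, each reaching $f^{-k_0}(\delta)$. The crucial fact is that $f^{k_0}(\bar\beta_1)$ is \emph{outside} $D$ while $f^{k_0}(\bar\beta_2)$ is \emph{inside} $D$. Both pieces then flow forward and, via property~(b) (any point of $R\setminus W^s_\DD(p)$ has a forward iterate in $R'$) applied to their intersection with $\delta$, land in $R'\subset\Delta'$ at some bounded time $k_1$; crossing from the exterior of $\Delta$ into $\Delta'$ forces each to cross both $\alpha$ and $\alpha'$. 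Your sequential ``exit through $\delta$, then re-enter near $\gamma$'' picture is not what happens, and your argument that the curve ``folds back'' because its unstable length grows does not by itself produce a second crossing of the specific strip $\Delta\setminus\Delta'$ at the \emph{same} uniform time $k_1$.

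The $\varepsilon$-separation likewise has nothing to do with distinct iterates of the aperiodic point $x$. It is purely topological: at the intermediate time $k_0$ the images $f^{k_0}(\beta_1)$ and $f^{k_0}(\beta_2)$ lie in different components of $f^{k_1-k_0}(\Delta\setminus\Delta')\setminus\delta$, and a compactness argument shows they both avoid a uniform neighborhood of $\widetilde\delta=\delta\cap f^{k_1-k_0}(\Delta\setminus\Delta')$ (because points near $\widetilde\delta$ reach $R'$ in at most $\ell_0$ steps, while the chosen subcurves do not reach $R'$ before time $k_1-k_0>\ell_0$). This inside/outside dichotomy at time $k_0$, not any spacing of orbit points of $x$, is what yields a uniform $\varepsilon$ after applying $f^{k_1-k_0}$.

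Finally, there is no need to extend Lemma~\ref{l.separated} to arbitrary curves: since $\beta_1,\beta_2\subset\beta\subset\Gamma$, the induction stays inside $\Gamma$ throughout.
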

\begin{proof}
By Lemma~\ref{l.separated},
the curve $\beta$ contains sub curves $\bar \beta_1, \bar \beta_2$ such that:
\begin{itemize}
\item[--] $\bar \beta_1$ (resp. $\bar \beta_2$) contains an endpoint $b_1$ (resp. $b_2$)
of $\beta$ and a point of $f^{-k_0}(\delta)$,
\item[--] $f^{k_0}(\bar \beta_1)$ is disjoint from the interior of $D$,
\item[--] $f^{k_0}(\bar \beta_2)$ is contained in $D$.
\end{itemize}
From Lemma~\ref{l.separated}, all the forward iterates of $f^{k_0+1}(b_i)$, for $i\in\{1,2\}$
are outside $D$.
From (b), there exists $k\geq k_0$ such that $f^{k}(\bar \beta_1)$ and $f^{k}(\bar \beta_2)$
have a point in the interior of $R'$, hence in the interior of $\Delta'$ (from (d)).
Thus by (a) these curves contain a point of
$\alpha'$ and a point of $\alpha$. Since the iterates of $\beta$
never intersects $\gamma\cup \sigma$, one deduces that for any $k'\geq k$,
$f^{k'}(\bar \beta_1)$ and $f^{k'}(\bar \beta_2)$ still intersect $\alpha$ and $\alpha'$
and in particular contain two curves connecting $\alpha$ to $\alpha'$.

The integer $k$ may depend on $\beta$, but since $f^{k_0}(\beta)$ intersect $\delta\setminus W^s_\mathbb{D}(p)$
in a compact set which does not depend on $\beta$, the integer $k$ is is uniformly bounded.
One can thus find $k'=k_1$ independent from $\beta$ such that
both $f^{k_1}(\bar \beta_1)$ and $f^{k_1}(\bar \beta_2)$ meet $\alpha$ and $\alpha'$.

Let us choose the minimal curve $\hat \beta_1\subset \bar \beta_1$
which connects $b_1$ to $f^{-k_1}(\alpha')$ and the minimal curve $\hat \beta_2\subset \bar \beta_2$
which connects $b_2$ to$ f^{-k_1}(\alpha')$. In particular for any
$k_0\leq k<k_1$, the curves $f^k(\hat \beta_i)$ are disjoint from $\Delta'$.

One then choose $\beta_1\subset \bar \beta_1$ and $\beta_2\subset \bar \beta_2$
such that $f^{k_1}(\beta_1)$ and $f^{k_1}(\beta_2)$ meet $\alpha$ and $\alpha'$ at their endpoint
and nowhere else.

By construction $f^{k_0}(\beta_1)$ is disjoint from $D$ and
$f^{k_0}(\beta_2)$ is contained in the interior of $D$.
They are contained in
two different connected components of
$f^{k_1-k_0}(\Delta\setminus \Delta')\setminus \delta$.
Moreover they avoid a uniform neighborhood of $\tilde \delta:=\delta\cap f^{k_1-k_0}(\Delta\setminus \Delta')$:
indeed there exists $\ell_0$ such that any point $y$ in $\tilde \delta$ has a forward iterate
$f^\ell(y)$ in $R'$ with $\ell\leq \ell_0$. By compactness the same holds for any point in a neighborhood of $\widetilde \delta$.
But by construction for any point in $f^{k_0}(\beta_1)$ and $f^{k_0}(\beta_2)$, the $k_1-k_0-1$ first iterates are disjoint from $R'$.
Hence by choosing $k_1>k_0+\ell$, one can ensure that $f^{k_0}(\beta_1)$ and $f^{k_0}(\beta_2)$ are disjoint from
a uniform neighborhood of $\widetilde \delta$.

After having fixed $k_1$ and having chosen $\varepsilon>0$ small enough, one deduces that the curves $f^{k_1}(\sigma_1)$, $f^{k_1}(\sigma_2)$ are $\varepsilon$-separated for some $\varepsilon>0$ small
as required.
\end{proof}

Note that $\Gamma\setminus \gamma$ contains  an arc that connect a point in $R$ with a point in $R'$:
this shows that there exists a curve $\beta\subset \Gamma$ contained in the interior of $D$
which connects $\alpha$ to $\alpha'$. 
One then apply Lemma~\ref{l.separation} inductively:
it shows that for each $\ell$, the arc $\beta$ contains $2^\ell$ orbits of length $\ell.k_1$ that are $\varepsilon$-separated.
One deduces that the topological entropy of $f$ is larger than $\log(2)/k_1$, hence positive.

The proof of Theorem~\ref{t.cycle} is now complete.
\bigskip

The proof of Theorem~\ref{t.cycle2} is the same, working inside the filtrating domain.
\qed

\section{Generalized Morse-Smale diffeomorphisms}\label{s.gms}

We extend the Definition \ref{d.generalizedMS} to filtrating sets:

\begin{definition}
A diffeomorphism is \emph{generalized Morse-Smale}\index{Morse-Smale and generalized Morse-Smale diffeomorphism} in a filtrating set $U$ if
\begin{itemize}
\item[--] the $\omega$-limit set of any forward orbit in $U$ is a periodic orbit,
\item[--] the $\alpha$-limit set of any backward orbit in $U$ is a periodic orbit,
\item[--] the period of all the periodic orbits contained in $U$
is bounded by some $K>0$.
\end{itemize}
\end{definition}

We also say that
a diffeomorphism is \emph{mildly dissipative} in a filtrating set $U$ if
for any ergodic measure $\mu$ for $f|_U$,
which is not not supported on a hyperbolic sink, and for
$\mu$-almost every $x$, $W^s_U(x)$ separates $U$.

\begin{proposition}\label{p.MS}
Any diffeomorphism of the disc which is
mildly dissipative and generalized Morse-Smale in a filtrating set $U$
has zero topological entropy in $U$.
Moreover the chain-recurrent points in $U$ are all periodic.
\end{proposition}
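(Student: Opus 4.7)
The plan is twofold: deduce the entropy statement from the variational principle applied to the generalized Morse-Smale hypothesis, and deduce the chain-recurrence statement by packaging the dynamics into a Conley filtration using the absence of cycles supplied by the preceding section.

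For the entropy statement, let $\Lambda\subset U$ be the maximal $f$-invariant compact subset of $U$, so that $h_{\mathrm{top}}(f|_U)=h_{\mathrm{top}}(f|_\Lambda)$. By the variational principle this equals $\sup_\mu h_\mu(f)$, where $\mu$ ranges over $f$-invariant probabilities supported on $\Lambda$. By Poincaré recurrence any such $\mu$ is carried by recurrent points; the generalized Morse-Smale hypothesis forces a recurrent point to lie in its own $\omega$-limit, which is a periodic orbit, so the point itself is periodic. Thus $\mu$ is concentrated on the set of periodic points of period at most $K$, on which $f$ is of finite order, hence $h_\mu(f)=0$ for every such $\mu$ and the topological entropy of $f|_U$ vanishes.

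For the chain-recurrence statement, let $K$ bound the periods in $U$; the set $U$ is also filtrating for $f^K$, which inherits mild dissipation and (by the first part) zero entropy, and every periodic point of $f|_U$ is fixed by $f^K$. Corollary~\ref{c.cycle} then forbids any cycle of fixed arcs for $f^K$ in $U$, and proposition~\ref{p.group} packages those fixed points into finitely many disjoint isolated fixed arcs $\cI=\{I_1,\dots,I_s\}$. I would totally order the $I_j$'s consistently with the resulting cycle-free ``accumulates on'' relation and build nested open sets $\emptyset=V_0\subset V_1\subset\cdots\subset V_s\subset U$ with $f^K(\overline{V_j})\subset V_j$ such that $V_j\setminus V_{j-1}$ admits $I_j$ as its only isolated invariant set. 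Conley's fundamental theorem then confines the chain-recurrent set of $f^K|_U$ to $\bigcup_j I_j$. A further one-dimensional filtration along each $I_j$, exploiting the strict monotonicity of the induced dynamics on the center manifold between consecutive fixed points guaranteed by definition~\ref{d.fixed-arc}, reduces the chain-recurrent content of $I_j$ to its fixed points. Since chain recurrence for $f$ and $f^K$ coincide on $U$ (an $\varepsilon$-pseudo orbit for one produces a pseudo orbit for the other after inserting or sampling iterates), this yields the claim.

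The hard part will be implementing this two-stage filtration when a fixed arc $I_j$ carries infinitely many fixed points accumulating on one another: the arc is invariant as a whole but not pointwise fixed, so the outer Conley filtration separates the different $I_j$'s from each other while an internal one-dimensional refinement is needed before the chain-recurrent set can be identified with the fixed points of $f^K$. Checking that these two reductions interact correctly, and that the chain-recurrence relation transfers cleanly between $f$ and $f^K$, is where most of the bookkeeping lives.
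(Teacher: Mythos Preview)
Your entropy argument is the same as the paper's: both reduce to the variational principle after observing that every invariant measure is periodic.

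For the chain-recurrence part you take a genuinely different route. The paper argues by a short contradiction: assuming a chain-recurrent non-periodic point $x$, it lies on an unstable branch of some fixed arc $I_0$ (since its $\alpha$-limit is periodic), and that branch accumulates on another arc $I_1$. Chain-recurrence of $x$ forces pseudo-orbits from $I_1$ back towards $I_0$; following those pseudo-orbits through the normally hyperbolic local structure near each arc produces a new unstable branch from $I_1$ to some $I_2$, and so on. Finiteness of $\cI$ then forces a cycle of fixed arcs, contradicting corollary~\ref{c.cycle}. This is four lines and uses nothing beyond the local picture near a normally hyperbolic arc. Your approach instead invokes the classical Smale--Conley theorem (no cycles among the pieces of the limit set $\Rightarrow$ existence of a filtration), then reads off the chain-recurrent set from the filtration, and finally refines inside each arc. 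This is correct---the generalized Morse-Smale hypothesis gives exactly the limit-set decomposition and corollary~\ref{c.cycle} gives the no-cycle condition needed for that classical result---but it imports more machinery than necessary, and the filtration you build is essentially reconstructed later in the paper anyway (section~\ref{ss.trapping}, corollary~\ref{c.trapping}). The paper's argument is preferable here precisely because it extracts the conclusion without first manufacturing the trapping regions; your version front-loads work that the paper postpones and does more carefully. Your remark about the internal one-dimensional filtration along each $I_j$ and the transfer of chain-recurrence between $f$ and $f^K$ are both correct and needed, though the paper absorbs the first into the contradiction (an $x$ inside an arc but not fixed cannot be chain-recurrent by one-dimensional monotonicity) and the second by simply passing to the iterate at the outset.
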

\begin{proof}
Any ergodic measure of $f|_U$ is supported on a periodic orbit, hence has zero entropy.
The variational principle (that states that the topological entropy is the supremum of the entropy of the invariant measures) concludes that the topological entropy of $f|_U$ is zero.

Up to {\color{black} replacing} $f$ by an iterate, one can suppose that all the periodic points and all the unstable
branches in $U$ are fixed by $f$.
Let us assume by contradiction that there exists a chain-recurrent point $x$ which is not periodic.
One chooses as in Proposition~\ref{p.group} a finite collection of disjoint fixed arcs $\cI$ of $U$.
One can require that they do not contain $x$.
By our assumption, $x$ belongs to an unstable branch of an arc $I_0$, which accumulates on an arc $I_1$.
Since $x$ is chain-recurrent, there exists pseudo-orbits from $I_1$ to $I_0$,
hence there exists an unstable branch of $I_1$ which accumulates on another arc $I_2$
and there exists pseudo-orbits from $I_2$ to $I_0$.
Arguing inductively, one builds a sequence of fixed arcs $I_n$ in $U$ such that the unstable manifold of $I_n$
accumulates on the arc $I_{n+1}$.
Since $\cI$ is finite, this implies that there exists a cycle of arcs in $U$, contradicting Corollary~\ref{c.cycle}.
\end{proof}

\begin{proposition}
The set of diffeomorphisms of the disc which are mildly dissipative and generalized Morse-Smale in a filtrating set $U$
is open for the $C^1$ topology.
\end{proposition}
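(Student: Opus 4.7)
The plan is to show both conditions persist under small $C^1$ perturbations. Fix $f$ mildly dissipative and generalized Morse-Smale in $U = V\cap W$ with period bound $K$, so that the strict containments $f(\overline V)\subset V$ and $f^{-1}(\overline W)\subset W$ are $C^0$-stable and $U$ remains filtrating for any $C^0$-small perturbation $g$. By proposition~\ref{p.MS}, the chain-recurrent set of $f|_U$ equals $\operatorname{Per}(f)\cap U\subset \operatorname{Fix}(f^{K!})\cap U$, and by proposition~\ref{p.group} applied to $f^{K!}$ this set is covered by finitely many disjoint normally hyperbolic isolated fixed arcs $I_1,\dots,I_\ell$.

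Since $f|_U$ has zero entropy and is mildly dissipative, theorem~\ref{t.cycle2} together with corollary~\ref{c.cycle} rules out cycles of chain-recurrence classes in $U$. Standard Conley theory then furnishes a \emph{fine filtration}
$$U=U_0\supset U_1\supset\dots\supset U_N=\emptyset,\qquad f(\overline{U_{i+1}})\subset U_{i+1},$$
each layer $U_i\setminus \overline{U_{i+1}}$ containing exactly one $f$-chain-recurrence class, namely the $f$-orbit of a single arc $I_{j(i)}$. These inclusions are also $C^0$-stable, so the same filtration traps $g|_U$ for $g$ sufficiently $C^0$-close to $f$.

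The crucial step is the analysis of each layer for $g$. By the normally hyperbolic invariant manifold theorem applied to $f^{K!}$, each arc $I_{j(i)}$ continues as a $g^{K!}$-invariant normally hyperbolic $C^1$ arc $\widetilde I$, carrying a locally invariant stable foliation whose leaves depend $C^1$-continuously on $g$. Upper semicontinuity of the chain-recurrent set confines the $g$-chain-recurrent points of $U_i\setminus \overline{U_{i+1}}$ to an arbitrarily small neighborhood of $I_{j(i)}$, and the local transverse filtration supplied by the stable holonomy of $\widetilde I$ forces every such point to lie on the $g$-orbit of $\widetilde I$. The restriction $g^{K!}|_{\widetilde I}$ is a $C^1$-diffeomorphism of a one-dimensional arc---hence monotone---so its chain-recurrent set coincides with its fixed-point set. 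Therefore every $g$-periodic point in the layer has period dividing $K!$, and every $\omega$- or $\alpha$-limit of a $g$-orbit meeting that layer is a single $g$-periodic orbit. Summing over the finitely many layers yields that $g$ is generalized Morse-Smale in $U$ with uniform period bound $K!$.

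Mild dissipation of $g$ then follows: every ergodic $g|_U$-measure is supported on a $g$-periodic orbit, so the condition need only be checked for non-sink periodic orbits $\mathcal O_g\subset \widetilde I$. The stable curve $W^s_\DD(p)$ of $p\in\mathcal O_g$ is $C^1$-close to $W^s_\DD(p_f)$ for a corresponding non-sink $f$-periodic point $p_f\in I_{j(i)}$; since the $f$-curve separates $\DD$ with endpoints pinned near $\partial \DD$, the topological separation property passes to the $g$-curve. The main obstacle is the crucial step: upgrading the upper semicontinuous localization of $g$-chain-recurrence near a \emph{non-hyperbolic} fixed arc to the statement that chain recurrence is entirely carried by the persisted normally hyperbolic arc $\widetilde I$. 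This is precisely where the normally hyperbolic structure from definition~\ref{d.fixed-arc} is indispensable---providing not only the $C^1$-persistence of $\widetilde I$ but also of its local stable foliation, whose holonomy supplies the transverse filtration that kills any stray chain-recurrent point off $\widetilde I$.
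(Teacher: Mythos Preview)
Your argument and the paper's share the same two pillars---normal hyperbolic persistence of the fixed arcs $I_j$ for $f^{K!}$, and upper semicontinuity of the chain-recurrent set---but the paper's proof is considerably more direct. It does not build a Conley fine filtration separating the arcs; instead it simply invokes normal hyperbolicity (as in~\cite{BoCr}) to get, for each $I$, a neighborhood $V_I$ such that any $g^k$-orbit contained in $V_I$ lies on a $g^k$-invariant normally contracted arc in $V_I$, and hence (by one-dimensional monotonicity, since $f^k|_I$ has fixed endpoints and is therefore orientation-preserving) accumulates on a $g^k$-fixed point. Upper semicontinuity then forces every $\omega$- and $\alpha$-limit of $g|_U$ into $V=\bigcup V_I$, finishing immediately. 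Your Conley filtration and the ``transverse filtration via stable holonomy'' step are correct but are unnecessary overhead: the local statement ``$g^k$-orbits staying in $V_I$ live on a $g^k$-fixed arc'' already does all the work.

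You do treat a point the paper's proof passes over in silence: persistence of mild dissipation. Your argument there has a small gap. A non-sink $g$-periodic point $p\in\widetilde I$ need not correspond to any $f$-periodic point $p_f\in I$, since bifurcations along the arc can create new periodic points. The repair is to argue at the level of the whole stable foliation of $I$: for a non-degenerate fixed arc $I$ (so its endpoints $p_1,p_2$ are not sinks), the strip $W^s_\DD(I)$ is foliated by the leaves $W^s_\DD(q)$, $q\in I$; the boundary leaves $W^s_\DD(p_1),W^s_\DD(p_2)$ separate $\DD$ by mild dissipation of $f$, and the disjointness and continuity of the foliation force every interior leaf to separate as well. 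Normal hyperbolicity then gives $C^1$-continuity of this foliation in $g$, so the leaf through any $p\in\widetilde I$ is $C^1$-close to some $W^s_\DD(q)$ and still separates $\DD$.
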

\begin{proof}
Note that if $U$ is filtrating for $f$, it is still filtrating for diffeomorphisms close.

From Proposition~\ref{p.group}, there exists a finite collection $\cI$ of disjoint arcs in $U$ that are fixed by an iterate
$f^k$ of $f$. By normal hyperbolicity (see~\cite{BoCr}), for any $I\in \cI$, there exists a neighborhood $V_I$
such that for any diffeomorphism $g$ that is $C^1$ close to $f$, any orbit of $g^k$ contained in $V_I$
is contained in a $g^k$-fixed arc contained in $V_I$; such an arc is still normally contracted.
One deduces that any forward (resp. backward) $g^k$-orbit contained in $V_I$ accumulates on a fixed point of $g^k$.

Since $\cI$ is finite and since the neighborhoods $V_I$ of the arcs $I$ may be chosen small,
one gets a neighborhood $V=\cup V_I$ of the set of periodic points of $f$ with the following property:
for any diffeomorphism $g$ that is $C^1$ close to $f$, the $\omega$-limit of any forward orbit of $g$ contained in $V$ is
an orbit of period less or equal to $k$ and the same holds for the $\alpha$-limit of any backward orbit of $g$ contained in $V$.

The chain-recurrent set varies upper semi-continuously with the dynamics.
Hence for any diffeomorphism $g$ that is close to $f$, the $\omega$-limit and the $\alpha$-limit sets of a $g$-orbit contained in $U$
is contained in $V$, and they are periodic orbits of period less or equal to $k$.
This proves that $g$ is generalized Morse-Smale in $U$.
\end{proof}

\section{Stabilization, decoration, structure of periodic points}
\label{decoration section}

In this section $f$ is a mildly dissipative diffeomorphism of the disc with zero entropy.
First we  introduce and discuss two related types of configurations of saddle periodic orbits: the decoration and the stabilization (subSection \ref{s.stab-dec}).
We then describe how the set of fixed points (or points of a given period) are organized through chains (see Sections \ref{s.connectedness-fixed}). 
Later, using the chains, we define a hierarchy between periodic points (Section \ref{s.connectedness-periodic}) and at the end, in Proposition \ref{p.decreasing-chain}, we show that all periodic points are related through this hierarchy.

\subsection{Stabilization and decoration}\label{s.stab-dec}
\begin{definition}\label{d.stabilization}
 A periodic point $p$ is \emph{stabilized by a fixed point $q$}\index{stabilization} if one of the two following cases occurs (see Figure~\ref{f.stabilized}):
\begin{itemize}
\item[--] either $p=q$ is a fixed point, not a sink, and $D_pf$ has an eigenvalue $\lambda^+_p\leq -1$,
\item[--] or $p$ has period larger than $1$ and an unstable branch $\Gamma$ which accumulates on $q$;
{\color{black} in the case where $q$ is not a sink and $D_pf$ has an eigenvalue $\lambda^+_p\leq -1$,
we also require that $\Gamma$ intersects both components of $\mathbb{D}\setminus W^s_{\DD}(q)$.}

\end{itemize}

Sometimes we also say that the orbit of $p$ is a \emph{stabilized periodic orbit} and that $q$ is a \emph{stabilizing point}.
The unstable branch that accumulates on $q$ is called a \emph{stabilizing branch}.
\end{definition}
\begin{figure}
\begin{center}
\includegraphics[width=12cm,angle=0]{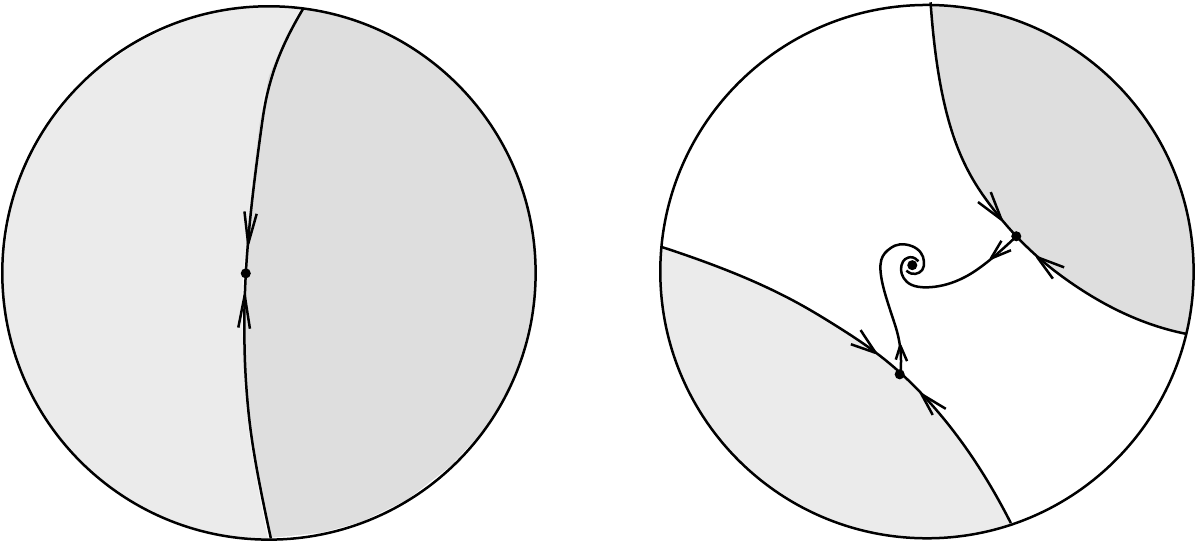}
\put(-50,92){$p$}
\put(-90,95){$q$}
\put(-115,43){$f(p)$}
\put(-260,70){$p=q$}
\end{center}
\caption{Stabilized periodic orbits with period $1$ and $2$.
(Decorated regions in grey.)\label{f.stabilized}}
\end{figure}

\begin{remarks} Let us make a few observations about stabilized and stabilizing points.

\begin{enumerate}
\item The first case can be considered as a degenerate case of the second: as explained in Remark \ref{r.degenerated},
$p$ can be considered as a $2$-periodic point which has collided with the stabilizing fixed  sink $q$.
The stabilizing branches are hidden in $q$ in this case.

\item In the second case, $q$ could be a fixed point of any type: a sink,  indifferent or a  saddle (in that case, it could be either stabilized or not).

\item There may exist several stabilizing points $q$ associated to a stabilized point $p$.

\end{enumerate}
\end{remarks}

We have introduced the notion of \emph{decorated periodic orbit} in Section~\ref{ss.decoration}.

\begin{proposition}[Stabilization implies decoration]\label{p.stab-decorate}
If $f$ is a mildly dissipative diffeomorphism with zero entropy, then any periodic orbit $\cO$ which is stabilized by a fixed point
is decorated.
Each point $p\in \cO$ has at most one stabilizing unstable branch.
\end{proposition}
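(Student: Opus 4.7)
The first case of the stabilization definition---$p=q$ with $\lambda^+_p\leq -1$---gives $\cO=\{p\}$, so the decoration condition holds vacuously (since $p\in W^s_\DD(p)$ forces $\cO\cap(\DD\setminus W^s_\DD(p))=\emptyset$), and no stabilizing branch is defined in this case so uniqueness is trivial. Assume henceforth that $p$ has period $k>1$ and $\Gamma$ is a stabilizing unstable branch of $p$ accumulating on a fixed point $q$; since $q$ is $f$-invariant, each iterate $f^j(\Gamma)$ is an unstable branch of $f^j(p)$ that also accumulates on $q$.

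The core step is to prove that no branch $f^j(\Gamma)$ meets any stable manifold $W^s_\DD(f^{j'}(p))$ of $\cO$. Passing to $f^{2k}$, every point of $\cO$ becomes fixed and each $f^j(\Gamma)$ becomes an $f^{2k}$-invariant fixed unstable branch. An intersection $x\in f^j(\Gamma)\cap W^s_\DD(f^{j'}(p))$ would produce forward $f^{2k}$-iterates of $x$ lying in $f^j(\Gamma)$ and converging to $f^{j'}(p)$, placing $f^{j'}(p)$ in the accumulation set of $f^j(\Gamma)$. When $j'=j$ this is a homoclinic accumulation of a fixed branch on its own periodic endpoint, ruled out by lemma~\ref{l.heteroclinic-cycle} applied to $f^{2k}$; when $j'\ne j$ it provides a nontrivial cycle of periodic orbits $\cO\to\cO$ for $f$, contradicting theorem~\ref{t.cycle}.

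It then follows that each $f^j(\Gamma)$ is contained in a single connected component $V_j$ of $\DD\setminus\bigcup_i W^s_\DD(f^i(p))$, with $f^j(p)$ on $\partial V_j$. The fixed point $q$ lies in a unique such component $U_q$, since $q$ cannot belong to any $W^s_\DD(f^i(p))$ (that would force $q\in\cO$). Because $q\in\overline{V_j}\cap U_q$ and distinct components are disjoint while $q$ is interior to $U_q$, one must have $V_j=U_q$ for every $j$; in particular the entire orbit $\cO$ sits on $\partial U_q$. For any $j$, every other point $f^{j'}(p)$ then lies in the component $D_j^+$ of $\DD\setminus W^s_\DD(f^j(p))$ that contains $U_q$, whereas the opposite component $D_j^-$ contains no point of $\cO$, yielding decoration.

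For the uniqueness of the stabilizing branch, the two unstable branches of $p$ emerge on opposite sides of $W^s_\DD(p)$ near $p$; if both were stabilizing, the argument above applied to each branch would force $\cO\setminus\{p\}$ to lie in both disjoint components of $\DD\setminus W^s_\DD(p)$, forcing $\cO=\{p\}$ and contradicting $k>1$. The main obstacle in the proof is the no-intersection step: one must check carefully that an intersection $f^j(\Gamma)\cap W^s_\DD(f^{j'}(p))\ne\emptyset$ really yields a cycle to which theorem~\ref{t.cycle} can be applied, working at the iterate $f^{2k}$ so that all relevant points and branches are fixed, and distinguishing the homoclinic case $j'=j$ from the heteroclinic-within-$\cO$ case $j'\ne j$.
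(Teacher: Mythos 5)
Your proof is correct and rests on the same mechanism as the paper's: the no-cycle theorem (and the homoclinic lemma) forbids the stabilizing branches of the orbit from crossing the stable manifolds $W^s_\DD(f^i(p))$, and decoration plus uniqueness of the stabilizing branch follow from locating the orbit relative to the component containing $q$. The paper's argument is just a more compressed contrapositive of the same idea: if some $f^j(p)$ lay in the component of $\DD\setminus W^s_\DD(p)$ not containing $q$, its branch accumulating on $q$ would have to cross $W^s_\DD(p)$, producing a cycle of the orbit with itself.
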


\begin{proof}
In the particular case where $\cO$ is a fixed point, the statement become trivial,
we will thus assume that $\cO$ has period larger than $1$.

Consider $p\in \cO$ and the connected component $C$ of $\mathbb{D}\setminus W^s_\mathbb{D}(p)$ which does not contain
the stabilizing point $q$. If one assumes that some iterate $f^j(p)$ belongs to $C$, then
the unstable branch of $f^j(p)$ which accumulates on $q$ intersects both components of $\mathbb{D}\setminus W^s_\mathbb{D}(p)$,
hence intersects $W^s(p)$: this implies that $f$ has a cycle of periodic orbit, a contradiction.

We have proved that the orbit of $p$ is contained in the connecting component of $\mathbb{D}\setminus W^s_\mathbb{D}(p)$
which contains the stabilizing branch. In particular $p$ has at most one stabilizing unstable branch.
\end{proof}

\begin{definition} \label{d.decorated region}
When $p$ is stabilized by a fixed point $q$,
the connected component of $\mathbb{D}\setminus W^s_\mathbb{D}(p)$ which does not
contain $q$ is called the \emph{decorated region of $p$}.\index{decoration}
(In the special case where $p=q$ is a fixed point, it admits two decorated regions.)

The \emph{period} of the decorated region is either the period of $p$ (when $p$ is not fixed)
or $2$ (when $p$ is fixed): this is the return time to the decorated region for points close to $p$.
\end{definition}

\begin{proposition}
If $f$ is a mildly dissipative diffeomorphism with zero entropy which reverses the orientation,
then each stabilized orbit has period $1$ or $2$.
\end{proposition}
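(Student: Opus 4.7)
The plan is to argue by contradiction. Assume $\cO$ is a stabilized orbit of period $n\geq 3$, fix $p\in\cO$ and a stabilizing fixed point $q$, so that some unstable branch of $p$ accumulates on $q$. Set $W_i:=W^s_{\DD}(f^i(p))$ for $i=0,\dots,n-1$. Mild dissipation makes each $W_i$ a proper arc separating $\DD$, and the $W_i$ are pairwise disjoint as distinct stable manifolds. Proposition~\ref{p.stab-decorate} forces the orbit $\cO$ (and hence $q$) to lie on a single side of each $W_i$; the opposite side is the decoration region $D_i$ of $f^i(p)$. A short nesting argument shows that the $D_i$ are pairwise disjoint: otherwise $D_i\subset D_j$ for some $i\neq j$, but then $f^i(p)\in\overline{D_j}$ either lies in $D_j$ (contradicting decoration) or on $W_j$ (contradicting disjointness of distinct stable leaves).

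Consequently, the $n$ disjoint chords $W_0,\dots,W_{n-1}$ cut $\DD$ into exactly $n+1$ open topological discs: the regions $D_i$ and one additional component $\Omega$, which contains $q$ and all of $\cO$. The inclusion $f(W_i)\subset W_{i+1}$, combined with $f(\DD)\subset\mathrm{Int}(\DD)$ (so $W_{i+1}\setminus f(W_i)$ automatically lies in $\DD\setminus f(\DD)$), forces $f(\Omega)$ to be disjoint from every $W_j$; being connected and containing $q$, $f(\Omega)\subset\Omega$. Thus $f$ embeds the closed topological disc $\overline\Omega$ into itself, and the $n$ marked points $f^i(p)$ sit simultaneously on $\partial\Omega$ and on $\partial f(\Omega)=f(\partial\Omega)$. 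Orient $\partial\Omega$ counterclockwise as the boundary of $\Omega$ and record the cyclic order of the marked points as $(f^{\sigma(0)}(p),\dots,f^{\sigma(n-1)}(p))$. Planarity then forces $\partial f(\Omega)$, oriented ccw as boundary of the disc $f(\Omega)\subset\Omega$, to display the same cyclic order: indeed each subarc $f(W_{i-1})\subset W_i$ occurs on $\partial f(\Omega)$ exactly where $W_i$ occurs on $\partial\Omega$.

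Now comes the parity argument. Since $f$ is orientation-reversing, it sends the ccw cyclic order on $\partial\Omega$ to the clockwise cyclic order on $\partial f(\Omega)$; combined with $f(f^i(p))=f^{i+1}(p)$ and with the planarity identification just above, this yields an index $k$ satisfying $\sigma(m)+1\equiv\sigma(k-m)\pmod n$ for every $m$. Substituting $m\mapsto k-m$ gives $\sigma(k-m)+1\equiv\sigma(m)\pmod n$; adding the two congruences forces $2\equiv 0\pmod n$, i.e.\ $n\mid 2$, contradicting $n\geq 3$. The only delicate point in this plan is the planarity assertion that two Jordan curves in the plane sharing the same $n$ marked points, both oriented ccw around a common interior point $q$, carry the same cyclic order on those points; this is elementary once one uses the explicit description of $\partial f(\Omega)$ as $n$ subarcs of the $W_i$'s alternating with $n$ arcs of $f(\partial\DD)$, after which only the two-line modular computation remains.
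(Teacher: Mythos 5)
Your combinatorial endgame is correct and is, at bottom, the same mechanism as the paper's: an orientation-reversing embedding must act on the cyclically ordered set $\{p,\dots,f^{n-1}(p)\}$ by an order-reversing bijection commuting with the shift $i\mapsto i+1$, which forces $2\equiv 0\pmod n$ (the paper gets this as $\tau(n)=-n+a$ on $\ZZ$ after lifting to the universal cover). The preparatory steps (disjointness of the $W_i$, decoration via proposition~\ref{p.stab-decorate}, pairwise disjointness of the $D_i$, the count of $n+1$ complementary components, and the nested-Jordan-curve comparison of cyclic orders) are all sound. The geometric implementation, however, differs from the paper's: you cut $\DD$ along the stable arcs and read the cyclic order on $\partial\Omega$, whereas the paper deletes the cellular set $K=\cup_n f^n(K_p)$ (the saturated accumulation set of the stabilizing unstable branches) to get an invariant annulus, passes to its universal cover, and orders the lifts of the curves $\Gamma_i\cup\sigma_i$ ($\sigma_i$ a stable branch of $f^i(p)$).

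The one genuine gap is the parenthetical claim that $W_{i+1}\setminus f(W_i)$ ``automatically lies in $\DD\setminus f(\DD)$,'' on which your inclusion $f(\Omega)\subset\Omega$ entirely rests. From $f(\DD)\subset\operatorname{Interior}(\DD)$ you only get $f(W_i)\subset W_{i+1}\cap f(\DD)$, and that $f(W_i)$ is the connected component of $W_{i+1}\cap f(\DD)$ containing $f^{i+1}(p)$. Nothing stated rules out that the arc $W_{i+1}=W^s_\DD(f^{i+1}(p))$ leaves $f(\DD)$ and re-enters it: in that case $W_{i+1}\cap f(\DD)$ has further components, whose $f$-preimages are subarcs of $W^s(f^{i}(p))\cap\DD$ disjoint from $W_i$, and you have no control on whether such subarcs meet $\Omega$. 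If one does, $f(\Omega)$ meets $W_{i+1}$ and the nesting $f(\overline\Omega)\subset\overline\Omega$ — which your cyclic-order comparison needs — is lost. This is precisely the preimage-type statement the paper's constructions systematically avoid: throughout (including in its proof of this proposition and in lemma~\ref{l.highperiod}) only the forward inclusions $f(W^s_\DD(x))\subset W^s_\DD(f(x))$ and $f(W^u(x))=W^u(f(x))$ are used, and the order structure is extracted from separating curves in the universal cover of the invariant annulus rather than from a trapped complementary region. To repair your argument you would need either to justify $W_{i+1}\cap f(\DD)=f(W_i)$ (e.g.\ by an intrinsic definition of $W^s(x)$ as a subset of $\DD$, under which it follows), or to replace $\Omega$ by a region bounded by curves that are genuinely forward-invariant up to inclusion, as the paper does.
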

\begin{proof}
Let us consider a stabilized periodic point $p$ with period $k$.
By definition there exists an unstable branch $\Gamma$ of $p$ which accumulates on a fixed point $q$.
We denote by $K_p$ the accumulation set of $\Gamma$. This set is cellular, fixed by $f^k$ and $f^n(K_p)$ contains $q$ for any $n$.
Hence the set $K:=\cup_n f^n(K_p)$ is a cellular set fixed by $f^k$. The complement $\mathbb{D}\setminus K$
is an invariant annulus. Let us denote by $B=\mathbb{R}\times (0,1)$ the universal cover with the covering
automorphism $(x,t)\mapsto (x+1,t)$. The map $f$ on the annulus lifts as a map $h$ on $B$
which reverses the orientation and satisfies $h(x+1,t)=h(x,t)-(1,0)$.
Let $\gamma$ be the union of $\Gamma$ with only one local stable branch of $p$: it is a proper curve in the annulus which connects
one end to the other one. It lifts in $B$ as a curve $\widehat \gamma_0$
whose complement has two connected components.
Repeating the construction for each iterate of $p$ and considering the translated curves,
one obtains a family of curves $(\widehat \gamma_n)_{n\in \ZZ}$ in $B$
with the properties:
\begin{itemize}
\item[--] $\widehat \gamma_{n+k}=\widehat \gamma_n+(1,0)$,
\item[--] $B\setminus \gamma_n$ has two connected components $U^-_n$ and $U^+_n$
satisfying $U^-_n\subset U^-_m$ when $n\geq m$,
\item[--] there exists a bijection $\tau$ of $\mathbb{Z}$ such that $h(\gamma_n)\subset \gamma_{\tau(n)}$.
\end{itemize}
In particular $\tau$ is monotone. Since $h$ reverses the orientation
there exists $a\in \ZZ$ such that $\tau(n)=-n+a$ for each $n\in\ZZ$.
In particular either $\tau$ has a fixed point or a point of period $2$.
This implies that $p$ is either fixed or has period $2$.
\end{proof}

The previous proposition shows that when $f$ reverses the orientation,
all the decorated regions have period $2$.
\begin{corollary}\label{c.orientation}
If $k$ is the period of a decorated region, then $f^k$ preserves the orientation.
\end{corollary}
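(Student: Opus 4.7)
The plan is a short case analysis based on the previous proposition. Recall that by definition, the period $k$ of a decorated region associated to a stabilized periodic point $p$ equals the period of $p$ when $p$ is not fixed, and equals $2$ when $p$ is fixed (the latter corresponding to the first, degenerate case of definition~\ref{d.stabilization}, where $p$ is a fixed saddle with reflexion).

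First I would dispose of the easy case in which $f$ preserves the orientation: then $f^k$ also preserves the orientation for any $k\geq 1$, and there is nothing to prove. The interesting case is when $f$ reverses the orientation. Then I would invoke the previous proposition, which asserts that every stabilized orbit has period $1$ or $2$. In either subcase the period $k$ of the decorated region equals $2$: if $p$ is fixed (period~$1$), the decorated region has period $2$ by the definition; if $p$ has period $2$, then $k=2$ by the definition as well. Since $f^2$ preserves the orientation whenever $f$ reverses it, we conclude that $f^k=f^2$ preserves the orientation, which finishes the corollary.

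There is no real obstacle here: the whole work is done by the previous proposition, and the present statement is essentially a uniform reformulation covering both orientation-preserving and orientation-reversing diffeomorphisms.
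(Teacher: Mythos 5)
Your proof is correct and is exactly the argument the paper intends: the corollary is stated immediately after the remark that, when $f$ reverses orientation, the previous proposition forces every decorated region to have period $2$, so $f^k=f^2$ preserves orientation, while the orientation-preserving case is trivial. Nothing further is needed.
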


\subsection{Structure of the set of fixed points}\label{s.connectedness-fixed}

We introduce a notion which generalizes the fixed arcs.
\begin{definition}\label{d.chain}
Let $p,p'$ be two fixed points.
A \emph{chain} for $f$ between $p$ and $p'$ is a (not necessarily compact) connected set $C$
which is the union of:
\begin{itemize}
\item[--] a set of fixed points $X$ containing $p$ and $p'$,
\item[--] some $f$-invariant unstable branches of points in $X$.
\end{itemize}

\end{definition}

\begin{proposition}\label{p.chain} If $f$ is an orientation preserving mildly dissipative diffeomorphism with zero entropy, 
then between any pair of fixed points $p,p'$ there exists a chain for $f$.
\end{proposition}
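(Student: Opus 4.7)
The plan is to combine the Lefschetz formula (proposition~\ref{p.lefschetz}), the no-cycle property (corollary~\ref{c.cycle}), the localized closing lemma (theorem~\ref{t.measure local}), and the topological $\lambda$-lemma (proposition~\ref{p.transitive}). By proposition~\ref{p.group}, let $\mathcal I$ be a finite disjoint collection of isolated fixed arcs whose union contains every fixed point of $f$. Call two arcs \emph{chain-equivalent} when some chain connects a fixed point of one to a fixed point of the other; the task is to show there is a single equivalence class.

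The preliminary step is to prove that every $f$-invariant unstable branch $\Gamma$ of a fixed point $p$ accumulates on a second fixed point. The accumulation set $K$ of $\Gamma$ is compact, $f$-invariant, and, as the Hausdorff limit of the connected iterates of a fundamental domain of $\Gamma$, connected; corollary~\ref{c.cycle} forbids $p \in K$. Passing if necessary to the cellular hull obtained by filling the bounded complementary components of $K$ in $\mathbb D$, theorem~\ref{t.measure local} provides periodic orbits inside this set; if a minimal-period orbit $\mathcal O$ there is not fixed, an appropriate iterate of $f$ carries $\mathcal O$-unstable branches whose accumulation sets, by proposition~\ref{p.transitive}, are trapped inside $K$. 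A descent argument, based on the finiteness of the collection of periodic arcs of bounded period given by proposition~\ref{p.group} and on the impossibility of cycles (corollary~\ref{c.cycle}), then forces the process to terminate at a fixed point of $f$ in $K$.

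Next I would argue by contradiction that the chain-equivalence has a single class. Suppose not, and pick a class $\mathcal I_0$. By the previous step together with the definition of chain-equivalence, every $f$-invariant unstable branch of an arc of $\mathcal I_0$ accumulates on an arc of $\mathcal I_0$, so the compact connected $f$-invariant continuum
\[
L_0 \;:=\; \overline{\bigcup_{I \in \mathcal I_0} I \, \cup \, \bigcup_{\Gamma} \Gamma}
\]
(the inner union running over all $f$-invariant unstable branches of points of $\mathcal I_0$) is disjoint from every arc of $\mathcal I \setminus \mathcal I_0$. I would then thicken $L_0$ into an open neighborhood $U_0$, disjoint from $\mathcal I \setminus \mathcal I_0$ and satisfying $f(\overline{U_0}) \subset U_0$, by combining small trapping neighborhoods of the arcs of $\mathcal I_0$ with tubular neighborhoods of each $f$-invariant branch. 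Since $f$ preserves orientation, on $\partial U_0$ the vector $f(x)-x$ points inward, so the Poincar\'e index theorem gives
\[
\sum_{I \in \mathcal I_0} \mathrm{index}(I,f) \;=\; i(\partial U_0,f) \;=\; 1.
\]
Performing the same construction for every other chain-class and summing contradicts the identity $\sum_{I \in \mathcal I} \mathrm{index}(I,f) = 1$ of proposition~\ref{p.lefschetz}; hence there is a single chain-class, proving the proposition.

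The main obstacle is the construction of the trapping neighborhood $U_0$: the accumulation set of an $f$-invariant branch inside $L_0$ may be highly non-trivial (it will eventually be shown in corollary~\ref{c.structure} to be a generalized odometer), and such sets do not admit trapping neighborhoods a priori. The argument must exploit mild dissipation to drive every orbit entering a prescribed neighborhood of $L_0$ either into $L_0$ along a stable set or along an $f$-invariant branch toward its accumulation set, and it must use corollary~\ref{c.cycle} to rule out detours that would return to $\mathcal I \setminus \mathcal I_0$.
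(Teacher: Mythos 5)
Your global strategy (show that each chain-equivalence class of fixed arcs contributes index $1$, then invoke the Lefschetz formula of proposition~\ref{p.lefschetz} to conclude there is only one class) is the same as the paper's, and your preliminary step is essentially lemma~\ref{p.accumulation} — though there you can simply apply Cartwright--Littlewood (proposition~\ref{p.CL}) to the cellular accumulation set of the branch; the detour through the closing lemma and a ``descent argument'' is both unnecessary and, as sketched, not actually a proof.

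The genuine gap is exactly the one you flag yourself: the construction of a neighborhood $U_0\supset L_0$ with $f(\overline{U_0})\subset U_0$. This is not a technical loose end but the missing heart of the argument. The set $L_0$ contains the accumulation sets of the $f$-invariant unstable branches, and these can be aperiodic invariant continua; showing that such a set sits inside a trapped disc is precisely theorem~\ref{t.renormalize}, which is proved only in section~\ref{ss.trapping} and whose proof relies on the structure of periodic points (propositions~\ref{p.decreasing-chain}, \ref{p.chain-decreasing}, etc.), which in turn rests on proposition~\ref{p.chain}. So your route is circular relative to the paper's architecture, and no independent construction of $U_0$ is offered. The paper avoids trapping regions entirely at this stage: it first shows (via no-cycles and finiteness of $\cI$) that every class contains an arc with no $f$-invariant unstable branch, hence of index $1$; then, cutting the disc along the stable curves $W^s_{\mathbb D}(I)$ of the index $-1$ arcs, it injectively assigns to each index $-1$ arc of the class an index $1$ arc of the same class lying in the complementary component on the far side from a fixed reference arc $I(0)$ (claim~\ref{c.index1}). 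This purely combinatorial pairing yields $\sum_{I\in C}\operatorname{index}(I,f)\geq 1$ for every class $C$, which is all that Lefschetz requires. To repair your proof you would either have to supply the trapping construction from scratch (essentially redoing section~\ref{ss.trapping} without the tools of section~\ref{decoration section}) or replace it by an index-counting argument of the paper's type.
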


The end of this section is devoted to the proof of this proposition. Previous proposition also holds for mildly dissipative diffeomorphisms without cycles.

\begin{lemma}\label{p.accumulation}
If $f$ is an orientation-preserving dissipative diffeomorphism of the disc,
any $f$-invariant unstable branch $\Gamma$ accumulates on a fixed point.
\end{lemma}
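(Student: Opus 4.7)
The plan is to apply the Cartwright--Littlewood fixed point theorem (proposition~\ref{p.CL}) to the ``filled'' accumulation set of $\Gamma$, and then to use the strict area contraction of $f$ to rule out the spurious fixed points in the complementary holes.

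Let $\gamma\subset \Gamma$ be a fundamental domain of $f|_\Gamma$ (chosen so that $\gamma$ and $f(\gamma)$ share a common endpoint in $\Gamma$) and set
\[
K\;:=\;\bigcap_{N\geq 0}\overline{\bigcup_{n\geq N} f^n(\gamma)}.
\]
By compactness of $\mathbb{D}$, $K$ is non-empty; it is $f$-invariant by construction; and it is connected as a decreasing intersection of compact connected sets, since $\bigcup_{n\geq N}f^n(\gamma)$ is a connected arc for every $N$. Moreover, because $f(\mathbb{D})\subset\operatorname{Interior}(\mathbb{D})$, we have $K\subset\bigcap_{n\geq 0} f^n(\mathbb{D})\subset\operatorname{Interior}(\mathbb{D})$.

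I then extend $f$ to an orientation-preserving homeomorphism $\tilde{f}$ of $\mathbb{R}^2$ that equals the identity outside a large disc containing $\mathbb{D}$. Let $V$ denote the unbounded component of $\mathbb{R}^2\setminus K$ and define the filling $\widehat{K}:=\mathbb{R}^2\setminus V$, i.e. the union of $K$ with the bounded components $(U_i)$ of $\mathbb{R}^2\setminus K$. The set $\widehat{K}$ is compact, cellular (its complement $V$ is connected), and $\tilde{f}$-invariant: $\tilde{f}$ permutes the components of $\mathbb{R}^2\setminus K$, and it fixes $V$ since it is the identity near infinity. Proposition~\ref{p.CL} therefore produces a fixed point $q\in\widehat{K}$.

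The key final step is to show that $q$ lies in $K$, not in some $U_i$. If instead $q\in U_i$, then $\tilde{f}(q)=q\in U_i\cap\tilde{f}(U_i)$ forces $\tilde{f}(U_i)=U_i$, because distinct complementary components are disjoint. Since $K\subset\operatorname{Interior}(\mathbb{D})$, the circle $\partial\mathbb{D}$ lies in $V$, so the bounded set $U_i$ cannot meet $\partial\mathbb{D}$, and hence $U_i\subset\mathbb{D}$ where $\tilde{f}$ agrees with $f$. Thus $f(U_i)=U_i$, giving
\[
\mathrm{area}(U_i)=\mathrm{area}(f(U_i))=\int_{U_i}|\det Df|\,dA<\mathrm{area}(U_i),
\]
a contradiction since $U_i$ is a non-empty open set and $|\det Df|<1$ on $\mathbb{D}$. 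Consequently $q\in K$, and $\Gamma$ accumulates on the fixed point $q$. The delicate point in this plan is precisely this dissipation argument: without it, Cartwright--Littlewood only guarantees a fixed point somewhere in the filling, and locating it on $K$ itself is what requires the strict area contraction.
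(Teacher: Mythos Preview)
Your proof is correct and uses the same ingredients as the paper: connectedness of the accumulation set, Cartwright--Littlewood, and the strict area contraction. The paper organizes them slightly differently: it asserts directly that ``since $f$ is dissipative, the complement $\mathbb{D}\setminus\Lambda$ is connected'' and then applies proposition~\ref{p.CL} to $\Lambda$ itself, whereas you fill $K$, apply Cartwright--Littlewood to the filling, and only then invoke the area argument to exclude the holes. Your version makes explicit the step the paper leaves implicit (why dissipation forces cellularity), so the two arguments are really the same proof read in different orders.
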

\begin{proof}
Let $\gamma\subset \Gamma$ be a curve which is a fundamental domain.
By definition the accumulation set $\Lambda$ is an invariant compact set. Since it is arbitrarily close in the Hausdorff topology to
{\color{black} the closure of the curve $\bigcup_{i\geq i_0} f^i(\gamma)$ (with $i_0$ large)}, the set $\Lambda$ is connected. Since $f$ is dissipative, the complement
$\mathbb{D}\setminus \Lambda$ is connected. One deduces from Proposition \ref{p.CL} that $\Lambda$ contains a fixed point.
\end{proof}

Let us consider the finite set $\cI$ of normally hyperbolic fixed arcs as in Section~\ref{ss.arc} and recall that fixed points can be treated as arcs.

\begin{lemma}\label{l.acc}
For any $f$-invariant unstable branch $\Gamma$ of an arc $I\in \cI$, the accumulation set intersects an arc $I'\in \cI$ of index $0$ or $1$.
\end{lemma}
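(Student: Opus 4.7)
The plan is a proof by contradiction: assuming the accumulation set $\Lambda$ of $\Gamma$ meets only arcs of index $-1$, I will build a cycle of fixed arcs, contradicting corollary~\ref{c.cycle} under the standing zero-entropy hypothesis.

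The technical heart is the following claim $(\star)$: if $\Gamma'$ is an $f$-invariant unstable branch whose accumulation set meets an arc $I' \in \cI$ of index $-1$, then $\Gamma'$ accumulates on at least one of the two $f$-invariant unstable branches of $I'$. Granting $(\star)$, one iterates as follows. By lemma~\ref{p.accumulation}, $\Lambda$ meets some $I_1 \in \cI$, which by the contradiction hypothesis has index $-1$; claim $(\star)$ then produces an $f$-invariant unstable branch $\Gamma_1$ of $I_1$ accumulated by $\Gamma$, and proposition~\ref{p.transitive} ensures that the accumulation set of $\Gamma_1$ is contained in $\Lambda$, hence still meets only arcs of index $-1$. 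Repeating yields a sequence of invariant unstable branches $\Gamma_k$ attached to arcs $I_k \in \cI$, each of index $-1$, with $\Gamma_k$ accumulating on $I_{k+1}$. Since $\cI$ is finite by proposition~\ref{p.group}, pigeonhole forces $I_k = I_j$ for some $k<j$, producing the required cycle of fixed arcs and contradicting corollary~\ref{c.cycle}.

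To prove $(\star)$, pick a fixed point $p \in I' \cap \overline{\Gamma'}$. Using that $\overline{\Gamma'}$ is $f$-invariant and that the one-dimensional dynamics along the normally contracted arc $I'$ pushes orbits towards a fixed endpoint, one reduces to the case where $p$ is an endpoint of $I'$ with $\lambda^+_p \geq 1$. Pick a sequence $z_n \in \Gamma'$ with $z_n \to p$. In the hyperbolic case $\lambda^+_p > 1$, a local analysis near $p$ shows that either infinitely many $z_n$ leave a fixed small neighborhood of $p$ along the local unstable direction --- so that $\Gamma'$ accumulates on a fundamental domain of the local unstable manifold, which is part of an invariant unstable branch of $I'$, and we are done --- or else all but finitely many $z_n$ lie on $W^s_\mathbb{D}(p)$. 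The latter subcase yields infinitely many intersections $\Gamma' \cap W^s_\mathbb{D}(p)$ accumulating at $p$: if $p$ is on the orbit of the base point of $\Gamma'$ this is a homoclinic configuration and lemma~\ref{l.heteroclinic-cycle} already forces positive entropy, a contradiction; otherwise the configuration near $p$ is heteroclinic, and one builds a crossing rectangle as in the proof of lemma~\ref{l.heteroclinic-cycle}, again producing positive entropy.

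The main obstacle lies in the subcase of $(\star)$ where $\Gamma'$ approaches $p$ exclusively along the stable direction, and especially in the indifferent situation $\lambda^+_p = 1$, where the classical $\lambda$-lemma is not directly available. Here one needs to exploit the normal contraction provided by definition~\ref{d.fixed-arc}, together with a careful topological analysis of the dynamics along the arc $I'$, to guarantee that $\Gamma'$ eventually spreads into an outgoing invariant branch rather than remaining trapped along $W^s_\mathbb{D}(p)$ or sliding along $I'$.
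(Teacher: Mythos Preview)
Your global strategy --- iterate ``$\Gamma'$ accumulates on an index $-1$ arc $\Rightarrow$ it accumulates on one of that arc's unstable branches'' and invoke finiteness of $\cI$ together with corollary~\ref{c.cycle} --- is exactly the paper's (the paper runs it directly rather than by contradiction, but this is cosmetic).

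The gap is in your proof of $(\star)$, and the obstacle you flag is not a real obstacle. The missing observation is global rather than local: the base point $p_0$ of $\Gamma'$ lies in some arc of $\cI$ distinct from $I'$, and since strips of distinct non-sink arcs are disjoint (end of section~\ref{ss.arc}), $p_0$ sits in an open component $V$ of $\mathbb{D}\setminus W^s_\mathbb{D}(I')$. This component is bounded by $W^s_\mathbb{D}(e)$ for one endpoint $e$ of $I'$, and because an index $-1$ arc has $\lambda^+_e>0$ at its endpoints, $V$ is forward-invariant; hence $\Gamma'\subset V$. Two consequences follow. First, $\Gamma'$ never meets $W^s_\mathbb{D}(e)$, so your ``stable direction'' subcase is vacuous (and your attempted entropy argument there, which would need a cycle rather than a single heteroclinic crossing, is unnecessary). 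Second, the only point of $I'$ in $\overline V$ is $e$ itself, so there is no reduction to perform --- the accumulation point on $I'$ is automatically the endpoint $e$. From there, normal hyperbolicity of the arc $I'$ (definition~\ref{d.fixed-arc}), not a $\lambda$-lemma at the single point $e$, handles the hyperbolic and the indifferent case uniformly: forward orbits of points of $\Gamma'\subset V$ near $e$ are attracted transversally onto the center direction and then pushed outward along the unstable branch $\Gamma_e\subset V$, so $\Gamma'$ accumulates on $\Gamma_e$. The paper compresses this whole step into one sentence.
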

\begin{proof}
The branch $\Gamma_0:=\Gamma$ is contained in the unstable set of a fixed point $p_0$.
From Lemma~\ref{p.accumulation}, the accumulation set of $\Gamma_0$ contains a fixed point $p_1$.
Let $I_1\in \cI$ be the fixed arc that contains $p_1$.
If $I_1$ has index $0$ or $1$, the lemma holds. Otherwise $I_1$ has the type of a saddle with no {\color{black} reflection}.
Since $p_0\not\in I_1$, the branch $\Gamma_0$ accumulates on  the stable manifold of an endpoint of $I_1$.
One can thus reduce to the case where $p_1$ is an endpoint of $I_1$ and where $I_1$ and $p_1$ have a common unstable branch $\Gamma_1$ which intersects the accumulation set of $\Gamma_0$ (see Proposition \ref{p.transitive}).
One can repeat the previous construction with the arc $I_1$ and the unstable branch $\Gamma_1$.
One build in this way inductively a sequence of arcs $I_n$ with an unstable branch $\Gamma_n$ which accumulate on $I_{n+1}$.
Since the number of arcs in $\cI$ is finite, and there is no cycle (Corollary~\ref{c.cycle}),
this sequence stops with one arc of index $0$ or $1$.
By construction, each unstable branch $\Gamma_n$ accumulates on the unstable branch $\Gamma_{n+1}$.
The Proposition~\ref{p.transitive} shows that $\Gamma_0$ accumulates on $\Gamma_{\ell-1}$, hence on the last arc $I_\ell$.
\end{proof}

We introduce the following equivalence relation $\sim$ between fixed arcs $I,I'\in \cI$:
\begin{description}
\item[$I\sim I'$:] \emph{There exists a sequence of arcs $I=I_1,I_2,\dots, I_{\ell}=I'$ in $\cI$ such that for each $0\leq i<\ell$
either $I_i$ admits a $f$-invariant unstable branch which accumulates on $I_{i+1}$ or
$I_{i+1}$ admits a $f$-invariant  unstable branch which accumulates on $I_i$.}
\end{description}

\begin{lemma}\label{l.class-unique}
The relation $\sim$ has only one equivalence class.
\end{lemma}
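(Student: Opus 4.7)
The approach is to translate the question into a graph-theoretic one and then combine the Lefschetz formula, Corollary \ref{c.cycle}, and mild dissipation.

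First, a case-by-case check of Definition \ref{d.type-arc} yields $\mathrm{index}(I,f)+u(I)=1$ for each $I\in\cI$, where $u(I)$ is the number of $f$-invariant unstable branches carried by the endpoints of $I$ (sinks and saddles with reflexion contribute $1+0$, saddle-nodes $0+1$, saddles with no reflexion $-1+2$). Summing and applying Proposition \ref{p.lefschetz} gives $\sum_{I} u(I)=|\cI|-1$. I would then form the directed multigraph $G$ with vertex set $\cI$ and one edge per $f$-invariant unstable branch, oriented from its source arc $I$ to an index-$\geq 0$ target arc on which it accumulates (Lemma \ref{l.acc}). Thus $G$ has $|\cI|$ vertices and $|\cI|-1$ edges, every edge goes from an arc of index $\leq 0$ to one of index $\geq 0$, and by Corollary \ref{c.cycle} $G$ has no directed cycle. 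The equivalence classes of $\sim$ are exactly the connected components of the underlying undirected multigraph $\widetilde G$; since $|E(\widetilde G)|=|V(\widetilde G)|-1$, the graph $\widetilde G$ is connected if and only if it is acyclic.

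It remains to rule out undirected cycles in $\widetilde G$. Suppose a minimal one $I_0-I_1-\cdots-I_{m-1}-I_0$ existed. Because $G$ is a DAG with edges going from index-$\leq 0$ to index-$\geq 0$ arcs, a flow count around the cycle forces the existence of a \emph{valley} vertex $V$ of index $-1$ both of whose $f$-invariant unstable branches $\Gamma_1,\Gamma_2$ carry the cycle at $V$. The union $L$ of the fixed arcs and invariant branches supporting the cycle is then a Jordan curve in $\DD$, which splits $\DD$ into a bounded interior region $D_{\mathrm{in}}$ (disjoint from $\partial\DD$) and an exterior region $D_{\mathrm{ext}}$ meeting $\partial\DD$. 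Near $V$ the curve $W^s_\DD(V)$ is transverse to $L$, so one of its arms enters $D_{\mathrm{in}}$.

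The heart of the proof is the subclaim $W^s_\DD(V)\cap L=\{V\}$. Any fixed point lying on both $W^s(V)$ and $L$ must coincide with $V$. If $W^s_\DD(V)$ met an invariant branch $\Gamma\subset L$ at a point $x\neq V$, then either $\Gamma\in\{\Gamma_1,\Gamma_2\}$ is an unstable branch of $V$ and $x$ is a homoclinic intersection, giving positive topological entropy by Lemma \ref{l.heteroclinic-cycle}; or $\Gamma$ is an invariant branch of another cycle vertex $q\neq V$, in which case Smale's $\lambda$-lemma forces $\Gamma$ to accumulate on $W^u(V)=\Gamma_1\cup\Gamma_2$ and then iterating Proposition \ref{p.transitive} around the cycle propagates the accumulation of $\Gamma$ back to $q$ itself, yielding a homoclinic of $q$ and again positive entropy by Lemma \ref{l.heteroclinic-cycle}. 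Hence $W^s_\DD(V)\cap L=\{V\}$, so the $D_{\mathrm{in}}$-arm of $W^s_\DD(V)$ stays trapped in the bounded region $D_{\mathrm{in}}$ and cannot reach $\partial\DD$; therefore $W^s_\DD(V)$ does not separate $\DD$, contradicting mild dissipation (Definition \ref{SD defi}) applied to the ergodic measure $\delta_V$.

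The main obstacle is the $\lambda$-lemma propagation step in the heteroclinic sub-case: one has to iterate Proposition \ref{p.transitive} along the cycle with care, verifying that the accumulation set of $\Gamma$ genuinely returns to $q$ to produce a homoclinic. This uses the DAG direction structure of $G$, together with the flexibility that any index-$\geq 0$ arc on which a branch accumulates is an admissible target in Lemma \ref{l.acc}, so that a suitable reassignment of targets converts the chain of accumulations into a directed cycle of fixed arcs excluded by Corollary \ref{c.cycle}.
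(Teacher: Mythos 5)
Your reduction to a graph problem is attractive and the index bookkeeping is correct ($\mathrm{index}(I,f)+u(I)=1$ case by case, hence $\sum_I u(I)=|\cI|-1$ by Proposition~\ref{p.lefschetz}, so that acyclicity of $\widetilde G$ would indeed give connectivity, and connectivity of $\widetilde G$ does imply a single $\sim$-class). The combinatorial step producing a ``valley'' arc $V$ of index $-1$ with both invariant branches on the cycle is also fine, since $G$ is a DAG by Corollary~\ref{c.cycle}. The gap is in the geometric step: the set $L$ obtained as the union of the fixed arcs and invariant branches supporting the cycle is \emph{not} a Jordan curve, and in general is not even closed. An $f$-invariant unstable branch $\Gamma$ does not land on the arc it accumulates on: its accumulation set is an invariant continuum which \emph{contains} the target arc but may contain much more (indeed Proposition~\ref{p.transitive} shows it typically swallows entire further branches). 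So there is no bounded complementary region $D_{\mathrm{in}}$, and the trapping argument for one arm of $W^s_\DD(V)$, hence the contradiction with mild dissipation, does not get off the ground. A secondary problem is the appeal to Smale's $\lambda$-lemma when $W^s_\DD(V)$ meets a branch $\Gamma$ of another vertex: that lemma needs a transverse intersection, which you do not have; the paper's topological substitute (Proposition~\ref{p.transitive}) has the different hypothesis that $\Gamma$ accumulates on a point of an \emph{unstable} branch.

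For contrast, the paper's proof never needs branches to land. It shows that the total index of each $\sim$-class is at least $1$ and then invokes Lefschetz to conclude there is at most one class: first, each class contains an index-$1$ arc (else one gets a cycle of arcs, excluded by Corollary~\ref{c.cycle}); second, to every index-$(-1)$ arc $I$ one injectively assigns a further index-$1$ arc of the same class, found inside the component of $\mathbb{D}\setminus\bigcup W^s_\DD(I_i)$ cut off by $W^s_\DD(I)$ on the side away from a reference index-$1$ arc. The separating objects there are the genuine stable curves $W^s_\DD(I)$, which do reach $\partial\DD$, rather than a putative closed curve built from unstable branches. If you want to salvage your approach you would need to replace $L$ by a separating set built from stable manifolds, at which point you are essentially reproducing the paper's claim~\ref{c.index1}.
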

\begin{proof}
It is enough to prove that the sum of the indices of the arcs in an equivalence class is larger or equal to $1$.
Then the Lefschetz formula (Proposition~\ref{p.lefschetz}) will conclude that there is at most one class.

Let $C$ be any equivalence class for $\sim$. It always contains a fixed arc of index $1$:

\begin{claim} The class $C$ contains a fixed arc with no $f$-invariant unstable branch.
\end{claim}
\begin{proof}
From Lemma~\ref{p.accumulation},
each $f$-invariant unstable branch of a fixed interval accumulates
on a fixed interval. If the conclusion of the claim does not hold, one thus obtain an infinite sequence
$I_n$ in $C$ such that $I_n$ admits a $f$-invariant unstable branch which accumulates on $I_{n+1}$.
Since the set $\cI$ of fixed intervals is finite, one gets a cycle between fixed interval, contradicting the conclusion
of Corollary~\ref{c.cycle}.
\end{proof}

As a consequences, we associate to any fixed arc of index $-1$ another arc of index $1$.
Let:
$$\mathcal{N}:=\mathbb{D}\setminus \bigcup\{W^s_\mathbb{D}(I_i), I_i\in \cI \text{ of index $-1$}\}.$$

\begin{claim}\label{c.index1}
Let $U$ be a connected component of $\mathcal{N}$.
Let $I\in \cI$ be an arc of index $-1$ such that $W^s_\mathbb{D}(I)$ bounds $U$.
Then $U$ contains an arc $I'\in \cI$ of index $1$ such that $I\sim I'$.
\end{claim}
\begin{proof}
We consider the sequences of arcs $I_1,\dots,I_\ell$ in $\cI$ such that
$I_1=I$, for each $k$ there exists an unstable branch of $I_k$ which accumulates on $I_{k+1}$
and each $W^s_\mathbb{D}(I_k)$ either is contained in $U$ or bounds $U$.
From Corollary~\ref{c.cycle} such a sequence is finite.
One can assume that it has maximal length.

We claim that its last element $I_\ell$
has index $1$ (hence is included in $U$).
If this is not the case and $I_\ell$ has index $0$, it is contained in $U$ and admits an unstable branch.
From Lemma~\ref{l.acc}, either this unstable branch accumulated on a fixed arc contained in $U$
or it intersects one of the boundaries $W^s_\mathbb{D}(\widetilde I)$ of $U$. In both case, we build a new
fixed arc and the sequence $I_1,\dots,I_\ell$ is not maximal, a contradiction.

By construction the last element $I':=I_\ell$ belongs to the class $C$.
\end{proof}

Now we proceed to finish the proof of Lemma \ref{l.class-unique}.  Let us choose arbitrarily a fixed arc $I(0)\in C$ of index $1$.
For each arc $I\in \cI$ of index $-1$, let us consider the connected component $V$ of
$\mathbb{D}\setminus W^s_\mathbb{D}(I)$ which does not contain $I(0)$.
Let $U$ be the connected component of
$\mathcal{N}$
which is contained in $V$ and whose boundary intersects $W^s_\mathbb{D}(I)$.
The previous claim associates to it an arc $I'\in C$ of index $1$ contained in $U$.
It is by construction different from $I(0)$.

Note that if $\widetilde I\in C$ is another arc of index $-1$,
the associated arc $\widetilde I'$ of index $1$ is different: indeed, in each component $U$ of $\mathbb{D}\setminus W^s_\mathbb{D}(I)$
which does not contain $I(0)$, there exists a unique $I\in \cI$ such that $W^s_\mathbb{D}(I)$ bounds $U$ and separates $U$
from $I(0)$.

We have shown that in $C$ the number of arcs of index $-1$ is smaller than the number of arcs of index $1$.
This concludes the proof of the Lemma~\ref{l.class-unique}.
\end{proof}

\begin{proof}[Proof of Proposition~\ref{p.chain}]
Any normally hyperbolic fixed arc is a chain for $f$.
The Lemma~\ref{l.class-unique} proves that the union $C$ of arcs in $\cI$ with their $f$-invariant unstable branches is a connected set.
Note that any arc is the union of a set of fixed points with $f$-invariant unstable branches.
This shows that $C$ of $f$ is a chain between any pair of fixed points.
\end{proof}

\begin{remark}\label{r.lefschetz}
The proof of previous proposition (and Claim~\ref{c.index1}) shows the following property:

\emph{Assume that $f$ preserves the orientation. Let $\cI$ be a finite collection of disjoint isolated arcs fixed by $f$
such that for any $I\in \cI$ and any $f$-invariant unstable branch $\Gamma$ of $I$, any periodic point in the accumulation set of $\Gamma$
belongs to some $I'\in \cI$. Then the sum of the indices $index(I,f)$ of all $I\in \cI$ is larger or equal to $1$.}
\end{remark}

\subsection{Points decreasing chain related to a stabilized point}
\label{s.connectedness-periodic}

In the present section we discuss how periodic points of larger period are related to points of lower period. Since Proposition \ref{p.chain} holds for any (orientation preserving) iterate of $f$, any periodic point  can be related to the fixed points through a chain associated to a large iterate of $f$.
In the next definitions and propositions we show that these chains have a particular structure that link points of larger period to points of lower one.

\begin{definition}
\label{d.decreasing-chain}
Let $p$ be a stabilized periodic point.
A periodic point $w\neq p$ is \emph{decreasing chain related}\index{chain, decreasing chain relation} to $p$
if there exists $k\geq 2$ and a chain $C$ for $f^k$ between $w$ and $p$ which
is contained in the closure of a decorated region of $p$.
\end{definition}

\begin{remark}\label{r.decreasing}
Note that any iterate $f^i(C)$ of the chain is contained in the closure of a decorated region of $f^i(p)$,
hence $f^i(w)$ is decreasing chain-related to $p$.
One deduces that the period of the decorated region of $p$ divides the period of $w$.
We also say that the orbit of $w$ is decreasing chain related to the orbit of $p$.
\end{remark}

The unstable set of a decreasing chain related point can be localized.

\begin{proposition}\label{p.unstable}
If $w$ is decreasing chain related to a stabilized periodic point $p$,
then the unstable set of $w$ is contained in the closure of a decorated region $V$ of $p$.

Moreover, if the period of $w$ is larger than the period of $V$ and $f$ is orientation preserving, then the closure of the unstable set of $w$ is contained in $V$.
\end{proposition}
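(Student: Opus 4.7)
Both parts are proved by contradiction, combining theorem~\ref{t.cycle} (no cycle of periodic orbits in the zero entropy setting), the topological $\lambda$-lemma content of proposition~\ref{p.transitive}, and proposition~\ref{p.heteroclinic}.

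For part~(1), I would first pass to an iterate $g=f^{2k}$, with $k$ chosen large enough and even, so that $g$ preserves the orientation and fixes $w$, $p$, and every $g$-invariant unstable branch appearing in the chain $C$. Assume for contradiction that some $g$-invariant unstable branch $\Gamma$ of $w$ exits $\overline V$. Since $\Gamma\cup\{w\}$ is connected and $w\in V$ (note that $w\notin W^s_\mathbb{D}(p)$, because $w$ is periodic and distinct from any iterate of $p$), the branch $\Gamma$ must cross $W^s_\mathbb{D}(p)$ at some point $z\neq p$. By $g$-invariance of $\Gamma$, the iterates $g^i(z)$ stay in $\Gamma$ and converge to $p$, so $p\in\overline\Gamma$ and the crossing at $z$ is topologically transverse.

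Next, I exploit that the chain $C\subset\overline V$ is connected and contains both $w$ and $p$: fix a topological arc inside $C$ from $p$ to $w$, written as a finite sequence of $g$-fixed points $p=x_\ell,x_{\ell-1},\dots,x_0=w$ joined by consecutive $g$-invariant unstable branches in $C$. The topologically transverse crossing $\Gamma\cap W^s_\mathbb{D}(p)$ yields, via proposition~\ref{p.transitive}, that $\Gamma$ accumulates on every $g$-invariant unstable branch of $p$. Propagating step by step along the arc, using proposition~\ref{p.transitive} at each edge with a case analysis depending on the direction of the edge (the branch emanating from $x_i$ or from $x_{i+1}$), the accumulation set of $\Gamma$ eventually reaches $w$ itself, i.e.\ $w\in\overline\Gamma$. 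This yields a $1$-cycle of periodic orbits of $g$, hence of $f$, contradicting theorem~\ref{t.cycle}. The main obstacle is precisely this case analysis: when an edge branch is oriented against the direction of propagation, proposition~\ref{p.transitive} must be invoked indirectly through the $g$-invariant unstable branches at both endpoints of the edge, and one must verify that no intermediate accumulation produces an auxiliary cycle before reaching $w$.

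For part~(2), I use part~(1) as input. Assume for contradiction that $\overline{W^u(w)}$ meets $\partial V\cap\mathbb{D}=W^s_\mathbb{D}(p)$ at some point $z$. The set $\overline{W^u(w)}$ is $f^n$-invariant, and since $m\mid n$, the iterates $f^{in}(z)\to p$ all lie in $\overline{W^u(w)}$; hence $p\in\overline{W^u(w)}$ and some unstable branch $\Gamma'$ of $w$ satisfies $p\in\overline{\Gamma'}$. Because $n>m$ forces $f^m(w)\neq w$, the branch $\Gamma'$ is not $f^m$-invariant. Since $f^m$ is orientation preserving, mildly dissipative, of zero topological entropy, and $p$ is a fixed point of $f^m$ that is not a sink (being stabilized) and therefore has a real eigenvalue at least $1$, proposition~\ref{p.heteroclinic} applied to $f^m$ forces $\Gamma'$ to intersect both components of $\mathbb{D}\setminus W^s_\mathbb{D}(p)$. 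This contradicts part~(1), which forces $\Gamma'\subset\overline V$. Hence $\overline{W^u(w)}\cap W^s_\mathbb{D}(p)=\emptyset$ and $\overline{W^u(w)}\subset V$.
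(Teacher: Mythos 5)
Your second part is essentially the paper's argument and is fine: once part~(1) is known, a branch of $w$ accumulating on $p$ would have to meet both components of $\mathbb{D}\setminus W^s_\mathbb{D}(p)$ by proposition~\ref{p.heteroclinic} applied to $f^m$, which is impossible.

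Your first part, however, has a genuine gap exactly at the step you flag as ``the main obstacle'', and I do not think it can be closed along the route you propose. You want to propagate the accumulation of $\Gamma$ from $p$ back to $w$ along the chain $C$ so as to produce a $1$-cycle. But proposition~\ref{p.transitive} is a one-way statement: it transfers accumulation \emph{forward} along unstable branches (if $\Gamma$ accumulates on a point of a branch $\Gamma'$, then $\overline\Gamma\supset\overline{\Gamma'}$). A chain is merely a connected union of fixed points and invariant unstable branches, and its edges may be oriented in either direction. For an edge where $x_i$ has a branch $\Gamma_i$ accumulating on $x_{i+1}$ (pointing toward $p$ rather than toward $w$), knowing that $\Gamma$ accumulates on $x_{i+1}$ gives no information whatsoever about $\Gamma$ accumulating on $x_i$ or on $\Gamma_i$; there is no ``indirect'' invocation of proposition~\ref{p.transitive} that reverses the arrow. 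Moreover, if $p$ is an indifferent fixed point with eigenvalue $-1$ and no unstable branch, your propagation cannot even start. So the conclusion $w\in\overline\Gamma$ is not established and the contradiction with theorem~\ref{t.cycle} is not reached.

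The paper's proof avoids this entirely. It first uses theorem~\ref{t.cycle} to place $\Gamma$ in a single component of $\mathbb{D}\setminus W^s_\mathbb{D}(w)$; in the nontrivial case it uses the chain in the \emph{opposite} way from you: since $C\subset\overline V$ is connected and must reach $w$, some unstable branch of $C$ accumulates on a point of $\Gamma$, so by proposition~\ref{p.transitive} the accumulation set of $\Gamma$ is contained in $\overline V$. The contradiction then comes from the stabilization structure of $p$: if $\Gamma$ crossed $W^s_\mathbb{D}(p)$, its accumulation set would contain the accumulation set of the stabilizing branch of $p$ (hence the stabilizing fixed point $q\notin\overline V$), or, in the degenerate fixed cases, a periodic point outside $\overline V$ accumulated by $W^u(p)$ or lying near $p$. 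This clashes with the containment in $\overline V$ just obtained. You should replace your cycle-building step by this two-sided argument: bound the accumulation set of $\Gamma$ inside $\overline V$ via the chain, then exhibit a point outside $\overline V$ that the crossing of $W^s_\mathbb{D}(p)$ would force into that accumulation set.
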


\begin{proof}
Let us consider the two connected components of $\mathbb{D}\setminus W^s_\mathbb{D}(w)$ (one has to consider only periodic points which are not sinks, so as described in Section \ref{preliminaries}, there is a unique stable manifold well defined).
Since $w$ belongs to a decorated region $V$ of $p$, one of these components $U_1$ is contained in $V$.
The other one is denoted by $U_2$.
From Theorem~\ref{t.cycle} (no cycle), any unstable branch $\Gamma$ of $w$ is contained in one of these components.
If $\Gamma$ is included in $U_1$, then it is included in a decorated region of $p$ and the proof is concluded in that case.

We may thus assume that $\Gamma$ is included in the component $U_2$ of $\mathbb{D}\setminus W^s_\mathbb{D}(w)$
which contains $p$ and let us prove first that its accumulation set is contained in the closure of $V.$ See Figure~\ref{f.related-localized}.
Since $w$ is decreasing chain related to $p$ then  there exists a chain $C$ for an iterate $f^k$ which contains $w,p$ and which is included in the closure
of the decorated region $V$ (recall Definition \ref{d.decreasing-chain}). If $\Gamma$ is part of the chain $C$, by definition is included in the closure of $V$. So, let us consider the case where $\Gamma$ is not part of the chain;  then there exist points of $C$ in $U_2$ which accumulate on $\Gamma$.
Since the period of points in $C$ is uniformly bounded and since $\Gamma$ is an unstable branch in $U_2$,
the point $w$ is not accumulated by periodic points of $C\cap U_2$.
Consequently, there exists an unstable branch $\Gamma_C$ in $C$ which accumulates on a point of $\Gamma$.
From Proposition~\ref{p.transitive}, one deduces that the accumulation set of $\Gamma$ is included in the accumulation set of $\Gamma_C$,
which is contained in  $\overline V$, and so the accumulation set of $\Gamma$ is also contained in $\overline V$.
\begin{figure}
\begin{center}
\includegraphics[width=5cm,angle=0]{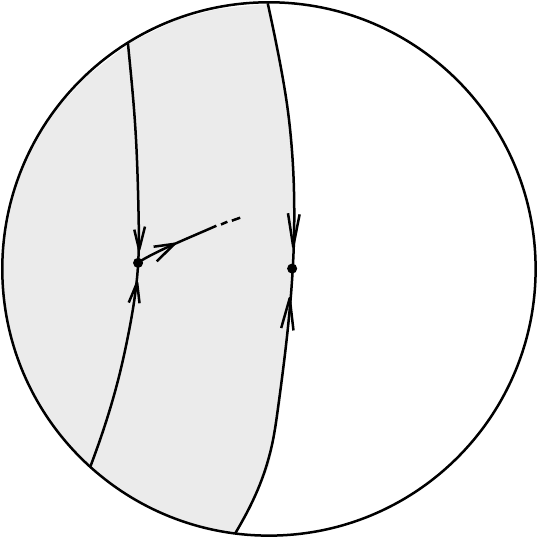}
\put(-83,88){\small $\Gamma$}
\put(-60,65){\small $p$}
\put(-115,72){\small $w$}
\put(-130,40){\small $U_1$}
\put(-90,30){\small $U_2$}
\end{center}
\caption{Proof of Proposition~\ref{p.unstable}.\label{f.related-localized}}
\end{figure}
\medskip

In order to conclude, we distinguish three cases:
\begin{itemize}
\item[--] The period of $p$ is larger than $1$.
Let $\Gamma_p$ be the unstable branch of $p$ which accumulates on a fixed point $q$ (not contained in $V$).
If $\Gamma$ is not included in the closure of $V$, it crosses $W^s_\mathbb{D}(p)$.
As a consequence the accumulation set of $\Gamma$ contains the accumulation set of $\Gamma_p$, hence $q$.
This is a contradiction since we have shown before that it is contained in $\overline V$.
\item[--] The point $p$ is a fixed saddle with {\color{black} reflection}: it admits an unstable branch $\Gamma_p$
which accumulates on a periodic point $q$ which is not in $\overline V$ (by Lemma~\ref{p.accumulation}).
One can conclude as in the previous case.
\item[--] {\color{black} The point $p$} is a fixed point, admits an eigenvalue $\lambda^+_p=-1$,
is not a sink, and not a saddle. In that case, $p$ is accumulated by points $z\in \mathbb{D}\setminus \overline V$ of period $2$.
If $\Gamma$ is not included in the closure of $V$, it crosses $W^s_\mathbb{D}(p)$,
hence it intersects the stable manifold of one of them. As a consequence the accumulation set of $\Gamma$ contains $z$.
This is a contradiction since we have shown before that it is contained in $\overline V$.
\end{itemize}
In all the cases we have shown that any unstable branch of $w$ is contained in $\overline V$.
\medskip

Let $k$ be the period of the decorated region of $p$.
If the period of $w$ is larger than $k$ and if $f$ is orientation preserving,
one applies Proposition~\ref{p.heteroclinic} to the diffeomorphism $f^k$:
the unstable set of $w$ does not accumulate on $p$;
hence its closure is contained in $V$, proving the last part of the proposition.
\end{proof}

\begin{corollary}\label{c.dichotomy-stabilized}
A point $w$ which is stabilized can not be decreasing chain-related to a stabilized point $p$.
\end{corollary}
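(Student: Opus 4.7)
Suppose by contradiction that $w$ is stabilized and is decreasing chain related to a stabilized point $p$, and let $V$ denote the decorated region of $p$ whose closure contains the witnessing chain. By Proposition~\ref{p.unstable} the unstable set $W^u(w)$ is contained in $\overline V$, and by Remark~\ref{r.decreasing} the period $\pi\geq 2$ of $V$ divides the period of $w$; so $w$ has period at least $2$ and falls in the second clause of Definition~\ref{d.stabilization}: it admits a stabilizing fixed point $q_w$ and an unstable branch $\Gamma_w$ accumulating on $q_w$, subject to the crossing condition whenever $q_w$ is itself a stabilized saddle. The inclusion $\Gamma_w\subset\overline V$ then forces $q_w\in\overline V$.

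The key step is to show that the iterates $V,f(V),\dots,f^{\pi-1}(V)$ are pairwise disjoint. For $0<i<\pi$, the connected components $W^s_\mathbb{D}(p)$ and $W^s_\mathbb{D}(f^i(p))$ of the stable set of the orbit of $p$ are disjoint; moreover $f^i(p)\notin V$ by decoration, so the whole curve $W^s_\mathbb{D}(f^i(p))$ lies outside $\overline V$. Consequently $V$ and the curve $W^s_\mathbb{D}(p)$ (which contains $p$) lie in the same component of $\mathbb{D}\setminus W^s_\mathbb{D}(f^i(p))$; since $f^i(V)$ is the component of $\mathbb{D}\setminus W^s_\mathbb{D}(f^i(p))$ that does not contain the orbit of $f^i(p)=p$, this is the other component, and hence $V\cap f^i(V)=\emptyset$. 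Now if the fixed point $q_w$ lay in the open region $V$, it would lie in every iterate $f^i(V)$, contradicting the disjointness. Therefore $q_w\in\partial V\subset W^s_\mathbb{D}(p)\cup\partial\mathbb{D}$; since $f(\partial\mathbb{D})$ lies in the interior of $\mathbb{D}$, the boundary $\partial\mathbb{D}$ carries no fixed point of $f$, so in fact $q_w\in W^s_\mathbb{D}(p)$.

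The only fixed point of $f$ on $W^s_\mathbb{D}(p)$ is $p$ itself, and only when $p$ is itself fixed. If $p$ has period larger than $1$, then $W^s_\mathbb{D}(p)$ contains no fixed point, contradicting $q_w\in W^s_\mathbb{D}(p)$; so $p$ must be fixed and stabilized in the first sense of Definition~\ref{d.stabilization}, i.e.\ a saddle with reflexion ($\lambda^+_p\leq -1$), and $q_w=p$. But then the second clause of that definition, applied to $w$ with stabilizer $q_w=p$ (which is itself a stabilized saddle), requires $\Gamma_w$ to intersect both components of $\mathbb{D}\setminus W^s_\mathbb{D}(p)$; this contradicts $\Gamma_w\subset\overline V$, contained in the closure of a single such component. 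The main obstacle is establishing the pairwise disjointness of the iterates of $V$; once this is in hand, the remaining deductions are direct applications of the definitions of stabilization and decoration.
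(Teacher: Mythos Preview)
Your argument follows essentially the same route as the paper's, but the disjointness step has a gap in the case where $p$ is itself a \emph{fixed} stabilized point (the first clause of Definition~\ref{d.stabilization}). There $\pi=2$, so you need $V\cap f(V)=\emptyset$; but your justification appeals to the disjointness of $W^s_{\mathbb{D}}(p)$ and $W^s_{\mathbb{D}}(f^i(p))$, which is vacuous since $f(p)=p$. The claim is still true, for a different reason: the condition $\lambda^+_p\le-1$ forces $Df(p)$ to reverse orientation transversally to $W^s_{\mathbb{D}}(p)$, so $f$ exchanges the two components of $\mathbb{D}\setminus W^s_{\mathbb{D}}(p)$, which are precisely the two decorated regions of $p$. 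With this separate handling of the fixed case your proof is complete. (A minor imprecision: $f^i(V)$ is only \emph{contained in} the decorated region of $f^i(p)$, not equal to it, since $f(\mathbb{D})\subset\interior(\mathbb{D})$; this does not affect your conclusion.)

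The paper's proof avoids this by splitting on whether $p$ is fixed from the outset. When $p$ is not fixed, it observes (via Proposition~\ref{p.unstable} applied to every iterate of $w$) that the decorated regions of the iterates of $p$ are disjoint and their closures carry no common fixed point, so the stabilizing branch of $w$ cannot accumulate on one; when $p$ is fixed, it argues directly that the only possible stabilizer for $w$ is $p$ itself and then invokes the crossing condition, exactly as you do in your final paragraph.
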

\begin{proof}
We argue by contradiction.
Let us first assume that $p$ is not fixed.
From Proposition~\ref{p.unstable},
the unstable set of each iterate $f^i(w)$ is contained in the decorated region of $f^i(p)$.
Since the decorated region has period larger or equal to $2$ and since $p$ is not fixed,
the unstable set of $w$ does not accumulate on a fixed point, a contradiction.

 When $p$ is fixed, since the unstable manifold of $w$ is contained in a decorated region,
the point $w$ can only be stabilized by $p$.
The definition of stabilized point for $p$ gives that $p$ is not a sink;
the definition for $w$ implies that the unstable manifold of $w$
crosses $W^s_\mathbb{D}(p)$, a contradiction.
\end{proof}

\begin{proposition}\label{p.both-related}
Let us consider two stabilized fixed points $p_1,p_2$ with decorated regions $V_1,V_2$.
If there exists a point $w\in V_1$ that is decreasing chain related to $p_2$, then $V_2\subset V_1$.
\end{proposition}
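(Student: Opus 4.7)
The plan is to argue by contradiction, forcing $p_1$ into the decorated region $V_2$ and then using the swap property at the stabilized point $p_2$ to reach a contradiction.

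First I unpack the hypothesis: the definition of decreasing chain relation yields a chain $C$ for some $f^k$ connecting $w$ to $p_2$ inside $\overline{V_2}$. Since $w$ is periodic and $w\neq p_2$, it cannot lie on $W^s_\DD(p_2)=\partial V_2$ (otherwise its forward orbit would converge to $p_2$, forcing $w=p_2$). Hence $w\in V_2$, and combined with $w\in V_1$ this gives $V_1\cap V_2\neq\emptyset$.

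Next I assume for contradiction that $V_2\not\subset V_1$. The open connected set $V_2$ then meets both $V_1$ and $\DD\setminus V_1$, so it must cross $\partial V_1\subset W^s_\DD(p_1)$. Since $p_1\neq p_2$ are distinct hyperbolic fixed points, their stable manifolds are disjoint, so $W^s_\DD(p_1)$ is disjoint from $\partial V_2=W^s_\DD(p_2)$; being connected and meeting $V_2$, the curve $W^s_\DD(p_1)$ must then lie entirely in $V_2$. In particular $p_1\in V_2$.

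Finally I derive the contradiction from the dynamics at $p_2$. Since $p_2$ is stabilized with eigenvalue $\lambda^+_{p_2}\leq -1$, the derivative $Df(p_2)$ reverses the unstable direction, so $f$ locally swaps the two decorated regions of $p_2$. The image $f(V_2)$ is open and connected, and by injectivity of $f$ it is disjoint from $f(W^s_\DD(p_2))$, hence contained in a single component of $f(\DD)\setminus f(W^s_\DD(p_2))$; the local behavior at $p_2$ identifies this component as sitting on the $V_2'$ side of $W^s_\DD(p_2)$, giving $f(V_2)\subset V_2'$. Applied to the fixed point $p_1\in V_2$, this yields $p_1=f(p_1)\in f(V_2)\subset V_2'$, contradicting $V_2\cap V_2'=\emptyset$. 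The most delicate step will be promoting the local swap to the global inclusion $f(V_2)\subset V_2'$: one must verify that $f(V_2)$ does not cross the separating stable curve $W^s_\DD(p_2)$, which rests on the fact that $f$ preserves $W^s_\DD(p_2)$ together with the mild dissipation ensuring no parasitic components of $W^s(p_2)\cap\DD$ confound the partition.
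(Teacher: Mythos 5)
Your first two steps are fine and agree with the paper's proof: from $w\in V_1\cap V_2$ and the disjointness of the two stable curves one gets $W^s_\DD(p_1)\subset V_2$ (the paper phrases this as $V_1\subset V_2$). The divergence, and the problem, is in your final step. The inclusion $f(V_2)\subset V_2'$ is a \emph{global} statement, and the local swap at $p_2$ (true, since $\lambda^+_{p_2}\le -1$) does not globalize for free. Concretely, $f(V_2)$ is disjoint from $f\bigl(W^s_\DD(p_2)\bigr)$, but what you need is that it is disjoint from the whole separating arc $W^s_\DD(p_2)$. A point $x\in V_2$ with $f(x)\in W^s_\DD(p_2)$ would lie on a connected component of $W^s(p_2)\cap\DD$ \emph{other} than $W^s_\DD(p_2)$; such a component exists exactly when the arc $W^s_\DD(p_2)$ leaves $f(\DD)$ and re-enters it, and nothing in the mild dissipation hypothesis forbids this (mild dissipation only guarantees that $W^s_\DD(x)$ separates $\DD$; it says nothing about the other components of the full stable set). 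This is precisely why the paper, e.g.\ in the proof of proposition~\ref{p.chain-decreasing}, only ever invokes the fact that the two decorated regions of a fixed stabilized point are \emph{locally} exchanged. A further warning sign is that your final step uses neither the hypothesis that $p_1$ is stabilized nor the chain from $w$ to $p_2$: if the swap argument worked, it would show that no fixed point whatsoever lies in a decorated region of a fixed stabilized point, a stronger statement the paper never establishes.

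The paper closes the argument differently, and both unused hypotheses are essential there. The chain $C\subset\overline{V_2}$ joins $w\in V_1$ to $p_2\notin\overline{V_1}$, hence crosses $W^s_\DD(p_1)$; so some unstable branch $\Gamma\subset C$ meets both sides of $W^s_\DD(p_1)$, and the topological $\lambda$-lemma (proposition~\ref{p.transitive}) forces $\overline{W^u(p_1)}\subset\overline{\Gamma}\subset\overline{V_2}$. Because $p_1$ is stabilized, its unstable set must accumulate on a fixed point, which can only be $p_2$; but $W^u(p_1)$ is confined to one side of $W^s_\DD(p_2)$, and since $p_2$ is itself stabilized this violates definition~\ref{d.stabilization}. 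To repair your proof you would either have to prove the global swap (ruling out the re-entrant components of $W^s(p_2)\cap\DD$, which seems as hard as the proposition itself) or revert to an argument of the paper's type that exploits the stabilization of $p_1$ and the chain.
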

\begin{proof}
By assumption $q\in V_1\cap V_2$. Since the stable manifolds of $p_1$ and $p_2$ are disjoint,
if the conclusion of the proposition does not hold, then $V_1\subset V_2$.

Since $w$ is decreasing chain related to $p_2$ in $V_2$,
there exists an iterate $f^\ell$ and a chain $C$ for $f^\ell$ included in $\overline {V_2}$
which contains both $w$ and $p_2$.
In particular it intersects $W^s_{\DD}(p_1)$.
This implies that there exists an unstable branch $\Gamma\subset C$ which
meets both components of $\DD\setminus W^s_\DD(p_1)$.
One deduces from Proposition~\ref{p.transitive} that the closure of the unstable set of $p_1$
is contained in the closure of $\Gamma$, hence in $\overline {V_2}$.

The closure of the unstable set $p_1$ contains a fixed point (since $p_1$ is stabilized):
the only possible fixed point is $p_2$. By definition of stabilization,
either $p_2$ is not stabilized or the unstable set $p_1$ meets both components of $\DD\setminus W^s_\DD(p_2)$.
In both cases we get a contradiction
\end{proof}

\begin{corollary}\label{c.unique-stabilized}
A point $w$ can not be decreasing chain-related to two different stabilized point $p_1,p_2$.
\end{corollary}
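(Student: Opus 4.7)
The approach is to argue by contradiction and apply Proposition \ref{p.both-related} symmetrically. Suppose $w$ is decreasing chain related to two stabilized periodic points $p_1, p_2$ lying on distinct orbits, with associated decorated regions $V_1 \ni w$ and $V_2 \ni w$. First I observe that $w$ in fact lies in the interior of each $V_i$: by Definition \ref{d.decreasing-chain} we have $w \in \overline{V_i}$, but $w$ cannot lie on the boundary curve $W^s_\mathbb{D}(p_i)$, since the only periodic point on this connected local stable curve is $p_i$ itself (any periodic point in $W^s(p_i)$ must lie on the orbit of $p_i$). Hence $w \in V_1 \cap V_2$.

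Now apply Proposition \ref{p.both-related} twice: once with the triple $(p_1, p_2, w)$ to obtain $V_2 \subset V_1$, and once with $(p_2, p_1, w)$ to obtain $V_1 \subset V_2$. Hence $V_1 = V_2$. Since $V_i$ is a connected component of $\mathbb{D} \setminus W^s_\mathbb{D}(p_i)$, and $W^s_\mathbb{D}(p_i)$ separates $\mathbb{D}$ into exactly two pieces, the boundary of $V_i$ inside $\mathbb{D}$ is precisely the curve $W^s_\mathbb{D}(p_i)$. Thus the equality $V_1 = V_2$ forces $W^s_\mathbb{D}(p_1) = W^s_\mathbb{D}(p_2)$; then $p_1 \in W^s_\mathbb{D}(p_2)$ implies that the orbit of $p_1$ converges under iteration to the orbit of $p_2$, and since $p_1$ is periodic the two orbits must coincide, contradicting the assumption that they are distinct.

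The main obstacle is that Proposition \ref{p.both-related} is formulated for stabilized \emph{fixed} points, whereas $p_1$ and $p_2$ may have higher period. To overcome this, I would pass to the iterate $g = f^N$ with $N$ an even common multiple of the periods of $p_1$ and $p_2$: then $g$ preserves orientation, both $p_1, p_2$ are fixed under $g$, the local stable curves and decorated regions $V_i$ are unchanged, and any chain for $f^k$ witnessing a decreasing chain relation remains a chain for an appropriate iterate of $g$. The proof of Proposition \ref{p.both-related} then goes through verbatim for $g$ in place of $f$, since that argument depends only on the topological configuration (disjoint local stable manifolds, decorated regions, chains of fixed points and invariant unstable branches) together with the no-cycle property and the localization results of Propositions \ref{p.transitive} and \ref{p.unstable}, all of which are insensitive to passing to an iterate.
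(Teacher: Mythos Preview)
Your core argument matches the paper's: both apply Proposition~\ref{p.both-related} symmetrically to obtain $V_1 = V_2$ and derive a contradiction from $p_1 \neq p_2$. Your first two paragraphs carry this out with more care than the paper's two-line proof (in particular, your justification that $w$ lies in the \emph{open} regions $V_i$ is correct and worth making explicit).

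Your last paragraph, however, does not quite work. You propose passing to $g = f^N$ and re-running the proof of Proposition~\ref{p.both-related} verbatim, but the decisive step in that proof --- ``the only possible fixed point is $p_2$'' --- relies on the fact that $f$ itself permutes the decorated regions attached to the orbit of $p_2$ nontrivially (swapping them when $p_2$ is a fixed saddle with reflexion, or sending $V_2$ to the disjoint decorated region of $f(p_2)$ when the period exceeds~$1$). Under $g$ with $N$ a multiple of that period the permutation becomes the identity, so $\overline{V_2}$ may contain many $g$-fixed points; moreover, for even $N$ the points $p_i$ are no longer stabilized in the sense of Definition~\ref{d.stabilization} applied to $g$, since the eigenvalue condition fails. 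The correct fix is not to pass to an iterate but to observe that the proof of Proposition~\ref{p.both-related} already works for arbitrary stabilized periodic $p_1,p_2$ provided one keeps reasoning with $f$-fixed points: the stabilizing branch of $p_1$ accumulates on an $f$-fixed point $q_1 \in \overline{V_2}$, and since the decorated regions of the $f$-orbit of $p_2$ are pairwise disjoint (Proposition~\ref{p.stab-decorate}; see also the proof of Proposition~\ref{p.chain-decreasing}), no $f$-fixed point can lie in $\overline{V_2}$ unless $p_2$ is itself fixed and $q_1 = p_2$, after which the contradiction follows exactly as written in the paper. Only the application of Proposition~\ref{p.transitive} genuinely needs an iterate, and that step is indeed insensitive to which iterate is used.
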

\begin{proof}
Otherwise $w$ would belong to decorated regions $V_1$ and $V_2$ for $p_1$ and $p_2$ and respectively.
The Proposition~\ref{p.both-related} would imply  that $V_1\subset V_2$ and $V_2\subset V_1$ simultaneously. A contradiction
since $p_1\neq p_2$.
\end{proof}

One also describes the accumulation sets of $f$-invariant unstable branches.

\begin{proposition}\label{p.chain-decreasing}
Let $z$ be a fixed point and $\Gamma$ be a $f$-invariant unstable branch of $z$.
Let $C$ be a chain for some iterate $f^k$ between two periodic points $w,p$ such that:
\begin{itemize}
\item[--] $p$ is stabilized by a fixed point $q$,
\item[--] $w$ is decreasing chain-related to $p$,
\item[--] $C$ is contained in the closure of a decorated region of $p$.
\end{itemize}
If the accumulation set of $\Gamma$ contains $w$, then it also contains $p$ (see Figure~\ref{f.unstable}).
\end{proposition}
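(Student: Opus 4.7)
The plan is to propagate the accumulation of $\Gamma$ through the chain $C$ from $w$ to $p$ using proposition~\ref{p.transitive} as the principal tool, valid for $f^k$ since $\Gamma$, being $f$-invariant, is also $f^k$-invariant.

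First I would use the connectedness of $C$ to extract a finite sequence of $f^k$-fixed points $w=z_0,z_1,\dots,z_n=p$ in $X\subset C$ such that consecutive $z_i,z_{i+1}$ are linked by an $f^k$-invariant unstable branch $\Gamma_i'\subset C$ (emanating from one of them and accumulating on the other). I would then argue by induction on $i$ that $z_i\in\mathrm{acc}(\Gamma)$; the base case is the hypothesis $w\in\mathrm{acc}(\Gamma)$ and the case $i=n$ is the desired conclusion $p\in\mathrm{acc}(\Gamma)$.

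The inductive step rests on the observation that once $z_i\in\mathrm{acc}(\Gamma)$, a $\lambda$-lemma-type analysis replaying the construction in the proof of proposition~\ref{p.transitive} gives $\mathrm{acc}(\Gamma)\supset W^u(z_i)$ (the $f^k$-unstable manifold): points of $\Gamma$ close to $z_i$ but off $W^s_\DD(z_i)$---the only possibility, since $\Gamma$ emanates from a distinct fixed point $z$ and stable manifolds of distinct periodic orbits are disjoint---have forward $f^k$-iterates escaping along the unstable direction of $z_i$, so one recovers accumulation on each branch of $W^u(z_i)$. Consequently $\mathrm{acc}(\Gamma)\supset\mathrm{acc}(\Gamma^*)$ for every $f^k$-invariant unstable branch $\Gamma^*$ of $z_i$, by proposition~\ref{p.transitive}. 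When $\Gamma_i'$ is such a branch of $z_i$ accumulating on $z_{i+1}$, this immediately yields $z_{i+1}\in\mathrm{acc}(\Gamma)$. In the reverse situation, where $\Gamma_i'$ emanates from $z_{i+1}$ and accumulates on $z_i$, I would apply the same $\lambda$-lemma analysis to $\Gamma_i'$ itself (obtaining $\mathrm{acc}(\Gamma_i')\supset W^u(z_i)$) and combine this with the hypothesis $C\subset\overline V$ together with the no-cycle theorem~\ref{t.cycle} and corollary~\ref{c.cycle} to force $z_{i+1}\in\mathrm{acc}(\Gamma)$.

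The hard part is precisely this reverse traversal: without the structural constraint that $C$ lies in a single decorated region of the stabilized point $p$, one could a priori face a chain edge $\Gamma_i'$ whose emanating point $z_{i+1}$ escapes $\mathrm{acc}(\Gamma)$. The containment in $\overline V$, the stabilization of $p$, and the no-cycle property rule out such branching dead ends, so that the accumulation must indeed traverse the entire chain from $w$ up to $p$.
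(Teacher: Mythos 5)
Your inductive scheme breaks down exactly at the step you flag as ``the hard part'', and the appeal to $C\subset\overline V$, stabilization and the no-cycle property is not an argument there. Accumulation propagates \emph{forward} along unstable branches (this is what proposition~\ref{p.transitive} gives), but not backward: if an $f^k$-invariant unstable branch $\Gamma_i'$ of $z_{i+1}$ accumulates on $z_i$ and you know $z_i\in\mathrm{acc}(\Gamma)$, nothing forces $\Gamma$ to come near $z_{i+1}$ --- think of $z_{i+1}$ a saddle whose branch falls into a sink $z_i$ on which $\Gamma$ also accumulates; $\Gamma$ can approach the sink without ever visiting $z_{i+1}$. Since a chain (definition~\ref{d.chain}) is merely a connected union of fixed points and invariant unstable branches, with no prescribed orientation from $w$ to $p$, such reverse edges are unavoidable and your induction cannot cross them. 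A secondary gap: you use ``$z_i\in\mathrm{acc}(\Gamma)$ implies $\mathrm{acc}(\Gamma)\supset W^u(z_i)$'', but proposition~\ref{p.transitive} requires $\Gamma$ to accumulate on a point \emph{of the unstable branch} of $z_i$, not merely on the fixed point itself; upgrading accumulation on the point to accumulation on its unstable set is precisely the kind of statement the paper is careful about elsewhere (compare proposition~\ref{p.heteroclinic}) and cannot be cited as a black box.

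The paper's proof does not traverse the chain at all. It uses that $\Gamma$ is $f$-invariant (not just $f^k$-invariant), so its accumulation set is $f$-invariant and contains the whole $f$-orbit of $w$. Now $w$ lies in a decorated region $V$ of $p$ and $f(w)$ lies in the decorated region $f(V)$ of $f(p)$, and these two regions are disjoint (by proposition~\ref{p.stab-decorate} when $p$ is not fixed; when $p$ is fixed its two decorated regions are exchanged because the non-stable eigenvalue is $\le -1$). A connected invariant curve accumulating in two disjoint decorated regions must cross $W^s_\DD$ of the orbit of $p$, which bounds them, and hence accumulates on $p$. If you want to salvage your approach you would need to replace the chain-traversal by this separation argument; as written, the proposal does not prove the statement.
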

\begin{proof}
By invariance, the points $f(w)$ and $f(p)$ and the chain $f(C)$ satisfy the same properties.
Note that the decorated region of $p$ containing $w$ and the decorated region of $f(p)$ containing $f(w)$
are disjoint: this is a consequence of Proposition~\ref{p.stab-decorate} when $p$ has period larger than $1$;
when $p$ is fixed, this is a consequence of the fact that its two decorated regions are locally exchanged (since in the case that  $p$ is fixed by Definition \ref{d.stabilization} the non-stable eigenvalue with modulus larger and equal to one is  negative).

The $f$-invariant unstable branch $\Gamma$ accumulates in $w$ and $f(w)$ and so it has to intersect two different decorated regions,
hence intersects the stable manifold of the orbit of $p$.
This gives the conclusion.
\end{proof}
\begin{figure}
\begin{center}
\includegraphics[width=5cm,angle=0]{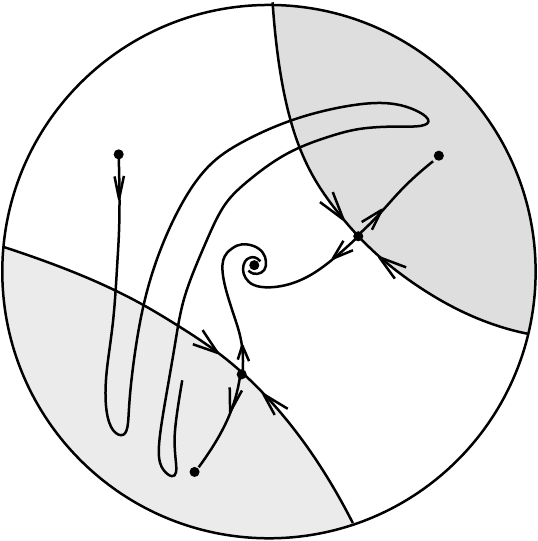}
\put(-70,78){\small $q$}
\put(-50,70){\small $p$}
\put(-120,95){\small $z$}
\put(-75,45){\small $f^2(p)$}
\end{center}
\caption{A $f$-invariant unstable branch intersecting a decorated region.\label{f.unstable}}
\end{figure}

\subsection{Lefschetz formula associated to a stabilized point}
Using the notion of decreasing chain related periodic point, we define the notion of index of a decorated region.

\paragraph{\it Index of a decorated region.}\index{index, Lefschetz formula} 
Given a decorated region $V$ of a stabilized periodic point $p$,
and a multiple $n$ of the period $k$ of $V$,
{\color{black} we introduce $C(p,V,n)$, the set} of points $w\in V$ that are fixed by $f^n$ and decreasing chain-related to $p$.
Note that by Corollary~\ref{c.orientation},
the map $f^n$ preserves the orientation.

{\color{black} When the set $C(p,V,n)$ is finite, the total index $L(V,f^n)$ of points that are fixed by $f^n$ in $\overline V$ is the sum of the indices of the points $w$ in $C(p,V,n)$ and of the half index $index(p,V,f^n)$, as defined in Section~\ref{ss.index arc}.
$$L(V,f^n):=index(p,V,f^n)+\sum_{w\in C(p,V,n)} index(w, f^n). $$}
{\color{black} When the set is infinite, one also defines an index, by grouping the periodic points inside arcs.}
From Section \ref{ss.arc}, there exists a finite family $\cI$ of disjoint arcs
that are fixed by $f^n$ and contained in $\overline V$ such that the set of periodic points in $\cup_{I\in\cI} I$
is exactly $\{p\}\cup C(p,V,n)$. We denote by $I_0$ the arc of $\cI$ which contains $p$.
Note that the other arcs $I\in \cI$ are isolated, hence have an index $index(I, f^n)$.
The arc $I_0$ is maybe not isolated (in the case $p$ is a fixed stabilized point),
but one can consider the index $index(I_0, V,f^n)$ of the half arc $I_0$ in the region $V$ for $f^n$
as defined in Section~\ref{ss.index arc}.

Then, one defines the index of the decorated region $\overline V$ for $f^n$ as
$$L(V,f^n):=index(I_0,V,f^n)+\sum_{I\in \cI\setminus \{I_0\}} index(I, f^n). $$
Observe that the number $L(V,f^n)$ does not depend on the choice of the family $\cI$:

\begin{proposition}\label{p.local-lefschetz}
For any decorated region $V$ of a stabilized periodic point $p$,
and for any multiple $n$ of the period $k$ of $V$,
the index $L(V,f^n)$ equals $1/2$.
\end{proposition}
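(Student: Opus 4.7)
The plan is to reduce Proposition~\ref{p.local-lefschetz} to the global Lefschetz formula (Proposition~\ref{p.lefschetz}) applied to an auxiliary dissipative diffeomorphism obtained from $f^n$ by a controlled modification outside a neighborhood of $\overline V$.

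First, I would apply Proposition~\ref{p.group} to $f^n$ to extend the family $\cI$ into a family $\cI^*$ of disjoint isolated fixed arcs covering all fixed points of $f^n$ in $\DD$; write $\widetilde I_0\in \cI^*$ for the arc through $p$, so that $\widetilde I_0\cap \overline V = I_0$ and $\widetilde I_0$ extends transversally across $W^s_\DD(p)$ into $V':=\DD\setminus \overline V$. Since $\widetilde I_0$ is normally hyperbolic and the arcs of $\cI^*$ are isolated and disjoint, one can choose an open neighborhood $U$ of $\overline V\cup \widetilde I_0$ in $\DD$ whose closure meets no other arc of $\cI^*$. I would then construct a dissipative diffeomorphism $g$ of $\DD$ coinciding with $f^n$ on a slightly smaller neighborhood of $\overline V\cup \widetilde I_0$, and having a unique attracting fixed point $q^*$ outside $U$ together with no other fixed points in $\DD\setminus U$; this is possible because $\DD\setminus\overline U$ is simply connected, so one may smoothly interpolate $f^n$ restricted to $\partial U$ with a retraction onto $q^*$.

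Second, apply Proposition~\ref{p.lefschetz} to $g$: the fixed arcs of $g$ are the arcs of $\cI^*$ contained in $U$ (namely $\widetilde I_0$ and the arcs of $\cI\setminus\{I_0\}$) together with the sink $q^*$, giving
\[
\sum_{I\in \cI\setminus\{I_0\}} \mathrm{index}(I, f^n)\; +\; \mathrm{index}(\widetilde I_0, f^n)\; +\; 1\; =\; 1.
\]
Splitting the full-arc index as $\mathrm{index}(\widetilde I_0, f^n)=\mathrm{index}(I_0,V,f^n)+\mathrm{index}(\widetilde I_0, V', f^n)$ and using the definition of $L(V,f^n)$, this rearranges to
\[
L(V, f^n)\; =\; -\,\mathrm{index}(\widetilde I_0, V', f^n).
\]

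Third, I identify the half-index $\mathrm{index}(\widetilde I_0, V', f^n)=-1/2$, which by definition amounts to exhibiting a local unstable branch of $\widetilde I_0$ in $V'$ fixed by $f^n$. When $p$ is a fixed stabilized saddle (the first case of Definition~\ref{d.stabilization}, $\lambda^+_p\leq -1$), the two unstable branches of $p$ lie one in each component of $\DD\setminus W^s_\DD(p)$, are interchanged by $f$, and are therefore fixed by $f^n$ since $n$ is a multiple of $k=2$; in particular one such branch lies in $V'$. When $p$ has period larger than one, Definition~\ref{d.stabilization} provides a stabilizing unstable branch of $p$ accumulating on the fixed point $q$, which by the proof of Proposition~\ref{p.stab-decorate} lies in $V'$, and is fixed by $f^n$ since $n$ is a multiple of the period of $p$. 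In both cases $\mathrm{index}(\widetilde I_0, V', f^n)=-1/2$, so $L(V, f^n)=1/2$.

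The main obstacle will be the construction in the first step: choosing the thickening $U$ thin enough along $W^s_\DD(p)$ to exclude all arcs of $\cI^*$ other than $\widetilde I_0$ meeting $\overline U\cap V'$, and performing the modification on $\DD\setminus\overline U$ so as to yield a dissipative diffeomorphism with a single sink and no spurious fixed points on $\partial U$. This requires the normal hyperbolicity of $\widetilde I_0$ (so that $U$ can be taken as a tubular neighborhood) together with the isolation of the arcs in $\cI^*$, which ensures the existence of such a $U$.
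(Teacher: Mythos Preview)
Your approach is genuinely different from the paper's, and it has a real gap.

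The central problem is in your second step, where you write that ``the fixed arcs of $g$ are the arcs of $\cI^*$ contained in $U$ (namely $\widetilde I_0$ and the arcs of $\cI\setminus\{I_0\}$).'' This identification is unjustified. By definition, the family $\cI$ used to compute $L(V,f^n)$ covers \emph{only} the set $\{p\}\cup C(p,V,n)$, i.e.\ $p$ together with the $f^n$-fixed points in $V$ that are decreasing chain-related to $p$. But your auxiliary diffeomorphism $g$ coincides with $f^n$ on all of $\overline V$, so its fixed points in $\overline V$ are \emph{all} the $f^n$-fixed points there. Nothing you have said rules out the presence in $V$ of an $f^n$-fixed point $w$ that is \emph{not} in $C(p,V,n)$: for instance a stabilized point $p'\neq p$ whose own stabilizing branch crosses $W^s_\DD(p)$ on its way to a fixed point, or a point decreasing chain-related to such a $p'$ (Corollary~\ref{c.dichotomy-stabilized} and Corollary~\ref{c.unique-stabilized} say precisely that such $w$ would not lie in $C(p,V,n)$). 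Any such arc contributes to your Lefschetz sum for $g$ but not to $L(V,f^n)$, so the displayed identity $\sum_{I\in \cI\setminus\{I_0\}}\mathrm{index}(I,f^n)+\mathrm{index}(\widetilde I_0,f^n)+1=1$ does not follow, and the conclusion $L(V,f^n)=-\mathrm{index}(\widetilde I_0,V',f^n)$ is unsupported.

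The paper's proof sidesteps exactly this difficulty by arguing globally rather than locally: it first establishes the lower bound $L(V,f^n)\geq 1/2$ (Lemma~\ref{l.local-lefschetz}), then uses Proposition~\ref{p.decreasing-chain} to partition the \emph{entire} collection of $f^n$-fixed arcs as $\cI_{fix}\sqcup\bigsqcup_{p'}\cI_{p'}$, and shows that if any single $L(V',f^n)$ exceeded $1/2$ the total index would exceed $1$, contradicting Proposition~\ref{p.lefschetz}. This global balance never needs to know which arcs of $\cI$ actually sit inside a given decorated region. Your localization-by-surgery idea would only recover the paper's statement if you first proved that every $f^n$-fixed point of $V$ lies in $\{p\}\cup C(p,V,n)$; that fact is not available at this stage and is in effect what the paper's argument replaces. (A secondary issue: the existence of the extension $g$ with a \emph{single} sink outside $U$, and no spurious fixed points on $U\setminus U'$, also needs an argument; it is not automatic that the boundary data $f^n|_{\partial U}$ admits such an extension.)
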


The proof is postponed to Section~\ref{ss.structure}.
Before, we prove a weaker statement.
\begin{lemma}\label{l.local-lefschetz}
For any decorated region $V$ of a stabilized periodic point $p$,
and for any multiple $n$ of the period $k$ of $V$,
the index $L(V,f^n)$ is larger or equal to $1/2$.
\end{lemma}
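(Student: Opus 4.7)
The proof proceeds by a \emph{doubling argument}. Let $S := W^s_{\DD}(p)\cap\overline V$: this is a $C^1$ arc, mapped into itself by $f^n$ since $p$ is $f^n$-fixed and $W^s_{\DD}(p)$ is $f^n$-invariant. The closed region $\overline V$ is a topological disc bounded by $S$ and a subarc of $\partial\DD$, so gluing two copies of $\overline V$ along $S$ yields a closed topological disc $\widetilde V$, and the symmetric extension of $f^n|_{\overline V}$ defines a continuous self-map $\widetilde f$ of $\widetilde V$. By Corollary~\ref{c.orientation}, $f^n$ preserves orientation on $V$, hence $\widetilde f$ is an orientation-preserving homeomorphism of $\widetilde V$.

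Each arc $I\in\cI\setminus\{I_0\}$ lies in the interior of $V$ and gives rise to two disjoint isolated reflected copies in $\widetilde V$, each of index $index(I,f^n)$; the arc $I_0$ meets $S$ only at $p$ and gives rise to a single symmetric doubled arc $\widetilde I_0$ whose total index equals the sum of the half-arc indices on the two sides of $S$, namely $2\cdot index(I_0,V,f^n)$. Hence the sum of indices of the doubled family $\widetilde\cI$ equals $2L(V,f^n)$.

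To apply Remark~\ref{r.lefschetz} to $\widetilde\cI$ it remains to verify the accumulation hypothesis: for any $\widetilde f$-invariant unstable branch of an arc of $\widetilde\cI$, the periodic points in its accumulation set lie in $\bigcup\widetilde\cI$. By symmetry it suffices to check this for an $f^n$-invariant unstable branch $\Gamma$ of some $I\in\cI$ in one copy of $\overline V$. Proposition~\ref{p.unstable} gives $\Gamma\subset\overline V$ when $I\neq I_0$, and $\overline V$ is $f^n$-invariant so this also holds for $I=I_0$. By Lemma~\ref{p.accumulation}, the accumulation set contains an $f^n$-fixed point $z\in\overline V$. If $z\in S$ then $z=p$, since $f^n$ contracts $W^s_{\DD}(p)$ to $p$; otherwise $z\in V$, and concatenating $\Gamma$ with $I$ and with a chain in $\overline V$ witnessing that $I$ is decreasing chain-related to $p$ produces an $f^m$-chain in $\overline V$ from $z$ to $p$ for some common multiple $m\geq 2$, so $z$ is decreasing chain-related to $p$ and hence $z\in C(p,V,n)$. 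In both cases $z\in\{p\}\cup C(p,V,n)$, hence $z$ lies in some arc of $\cI$.

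Remark~\ref{r.lefschetz}, whose argument is essentially topological, then yields $\sum_{\widetilde I\in\widetilde\cI}index(\widetilde I,\widetilde f)\geq 1$, that is $2L(V,f^n)\geq 1$, so $L(V,f^n)\geq 1/2$. The principal obstacle is that $\widetilde f$ is only a homeomorphism (not a diffeomorphism) along $S$: one must verify that the proofs of Proposition~\ref{p.group} and Remark~\ref{r.lefschetz} extend to the doubled topological setting on $\widetilde V$, and in particular that the half-arc index $index(I_0,V,f^n)$ does combine coherently under the symmetric gluing to give exactly half the full index of $\widetilde I_0$ in $\widetilde V$.
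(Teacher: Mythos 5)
Your reduction of the inequality to a full Lefschetz-type count on a doubled disc is a natural idea, and your verification of the accumulation hypothesis (an $f^n$-fixed point in the accumulation set of an invariant branch contained in $\overline V$ is either $p$ or decreasing chain related to $p$) is essentially the observation the paper also isolates. But the proof has a genuine gap exactly where you flag it, and that gap is not a removable technicality. Remark~\ref{r.lefschetz} is not a purely topological statement about orientation-preserving self-maps of the disc (plain Lefschetz would only give information when $\widetilde{\cI}$ carries \emph{all} fixed points, which it need not: $V$ may contain $f^n$-fixed points that are not decreasing chain related to $p$). The inequality in Remark~\ref{r.lefschetz} comes from the dynamical matching argument of proposition~\ref{p.chain} and claim~\ref{c.index1}, whose proof uses proposition~\ref{p.group} (center manifolds, $C^1$ structure of the fixed set), the absence of cycles (corollary~\ref{c.cycle}, itself resting on Pixton discs, Pesin theory and the closing lemma~\ref{t.measure local}) and proposition~\ref{p.transitive} (which uses mild dissipation). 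None of this is available for the doubled map $\widetilde f$, which is merely a homeomorphism of a disc, not differentiable along $S$, and for which mild dissipation is not even meaningful; deferring this verification is deferring essentially the whole content of the lemma. A second, unproved assertion is that $\overline V$ is $f^n$-invariant: a point of $V$ could a priori be sent by $f^n$ into the other component of $\DD\setminus W^s_\DD(p)$, since $f^{-n}(W^s_\DD(p))\cap\DD$ is in general strictly larger than $W^s_\DD(p)$ and may meet $V$; without this, $\widetilde f$ is not even a well-defined self-map of $\widetilde V$.

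The paper avoids the doubling altogether and runs the counting argument of claim~\ref{c.index1} directly inside $V$ for the original diffeomorphism $f^n$, for which all the machinery is available. Setting $\mathcal N:=V\setminus\bigcup\{W^s_\DD(I),\ I\in\cI \text{ of index }-1\}$, each arc of index $-1$ is matched injectively with an arc of index $+1$ lying in the component of $\mathcal N$ separated from the stabilizing branch by its stable manifold, and when $index(I_0,V,f^n)=-1/2$ one extra arc of index $+1$ is produced in the component adjacent to $W^s_\DD(I_0)$. This gives directly that the number of index $+1$ arcs is at least the number of index $-1$ arcs, with one to spare in the $-1/2$ case, i.e.\ $L(V,f^n)\geq 1/2$. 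To salvage your approach you would have to carry out exactly this in-$V$ matching for $f^n$ (after checking the two observations about invariant unstable branches of the arcs of $\cI$), at which point the double is no longer needed.
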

\begin{proof}
Since $f^n$ preserves the orientation,
we follow the proof of Proposition~\ref{p.chain} and prove a version of Remark~\ref{r.lefschetz}
inside the decorated region  $V$,
after making the following observations:
\begin{itemize}
\item[--] for any $I\in \cI$, any $f^n$-invariant unstable branch $\Gamma$ of $I$ is contained in $\overline V$
(unless when $I=I_0$ and $\Gamma$ is the unstable branch that stabilizes $p$),
\item[--] any point fixed by $f^n$ in the accumulation set of $\Gamma$ is contained in $\overline V$
(since such a point coincides with $p$ or is decreasing chain related to $p$).
\end{itemize}
Let:
$$\mathcal{N}:=V\setminus \bigcup\{W^s_\mathbb{D}(I_i), I_i\in \cI \text{ of index $-1$}\}.$$
The proof of Claim~\ref{c.index1} shows that any component $U$ of $\mathcal{N}$, either it 
contains an arc $I'\in \cI$ of index $1$ or it is the component bounded by $W^s_\mathbb{D}(I_0)$
and $I_0$ is semi-attracting in $V$.

To each arc $I\in \cI\setminus \{I_0\}$ of index $-1$, one
let $V_I$ be the component of $\mathbb{D}\setminus W^s_\mathbb{D}(I)$
which does not contain the stabilizing unstable branch of $I_0$.
One associates by Claim~\ref{c.index1} an arc $I'$ of index $1$
in the component of $\mathcal{N}$ bounded by $W^s_\mathbb{D}(I)$ which belongs to $V_I$.

When the arc $I_0$ has a $f^n$-invariant unstable branch in $V$
(and has half index $index(I_0,V,f^n)=-1/2$),
one can also associate by the Claim~\ref{c.index1} an arc of index $1$ which belongs to the component of $\mathcal{N}$
bounded by $W^s_\mathbb{D}(I_0)$.

The number of arcs of index $1$ in $\cI$ is thus larger or equal to the number of arcs of index $-1$, and it is  larger or equal to the number of arcs of index $-1$
plus $1$ in the case $index(I_0,V,f^n)=-1/2$.
This proves that the sum of the indices $L(V,f^n)$ is always larger or equal to $1/2$.
\end{proof}

\subsection{Structure of the set of periodic points}\label{ss.structure}
The next proposition classifies the periodic points.

\begin{proposition}\label{p.decreasing-chain}
For any periodic point $w$, one and only one of the possibilities occurs:
\begin{itemize}
\item[(1)] $w$ is fixed and either is a sink or $Df(w)$ has an eigenvalue $\geq 1$,
\item[(2)] $w$ is stabilized,
\item[(3)] $w$ is decreasing chain related to a stabilized periodic point.
\end{itemize}
\end{proposition}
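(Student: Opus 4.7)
My plan is to split the proof into two parts: exclusivity of the three cases and their collective exhaustiveness.

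For exclusivity, cases (1) and (2) are disjoint by definition~\ref{d.stabilization}: a stabilized fixed point is required not to be a sink and to have an eigenvalue $\lambda^+_p\leq -1$, whereas case (1) demands either a sink or an eigenvalue $\geq 1$. Cases (2) and (3) are disjoint by corollary~\ref{c.dichotomy-stabilized}. Cases (1) and (3) are disjoint because, by remark~\ref{r.decreasing}, the period of the decorated region of $p$ divides the period of any point decreasing chain related to $p$; since any decorated region has period at least $2$ (definition~\ref{d.decorated region}), a fixed point cannot satisfy case (3).

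For exhaustiveness, I first dispatch the easy sub-cases. If $w$ is fixed, the eigenvalues of $Df(w)$ satisfy $|\lambda^-_w\lambda^+_w|<1$. If $w$ is a sink we are in case (1); otherwise $|\lambda^+_w|\geq 1$ and $\lambda^+_w$ is real, so either $\lambda^+_w\geq 1$ (case (1)) or $\lambda^+_w\leq -1$ and $w$ is not a sink, in which case definition~\ref{d.stabilization} makes $w$ stabilized by itself, giving case (2). If $w$ has period $n\geq 2$ and is already stabilized, case (2) holds directly.

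The substantive content is to show that every non-stabilized periodic point $w$ of period $n\geq 2$ is decreasing chain related to a stabilized periodic point, which I plan to prove by strong induction on $n$. Suppose first that $w$ has nontrivial unstable set. Choose any $f^n$-invariant unstable branch $\Gamma$ of $w$; applying lemma~\ref{p.accumulation} to an orientation-preserving iterate produces a periodic point $p'$ in the accumulation set of $\Gamma$, with period dividing $n$. If $p'$ is fixed by $f$, then because $w$ fails to be stabilized the configuration of $\Gamma$ near $p'$ must violate definition~\ref{d.stabilization}: this forces $p'$ itself to be stabilized and $\Gamma$ to remain on one side of $W^s_\mathbb{D}(p')$. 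Consequently $w$ lies in the closure of a decorated region $V$ of $p'$, and $\{w\}\cup\Gamma\cup\{p'\}$ is a chain for $f^n$ inside $\overline V$, establishing case (3). If instead $p'$ has period strictly between $1$ and $n$, the induction hypothesis classifies $p'$ as stabilized or decreasing chain related to a stabilized $p$; I then concatenate $\Gamma$ with the chain from $p'$ to $p$ provided by the inductive step and use propositions~\ref{p.unstable} and~\ref{p.both-related} to localize the combined chain inside a single decorated region of $p$. The case where $w$ is a sink (and hence has no unstable branch to follow) is handled by applying proposition~\ref{p.chain} to $g=f^{2n}$: the chain from $w$ to a fixed point of $f$ is connected, so some other fixed point of $g$ in the chain must have a $g$-invariant unstable branch accumulating on $w$, after which the inductive argument above applies to that point.

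The main obstacle is to verify, at each inductive step, that the concatenated chain truly lies inside a single decorated region of a single stabilized $p$. Because decorated regions associated to distinct stabilized points are nested (proposition~\ref{p.both-related}), the natural candidate for $p$ is the stabilized point whose decorated region is the smallest one enclosing the entire chain; showing that this minimum exists and that the chain reaches $p$ without crossing $W^s_\mathbb{D}(p)$ in the wrong direction will rely on the no-cycle theorem~\ref{t.cycle} together with corollary~\ref{c.dichotomy-stabilized} to exclude the forbidden configurations.
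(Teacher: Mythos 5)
Your treatment of exclusivity is fine and matches the paper. The gap is in exhaustiveness, and it is a genuine one: your branch-following induction cannot, by itself, produce the stabilized point that is supposed to organize $w$. Two concrete failure modes. First, the induction on the period does not close: lemma~\ref{p.accumulation} applied to $f^n$ only yields a point $p'$ fixed by $f^n$ in the accumulation set of $\Gamma$, and $p'$ may have $f$-period exactly $n$, so the period does not strictly decrease. Second, and more seriously, the ``terminal'' objects of the chain structure for $f^n$ need not be stabilized or fixed: a periodic sink of period $n\geq 2$, or an isolated $f^n$-fixed arc of sink or reflexion type containing no fixed point of $f$, satisfies none of (1)--(3) a priori, has no unstable branch to follow downward, and the point $z$ whose branch accumulates on it (obtained from proposition~\ref{p.chain} for a high iterate) is exactly another point you are still trying to classify, possibly of the same period. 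The no-cycle theorem~\ref{t.cycle} rules out cycles among such points but does not rule out a ``self-contained'' acyclic cluster of periodic points of period $\geq 2$, none of whose unstable branches ever reaches a fixed point of $f$; nothing in your argument excludes this configuration, and for such a cluster there would be no stabilized point to which $w$ is decreasing chain related.

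The paper closes precisely this hole with a global counting argument that your proposal has no analogue of. It partitions the isolated $f^n$-fixed arcs into those ($\cI_0$) containing already-classified points and the rest, shows that unstable branches of unclassified arcs can only accumulate on unclassified arcs (otherwise the source point would become stabilized or decreasing chain related, hence classified), and then computes indices: the Lefschetz formula (proposition~\ref{p.lefschetz}) together with lemma~\ref{l.local-lefschetz} forces the classified arcs to carry total index at least $1$, while remark~\ref{r.lefschetz} forces any nonempty self-contained unclassified family to carry total index at least $1$ as well, contradicting the total index being $1$. This index count is the essential ingredient proving that the unclassified family is empty; without it, the local concatenation of chains you describe (which is otherwise broadly consistent with propositions~\ref{p.unstable} and~\ref{p.both-related}) does not terminate at a stabilized point in general.
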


\begin{proof} The options (1) and (2) are incompatible by definition of the stabilization.
Options (2) and (3) are incompatible by Corollary~\ref{c.dichotomy-stabilized}.
Also (1) and (3) are incompatible by Remark~\ref{r.decreasing}.
It remains to prove that any periodic point $w$ satisfies one of the cases.
 
Let $f^n$ be an orientation-preserving iterate that fixes $w$
and let $\cI$ be a finite collection of isolated arcs fixed by $f^n$
which contains all the points fixed by $f^n$.
Let $\cI_0$ be the set of intervals $I\in \cI$ containing a periodic point satisfying one of the cases (1), (2) or (3).

\begin{claim}\label{c.decompose-arc}
For $I\in \cI_0$, any periodic point in $I$ satisfies the proposition.
More precisely one and only one of the following cases occurs:
\begin{itemize}
\item[--] the periodic points in $I$ are all fixed and not stabilized,
\item[--] $I$ contains either  a stabilized point $p$ or a point decreasing chain related to a stabilized point $p$:
all the other periodic points in $I$ are decreasing chain related to $p$.
\end{itemize}
\end{claim}
\begin{proof}
We can assume that $I$ is not reduced to a single periodic point
(in that case the statement holds immediately). We consider three cases:

If $I$ contains a fixed point $q$ with an eigenvalue $\lambda^+_q\geq 1$,
then any periodic point in $I$ is fixed and can not be stabilized. The first case occurs.

If $I$ contains a fixed point $q$ with eigenvalue $\lambda^+_q\leq -1$,
the other periodic points in $I$ have period $2$: if $q$ is not a sink, it is stabilized
and the other periodic points in $I$ are decreasing chain-related to $q$;
if $q$ is a sink, its basin in $I$ is bounded by a $2$-periodic orbit $\{p,f(p)\}$,
the other periodic points in $I$ are decreasing chain related to $p$ or $f(p)$.

If $I$ does not contain any fixed point, but contains a stabilized point $p$,
then it is contained in the closure of the decorated region $V$ of $p$.
Otherwise, by $f^n$-invariance, $I$ would contain the stabilized unstable branch of $p$
and its accumulation set: a contradiction since $I$ does not contain any fixed point.
One deduces that any periodic point $I$ different from $p$
is decreasing chain related to $p$.

If $I$ does not contain any fixed point, nor any stabilized periodic point,
but contains a point decreasing chain related to a stabilized point $p$,
one deduces that $I$ is contained in a decorated region $V$ of $p$.
Otherwise $I$ would intersect $W^s_\DD(p)$, and hence by $f^n$-invariance would contain $p$.
Therefore any periodic point in $I$ is also decreasing chain related to $p$.
\end{proof}

\begin{claim}\label{c.control-unstable}
For any $I\in \cI\setminus \cI_0$ and any $f^n$-invariant unstable branch $\Gamma$,
any periodic point in the accumulation set of $\Gamma$ belongs to some $I'\in \cI\setminus \cI_0$.
\end{claim}
\begin{proof}
Let us consider an endpoint $z\in I$ with a $f^n$-invariant unstable branch $\Gamma$
whose accumulation set contains a $f^n$-invariant point $q$.
Let us assume by contradiction that the interval $I'\in \cI$ containing $q$ belongs to $\cI_0$.
We distinguish two cases.
\smallskip

{\em i--} The point $q$ is fixed: if $q$ satisfies case $(1)$, then $z$ is stabilized, a contradiction; if $q$ satisfies case $(2)$, since $z$ is not stabilized, the Definition~\ref{d.stabilization} implies that $\Gamma$ does not intersect one of the components of $\DD\setminus W^s_\DD(q)$. Therefore, by Definition~\ref{d.decreasing-chain} one deduces that $z$ is decreasing chain related to $q$;  this is a contradiction since $I\not\in \cI_0$.\hspace{-1cm}\mbox{}
\smallskip

  {\em ii--}  The point $q$ is not fixed: Since $I'\in \cI_0$, from the previous claim there exists a stabilized point $p$
such that all the periodic points in $I'$ are decreasing chain related to $p$ or coincide with $p$.
Let $V$ be the decorated region associated to $p$ which contains $q$.
By Definition~\ref{d.decreasing-chain}, there exists a chain $C\subset \overline V$ for $f^n$ containing $q$ and $p$.
Note that $\Gamma$ cannot intersect the region $\DD\setminus \overline V$:
when $p$ is fixed, this would immediately imply that $z$ is stabilized, a contradiction;
when $p$ is not fixed, this would imply (by Proposition~\ref{p.transitive})
that the accumulation set of $\Gamma$ would contain the accumulation set of the stabilized branch of $p$,
and then that $z$ is stabilized, a contradiction.
One deduces that $I\cup I'\cup \Gamma\cup C$ is a chain for $f^n$
containing $z$ and $p$. It is contained in $\overline V$ hence $z$ is decreasing chain related to $p$.
A contradiction.
\end{proof}

We can now conclude the proof of the Proposition~\ref{p.decreasing-chain}.
From Claim~\ref{c.decompose-arc}, one can for each stabilized point $p$ consider the family $\cI_p$ of arcs $I\in \cI_0$
such that all the periodic points in $\cI_p$ are decreasing chain related to $p$ or equal to $p$.
One can also consider the family $\cI_{fix}$ of arcs whose periodic points are fixed and not stabilized.
The family $\cI_0$ decomposes as the disjoint union of $\cI_{fix}$ with the families $\cI_p$, for $p$ stabilized.

Let $p$ be a stabilized fixed point, with decorated regions $V_1,V_2$.
Lemma~\ref{l.local-lefschetz} implies
\begin{equation}\label{e.fixed-stab}
\sum_{I\in\cI_p}index(I,f^n)=L(V_1,f^n)+L(V_2,f^n)\geq 1.
\end{equation}
Let $p$ {\color{black} be} a stabilized point fixed by $f^n$ but not by $f$. It has one decorated region $V$.
Let $I_p$ be the arc in $\cI_p$ which contains $p$.
Since $p$ has an unstable branch in the region $\mathbb{D}\setminus \overline V$,
we get $index(I,\mathbb{D}\setminus \overline V,f^k)=-1/2$. Consequently Lemma~\ref{l.local-lefschetz} implies
\begin{equation}\label{e.notfixed-stab}
\sum_{I\in\cI_p}index(I,f^n)=L(V,f^n)+index(I_p,\mathbb{D}\setminus \overline V,f^n)\geq 0.
\end{equation}
 
Note that if $I\in \cI$ contains a stabilized fixed point, then $index(I,f)=1$, whereas for $I\in \cI_{fix}$ one has $index(I,f)=index(I,f^n)$.
Therefore the Lefchetz formula (Proposition~\ref{p.lefschetz}) for $f$ gives
$$\sum_{I\in\cI_{fix}}index(I,f^n)=\sum_{I\in\cI_{fix}}index(I,f)=1-\operatorname{Card}\{p \text{ fixed and stabilized}\}.
$$
Combining the three previous inequalities give
$$\sum_{I\in\cI_{0}}index(I,f^n)\geq 1.$$

If one assumes that $\cI\setminus \cI_0$ is non-empty,
the Claim~\ref{c.control-unstable} and the Remark~\ref{r.lefschetz} give
$$\sum_{I\in\cI\setminus \cI_{0}}index(I,f^n)\geq 1.$$
This gives $\sum_{I\in\cI}index(I,f^n)\geq 2$ which contradicts the Lefschetz formula  (Proposition~\ref{p.lefschetz}).
Consequently $\cI=\cI_0$ and any point fixed by $f^n$ satisfies one of the cases of the Proposition~\ref{p.decreasing-chain}.
The proof is complete.
\end{proof}

We can now complete the proof of the Lefschetz formula inside a decorated region.
\begin{proof}[Proof of Proposition~\ref{p.local-lefschetz}]
We argue as in the proof of the Proposition~\ref{p.decreasing-chain} for the orientation preserving iterate $f^n$.
We consider a collection $\cI$ of disjoint isolated arcs fixed by $f^n$.
For each stabilized point $p$, we consider the collection of arcs $\cI_p$ containing points decreasing chain related to $p$
and the point $p$ itself.
We also consider the family $\cI_{fix}$ of arcs whose periodic points are fixed and not stabilized.
The family $\cI$ is partitioned as the disjoint union of $\cI_{fix}$ with the families $\cI_p$, for $p$ stabilized.

Arguing as before, the conclusion of Lemma~\ref{l.local-lefschetz} gives the inequality~\eqref{e.fixed-stab} for any $p$ stabilized and fixed 
and it gives the inequality~\eqref{e.notfixed-stab} for $p$ stabilized and not fixed.
If one of these inequalities is strict, one deduces $\sum_{I\in\cI}index(I,f^n)> 1$
and contradicts the Lefschetz formula  (Proposition~\ref{p.lefschetz}).
Consequently the inequalities~\eqref{e.fixed-stab} and~\eqref{e.notfixed-stab} are equalities.
This means that the inequality in Lemma~\ref{l.local-lefschetz} is an equality and Proposition~\ref{p.local-lefschetz} holds.
\end{proof}

\section{Trapping discs}
\label{ss.trapping}

A compact set $\Delta\subset \mathbb{D}$ is a \emph{(topological) disc} if it is homeomorphic to the unit disc.
It is \emph{trapping}\index{trapping disc} for $f$ if $f(\Delta)\subset \operatorname{Interior}(\Delta)$.
In this section we prove the following result.

\begin{theorem}\label{t.renormalize}
Let $f$ be a mildly dissipative diffeomorphism of the disc with zero topological entropy
and $\Gamma$ be a $f$-invariant unstable branch of a fixed point $p$.
Then there exists a trapping disc $\Delta$ containing the accumulation set of $\Gamma$ and disjoint from $W^s(p)$.
\end{theorem}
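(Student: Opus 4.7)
The plan is to realize $\Delta$ as a finite union of Pixton discs whose boundaries combine arcs of $\Gamma$ with local stable manifolds of saddles in the accumulation set of $\Gamma$; the key tools will be Proposition \ref{p.decreasing-chain} to classify the periodic orbits of that accumulation set and Theorem \ref{t.measure local} to prevent those orbits from proliferating.

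Let $\gamma \subset \Gamma$ be a fundamental domain and set $\Lambda := \bigcap_{N \geq 0} \overline{\bigcup_{n \geq N} f^n(\gamma)}$. As a Hausdorff limit of connected sets $\bigcup_{n=N}^{N+k} f^n(\gamma)$, $\Lambda$ is an $f$-invariant continuum, and dissipation ensures that $\DD \setminus \Lambda$ is connected, so $\Lambda$ is cellular. Lemma \ref{l.heteroclinic-cycle} forbids $\Gamma$ from meeting $W^s_\DD(p)$ away from $p$; combined with the $f$-invariance of $\Lambda$ and Theorem \ref{t.cycle}, this yields $\Lambda \cap W^s(p) = \emptyset$, so any region built with $\Gamma$ as part of its boundary will automatically be disjoint from $W^s(p)$.

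To organize the construction, I would analyze the periodic orbits in $\Lambda$. Lemma \ref{p.accumulation} and Proposition \ref{p.CL} produce at least one periodic point in $\Lambda$, and Theorem \ref{t.measure local} makes periodic points of $\Lambda$ dense in the support of every $f$-invariant probability on $\Lambda$. By Proposition \ref{p.decreasing-chain} every such orbit is either fixed, stabilized, or decreasing-chain-related to a stabilized saddle. For each stabilized saddle $q \in \Lambda$, Propositions \ref{p.unstable} and \ref{p.chain-decreasing} confine the orbits decreasing-chain-related to $q$ to the decorated region of $q$ and force $\Gamma$ to accumulate on $W^s_\DD(q)$. These two facts allow me to build a Pixton disc $D_q$ (Definition \ref{d.pixton0}) bounded by an arc of $\Gamma$, an arc of $W^s_\DD(q)$, and a short closing arc. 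By construction the appropriate iterate $f^n$ traps $D_q$, and $D_q$ contains the whole part of $\Lambda$ decreasing-chain-related to $q$.

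The main obstacle is to show that finitely many of these Pixton discs already cover $\Lambda$. By Proposition \ref{p.both-related} and Corollary \ref{c.unique-stabilized}, applied if needed to an orientation-preserving iterate of $f$, distinct decorated regions inside $\Lambda$ are either nested or disjoint, so it suffices to keep one $D_q$ per maximal stabilized saddle in $\Lambda$. Assume for contradiction that infinitely many maximal stabilized orbits $(q_k)$ lie in $\Lambda$. Proposition \ref{p.group} applied to each iterate $f^k$ implies that only finitely many periodic orbits of bounded period can exist, so the periods of the $q_k$ tend to infinity; by compactness they accumulate on a nonempty aperiodic invariant subset $K \subset \Lambda$ carrying an ergodic probability $\mu$. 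Theorem \ref{t.measure local} then approximates $\mu$ by periodic orbits of $\Lambda$, which by Proposition \ref{p.decreasing-chain} are themselves stabilized saddles or decreasing-chain-related to some common stabilized saddle $q^{*}$; Proposition \ref{p.both-related} finally forces all but finitely many of the $q_k$ into the decorated region of $q^{*}$, contradicting their maximality. Once finiteness is in hand, the union of the finitely many discs $D_q$, thickened along connecting arcs of $\Gamma$ to a simply connected set, is the desired trapping disc $\Delta$ containing $\Lambda$ and disjoint from $W^s(p)$.
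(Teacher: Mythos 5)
Your overall strategy --- Pixton discs bounded by arcs of $\Gamma$ and by local stable manifolds of the stabilized saddles in the accumulation set, with the closing lemma controlling proliferation --- is the right one, but two essential steps are missing, and the finiteness argument you substitute for one of them is both gappier and harder than what is actually needed.

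First, you only build discs around stabilized saddles (and, implicitly, sinks), but the accumulation set $\Lambda$ of $\Gamma$ may contain fixed points falling under case (1) of Proposition~\ref{p.decreasing-chain}: fixed saddles with an eigenvalue $\geq 1$, saddle-nodes, and whole normally hyperbolic fixed arcs. Such a point $p'$ carries its own $f$-invariant unstable branches, whose accumulation sets are again contained in $\Lambda$ by Proposition~\ref{p.transitive}; to trap a neighborhood of $p'$ one must already have trapped those accumulation sets. This is why the paper orders the finite family $\cI$ of fixed arcs by the (acyclic) accumulation relation and proves the stronger Theorem~\ref{t.renormalize2} by induction along that order, with Lemma~\ref{l.periodone} gluing a tubular neighborhood of the arc to the trapping discs supplied by the inductive hypothesis. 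Your proposal has no induction and no mechanism for these fixed points, so the union of your discs $D_q$ need not contain $\Lambda$.

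Second, even for the part of $\Lambda$ governed by stabilized saddles, you never prove that the discs cover $\Lambda$: you assert that the union ``is the desired trapping disc containing $\Lambda$.'' The set $\Lambda$ is not just its periodic orbits; it contains aperiodic invariant subsets and the limits of long arcs of $\Gamma$. The crucial statement is the paper's Lemma~\ref{l.covering}: \emph{every} point of $\Lambda$ has a backward iterate in the interior of one of the Pixton discs. Its proof is exactly where Theorem~\ref{t.measure local} enters: a point whose backward orbit avoids all the discs accumulates on an aperiodic invariant set carrying an ergodic measure $\mu$; the closing lemma produces periodic points of $\Lambda$ near a Pesin-generic point of $\mu$, and the unstable sets attached to those periodic points (which lie in the discs) must cross the stable manifolds of generic iterates, forcing $\mu$ to be supported in a single disc --- a contradiction. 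You instead spend the closing lemma on proving that $\Lambda$ contains only finitely many ``maximal'' stabilized orbits, which is essentially a local form of Theorem~\ref{t.finite}; that statement is proved in the paper only \emph{after} (and using) Theorem~\ref{t.renormalize2}, and your two-line version has its own gaps (nothing forces the approximating periodic orbits to be decreasing-chain-related to a \emph{common} $q^{*}$, and Proposition~\ref{p.group} bounds the number of fixed arcs, not the number of periodic orbits of bounded period). The correct and cheaper route is: build the discs, prove the covering lemma, and let compactness of $\Lambda$ give finiteness of the subcover --- no a priori bound on the number of stabilized orbits is required.
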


It is enough to prove the theorem in the case where $f$ is orientation preserving.
Let us consider the finite set $\cI$ of isolated fixed arcs as introduced in Section~\ref{ss.arc}.
Since there is no cycle of fixed arcs (Corollary~\ref{c.cycle}),
the elements of $\cI$ can be ordered as a sequence $I_1,\dots,I_n$ such that there is no $f-$invariant unstable branch of $I_i$ which accumulates on $I_j$
when $j\geq i$. The proof first deals with the $f$-invariant unstable branches of the arcs $I_i$, by induction on $i$.
In this case we have a more precise version.

\addtocounter{theorem}{-1}
\renewcommand{\thetheorem}{\Alph{theorem}'}
\begin{theorem}\label{t.renormalize2}
Let $f$ be a mildly dissipative diffeomorphism of the disc with zero topological entropy
and $\cI$ a set of isolated fixed arcs as introduced in Section~\ref{ss.arc}.
For any $I_i\in \cI$ and any $f$-invariant unstable branch $\Gamma$ of $I_i$,
let $Z$ be the closure of the union of:
\begin{itemize}
\item[--] the accumulation set $\Lambda$ of $\Gamma$,
\item[--] the arcs $I_j\in \cI$ for $j<i$,
\item[--] the $f$-invariant unstable branches of the arcs $I_j$ for $j<i$.
\end{itemize}
Then, $\Lambda$ is included in {\color{black} a} trapping disc $\Delta$ which is contained in an arbitrarily small neighborhood of $Z$.
\end{theorem}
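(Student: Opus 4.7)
The plan is to proceed by induction on $i$. Let $U$ be an arbitrarily small neighborhood of $Z$; the goal is to construct a trapping disc $\Delta \subset U$ containing $\Lambda$. The induction hypothesis provides, for each $j < i$ and each $f$-invariant unstable branch of $I_j$, such a trapping disc. The first step is to identify the structure of $\Lambda$: by the ordering of $\cI$ (which is possible by no-cycle, Corollary~\ref{c.cycle}) together with Lemma~\ref{p.accumulation}, any fixed arc contained in $\Lambda$ lies among $\{I_1,\dots,I_{i-1}\}$; using the topological $\lambda$-lemma (Proposition~\ref{p.transitive}), $\Lambda$ then also contains the corresponding $f$-invariant unstable branches. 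The remaining periodic points of $\Lambda$ are classified by Proposition~\ref{p.decreasing-chain}: each is either stabilized or decreasing-chain related to a stabilized orbit.

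Next, for each stabilized periodic orbit $\mathcal{O}\subset \Lambda$ with decorated region $V$ of period $k$, I will build a Pixton disc $D_\mathcal{O}$ for $f^k$ as in Definition~\ref{d.pixton0}: its boundary consists of a fundamental arc of $\Gamma$, an arc of $W^s_\DD(\mathcal{O})$, and a small closing arc inside $V$ (possible because $\Gamma$ must accumulate on $\mathcal{O}$, since $\mathcal{O}\subset \Lambda$). By Proposition~\ref{p.unstable}, any periodic point decreasing-chain related to $\mathcal{O}$ has its unstable set inside the closure of $V$, so it is absorbed by $D_\mathcal{O}$. The no-cycle theorem forbids any stable manifold of a point outside $V$ from crossing the interior of $D_\mathcal{O}$, so $D_\mathcal{O}$ remains disjoint from the other $I_j$'s and from the inductive trapping discs associated with them.

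The main obstacle is bounding the number of Pixton discs needed. I will argue by contradiction: suppose there are infinitely many pairwise distinct stabilized orbits $\mathcal{O}_n\subset \Lambda$ requiring distinct discs. Proposition~\ref{p.both-related} forces (after extraction) the decorated regions $V_n$ to be nested, so their intersection $K\subset \Lambda$ is $f$-invariant, cellular and compact. By the localized closing lemma (Theorem~\ref{t.measure local}), every invariant probability on $K$ is supported in the closure of the periodic points of $K$; but Proposition~\ref{p.decreasing-chain} classifies those periodic points, and neither possibility is compatible with membership in $K$: fixed points would lie in some $I_j$ with $j<i$ and therefore in an inductive trapping disc disjoint from $K$, while stabilized orbits would lie outside the deeper $V_n$'s by Corollary~\ref{c.unique-stabilized}. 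This contradiction yields finiteness.

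Finally, I assemble $\Delta$ from the finitely many Pixton discs $D_1,\dots,D_N$ and the inductive trapping discs for the $I_j$, $j<i$, linked by thin tubular neighborhoods along the $f$-invariant unstable branches of those $I_j$ contained in $Z$. By the hierarchy of chains and decorations (Propositions~\ref{p.chain} and~\ref{p.stab-decorate}, and the fact that decorated regions are either disjoint or nested) the assembled set is simply connected and lies in $U$. A standard $C^0$ smoothing of the boundary, using that each constituent piece is already trapping under the appropriate iterate and that consecutive pieces meet transversally along arcs of $\Gamma$ and stable curves, produces the required trapping disc $\Delta$ for $f$ itself. Specializing to the case $\cI = \{I_1\}$ with $I_1$ the arc containing $p$ then yields Theorem~\ref{t.renormalize}.
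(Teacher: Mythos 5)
Your overall architecture (induction on $i$, Pixton discs around the stabilized orbits accumulated by $\Gamma$, merging with the inductive trapping discs) matches the paper's, but two steps in the middle do not hold up. First, your finiteness argument is broken. A decorated region is attached to a single point $p$ of a stabilized orbit, and $f$ permutes the decorated regions of the iterates of $p$; hence the intersection $K$ of a nested sequence $V_n$ is \emph{not} $f$-invariant, and the claim $K\subset\Lambda$ is unjustified, so you cannot apply theorem~\ref{t.measure local} to it. You also only treat the nested case: two decorated regions are either nested or disjoint, and an infinite family could well be pairwise disjoint, which your extraction ignores. More structurally, what you are trying to prove here is essentially theorem~\ref{t.finite} (finiteness of stabilized orbits), which the paper establishes only \emph{after} theorem~\ref{t.renormalize2} and by \emph{using} it (via proposition~\ref{p.separation}); the proof of theorem~\ref{t.renormalize2} deliberately avoids any a priori count of stabilized orbits. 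The correct route is to allow a possibly infinite collection $\cD$ of Pixton discs and extract a finite subcover of the compact set $\Lambda$ from the forward-invariant union of their interiors.

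Second, and independently, even granting finitely many discs, nothing in your proposal shows that their union contains all of $\Lambda$. The discs you build capture periodic orbits, their unstable sets and the chains attached to them, but $\Lambda$ typically also contains aperiodic invariant sets (this is exactly the infinitely renormalizable situation, where odometers sit inside $\Lambda$), and these need not lie on any unstable branch or in any decorated region's ``skeleton''. The paper's lemma~\ref{l.covering} is the missing ingredient: every point of $\Lambda$ has a backward iterate in the interior of some Pixton disc. Its proof is not formal — it argues by contradiction that an uncovered point would generate a non-atomic ergodic measure $\mu$, uses a Pesin block and the localized closing lemma (theorem~\ref{t.measure local}) to produce periodic points of $\Lambda$ in a thin strip $R$ bounded by stable manifolds of generic points of $\mu$, and then shows the unstable set or chain attached to such a periodic point must cross those stable manifolds, forcing $\mu$ to be carried by the boundary of a Pixton disc, which is absurd. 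Without this step (or a substitute), the assembled set $\Delta$ may simply miss part of $\Lambda$, and the theorem is not proved. The final smoothing to an $f$-trapping disc also needs the two-stage modification the paper performs (first $f(D)\subset D$ with $f^k(D)\subset\interior(D)$, then thickening along the boundary), but that is a minor point compared with the two gaps above.
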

\renewcommand{\thetheorem}{\Alph{theorem}}

In the following we will first prove the second theorem and then deduce the first. As an immediate consequence one gets:

\begin{corollary}\label{c.trapping}
Let us consider an isolated fixed arc $I=I_i$
which is not reduced to a fixed point with eigenvalue $-1$.
Let $U$ be an open set which contains 
the arcs $I_j\in \cI$ for $j\leq i$ and the closure of
their $f$-invariant unstable branches.
Then there exists a trapping disc $\Delta\subset U$
which contains $I$.
\end{corollary}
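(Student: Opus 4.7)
The plan is to combine, for each $f$-invariant unstable branch of $I=I_i$, a trapping disc supplied by theorem~\ref{t.renormalize2} with a trapping neighborhood of $I$ built from the normally hyperbolic structure of definition~\ref{d.fixed-arc}. Since $I$ is isolated and not reduced to a single fixed point with eigenvalue $\le -1$, it admits zero, one, or two $f$-invariant unstable branches $\Gamma$, according to the types in definition~\ref{d.type-arc}.

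For each such branch $\Gamma$, theorem~\ref{t.renormalize2} yields a trapping disc $\Delta_\Gamma$ containing the accumulation set $\Lambda_\Gamma$ of $\Gamma$ and contained in an arbitrarily small neighborhood of the set $Z_\Gamma$ (formed by $\Lambda_\Gamma$, the arcs $I_j$ with $j<i$, and their $f$-invariant unstable branches). Since $\Lambda_\Gamma \subset \overline{\Gamma} \subset U$ by hypothesis, and the arcs $I_j$ with $j<i$ together with the closures of their $f$-invariant unstable branches also lie in $U$, we have $Z_\Gamma \subset U$; shrinking the neighborhood yields $\Delta_\Gamma \subset U$.

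Because $\Delta_\Gamma$ is trapping and $\Lambda_\Gamma \subset \operatorname{Int}(\Delta_\Gamma)$, the forward orbits along $\Gamma$ enter $\operatorname{Int}(\Delta_\Gamma)$ after finitely many iterates. We may therefore choose a compact subarc $\gamma_\Gamma \subset \Gamma$ joining the endpoint of $I$ generating $\Gamma$ to a point in $\operatorname{Int}(\Delta_\Gamma)$. Using the normally hyperbolic splitting of definition~\ref{d.fixed-arc}, we construct a thin closed ``rectangular'' neighborhood $R \subset U$ of $I$, whose lateral boundary consists of arcs transverse to $I$ and tangent to the strong stable direction $E^s$ (so that $f$ pushes them strictly into the interior of $R$), with small ``windows'' at the endpoints of $I$ through which the branches $\Gamma$ escape. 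We then join $R$ to each $\Delta_\Gamma$ by a thin tubular neighborhood $T_\Gamma \subset U$ of $\gamma_\Gamma$, chosen so that $f$ maps $T_\Gamma$ forward along $\Gamma$ into $T_\Gamma \cup \operatorname{Int}(\Delta_\Gamma)$. Setting
\[
\Delta \;:=\; R \,\cup\, \bigcup_\Gamma T_\Gamma \,\cup\, \bigcup_\Gamma \Delta_\Gamma,
\]
we check that $\Delta$ is a topological disc (obtained by gluing the discs $\Delta_\Gamma$ to the disc $R$ along the thin bands $T_\Gamma$), that $\Delta \subset U$ contains $I$, and that $\Delta$ is trapping by combining the contraction on the lateral boundary of $R$, the progression of points in $T_\Gamma$ toward $\Delta_\Gamma$, and the trapping of $\Delta_\Gamma$.

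The main difficulty lies in the local design near each endpoint of $I$ carrying an unstable branch, where both a strongly contracting transverse direction and a weakly expanding direction along $\Gamma$ are present, so that $R$ cannot be strictly trapping at such an endpoint and must open up along $\Gamma$ to feed into $T_\Gamma$. Using a center-manifold chart at the endpoint, as in the proof of proposition~\ref{p.group}, one arranges $R$ and $T_\Gamma$ to have the requisite product structure and to match compatibly. In the sink-arc case there is no branch and the construction reduces immediately to taking $\Delta$ to be any small closed trapping neighborhood of $I$ in $U$, furnished by normal contraction.
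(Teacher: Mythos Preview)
Your proof is correct and follows essentially the same approach as the paper. The paper states the corollary as an immediate consequence of theorem~\ref{t.renormalize2} and does not spell out a separate proof, but the construction you describe---applying theorem~\ref{t.renormalize2} to each $f$-invariant unstable branch of $I$ and then gluing the resulting trapping discs to a tubular neighborhood of $I$ together with the branches---is exactly what the paper does in the proof of lemma~\ref{l.periodone} (see also figure~\ref{f.tube}) and again in the proof of theorem~\ref{t.renormalize}. Your version is simply more explicit about the local design near the endpoints of $I$.
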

\medskip

One also deduces that periodic points are almost isolated in the recurrent set of $f$.

\begin{corollary}\label{c.isolated}
Let us consider an isolated fixed arc $I$
which is not reduced to a fixed point with eigenvalue $-1$.
Then, there exists a neighborhood $W$ of $I$ such that
\begin{itemize}
\item[--] the $\alpha$-limit set of any point $z\in W$ is either disjoint from $W$ or a fixed point of $I$,
\item[--] the $\omega$-limit set of any point $z\in W$ is either disjoint from $W$ or
a fixed point of $I$.
\end{itemize}
\end{corollary}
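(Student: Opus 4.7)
My plan is to combine Corollary~\ref{c.trapping}, the no-cycle property, and the normally hyperbolic structure near $I$. Since $I$ is a normally hyperbolic isolated fixed arc (and is not a single fixed point with eigenvalue $\le -1$), a small tubular neighborhood of $I$ admits a strong stable foliation and a local center curve $\gamma \supset I$. By the no-cycle property (Corollary~\ref{c.cycle} and Theorem~\ref{t.cycle}), no $f$-invariant unstable branch of any arc of $\cI$ accumulates on $I$. This lets me choose an open neighborhood $W$ of $I$ such that $\overline{W}$ is disjoint from every other arc of $\cI$, from the accumulation sets of all $f$-invariant unstable branches of arcs of $\cI$, and $W$ is contained in a strong-stable tubular neighborhood of $I$. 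Applying Corollary~\ref{c.trapping} with $U$ a tubular neighborhood of $I \cup \bigcup_{j<i} I_j$ together with their $f$-invariant unstable branches, I obtain a trapping disc $\Delta$ containing $I$, and I further shrink $W$ so that $\overline{W} \subset \interior(\Delta)$.

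For the $\omega$-limit claim, fix $z \in W$. If the forward orbit $(f^n(z))$ remains in $W$ for all $n$, the strong stable contraction pushes the orbit toward $\gamma$; the one-dimensional dynamics on $\gamma \cap W$ then drives it to a fixed point of $I$ (the neutral alternative on a component of $(\gamma \cap W) \setminus I$ is excluded because it would produce periodic points accumulating on $I$, contradicting isolation), so $\omega(z)$ is a single fixed point of $I$. Otherwise, the orbit exits $W$; the trapping property confines it to $\Delta$, and by construction $\Delta \setminus W$ lies in a tubular neighborhood of the earlier arcs $I_j$ ($j < i$) together with their $f$-invariant unstable branches, whose accumulation sets avoid $\overline{W}$. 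Consequently the forward orbit asymptotes to those accumulation sets and $\omega(z) \cap W = \emptyset$.

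The $\alpha$-limit claim is handled dually. For $z \in W$ with non-empty $\alpha$-limit, if the backward orbit persists in $W$ then backward-time expansion of the strong stable direction forces it to approach $\gamma$, and the one-dimensional dynamics on $\gamma$ yields a single fixed point of $I$ as $\alpha(z)$. If the backward orbit leaves $W$, the corresponding backward version of the no-cycle argument prevents it from returning to $\overline{W}$ recurrently, whence $\alpha(z) \cap W = \emptyset$.

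The main obstacle is to verify the return dichotomy: once the forward orbit exits $W$, its $\omega$-limit cannot meet $W$. This is where the no-cycle property is essential: a return to $W$ after an excursion would, via Proposition~\ref{p.transitive}, inject the accumulation set of some $f$-invariant unstable branch into a neighborhood of $I$, and iterating this argument would produce a cycle of fixed arcs contradicting Corollary~\ref{c.cycle}. Once this is ruled out, the remainder reduces to the transparent one-dimensional dynamics on the local center curve $\gamma$.
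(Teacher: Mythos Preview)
There is a genuine gap. Your ``return dichotomy'' --- the claim that once the forward orbit of $z$ exits $W$, its $\omega$-limit set cannot meet $W$ --- is not justified by the argument you give. Proposition~\ref{p.transitive} concerns accumulation sets of unstable branches of periodic points, not arbitrary forward orbits; an orbit that leaves $W$ and later drifts back near $I$ does not, by itself, produce a cycle of fixed arcs. Your attempt to invoke Corollary~\ref{c.cycle} through Proposition~\ref{p.transitive} therefore does not go through. A related error appears earlier: it is \emph{not} true that ``no $f$-invariant unstable branch of any arc of $\cI$ accumulates on $I$''. The no-cycle property forbids cycles, but arcs $I_j$ with $j>i$ (in the ordering of Section~\ref{ss.trapping}) can perfectly well have unstable branches accumulating on $I$; your choice of $W$ disjoint from all such accumulation sets is thus impossible in general.

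The paper's proof avoids this issue by applying Theorem~\ref{t.renormalize} (not Corollary~\ref{c.trapping}) to each $f$-invariant unstable branch of $I$ itself: this yields trapping discs $V_1,V_2$ (or a single $V$ in the saddle-node case) that contain the accumulation sets of those branches and are \emph{disjoint from} $I$. Then any forward orbit leaving a small neighborhood $W$ of $I$ is forced into $V_1\cup V_2$ and trapped there, so its $\omega$-limit is automatically disjoint from $W$. For the $\alpha$-limit, the paper forms the forward-invariant set $W'=V_1\cup V_2\cup(\cup_n f^n(W))$, checks $f^k(\overline{W'})\subset W'$ for some $k$, and uses this together with normal hyperbolicity. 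The point is that the trapping discs you need are the ones \emph{disjoint from} $I$ that absorb the unstable branches of $I$, not a large disc $\Delta$ that contains $I$.
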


Note that a fixed point with eigenvalue $-1$ is contained in an isolated
fixed arc $I'$ for $f^2$ to which the corollary may be applied.
This gives:

\begin{corollary}\label{c.isolated2}
Any periodic orbit $\cO$, with period $N$, admits a neighborhood $W$
such that any ergodic measure $\mu$ satisfying $\mu(W)>0$
is supported on a periodic orbit with period less or equal to $2N$.
\end{corollary}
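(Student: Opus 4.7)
The plan is to apply corollary~\ref{c.isolated} to the iterate $g := f^{2N}$, which inherits zero entropy and mild dissipation from $f$ (the strong stable manifolds of $f$ and $f^{2N}$ coincide) and has every point of $\cO$ as a fixed point. First I would apply proposition~\ref{p.group} to $g$ to obtain a finite collection $\cJ$ of disjoint isolated $g$-fixed arcs covering all $g$-fixed points; choosing one representative per $f$-orbit of $\cO$ and transporting by $f$, I can arrange that $f$ permutes $\cJ$, so that if $J_p$ denotes the arc through $p \in \cO$ then $f(J_p) = J_{f(p)}$.

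The key step is to verify that each $J_p$ satisfies the hypothesis of corollary~\ref{c.isolated}: that it is not reduced to a single fixed point with eigenvalue $-1$ for $g$. If $J_p = \{p\}$, the eigenvalues of $Dg(p) = (Df^N(p))^2$ are the squares of those of $Df^N(p)$; a real square is non-negative, while complex conjugate eigenvalues $re^{\pm i\theta}$ of $Df^N(p)$ satisfy $r^2 = |\det Df^N(p)| < 1$ by dissipation, so neither square equals $-1$. Corollary~\ref{c.isolated} then produces, for each $p \in \cO$, an open neighborhood $W_p$ of $J_p$ such that the $g$-omega and $g$-alpha limit sets of any point of $W_p$ are either disjoint from $W_p$ or reduce to a single $g$-fixed point of $J_p$. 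Setting $W_{f(p)} := f(W_p)$ for each $p$, the union $W := \bigcup_{p \in \cO} W_p$ is an $f$-invariant open neighborhood of $\cO$; since $f$ commutes with $g$, the conclusion of corollary~\ref{c.isolated} still holds at each $W_p$.

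For the conclusion, I would take any $f$-ergodic measure $\mu$ with $\mu(W)>0$; by $f$-invariance of $W$, $\mu(W)=1$. Decomposing $\mu$ into its $g$-ergodic components, since $W$ is also $g$-invariant each component $\nu$ satisfies $\nu(W) = 1$, so $\nu(W_{p'}) > 0$ for some $p' \in \cO$. Picking a $\nu$-generic point $z$ and an iterate $z' := g^k(z) \in W_{p'}$, we have $\supp(\nu) = \omega_g(z')$ with $\supp(\nu) \cap W_{p'} \neq \emptyset$, so corollary~\ref{c.isolated} forces $\supp(\nu) = \{q\}$ for a $g$-fixed point $q \in J_{p'}$. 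Thus $\nu = \delta_q$ and $\mu$ is the equidistribution on the $f$-orbit of $q$, whose period divides $2N$.

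The main obstacle will be the eigenvalue check in the second paragraph: the bound $2N$ (rather than a larger multiple) requires ruling out the degenerate case $J_p = \{p\}$ with eigenvalue $-1$ for $g$, in which $4N$-periodic orbits of $f$ could accumulate on $p$ and violate the claimed bound; passing from $f^N$ to $f^{2N}$ is precisely what lets dissipation exclude this configuration.
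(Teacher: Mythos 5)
Your proof is correct and follows the same route as the paper: the text derives corollary~\ref{c.isolated2} precisely by observing that a fixed point of $f^N$ with eigenvalue $-1$ sits inside an isolated fixed arc for $f^{2N}$ to which corollary~\ref{c.isolated} applies, which is exactly your eigenvalue check for $g=f^{2N}$. You merely spell out the measure-theoretic bookkeeping (passing to $g$-ergodic components and using recurrence to exclude the ``disjoint from $W$'' alternative) that the paper leaves implicit.
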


The construction of the  trapping domains in a small neighborhood that contains the accumulation set of $\Gamma$ in the proofs of Theorems~\ref{t.renormalize} and \ref{t.renormalize2} go along the following lines:
\begin{enumerate}
\item Using a slight variation of Definition \ref{d.pixton0} we build {\em Pixton {\color{black} discs which either are bounded by}} arcs  of $\Gamma$ and local stable {\color{black} manifolds} of stabilized periodic orbits with period one or larger (Lemma \ref{l.highperiod}), {\color{black} or are basins} of attraction of (semi)attracting fixed points (Lemma \ref{l.periodone}).
\item The union of these Pixton discs can be refined in a larger Pixton disc that contains all its iterates, the periodic points accumulated by $\Gamma$ and any decreasing chain-related point to them (Corollary \ref{c.pixton}).
\item Using the closing lemma (Theorem~\ref{t.measure local}) we prove that {\color{black} the backward orbit of} any point in the accumulation set of $\Gamma$ {\color{black} is contained} in the interior of the  Pixton disc described in previous item.
That allows to {\color{black} slightly modify} the Pixton disc {\color{black} in order to} guarantee  that {\color{black} its} forward
{\color{black} its forward iterates are} contained in {\color{black} the interior of the disc}.
\end{enumerate}

\subsection{Pixton discs revisited}\label{ss.pixton}
We prepare here the proof of Theorem~\ref{t.renormalize2}.
We assume in this section that $f$ preserves the orientation.

We consider an arc $I_i\in \cI$ and a $f$-invariant unstable branch $\Gamma$ of an endpoint $p$ of $I_i$.
Arguing by induction, we may assume that Theorem~\ref{t.renormalize2} holds for the $f$-invariant unstable branches of any arc $I_k\in \cI$ with $k<i$.
Let $Z$ be the invariant compact set introduced in the statement of the theorem. By assumption on the order inside the family $\cI$,
the set $Z$ disjoint from $W^s_\mathbb{D}(p)$.
We choose a neighborhood $U$ of $Z$ disjoint from $W^s_{\mathbb{D}}(p)$.
\medskip

We introduce the following notion, which is slightly different than the Definition \ref{d.pixton0} given before.

\begin{definition}\label{d.pixton} Given a $f$-invariant unstable branch $\Gamma$, a \emph{Pixton disc}\index{Pixton disc} associated to $\Gamma$ is a closed topological disc $D$ whose boundary is a Jordan curve which decomposes into
\begin{itemize}
\item[--] a closed set $\gamma^s$ satisfying $f^n(\gamma^s) \subset \operatorname{Interior}(D)$ for all $n$ larger than some $n_D\geq 1$,
\item[--] and its complement $\gamma^u$ (that could be empty)
which is contained in $\Gamma$.
\end{itemize}
\end{definition}

\begin{remarks}\label{r.pixton}
{\color{black} The following straightforward statements hold for Pixton discs:}
\begin{enumerate}
\item A trapping disc is a Pixton disc. Conversely a Pixton disc such that $\gamma^u=\emptyset$
is a trapping disc. In particular, an attracting fixed point {\color{black} is} associated {\color{black} to a} Pixton disc.
\item The forward iterates of a Pixton disc are Pixton discs.
\item If $D_1,D_2$ are two Pixton discs whose intersection is non-empty, then one obtains a new Pixton disc
$D$ by considering their ``filled union": this is the union of $D_1\cup D_2$ with all the connected components
of {\color{black} $\DD\setminus (D_1\cup D_2)$} which do not contain the boundary of $\mathbb{D}$. By~\cite{kerekjarto} (see also~\cite{LY}), the filled union is a disc. 
The new set $\gamma^s$ is contained in the union of the sets $\gamma^s_1,\gamma_2^s$ associated to $D_1,D_2$.
The same holds for $\gamma^u$.
\end{enumerate}
\end{remarks}
Observe that previous remark provides the proof of the first step in the induction argument: the first arc $I_1$ in $\cI$ is an attracting arc.

In what follow until the end of the section, $p, \Gamma, U$ are the fixed point, unstable arc and neighborhood defined at the beginning of the section. 
In order to prove Theorem~\ref{t.renormalize}, we need to cover periodic points in the accumulation set of $\Gamma$
by Pixton discs. This is done first for periodic points with period larger than $1$, and later for fixed points.

\begin{lemma}\label{l.highperiod}
Consider a periodic orbit $\cO$ accumulated by $\Gamma$ and stabilized by a fixed point $q$.
Then there exists a Pixton disc $D\subset U$
which contains $\cO$ in its interior and whose stable boundary $\gamma^s$ is contained in the stable manifold $W^s_\mathbb{D}(\cO)$ of $\cO$.
\end{lemma}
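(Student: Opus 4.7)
My plan is to handle the two cases of the definition of stabilization separately, constructing the disc $D$ explicitly out of arcs in $\Gamma$ and arcs in $W^s_\DD(\cO)$, and then using the filled-union operation of remark~\ref{r.pixton}(3) to glue the local pieces into a single topological disc.

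\textbf{Case 1: $\cO=\{q\}$ is a single fixed point stabilized by itself}, i.e.\ $q$ is a fixed saddle with reflexion ($\lambda^+_q\le -1$). The two components of a small neighborhood of $q$ cut by $W^s_\DD(q)$ are exchanged by $f$, and since $\Gamma$ is $f$-invariant and accumulates on $q$, it accumulates on $q$ from both sides. I would pick a small closed arc $\sigma\subset W^s_\DD(q)$ containing $q$ in its interior, and two short arcs $\gamma^+,\gamma^-\subset\Gamma$, one on each component of $\DD\setminus W^s_\DD(q)$, whose endpoints lie on $\sigma$. The three arcs $\sigma,\gamma^+,\gamma^-$ together bound a topological disc $D$ containing $q$ in its interior, with stable boundary $\gamma^s:=\sigma\subset W^s_\DD(q)$ and unstable boundary contained in $\Gamma$. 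By choosing $\sigma$, $\gamma^\pm$ close enough to $q$ we obtain $D\subset U$; the property $f^n(\gamma^s)\subset\operatorname{Interior}(D)$ for large $n$ follows from the exponential contraction of $\sigma$ towards $q$.

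\textbf{Case 2: $\cO$ has period $k\ge 2$, stabilized by a fixed point $q$.} First I would upgrade the accumulation hypothesis: some $p_*\in\cO$ is accumulated by $\Gamma$, and the stabilizing unstable branch $\Gamma_*$ of $p_*$ has $q$ in its accumulation set; since $\Gamma$ accumulates on the endpoint $p_*$ of $\Gamma_*$, proposition~\ref{p.transitive} shows the accumulation set of $\Gamma$ contains $\Gamma_*$ and in particular $q$ and, by $f$-invariance of $\Gamma$, each point $p_j\in\cO$. Next, for every $p_j\in\cO$ I would choose a small stable arc $\sigma_j\subset W^s_\DD(p_j)$ containing $p_j$ in its interior, with $f(\sigma_j)\subset\sigma_{j+1\bmod k}$; by proposition~\ref{p.stab-decorate} the orbit $\cO$ lies entirely on one side of $W^s_\DD(p_j)$ (the complement of the decorated region $V_j$), so the opposite side $V_j$ is ``free''. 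Using that $\Gamma$ accumulates on $p_j$ and that $f^k$ is orientation-preserving on a neighborhood of the decorated region (corollary~\ref{c.orientation}), I would find a short arc of $\Gamma$ entering a neighborhood of $p_j$ from $V_j$ with both endpoints on $\sigma_j$, and close off a small ``petal'' $D_j$ around $p_j$ whose boundary is the union of a subarc of $\sigma_j$ and a subarc of $\Gamma$.

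\textbf{Gluing and verification.} I would then define $D$ as the filled union (remark~\ref{r.pixton}(3)) of the petals $D_1,\dots,D_k$ together with a short tubular neighborhood of a system of arcs in $\Gamma$ linking consecutive petals (the existence of such arcs is where the accumulation of $\Gamma$ on every $p_j$ is used). The result is a topological disc by Kerékjártó, contained in $U$ provided all local choices were made small enough, with stable boundary $\gamma^s\subset\bigcup_j\sigma_j\subset W^s_\DD(\cO)$ and unstable boundary in $\Gamma$. The condition $f^n(\gamma^s)\subset\operatorname{Interior}(D)$ for large $n$ follows because each iterate $f^n(\sigma_j)$ has length tending to zero and collapses onto $p_{j+n\bmod k}\in\operatorname{Interior}(D)$.

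\textbf{Main obstacle.} The delicate step is Case 2, specifically organizing the petals and the connecting $\Gamma$-arcs so that the filled union is a single disc containing all of $\cO$ in its interior and lying inside $U$, and so that $\gamma^u$ really is contained in $\Gamma$ (in particular, no unwanted intersections with other stable manifolds). This requires exploiting both the decoration structure (proposition~\ref{p.stab-decorate}), which prescribes consistently on which side of each $W^s_\DD(p_j)$ the orbit $\cO$ sits, and the $f$-invariance and non-self-crossing of $\Gamma$, together with the fact (corollary~\ref{c.orientation}) that $f^k$ preserves the orientation on a neighborhood of each decorated region, which is what makes the choices of petals at the various $p_j$ compatible under the dynamics.
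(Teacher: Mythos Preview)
Your overall picture---build the disc from arcs in $\Gamma$ together with arcs in $W^s_\DD(\cO)$---is the right one, but the gluing step in Case~2 does not work as written and hides the genuine difficulty.

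The concrete problem is the phrase ``together with a short tubular neighborhood of a system of arcs in $\Gamma$ linking consecutive petals''. The boundary of such a tube is \emph{not} contained in $\Gamma\cup W^s_\DD(\cO)$; after the filled union you will have pieces of $\partial D$ that are neither in $\Gamma$ nor in a stable manifold, so $D$ is not a Pixton disc in the sense of definition~\ref{d.pixton} and certainly does not satisfy the conclusion $\gamma^s\subset W^s_\DD(\cO)$. If instead you try to link the petals by genuine sub-arcs of $\Gamma$, you must explain why one can choose a \emph{single} arc of $\Gamma$ that threads through the neighborhoods of $p_1,\dots,p_k$ in the correct cyclic order, meets each $\sigma_j$ at exactly the right points, and closes up with stable arcs to a \emph{Jordan} curve bounding a disc that actually contains all of $\cO$. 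None of this is automatic from accumulation alone, and your petal picture also presupposes that $\Gamma$ enters each decorated region $V_j$, which you have not argued.

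The paper resolves all of this with one clean topological device: it passes to the universal cover $\widetilde\DD\simeq\RR\times[0,1)$ of $\DD\setminus\{q\}$ and lifts $f$ to $\widetilde f$ fixing a lift $\widetilde\Gamma$ of $\Gamma$. For one point $w\in\cO$, the union $W^s\cup W^u$ (a stable branch to $\partial\DD$ and the stabilizing unstable branch to $q$) lifts to a family of separating curves $\widetilde W_n$ that are \emph{totally ordered} in the strip and satisfy $\widetilde W_{n+\tau}=\widetilde W_n+(1,0)$; because $w$ is not fixed, $\widetilde f$ shifts this family by some $j\ge 1$. Since $\widetilde\Gamma$ is $\widetilde f$-invariant and does not meet the unstable lifts $\widetilde W^u_n$, it is forced to cross every stable lift $\widetilde W^s_n$, $n\ge 0$. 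One then takes a \emph{single} sub-arc $\widetilde\gamma^u\subset\widetilde\Gamma$ joining $\widetilde W^s_n$ to $\widetilde W^s_{n+\tau}=\widetilde W^s_n+(1,0)$ and closes it by a \emph{single} arc $\widetilde\gamma^s\subset\widetilde W^s_n$; the projection is a simple closed curve whose stable part lies in $W^s_\DD(w)$ (one point of $\cO$, not all of them) and which, by construction, separates $\partial\DD$ from all the stabilizing branches $f^k(W^u)$, hence bounds a disc containing the whole orbit $\cO$. The fixed case ($\cO=\{q\}$ with reflexion) is handled by the same covering argument using the two stable branches of $q$ in place of $W^s\cup W^u$. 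The ordering of the separating curves in the cover is precisely what organizes the crossings of $\Gamma$ with the stable manifolds and replaces your ad~hoc petal-and-tube assembly.
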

\begin{proof}

We first assume that $\widetilde w$ has period $\tau\geq 2$.
See Figure~\ref{f.pixton}.

Let us consider the universal cover $\widetilde{ \mathbb{D}}$ of $\mathbb{D}\setminus \{q\}$:
it is homeomorphic to the strip $\mathbb{R}\times [0,1)$ and the translation $(x,y)\mapsto (x+1,y)$ can be chosen to be a covering automorphism
which generates the fundamental group.
Let $\widetilde p$ and $\widetilde \Gamma$ be lifts of $p$ and of the unstable branch $\Gamma$.
We choose the lift $\widetilde f$ of $f$ which preserves $\widetilde p$ and $\widetilde \Gamma$.

\begin{figure}
\begin{center}
\includegraphics[width=8cm,angle=0]{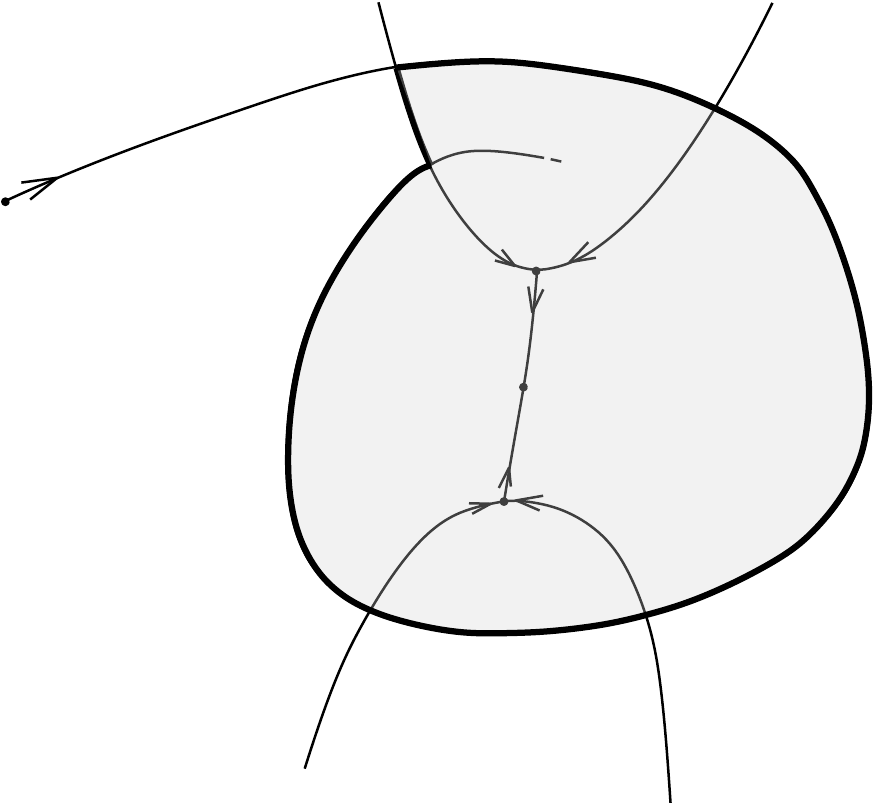}
\put(-178,168){\small $\Gamma$}
\put(-88,110){\small $q$}
\put(-220,150){\small $p$}
\put(-83,133){\small $w$}
\put(-92,83){\small $f(w)$}
\put(-35,80){\small $D$}
\put(-170,80){\small $\gamma^u$}
\put(-135,180){\small $\gamma^s$}
\put(-125,200){\small $W^s$}
\put(-107,120){\small $W^u$}
\includegraphics[width=9cm,angle=0]{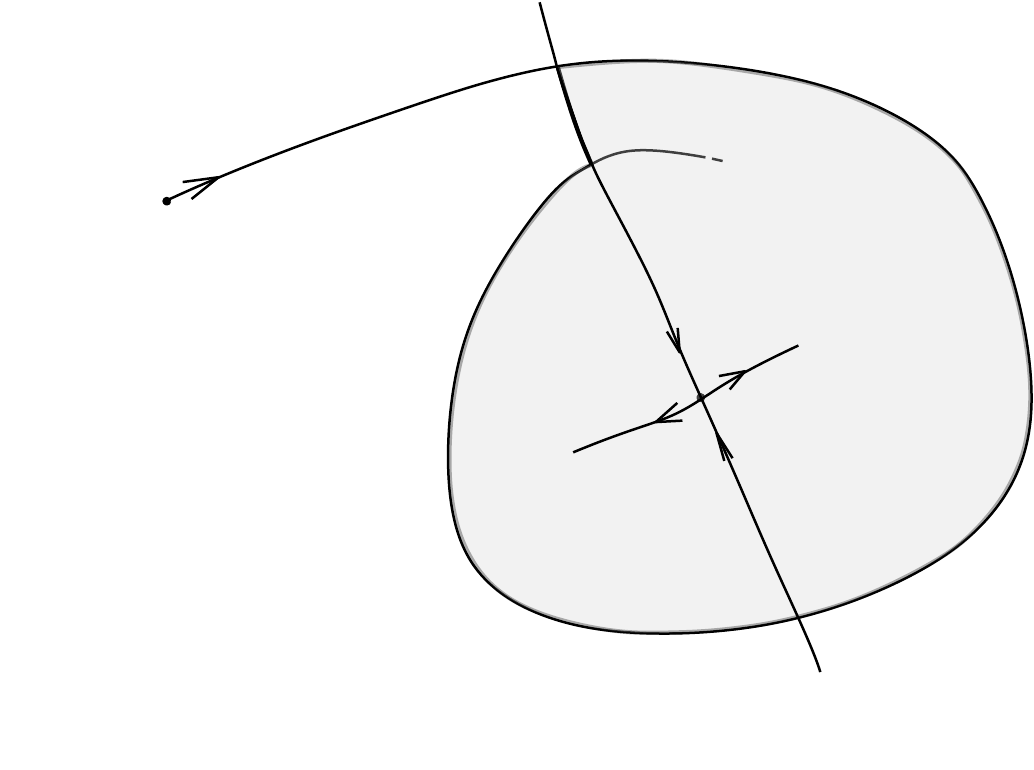}
\put(-210,135){\small $p$}
\put(-163,153){\small $\Gamma$}
\put(-160,80){\small $\gamma^u$}
\put(-125,155){\small $\gamma^s$}
\put(-35,80){\small $D$}
\put(-120,190){\small $W^s$}
\put(-78,88){\small $w$}
\end{center}
\caption{Construction of a Pixton disc: $w$ has period $2$ (left) or $1$ (right).\label{f.pixton}}
\end{figure}
\begin{figure}
\begin{center}
\includegraphics[width=12cm,angle=0]{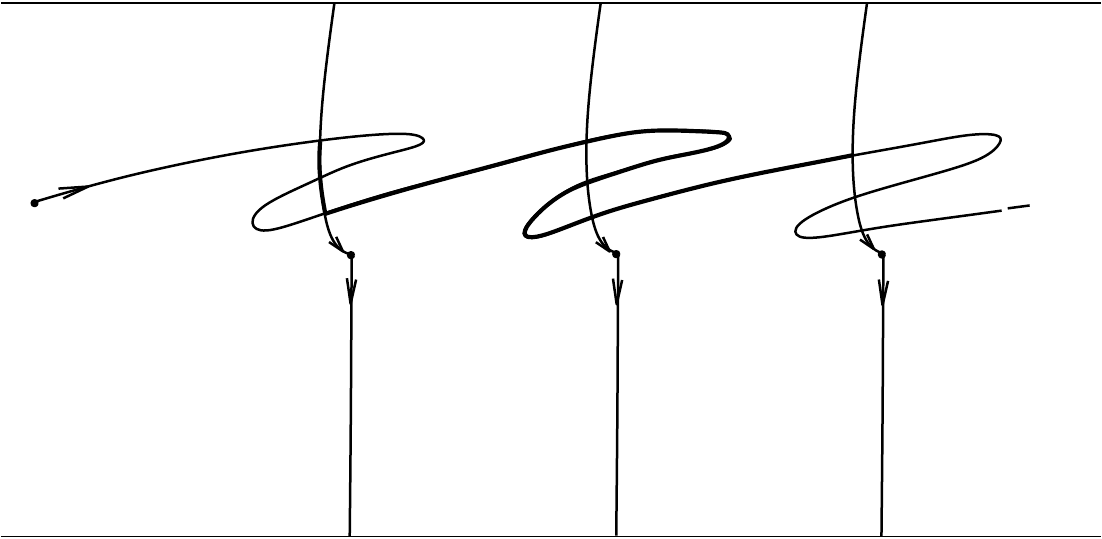}
\put(-300,115){\small $\widetilde \Gamma$}
\put(-328,98){\small $p$}
\put(-238,95){\small $a$}
\put(-75,122){\small $b$}
\put(-238,140){\small $\widetilde W^s_n$}
\put(-230,40){\small $\widetilde W^u_n$}
\put(-153,140){\small $\widetilde W^s_{n+1}$}
\put(-73,140){\small $\widetilde W^s_{n+\tau}$}
\end{center}
\caption{Proof of Lemma~\ref{l.highperiod}.\label{f.pixton-lift}}
\end{figure}

Consider $w\in \cO$, one of its stable branches $W^s\subset W^s_\mathbb{D}(w)$ connecting $w$ to a point $z$ in the boundary of $\mathbb{D}$,
and $W^u$ the unstable branch that accumulates on $q$. The points $w,z$ and the curve $W:= W^s\cup W^u$ lift as
$\widetilde w, \widetilde z\in \widetilde W=\widetilde W^s\cup\widetilde W^u$. One may assume that $\widetilde z=(0,0)$.
Note that $\widetilde W$ separates the strip: its complement contains two components
bounded by $(-\infty,0)\times \{0\}$ and $(0,+\infty)\times \{0\}$ respectively.

To any lift $\widetilde w'=\widetilde f^k(\widetilde w)+(\ell,0)$ of any iterate $f^k(w)$ of $w$,
one associates in a same way a curve $\widetilde W'$, disjoint from $\widetilde W$:
it either lands on $(-\infty,0)\times \{0\}$ (in which case one denotes $\widetilde W'<\widetilde W$) or on $(0,+\infty)\times \{0\}$.
One defines in this way a totally ordered collection of separating sets $\dots<\widetilde W_{n-1}<\widetilde W_n<\widetilde W_n<\dots$
such that $\widetilde W_n+(1,0)=\widetilde W_{n+\tau}$. Since the point $w$ is not fixed, the sets $\widetilde W_n=\widetilde W^s\cup\widetilde W^u$ are not fixed by $\widetilde f$:
there exists $j\neq 0$ such that $\widetilde f(\widetilde W_n)\subset \widetilde W_{n+j}$ for any $n\in \mathbb Z$.
We may assume without loss of generality that $j\geq 1$. See Figure~\ref{f.pixton-lift}.

The $\widetilde f$-invariant curve $\widetilde \Gamma$ accumulates on each set $\widetilde f^k(\widetilde W)\subset \widetilde W_{kj}$, $k\geq 0$.
Since the sets are separating, it intersects all the sets $\widetilde W_n$, $n\geq 0$.
Note that the unstable branch $\widetilde \Gamma$ does not intersects the curve $\widetilde W^u_n$.
It follows that it intersects all the $\widetilde W^s_n$, $n\geq 0$.

For $n\geq 1$ large, let $\widetilde \gamma^u$ be a (open) curve in $\widetilde \Gamma$ which connects $\widetilde W^s_n$ to  $\widetilde W^s_{n+\tau}=\widetilde W^s_n+(1,0)$
at two points $a\in \widetilde W^s_n$ and $b\in \widetilde W^s_n+(1,0)$.
Let $\widetilde \gamma^s\subset \widetilde W^s_n$ be the (closed) curve which connects $a$ to $b-(1,0)$.
The curve $\widetilde \gamma^s\cup \widetilde \gamma^u$ projects on a simple closed curve $\gamma=\gamma^s\cup \gamma^u$ of $\mathbb{D}$
which bounds a disc $D$.
By construction, the large forward iterates of $\gamma^s$ converge to the orbit of $w$, hence are contained in $D$.
One deduces that $D$ is a Pixton disc.

Note that the lift $\widetilde \gamma=\cup_{k\in \mathbb Z} (\widetilde \gamma^s\cup \widetilde \gamma^u+(k,0))$ separates
the boundary $\mathbb{R}\times \{0\}$ from the sets $\widetilde W^u_n$.
This implies that the disc $D$ contains all the unstable branches $f^k(W^u)$ of the iterates of $w$ and in particular the orbit $\mathcal O$.

Up to {\color{black} replacing} $D$ by a large iterate, one find a Pixton disc whose unstable boundary $\gamma^u$ is arbitrarily close to the limit
set $\Lambda$, whose stable boundary $\gamma^s\subset W^s_\mathbb{D}(w)$ has arbitrarily small diameter,
and whose area is arbitrarily small. One deduces that the disc is in an arbitrarily small neighborhood of its unstable boundary, hence of $\Lambda$.
Consequently it is included in $U$ as required.
\medskip

In the case where $w=q$ has period $1$ but negative eigenvalue, we argue in a similar way.
We denote by $W^s_0$ and $W^s_1$ the two stable branches of $q$ and we lift them as an ordered collection
of separating curves $\dots<\widetilde W^s_n<\widetilde W^s_{n+1}<\dots$ such that the curves $\widetilde W^s_{2n}$ lift $W^s_0$
and the curves $\widetilde W^s_{2n+1}$ lift $W^s_1$. Moreover $\widetilde W^s_{n+2}=\widetilde W^s_n+(1,0)$.
Since $f(W^s_0)\subset W^s_1$ and $f(W^s_1)\subset W^s_0$, the curves $\widetilde W^s_n$ are not fixed by $\widetilde f$.
The end of the proof is similar: we get a curve $\widetilde \gamma$ which separates the boundary $\mathbb{R}\times \{0\}$
from a line $\mathbb{R}\times \{1-\delta\}$, $\delta>0$ small. It projects as a simple closed curve which bounds a Pixton disc containing $q$
as required.
\end{proof}

\begin{lemma}\label{l.periodone}
Each fixed point $p'$ accumulated by $\Gamma$ and which does not have an eigenvalue less or equal to $-1$
is contained in a trapping disc $D\subset U$.
\end{lemma}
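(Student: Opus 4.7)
The plan is to use proposition \ref{p.decreasing-chain} to classify $p'$, and then to reduce to either an elementary sink argument or the inductive hypothesis on arcs with smaller index. First I would argue that since $p'$ has no eigenvalue $\leq -1$, definition \ref{d.stabilization} rules out $p'$ being a (fixed) stabilized point; also remark \ref{r.decreasing} combined with definition \ref{d.decorated region} rules out $p'$ being decreasing chain related to a stabilized periodic point, since such a relation would force the period of the corresponding decorated region, always at least $2$, to divide the period $1$ of $p'$. Consequently proposition \ref{p.decreasing-chain} forces $p'$ into case $(1)$: $p'$ is either a sink or $Df(p')$ has an eigenvalue $\geq 1$.

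The sink case is handled directly: the local stable set of a sink contains a neighborhood of $p'$, so one takes a small closed topological disc $D$ around $p'$ trapped by $f$ and shrinks it until $D \subset U$ (using that $p'\in\Lambda\subset Z\subset U$).

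For the remaining case, $p'$ belongs to some isolated fixed arc $I_j \in \cI$. By the ordering chosen for $\cI$ at the start of the proof of theorem \ref{t.renormalize2}, no $f$-invariant unstable branch of $I_i$ may accumulate on $I_\ell$ with $\ell \geq i$, so $j < i$. Moreover $I_j$ contains $p'$, which has no eigenvalue $-1$, hence $I_j$ is not reduced to a single fixed point with eigenvalue $-1$. I would then apply corollary \ref{c.trapping} to the arc $I_j$ inside the open set $U$: by construction $U$ is a neighborhood of $Z$, hence contains every arc $I_k$ with $k < i$ (in particular every $k \leq j$) together with the closures of their $f$-invariant unstable branches, so the hypotheses of the corollary are met. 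This produces a trapping disc $D \subset U$ containing $I_j$, and in particular $p'$.

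The only (mild) subtlety is the induction: corollary \ref{c.trapping} is itself a consequence of theorem \ref{t.renormalize2}, so its use here for the arc $I_j$ must be justified through the inductive hypothesis at index $j < i$, which is exactly what the induction scheme in the proof of theorem \ref{t.renormalize2} provides. No new tool is required.
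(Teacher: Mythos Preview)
Your argument is correct and lands on the same mechanism as the paper: locate $p'$ in an arc $I_j$ with $j<i$, then use the inductive hypothesis (theorem~\ref{t.renormalize2} at levels $<i$) to produce a trapping disc around $I_j$ inside $U$. Your invocation of corollary~\ref{c.trapping} at level $j$ is exactly the paper's explicit tubular-neighborhood construction (glue trapping discs for the unstable branches of $I_j$ to a thin neighborhood of $I_j$), and you correctly flag and resolve the inductive dependence.

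The one genuine difference is your opening detour through proposition~\ref{p.decreasing-chain}. That classification is not needed here: the dichotomy ``$p'$ is a sink or $Df(p')$ has an eigenvalue $\geq 1$'' is immediate from the hypothesis that no eigenvalue is $\leq -1$ together with dissipation, without invoking any of the stabilization/decoration machinery. The paper simply places $p'$ in its arc $I'=I_j$ (which every fixed point has, by proposition~\ref{p.group}), observes $j<i$ from the ordering, rules out the reflexion type from the eigenvalue hypothesis, and proceeds. Your route is valid but adds a layer of structure theory that this particular lemma does not require.
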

\begin{proof}
We use the inductive assumption stated before the Section~\ref{ss.pixton}.
The fixed point $p'$ belongs to an arc $I'=I_j$ in $\cI$. From our choice of the order on $\cI$, we have $j<i$.
If $I'$ has the type of a sink, it admits arbitrarily small neighborhoods that are trapping disc.
Note that $I'$ can not have the type of a point with {\color{black} reflection} (since $p'$ does not have an eigenvalue less or equal to $-1$).
\begin{figure}
\begin{center}
\includegraphics[width=8cm,angle=0]{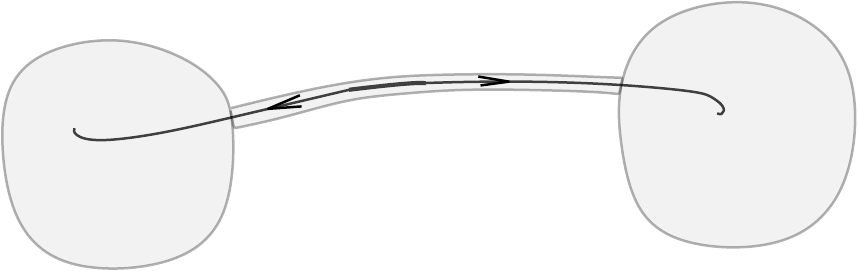}
\put(-80,55){\small $V$}
\put(-130,38){\small $I'$}
\put(-30,20){\small $D'_2$}
\put(-200,20){\small $D'_1$}
\end{center}
\caption{A Pixton disc $D=D'_1\cup V\cup D'_2$ obtained by gluing two Pixton dics.\label{f.tube}}
\end{figure}

Consequently, we are reduced to consider the case where $I'$ has a non-trivial bundle $F$ and each endpoint is either attracting in the direction $F$ or attached
to a $f$-invariant unstable branch $\Gamma'$ (it has the type of a saddle-node or of a saddle with no {\color{black} reflection}).
The proposition holds for the branches $\Gamma'$ (this is our inductive assumption).
One deduces that there exists one or two trapping discs $D'$ containing the accumulation sets of these branches and included in $U$.
Taking the union with a tubular neighborhood $V$ of $I'$ and of the branches $\Gamma'$, one obtains a trapping disc $D\subset U$ which contains
the fixed point $p'$. See Figure~\ref{f.tube}.\end{proof}

\begin{corollary}\label{c.pixton}
Under the setting of Theorem~\ref{t.renormalize}, there exists a collection $\cD$ of Pixton discs $D$
(disjoint from $W^s_\DD(p)$) such that:
\begin{itemize}
\item[(a)] all the forward iterates $f^k(D)$ of discs $D\in \cD$ are included in $U$,
\item[(b)] any periodic orbit $\cO$ in the accumulation set of $\Gamma$ is contained in one $D\in\cD$,
\item[(c)] for any periodic orbit $\widetilde \cO$ in the accumulation set of $\Gamma$ and which is stabilized by
a fixed point, there exists a Pixton disc $D\in \cD$ which contains the unstable set of $\widetilde \cO$,
any periodic orbit $\cO$ decreasing chain-related to $\widetilde \cO$ and the unstable set of $\cO$.
\end{itemize}
\end{corollary}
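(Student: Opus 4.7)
My plan is to build the collection $\cD$ by treating separately the three types of periodic orbits described by Proposition~\ref{p.decreasing-chain}. For each periodic orbit $\cO$ in the accumulation set of $\Gamma$, it is either (1) a fixed point which is not stabilized (a sink or a saddle with no reflexion), (2) a stabilized periodic orbit, or (3) decreasing chain-related to a stabilized orbit. For type~(1) orbits, I will invoke Lemma~\ref{l.periodone} to produce a trapping disc $D_{p'}\subset U$ and add it to $\cD$. For type~(2) orbits, I will start from the Pixton disc provided by Lemma~\ref{l.highperiod}, enlarge it so that it satisfies (c), and add it to $\cD$. Orbits of type~(3) will be absorbed inside the Pixton discs built for their stabilizing orbits, so they contribute no new element of $\cD$. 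Coverage property (b) then follows at once from the classification.

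The core technical step is the construction of the Pixton disc $D_{\widetilde\cO}$ for a stabilized orbit $\widetilde\cO$ accumulated by $\Gamma$. By Lemma~\ref{l.highperiod} there is an initial Pixton disc $D_0\subset U$ containing $\widetilde\cO$ in its interior, with stable boundary inside $W^s_\DD(\widetilde\cO)$; moreover $D_0$ already captures the stabilizing unstable branches of $\widetilde\cO$, since these accumulate on the stabilizing fixed point $q$ which, by Proposition~\ref{p.transitive}, belongs to the accumulation set of $\Gamma$, hence to $Z\subset U$. To include the unstable set of $\widetilde\cO$, every decreasing chain-related orbit $\cO$, and its unstable set, I rely on the crucial localization given by Proposition~\ref{p.unstable}: all these objects lie in the closure of the union of the decorated regions of $\widetilde\cO$, a set bounded by the stable manifold $W^s_\DD(\widetilde\cO)$. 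By choosing the stable boundary $\gamma^s$ of the Pixton disc (as in the construction of Lemma~\ref{l.highperiod}, but taken along fundamental domains of $W^s_\DD(\widetilde\cO)$ sufficiently far from $\widetilde\cO$), I will enlarge $D_0$ into a Pixton disc $D_{\widetilde\cO}$ whose interior contains the closures of the decorated regions of $\widetilde\cO$, and thus satisfies (c).

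To guarantee that $D_{\widetilde\cO}$ still lies in $U$ and that its forward iterates remain in $U$, I will use that $U$ is a neighborhood of $Z$, which is itself invariant and contains the closures of the decorated regions of $\widetilde\cO$ (these closures consist of accumulation points of $\Gamma$ together with unstable sets already in $Z$). By taking the pieces of $\Gamma$ involved in the boundary of $D_{\widetilde\cO}$ deep enough along the branch and the stable pieces short enough in diameter, the diameter of $D_{\widetilde\cO}$ can be made as close to that of $\overline V$ as desired, keeping it inside $U$. Forward invariance of the inclusion $f^k(D_{\widetilde\cO})\subset U$ follows since the stable boundary contracts into the interior of $D_{\widetilde\cO}$ (the Pixton property) and since the decorated regions are permuted cyclically by $f$ inside $U$.

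The main obstacle I anticipate is checking that the enlarged $D_{\widetilde\cO}$ is still a genuine Pixton disc in the sense of Definition~\ref{d.pixton}: its boundary must decompose into an arc $\gamma^u\subset\Gamma$ and a closed piece $\gamma^s$ whose large forward iterates land in the interior. This is delicate because there may be infinitely many decreasing chain-related orbits with arbitrarily large periods, which could accumulate on the stable boundary and obstruct the inward trapping of $\gamma^s$. To handle this, I will invoke Remark~\ref{r.pixton}(3) on filled unions of Pixton discs: starting from $D_0$ and adjoining, one at a time, enlarged Pixton discs whose stable boundaries lie in $W^s_\DD(\widetilde\cO)$ and whose unstable portions lie along $\Gamma$, the filled unions remain Pixton discs. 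The argument then amounts to verifying that a finite such enlargement of $D_0$ already fills the closure of the decorated regions up to the boundary $W^s_\DD(\widetilde\cO)$, which is guaranteed by the fact that any compact subset of the decorated region can be covered by finitely many translates along $\Gamma$ of a fundamental domain of $W^s_\DD(\widetilde\cO)$.
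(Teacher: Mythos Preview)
Your overall architecture (handling the three types from Proposition~\ref{p.decreasing-chain} separately, using Lemma~\ref{l.periodone} for non-stabilized fixed points and Lemma~\ref{l.highperiod} for stabilized orbits, and absorbing type~(3) orbits inside the discs of their stabilizing orbits via Proposition~\ref{p.chain-decreasing}) matches the paper. The gap is in your treatment of item~(c).

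You assume that the Pixton disc $D_0$ produced by Lemma~\ref{l.highperiod} needs to be \emph{enlarged} so as to swallow the closures of the decorated regions of $\widetilde\cO$, and then you try to justify this enlargement. This runs into real trouble: the closure of a decorated region reaches $\partial\DD$ and is certainly not contained in $Z$ or in $U$, so your claim that ``$U$ \dots\ contains the closures of the decorated regions of $\widetilde\cO$'' is false, and your proposed ``finite enlargement filling the closure of the decorated regions'' cannot succeed. The difficulty you flag in your last paragraph is a symptom of this wrong turn, not an obstacle intrinsic to the statement.

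The point you are missing is that $D_0$ \emph{already} satisfies~(c), with no enlargement needed. The boundary of $D_0$ lies in $W^s_\DD(\widetilde\cO)\cup\Gamma$, and one checks that none of the sets listed in~(c) can cross either piece. The unstable set $W^u(\widetilde\cO)$ does not meet $W^s_\DD(\widetilde\cO)\setminus\widetilde\cO$ by the no-cycle theorem (Theorem~\ref{t.cycle}), and it cannot meet $\Gamma\subset W^u(p)$ since unstable sets of distinct periodic points are disjoint. For a periodic orbit $\cO$ decreasing chain-related to $\widetilde\cO$, take a chain $C\subset\overline V$ joining a point of $\cO$ to a point of $\widetilde\cO$: the closure $\overline C$ is compact, invariant under an iterate of $f$, and contained in $U$, hence disjoint from $W^s_\DD(p)$; iterating backwards shows $\overline C$ is disjoint from $W^u(p)\supset\Gamma$. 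Since $C$ is connected and lies in the closure of a single decorated region, it meets at most one component of $\DD\setminus W^s_\DD(\widetilde\cO)$, so it cannot cross $\gamma^s$ either. Hence $C\subset D_0$, giving $\cO\subset D_0$. Finally Proposition~\ref{p.unstable} confines $W^u(\cO)$ to $\overline V$, and the same two observations (disjoint from $\Gamma$, does not cross $W^s_\DD(\widetilde\cO)$) give $W^u(\cO)\subset D_0$. Item~(a) then follows as you indicate, by replacing $D_0$ by a high iterate: the area shrinks, $\gamma^s$ shrinks, and $\gamma^u\subset\Gamma$ stays near $\Lambda\subset U$.
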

\begin{proof}
For fixed points $p'$ accumulated by $\Gamma$, we either apply the Remark~\ref{r.pixton} (when $p'$ is a sink),
Lemma~\ref{l.highperiod} (when $p'$ is not a sink and has an eigenvalue smaller or equal to $-1$: it is then stabilized by the fixed point $q=p'$),
or Lemma~\ref{l.periodone} (in the other case).

For any periodic orbit $\widetilde \cO$ which is stabilized by a fixed point,
the Lemma~\ref{l.highperiod}  provides a Pixton disc $D\subset U$
which contains $\widetilde \cO$ in its interior and whose stable boundary $\gamma^s$ is contained in $W^s_\mathbb{D}(\widetilde \cO)$.
By the no-cycle theorem (Theorem~\ref{t.cycle}), the unstable manifold of $\widetilde \cO$ does not intersects $W^s_\mathbb{D}(\widetilde \cO)\setminus \widetilde \cO$.
This proves that the unstable set of $\widetilde \cO$ is included in $D$.

Let $\cO$ be a periodic orbit decreasing chain related to $\widetilde \cO$.
For any point $w\in \cO$, there exists $\widetilde w\in \widetilde \cO$
and a chain $C\subset D$ for an iterate of $f$ which contains $w$ and $\widetilde w$.
The closure of $C$ is fixed by an iterate of $f$ and is disjoint from $W^s_\mathbb{D}(p)$ (since
it is contained in $U$). As a consequence, it is disjoint from $W^u(p)$.
Since $C$ is connected and contained in the closure of a decorated region of $\widetilde \cO$,
it intersects at most one connected component of $\mathbb{D}\setminus W^s_\mathbb{D}(\widetilde \cO)$.
Since the boundary of $D$ is contained in $W^s_\mathbb{D}(\widetilde \cO)\cup W^u(p)$,
the chain $C$ is contained in $D$. This proves that $\cO$ is included in $D$.
By Proposition~\ref{p.unstable} the unstable set of $w$ intersects at most one component
of $\mathbb{D}\setminus W^s_\mathbb{D}(\widetilde \cO)$. It does not intersects $W^u(p)$ either.
Consequently, $W^u(\cO)$ is also included in $D$.
This gives the item (c).

From Proposition~\ref{p.decreasing-chain},
any periodic orbit $\cO$ in the accumulation set of $\Gamma$ which has period larger than $1$
and is not stabilized by a fixed point is decreasing chain related to a periodic orbit $\widetilde \cO$ stabilized by a fixed point.
Moreover from Proposition~\ref{p.chain-decreasing}, the stabilized orbit $\widetilde \cO$ is also accumulated by $\Gamma$.
The item (c) provides a Pixton disc $D\subset U$ which contains $\widetilde O$ and $\cO$.
This completes (b).

It remains to prove the item (a).
By construction, the discs $D$ are contained in $U$.
In the case $D$ is trapping, all its forward iterates are contained in $U$ also.
In the other cases, $D$ is obtained with Lemma~\ref{l.highperiod}: since $f$ is dissipative,
the volume of $f^k(D)$ is arbitrarily small for $k$ large; since the stable boundary $\gamma^s$ is contained in the stable
curve of a periodic orbit, its length $f^k(\gamma^s)$ gets arbitrarily small; since the unstable boundary $\gamma^u$ is contained in $\Gamma$,
one concludes that all the iterates $f^k(D)$, for $k$ large, are contained in $U$.
Up to {\color{black} replacing} $D$ by a larger iterate, Property (a) is satisfied.
\end{proof}

\subsection{Proof of Theorem~\ref{t.renormalize2}}
We first assume that $f$ preserves the orientation
and we consider the setting of the Section~\ref{ss.pixton}.
The accumulation set $\Lambda$ of $\Gamma$ may be covered by Pixton discs.

\begin{lemma}\label{l.covering}
Let us consider the family of Pixton discs $\cD$ obtained by Corollary~\ref{c.pixton}.
Then, any point $x$ in the accumulation set $\Lambda$ of $\Gamma$ has a backward iterate in the interior of one of the Pixton discs $D\in \cD$.
\end{lemma}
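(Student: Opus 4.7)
The plan is to show that the $\alpha$-limit set $\alpha(x)\subset \Lambda$ meets the open set $\mathcal U:=\bigcup_{D\in\cD}\interior(D)$; since $\alpha(x)$ is the $\omega$-limit of $x$ under $f^{-1}$ and $\mathcal U$ is open, this will immediately provide a backward iterate $f^{-n}(x)\in\mathcal U$ as desired.

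Two preliminary remarks will be needed. First, $\Lambda$ is connected (it is the Hausdorff limit of the connected arcs $f^n(\gamma)$) and cellular (its complement in $\DD$ is connected by dissipation, as in the proof of lemma~\ref{p.accumulation}), so the local closing lemma (theorem~\ref{t.measure local}) applies: the support of any $f$-invariant probability on $\Lambda$ is contained in $\overline{\operatorname{Per}(f)\cap\Lambda}$. Second, every periodic orbit in $\Lambda$ actually lies in $\mathcal U$: corollary~\ref{c.pixton}(b) places such an orbit in some $D\in \cD$, and a periodic orbit $\cO'\subset D$ can meet neither $\gamma^u_D\subset \Gamma\subset W^u(p)$ (periodic points in $W^u(p)$ reduce to $\{p\}$, which is not in $\Lambda$ by the no-cycle theorem~\ref{t.cycle}) nor $\gamma^s_D$ (for a trapping disc $\cO'\subset \partial D$ would contradict $f(\partial D)\subset \interior(D)$; for a disc from lemma~\ref{l.highperiod}, $\gamma^s_D\subset W^s_\DD(\cO_D)$ contains no periodic point outside $\cO_D$, which itself lies in $\interior(D)$).

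Now fix an ergodic invariant measure $\mu$ on the non-empty compact invariant set $\alpha(x)$. If $\mu$ is atomic, it is supported on a periodic orbit $\cO\subset\alpha(x)\subset\Lambda$, so $\cO\subset \mathcal U$ and the plan concludes. Otherwise $\mu$ is non-atomic, and by the closing lemma $\supp(\mu)\subset \overline{\mathcal U}$. I will verify that $\supp(\mu)\cap \partial D=\emptyset$ for every $D\in\cD$, which combined with $\supp(\mu)\subset \overline{\mathcal U}$ forces $\supp(\mu)\subset \mathcal U$ and hence $\alpha(x)\cap \mathcal U\neq\emptyset$. Writing $\partial D=\gamma^s\cup\gamma^u$:
\begin{itemize}
\item[--] if $y\in\supp(\mu)\cap \gamma^s$, the Pixton disc property gives $f^{n_D}(y)\in\interior(D)$, so by invariance of the support the intersection $\supp(\mu)\cap\mathcal U$ is already non-empty and the plan is done;
\item[--] if $y\in\supp(\mu)\cap\gamma^u\subset\Gamma$, then $f^{-k}(y)\to p$, so by closed invariance of $\supp(\mu)$ one gets $p\in\supp(\mu)\subset\Lambda$, contradicting the fact that $\Gamma$ cannot accumulate on $p$ (the no-cycle theorem~\ref{t.cycle} forbids the $1$-cycle at $p$).
\end{itemize}

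The main delicate point is excluding $\supp(\mu)\cap \gamma^u\neq\emptyset$: this uses the zero-entropy hypothesis through the no-cycle theorem, which prevents the fixed point $p$ from belonging to the accumulation set $\Lambda$ of its own unstable branch. Everything else is a direct combination of the closing lemma on $\Lambda$ with the defining properties of Pixton discs and the listing of the periodic points in $\mathcal U$ given by corollary~\ref{c.pixton}.
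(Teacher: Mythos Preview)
There is a genuine gap in the step where you conclude $\supp(\mu)\subset \mathcal U$ from $\supp(\mu)\subset\overline{\mathcal U}$ together with $\supp(\mu)\cap\partial D=\emptyset$ for every $D\in\cD$. This implication requires $\overline{\mathcal U}\subset\bigcup_{D\in\cD}D$, i.e.\ that the union of the closed Pixton discs is itself closed. That holds when $\cD$ is finite, but at this point in the paper the family $\cD$ is \emph{not} known to be finite: there is one Pixton disc for each stabilized periodic orbit accumulated by $\Gamma$, and the finiteness of stabilized orbits is only established later (theorem~\ref{t.finite}), whose proof passes through theorem~\ref{t.renormalize2} and hence through the present lemma. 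Concretely, a point $y\in\supp(\mu)$ could be the limit of periodic points $w_k\in\Lambda$ lying in pairwise distinct discs $D_k\in\cD$; then $y\in\overline{\mathcal U}$ yet $y$ need not belong to any single $D$, so your boundary dichotomy $\gamma^s$/$\gamma^u$ never applies to $y$.

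The paper closes this gap by a genuinely different mechanism: instead of trying to cover $\supp(\mu)$ by the whole family $\cD$, it traps $\supp(\mu)$ inside one \emph{single} Pixton disc. Using Pesin blocks, one chooses nearby generic points $z',z''$ of $\mu$ whose stable curves bound a thin strip $R$ containing no fixed point. The closing lemma produces a periodic point $w_k\in\Lambda\cap R$, and corollary~\ref{c.pixton}(c) gives a disc $D\in\cD$ containing the entire unstable set of $w_k$ (or of the chain linking $w_k$ to its stabilizing point). Since this unstable set must exit $R$ to reach a fixed point, it meets $W^s_\DD(z')$ or $W^s_\DD(z'')$; the forward orbit of $z'$ or $z''$ then enters $D$ and, being dense in $\supp(\mu)$, forces $\supp(\mu)\subset D$. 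Now $\supp(\mu)\subset\partial D$ for this single $D$, and only then does the boundary analysis (your $\gamma^s$/$\gamma^u$ dichotomy) yield the contradiction. The geometric use of the strip $R$ and of the containment of unstable sets in item~(c) is precisely what your argument is missing.
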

\begin{proof}
The proof of this lemma is done by contradiction.
If the conclusion does not hold,
the backward orbit of a point $x\in \Lambda$ accumulates on an invariant set $K\subset \Lambda$
that is disjoint from the interior of all the discs $D\in \cD$.
Then $K$ supports an ergodic measure $\mu$.
From the item (b) of Corollary~\ref{c.pixton}, this measure $\mu$ is non-atomic.
The Pesin theory associates a compact set $B\subset \operatorname{Support}(\mu)$
with $\mu(B)>0$ such that all the points $z$ in $B$ have a stable manifold $W^s_\DD(z)$
which separates $\DD$ and varies continuously with $z\in B$ for the $C^1$ topology.
We can thus choose $z\in B$ whose forward orbit is dense in the support of $\mu$ and two forward iterates $z',z''\in B$ close to $z$
and separated by $W^s_\DD(z)$.
In particular the region $R\subset \mathbb{D}$ bounded by $W^s_\DD(z')$ and $W^s_\DD(z'')$
does not contain any fixed point.
From the closing lemma (Theorem~\ref{t.measure local}), there exists a sequence $(w_k)$
of periodic points in $\Lambda$ converging to $z$. See picture~\ref{f.finiteness-cross}.
\begin{figure}
\begin{center}
\includegraphics[width=4cm,angle=0]{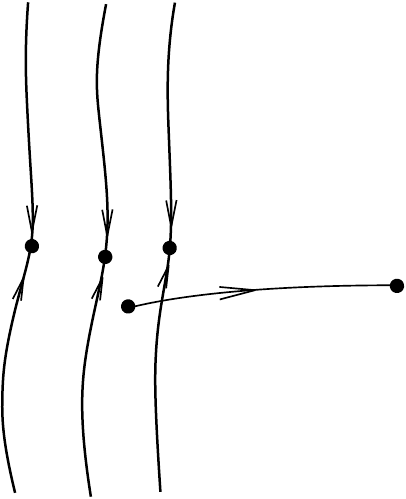}
\put(-80,120){\small $R$}
\put(-60,72){\small $z''$}
\put(-100,70){\small $z'$}
\put(-80,68){\small $z$}
\put(-10,50){\small $q$}
\put(-83,45){\small $w_k$}
\end{center}
\caption{Proof of Lemma~\ref{l.covering}.\label{f.finiteness-cross}}
\end{figure}
\medskip

We first assume that $w_k$ is stabilized by a fixed point $q$; since $w_k$ are in $\Lambda$, they are accumulated by $\Gamma$ and so by item (c) in Corollary~\ref{c.pixton}, there  a Pixton disc  $D\in \cD$  which contains the orbit of $w_k$ and its unstable set.
Since $w_k\in R$ and $q\not\in R$,
the unstable set of $w_k$ intersects the boundary of $R$, hence the stable manifold of
$z'$ or $z''$. Since the forward orbits of $z'$ and $z''$ equidistribute towards the measure $\mu$,
this implies that $\mu$ is supported on $D$. Since the support of $\mu$ is contained in $K$ which is disjoint with the interior of $D$ it follows that the support is contained in the boundary of $D$.
This is a contradiction since the orbit of any point in the boundary of $D$ converges in the future or in the past
towards $p$ or the orbit of $w_k$.
\medskip

When $w_k$ is not stabilized by a fixed point,
Proposition~\ref{p.decreasing-chain} implies that $w_k$ is decreasing chain-related to a periodic point $\widetilde w_k$ which is stabilized by
a fixed point $q_k$ and Proposition~\ref{p.chain-decreasing} implies that $\widetilde w_k$ also belongs to $\Lambda$.
In the case $\widetilde w_k$ belongs to $R$, the previous argument applies and gives a contradiction.
We are thus reduced to the case where $\widetilde w_k$ does not belong to $R$.

Let us consider a chain $C$ for an iterate of $f$ which contains $w_k$ and $\widetilde w_k$.
Let $D\in \cD$ be a Pixton disc associated to $\widetilde w_k$ as in Corollary~\ref{c.pixton} item (c):
in particular it contains the chain $C$. Since $C$ is connected and intersects both $R$ and its complement, there exists an unstable branch $\Gamma$
of a periodic point $w'_k\in C$ which intersects the stable curve of $z'$ or $z''$.
Since $\Gamma\subset D$, this implies as before that $\mu$ is supported on $D$ and gives a contradiction.
\end{proof}

We can now complete the proof of the theorem.

\begin{proof}[End of the proof of Theorem~\ref{t.renormalize2}]
Let $\Lambda$ be the accumulation set of $\Gamma$: this is an invariant compact set.
Let us consider the collection $\cD$ of Pixton discs given by Corollary~\ref{c.pixton}.
Let $V$ be the union of all the open sets
$\operatorname{Interior}(f^k(D))$ over $D\in \cD$ and over all $k\geq 0$. This is an open set satisfying $f(V)\subset V\subset U$.
By Lemma~\ref{l.covering}, any point in the accumulation set $\Lambda$ of $\Gamma$
has a backward iterate in $V$. Since $V$ is forward invariant and by compactness of $\Lambda$,
there exists a finite number of Pixton discs $f^k(D_n)$ such that the union of their interiors covers $\Lambda$.
The Remark~\ref{r.pixton}.(3) allows to replace any two of these discs which intersect by a single Pixton disc.
We repeat this inductively. Since $\Lambda$ is connected, one gets a Pixton disc $\widetilde D$ whose interior contains $\Lambda$.
We denote by $\widetilde \gamma^s, \widetilde \gamma^u$ its stable and unstable boundaries.

We modify $\widetilde D$ in order to obtain a Pixton disc satisfying some forward invariance.
One chooses $k$ large such that $f^k(\widetilde \gamma^u)$ is contained in a small neighborhood of $\Lambda$, hence in $\operatorname{Interior}(\widetilde D)$.
From the definition of the Pixton disc we also have $f^k(\widetilde \gamma^s)\subset \operatorname{Interior}(\widetilde D)$.
One applies Remark~\ref{r.pixton}.(3) again in order to build a Pixton disc $D$
which contains $\widetilde D\cup f(\widetilde D)\cup\dots\cup f^{k-1}(\widetilde D)$.
Since $f(\widetilde \gamma^s)\subset \widetilde D$, the stable boundary $\gamma^s$ of $D$ is included in $\widetilde \gamma^s$:
in particular, the $k$ first iterates of $\gamma^s$ are contained in $\operatorname{Interior}(D)$.
By construction, the $k-1$ first iterates of the boundary of $D$ are contained in $D$ and all the larger iterates are contained in $\operatorname{Interior}(\widetilde D)$.
This proves that the $k-1$ first iterates of $\gamma^u$ are contained in $D$ and the $k-$th iterate is included in $\operatorname{Interior}(D)$.
One deduces that $D$ is a Pixton disc whose interior contains $\Lambda$ and which furthermore
satisfies $f(D)\subset D$ and $f^k(D)\subset \operatorname{Interior}(D)$.

One finally modifies $D$ in order to build a disc $\Delta$ trapped by $f$:
for each $x$ in the boundary of $D$, one considers
the smallest integer $i\geq 1$ such that $f^i(x)\in \operatorname{Interior}(D)$;
one chooses small closed discs $D_{x,0}$, $D_{x,1}$, \dots, $D_{x,i-1}$
centered at $x$, $f(x)$, \dots, $f^{i-1}(x)$ respectively
such that $f(D_{x,j})\subset \operatorname{Interior}(D_{x,j+1})$ when $j<i-1$
and $f(D_{x,i})\subset \operatorname{Interior}(D)$.
By compactness, one selects finitely many points $x_1$, \dots, $x_m$
in the boundary of $D$, such that the union of the interior of the $D_{x_i,0}$ covers the boundary of $D$.
By construction, the union of $D$ with all the discs $D_{x_k,j}$ is a compact set $\widetilde \Delta$
whose image is contained in $\operatorname{Interior}(\widetilde \Delta)$.

{\color{black} As in Remark~\ref{r.pixton}, one can fill $\widetilde \Delta$ and obtain a disc $\Delta\supset \widetilde \Delta$} whose boundary is
{\color{black} contained} in the boundary of $\widetilde \Delta$.
In particular, $f(\Delta)\subset \operatorname{Interior}(\Delta)$.
We have thus obtained a trapping disc which contains $\Lambda$. From the item (b) of Corollary~\ref{c.pixton},
the trapping disc is disjoint from $W^s_\DD(p)$ as required.
The conclusion of the Theorem~\ref{t.renormalize} thus holds for $\Gamma$.

Since all the forward iterates of the Pixton discs $D\in \cD$ are included in $U$, the disc $\Delta$ may be chosen in a small neighborhood of $\overline U$.
This argument applied inductively
concludes the proof of Theorem~\ref{t.renormalize2} in the case $f$ is orientation preserving.

When $f$ is orientation reversing,
one first considers $f^2$ and gets a disc $\Delta_0$ disjoint from $W^s(p)$,
which contains the accumulation set of $\Gamma$ and is trapping for $f^2$.
The filled union $\Delta_1$ of $\Delta_0$ and $f(\Delta_0)$ has the same property (see Remark~\ref{r.pixton}),
but also satisfies $f(\Delta_0)\subset \Delta_0$.
Arguing as above, one can then modify $\Delta_1$ and get a disc $\Delta\supset \Delta_1$ contained in an arbitrarily small
neighborhood of $\Delta_1$ which is trapping for $f$.
\end{proof}

\subsection{Proof of Theorem~\ref{t.renormalize} and its consequences}

\begin{proof}[Proof of Theorem~\ref{t.renormalize}]
From Theorem~\ref{t.renormalize2}, the conclusion of Theorem~\ref{t.renormalize}
holds for the $f$-invariant unstable branches of the arcs $I_i\in \cI$.
One can easily conclude for the other $f$-invariant branches, \emph{i.e.,} for the unstable branches $\Gamma$ contained in $I_i$.
Indeed the accumulation set of such a branch belongs
to an isolated fixed arc $I\subset I_i$, which is disjoint from the unstable branch $\Gamma$ and bounded by an endpoint $p_i$ of $I_i$.
If $I_i$ has the type of a sink, it admits arbitrarily small neighborhoods that are trapping discs and the proposition follows.
Otherwise $p_i$ has a $f$-invariant unstable branch and we know from Theorem~\ref{t.renormalize2} that its accumulation set is contained in a trapping disc $\Delta_0$ disjoint from $I_i$.
One can then extend the disc $\Delta_0$ with a tubular neighborhood of $I$ and of the unstable branch of $p$: one then gets a
trapping disc $\Delta$
which contains $I$ (hence the accumulation set of $\Gamma$) as required.
\end{proof}

\begin{proof}[Proof of Corollary~\ref{c.isolated}]
Since $I$ is not reduced to a fixed point with eigenvalue $-1$, 
three types are possible (see Definition~\ref{d.type-arc}).

If $I$ has the type of a sink, the $\omega$-limit set of any point in an open neighborhood $W$
is a fixed point of $I$. Moreover by compactness, there exists $k\geq 1$ such that
$f^k(\overline W)\subset W$ and $\cap_{n\geq 0} f^n(W)=I$.
Hence the $\alpha$-limit set of any point in $W\setminus I$ is disjoint from $W$.

If $I$ has the type of a saddle with no {\color{black} reflection}, one applies Theorem~\ref{t.renormalize} and 
consider two trapping discs $V_1,V_2$ disjoint from $I$ which contain the accumulation sets
of the unstable branches of $I$.
The forward orbit of any point in a neighborhood $W$ of $I$
either intersect $V_1\cup V_2$ (in this case the $\omega$-limit set is contained in $V_1\cup V_2$
and is disjoint from $I$) or is contained in $I$ (it is a fixed point).
Let us define $W'=V_1\cup V_2\cup (\cup_n f^n(W))$.
By compactness, there exists $k\geq 1$ such that
$f^k(\overline W')\subset W'$.
Hence the $\alpha$-limit set of any point in $W$ is either disjoint from $W$ or contained in $W$.
Since $I$ is normally hyperbolic, any $\alpha$-limit set contained in $W$ is a fixed point of $I$.

If $I$ has the type of a saddle-node,
one {\color{black} considers} a trapping disc $V$ disjoint from $I$ which contains the accumulation set
of the (unique) unstable branch of $I$.
The forward orbit of any point in a neighborhood $W$ of $I$
either intersects $V$ or is contained in $I$.
One introduces $W'=V\cup (\cup_n f^n(W))$ and one argues as in the previous case.
\end{proof}

\subsection{Trapping discs and periodic measures}

As a byproduct of the previous arguments we obtain the following property which will be useful later.
\begin{proposition}\label{p.trapping-aperiodic}
Let $f$ be a mildly dissipative diffeomorphism of the disc with zero entropy,
and let $\mu$ be an aperiodic invariant measure.
Then for $\mu$-almost every point $z$ there exists $\varepsilon>0$
with the following property:
if $\Delta$ is a disc trapped by $f$ which contains a point
$\varepsilon$-close to $z$, then $\mu$ is supported on $\Delta$.
\end{proposition}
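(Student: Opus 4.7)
The plan is to reduce to the ergodic case, choose a Pesin-regular Birkhoff-generic point $z$ carefully, and adapt the rectangle construction of the local closing lemma (Theorem~\ref{t.measure local}) to show that any trapped disc $\Delta$ meeting a small ball around $z$ has $\mu(\Delta)>0$; ergodicity of $\mu$ and the forward invariance $f(\Delta)\subset\Delta$ then upgrade this to $\mu(\Delta)=1$, so $\mu$ is supported on $\Delta$.

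By ergodic decomposition one may assume $\mu$ ergodic. Since $\mu$ is aperiodic and $f$ is dissipative with zero topological entropy, the lower Lyapunov exponent of $\mu$ is strictly negative (by dissipation), while its upper one is non-positive: otherwise, Katok's theorem would produce a horseshoe in the support of $\mu$, contradicting zero entropy. Classical Pesin theory then produces, for $\mu$-almost every $z$, a one-dimensional $C^1$ local stable manifold $W^s_\DD(z)$, and mild dissipation makes $W^s_\DD(z)$ separate $\DD$. By theorem~\ref{t.stable}, these stable curves depend continuously on $z$ for the $C^1$ topology on a Pesin block $B$ with $\mu(B)$ as close to $1$ as desired.

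Pick $z\in B$ which is Birkhoff-generic for $\mu$, is a density point of $\operatorname{supp}(\mu)$, and is accumulated by its forward iterates inside $B$ from both components of $\DD\setminus W^s_\DD(z)$ (a condition of full $\mu$-measure). Choose $\varepsilon>0$ depending on the Pesin parameters at $z$. Let $\Delta$ be a trapped disc with $z'\in\Delta$ satisfying $d(z',z)<\varepsilon$; replacing $z'$ by a nearby interior point if necessary, assume $z'\in\operatorname{Interior}(\Delta)$. Following the second case in the proof of theorem~\ref{t.measure local}, select two forward iterates $f^{n_1}(z),f^{n_2}(z)\in B$ close to $z$ and lying on opposite sides of $W^s_\DD(z)$; together with two short transverse arcs, the local stable manifolds $W^s_\DD(f^{n_i}(z))$ bound a small rectangle $R$ around $z$. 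For $\varepsilon$ small compared with the diameter of $R$, the point $z'$ lies in $R\cap\Delta$, and by the Pesin product structure near $z$ one shows (exactly as in the rectangle/crossing arguments of section~\ref{ss.closing}) that some forward iterate $f^N(z)$ enters an arbitrarily small neighborhood of $z'$. Because $z'\in\operatorname{Interior}(\Delta)$ and $\Delta$ is forward invariant, $f^N(z)\in\Delta$, and all later iterates remain there; Birkhoff equidistribution gives $\mu(\Delta)>0$.

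The main obstacle is the uniformity of $\varepsilon$: it must work for every trapped disc meeting $B(z,\varepsilon)$, while the ``depth'' of $z'$ inside $\Delta$ a priori depends on $\Delta$. This is where corollary~\ref{c.isolated2} enters: every trapped disc $\Delta$ meeting $B(z,\varepsilon)$ must contain a periodic orbit (by proposition~\ref{p.CL} applied to its maximal invariant set), and the open neighborhoods $W$ of low-period periodic orbits given by corollary~\ref{c.isolated2} have zero $\mu$-measure and hence do not meet $\operatorname{supp}(\mu)$. Consequently the periodic orbits that could force $\Delta$ to be arbitrarily thin near $z$ all have large period, and the resulting lower bound on the geometry of $\Delta$ near $z$ allows us to choose $\varepsilon$ uniformly and conclude.
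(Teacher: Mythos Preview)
Your argument has a real gap at the step where you claim that ``by the Pesin product structure near $z$ \dots\ some forward iterate $f^N(z)$ enters an arbitrarily small neighborhood of $z'$.'' There is no Pesin product structure here: the upper Lyapunov exponent of $\mu$ is non-positive, so there are only stable leaves, no unstable ones. And $z'$ is just an arbitrary point of $\Delta$ that happens to be $\varepsilon$-close to $z$; it need not lie in $\operatorname{supp}(\mu)$, so there is no reason for the orbit of $z$ to approach $z'$ in particular. Even if you only need $f^N(z)$ close to $z$ (which recurrence does give), to land inside $\Delta$ you would need to know how deep $z'$ sits inside $\Delta$, and that depends on $\Delta$. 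Your final paragraph acknowledges this uniformity issue but does not resolve it: corollary~\ref{c.isolated2} only pushes low-period orbits away from $\operatorname{supp}(\mu)$; it gives no quantitative lower bound on the ``thickness'' of an arbitrary trapped disc near $z$.

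The paper's proof bypasses all of this with a single topological observation you have missed. Choose the strip $R$ bounded by $W^s_\DD(z')$ and $W^s_\DD(z'')$ (with $z',z''$ generic, on opposite sides of $W^s_\DD(z)$) small enough that $R$ contains no fixed point; take $\varepsilon$ so that $B(z,\varepsilon)\subset R$. Any trapped disc $\Delta$ contains a fixed point (Cartwright--Littlewood, proposition~\ref{p.CL}), so $\Delta$ cannot be contained in $R$; but $\Delta$ meets $R$ by hypothesis. Since $\Delta$ is connected and contained in $\operatorname{Interior}(\DD)$, it must cross one of the stable curves $W^s_\DD(z')$, $W^s_\DD(z'')$. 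Forward invariance of $\Delta$ then traps a tail of the forward orbit of $z'$ (or $z''$) inside $\Delta$, and since that orbit equidistributes to $\mu$, one gets $\mu(\Delta)=1$ directly. Notice that $\varepsilon$ depends only on $R$, not on $\Delta$, so the uniformity problem simply does not arise.
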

\begin{proof}
The argument appeared in the proof of Lemma~\ref{l.covering}.
The point $z$ is contained in a strip $R$ bounded by two stable manifolds $W^s_{loc}(z'), W^s_{loc}(z'')$
such that the forward orbits of $z'$ and $z''$ equidistribute towards $\mu$,
and such that $R$ does not contain any fixed point.

If the disc $\Delta$ contains a point close to $z$, it intersects $R$.
Since $\Delta$ is trapped, it also contain a fixed point. Consequently
$\Delta$ meets the stable manifold of $z'$ or $z''$, and therefore the forward orbit of a large iterate of this point.
This forward orbit equidistributes towards $\mu$.
This shows that $\mu$ is supported on $\Delta$.
\end{proof}

\section{Local renormalization}
\label{ss.local renormalization}
In this section we prove the Theorem~\ref{t.theoremA} about the existence of a renormalizable disc.
We also explain in Proposition~\ref{p.semi} how to renormalize inside a decorated region; this proposition is the main step to prove the global renormalization stated by Theorem~\ref{t.renormalize-prime} in Section \ref{s.renormalize}. 

\subsection{Renormalizable diffeomorphisms, proof of Theorem~\ref{t.theoremA}}
Let $f$ be a mildly dissipative diffeomorphism of the disc with zero entropy. We distinguish two cases, either all periodic points are fixed or not. In the first case, one have to prove that $f$ is generalized Morse-Smale; in the second, that there is a renormalizable domain.

\paragraph{\it First case: any periodic point of $f$ is fixed.}
For any $x\in \DD$, let $\mu$ be an ergodic measure supported on $\omega(x)$.
By the closing lemma (Theorem~\ref{t.measure local}), $\mu$ is supported on a fixed point $p$ and in particular, the forward orbit of $x$ accumulates on $p$.
If $x$ does not belong to the stable set of $p$ then $p$ has an unstable branch and the forward orbit of $x$ accumulate on that unstable branch.
By Theorem~\ref{t.renormalize} there exists a disc $\Delta$ which is trapped (by $f$ or by $f^2$) containing the accumulation set of the unstable branch and
disjoint from a neighborhood of $p$. In particular, $\omega(x)\subset \Delta\cup f(\Delta)$ and so $\omega(x)$ does not contain $p$; a contradiction.
We have shown that any forward orbit converges to a fixed point, thus $f$ is a generalized Morse-Smale.

\paragraph{\it Second case: there are periodic points with period larger than $1$.}
By Proposition~\ref{p.decreasing-chain}, there exists a stabilized periodic point $p$.

If $p$ has period $k>1$, one  considers the decorated region $V_p$ associated to $p$ and observe that one of the following cases holds.
\begin{itemize}
\item[\;2.a.] there exists an unstable branch of $p$ contained in $V_p$,
\item[\;2.b.] $p$ belongs to an arc which is fixed for $f^k$, contained in $\overline{V_p}$ and not reduced to $p$,
\item[\;2.c.] $p$ is a saddle-node of $f^k$.
\end{itemize}
In the case 2.a, we can apply again Theorem~\ref{t.renormalize} for $f^k$ and the unstable branch of $p$ that is contained in $V_p$:
this gives a disc $D\subset V_p$ which is trapped by $f^k$; since the decorated regions of the iterates of $p$ are disjoint,
the disc $D$ is disjoint from its $k-1$ first iterates.
In the first cases 2.b and 2.c, it follows immediately that there is a compact disc disjoint from its $k-1$ first iterates and mapped into itself by $f^k$.

If $p$ is a stabilized fixed point, it is not a sink. Let $V_p$ be one of its decorated regions. Only the cases 2.a and 2.b can occur.
In case 2.a, $p$ has two unstable branches that are exchanged by $f$; hence there exists a disc $D\subset V_p$ which is trapped by $f^2$.
In case 2.b, $p$ is accumulated by points of period $2$: one can then find an arc $I\subset V_p$ which is fixed by $f^2$ and disjoint from $f(I)$
and then a disc $D\subset V_p$ which is mapped into itself by $f^2$.

To summarize, in the second case we have found a disc $D$, disjoint from its first $k-1$ iterates and mapped into itself by $f^k$:
the diffeomorphism is renormalizable. The Theorem~\ref{t.theoremA} is now proved.\qed

\subsection{Renormalization inside decorated regions}

The following proposition provides the renormalization inside each decorated region, refining the trapped domain inside a decorated region of a periodic point $p$ into finite disjoint periodic trapping  domains that capture only the periodic points of larger period that are  decreasing chain related to $p$.
\begin{proposition}\label{p.semi}
Let $f$ be a mildly dissipative diffeomorphism of the disc with zero entropy,
$p$ be a stabilized periodic point with a decorated region $V$ and $k$ be the period of $V$.
Then, there exists a finite number of disjoint topological disks $D_1,..., D_m$ such that
\begin{enumerate}
 \item[a.] $D_1\cup\dots\cup D_m\subset V,$
 \item[b.] each $D_i$ is trapped by $f^k,$
 \item[c.] $D_1\cup\dots\cup D_m$ contains all the periodic points $q\in V$ that are decreasing chain related to $p$ in $V$ with period larger than $k$,
 \item[d.] conversely any periodic point in $D_1\cup\dots\cup D_m$ is decreasing chain related to $p$.
\end{enumerate}
\end{proposition}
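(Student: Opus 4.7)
The plan is to apply the global trapping theorem~\ref{t.renormalize2} to the iterate $g=f^k$, restricted to the decorated region $V$. By corollary~\ref{c.orientation}, $g$ preserves the orientation; it inherits mild dissipation and zero topological entropy from $f$, and in every case ($p$ fixed or not for $f$) the point $p$ is fixed for $g$, so the structural results of sections~\ref{decoration section} and~\ref{ss.trapping} apply to $g$. I will use throughout that the $f$-periodic points in $V$ decreasing chain related to $p$ with period $>k$ are precisely the $g$-periodic points in $V$ of $g$-period $>1$ that are, by proposition~\ref{p.decreasing-chain} applied to $g$, either $g$-stabilized inside $V$ or $g$-decreasing chain related to such an orbit.

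First I would describe the $g$-fixed structure in $\overline V$: proposition~\ref{p.group} applied to $g$ produces a finite collection $\mathcal{I}_V$ of pairwise disjoint isolated $g$-fixed arcs covering every $g$-fixed point in $\overline V$; one arc $I_0$ contains $p$, and the others lie in $V$. For each $g$-invariant unstable branch $\Gamma$ of an arc in $\mathcal{I}_V\setminus\{I_0\}$, and for the at most one $g$-invariant unstable branch of $I_0$ lying in $V$ (unique by proposition~\ref{p.stab-decorate}), I would apply theorem~\ref{t.renormalize2} to $g$: this produces a trapping disc for $g$ contained in an arbitrarily small neighborhood of the accumulation set of $\Gamma$ together with the lower arcs and their $g$-invariant unstable branches. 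The key confinement statement is that all these accumulation sets stay in $\overline V$: a set that escaped $V$ would have to meet $W^s_{\mathbb{D}}(p)$, producing a heteroclinic configuration forbidden by the no-cycle theorem~\ref{t.cycle} applied to $g$ (as used in propositions~\ref{p.transitive} and~\ref{p.heteroclinic}). Consequently the ambient neighborhood in theorem~\ref{t.renormalize2} may be taken inside $V$, so the resulting trapping discs lie in $V$.

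I would then consolidate the finitely many trapping discs into a pairwise disjoint family $D_1,\dots,D_m$ by iterating the filled-union construction of remark~\ref{r.pixton}: the filled union of two overlapping Pixton discs with empty unstable boundary is again a trapping disc, and finiteness of the collection makes the process terminate. Items (a) and (b) hold by construction.

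Finally I would verify (c) and (d). For (c), any $f$-periodic $q\in V$ decreasing chain related to $p$ with period $>k$ is $g$-periodic in $V$ of $g$-period $>1$; proposition~\ref{p.decreasing-chain} applied to $g$ then puts it either into a $g$-stabilized orbit or into a $g$-decreasing-chain related orbit, and in either case $q$ lies in the accumulation set of one of the $g$-invariant unstable branches considered above (via proposition~\ref{p.chain-decreasing} and the analogue of corollary~\ref{c.pixton}(c) for $g$), hence in some $D_i$. For (d), an $f$-periodic $q\in D_i$ is $g$-periodic in $V$, and proposition~\ref{p.chain} applied to $g$ yields a $g$-chain connecting $q$ to $p$; the same no-cycle control confines it to $\overline V$, so $q$ is decreasing chain related to $p$. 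The main obstacle throughout is this localization step: showing that every trapping disc produced by theorem~\ref{t.renormalize2} for $g$ can be confined inside $V$, which depends crucially on the no-cycle theorem together with the heteroclinic control of propositions~\ref{p.transitive} and~\ref{p.heteroclinic}.
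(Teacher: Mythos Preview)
Your plan has a genuine gap in the coverage claim for item~(c). You construct trapping discs from the accumulation sets of the $g$-invariant unstable branches of the $g$-fixed arcs in $\overline V$, and then assert that every $f$-periodic point $q\in V$ of period larger than $k$ lies in one of these accumulation sets. But proposition~\ref{p.chain-decreasing} and corollary~\ref{c.pixton}(c) do not give this: proposition~\ref{p.chain-decreasing} goes in the wrong direction (it says that if a branch $\Gamma$ accumulates on a point decreasing chain related to a stabilized orbit then it also accumulates on the stabilized orbit), and corollary~\ref{c.pixton}(c) only applies once you already know the stabilized orbit lies in the accumulation set of $\Gamma$. Concretely, a $g$-stabilized orbit $\mathcal{O}\subset V$ of $g$-period larger than $1$ has an unstable branch accumulating on a $g$-fixed point $r\in\overline V$, but nothing forces any $g$-invariant unstable branch of a $g$-fixed arc to accumulate on $\mathcal{O}$; if $r$ happens to be a sink for $g$, the trapping disc you build around it is its basin, which certainly does not contain $\mathcal{O}$.

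The paper's proof avoids this by not working with $g=f^k$ alone. It first shows (lemma~\ref{l.first-disc}) that each point $q$ of period $\tau>k$ in the relevant set lies in a disc trapped by $f^\tau$, applying theorem~\ref{t.renormalize2} to $f^\tau$ and using a delicate induction (claim~\ref{c.local}) to confine the branches to $V$. It then uses the Lefschetz formula in the decorated region (proposition~\ref{p.local-lefschetz}) to upgrade these discs to $f^k$-trapped ones (lemma~\ref{l.trapping-tau}): an $f^\tau$-trapped disc not trapped by $f^k$ would contribute differently to the indices $L(V,f^k)$ and $L(V,f^\tau)$, contradicting that both equal $1/2$. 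Finally, finiteness (lemma~\ref{c.U}) is obtained by a measure-theoretic argument via proposition~\ref{p.trapping-aperiodic}, not for free. Your approach, by staying at the level of $g=f^k$, misses the Lefschetz upgrade step and the variable-$\tau$ construction, and the coverage claim cannot be repaired without them.
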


\begin{proof}
From Corollary~\ref{c.orientation}, the map $f^k$ preserves the orientation.

Let $\mathcal{P}$ be the set of $q\in V$ which are
decreasing chain-related to $p$ such that
\begin{itemize}
\item[--] either the period of $q$ is larger than $k$,
\item[--] or the period of $q$ equals $k$ and $Df^k(p)$ has an eigenvalue less or equal to $-1$.
\end{itemize}
For each $\tau>k$, $\mathcal{P}(\tau)$
will denote the set of $q\in \mathcal{P}$ with period less or equal to $\tau$.

\begin{lemma}\label{l.first-disc}
Any $q\in \mathcal{P}$ belongs to a disc $\Delta_q\subset V$ trapped by some iterate of $f^k$.
\end{lemma}
\begin{proof}
The case where $q$ is a sink is clear. One can thus assume that there exists $\tau > k$
such that $f^\tau(q)=q$ and $Df^\tau(q)$
has an eigenvalue larger or equal to $1$.

We consider a  finite collection $\cJ$ of disjoint isolated arcs fixed by $f^\tau$ which contains all the points that are fixed by $f^\tau$.
Since $p$ has an unstable branch in $\DD\setminus \overline V$,
we also may assume that each arc is either contained in $\overline V$ or disjoint from it.
As in the statement of Theorem~\ref{t.renormalize2}, we
write  $J>J'$ if $J$ has an unstable manifold fixed by $f^\tau$ which accumulates on $J'$
and we consider all the sequences $J^0>J^1>\dots>J^n$ in $\cJ$ such that $q\in J^0$.

\begin{claim}
The periodic points (different from $p$) in all the arcs $J^i$ are decreasing chain related to $p$.
\end{claim}
\begin{proof}
The property holds for $J^0$ since $J^0$ contains $q\in \mathcal{P}$ and is included in $\overline V$.
If the property holds for $J^0,\dots,J^i$,
then by Proposition~\ref{p.unstable} the unstable branches of $J^i$ are contained in $\overline V$.
By Definition~\ref{d.decreasing-chain},
their unstable sets only accumulate on periodic points that are decreasing chain related to $p$
or coincide with $p$. In particular $J^{i+1}$ is included in $\overline V$ and then
satisfies the inductive property.
\end{proof}

\begin{claim}\label{c.local}
The unstable branches of $J^n$ do not accumulate on $p$.
\end{claim}
\begin{proof}
The claim is proved inductively for all arcs $J^0,J^1,\dots,J^n$.
Hence one will assume that for any arc $J^0,\dots,J^{n-1}$
the unstable branches do not accumulate on $p$ and by contradiction that
one of the unstable branches of $J^n$ accumulates on $p$.

Up to removing some intervals $J^i$, $0< i<n$, from the sequence $J^0,\dots, J^n$,
one can also assume that $J^i>J^{i'}$ does not hold when $i+1<i'$.

We first prove that for each $i\in\{0,\dots,n-1\}$,
the unstable branches of $J^i$ avoid one of the components of $\DD\setminus W^s_{\DD}(J^{i+1})$:
otherwise one unstable branch $\Gamma$ of $J^i$ would accumulate on the unstable branches of $J^{i+1}$
and from Proposition~\ref{p.transitive}, the accumulation set of $\Gamma$ contains the accumulation set of the unstable branches of $J^{i+1}$.
When $i<n-1$, this implies $J^i>J^{i+2}$, contradicting our choice on the sequence $J^0,\dots,J^n$.
When $i=n-1$, our assumption on $J^n$ implies that $\Gamma$ accumulates on $p$, contradicting
our assumption that the unstable branches of $J^{n-1}$ do not accumulate on $p$.
\medskip

Recall that from Corollary~\ref{c.orientation}, the map $f^k$ preserves the orientation.
The property obtained in the last paragraph together with Proposition~\ref{p.heteroclinic2} imply that for each $i\in\{0,\dots,n-1\}$,
the period of the unstable branches of $J^i$ equals the period of the unstable branches of $J^{i+1}$.

By definition of $\mathcal{P}$,
either $q\in J^0$ has period larger than $k$, or has period $k$ and the unstable branches
of $J^0$ are exchanged by $f^k$. In any case the unstable branches of $J^0$ have period larger than $k$.
Consequently the same property holds for each arc $J^i$.

But by assumption an unstable branch of $J^n$ accumulate on $p$
and is contained in $\overline V$.
Since $f^k$ is orientable, Proposition~\ref{p.heteroclinic} implies that the unstable branches of $J^n$
have period $k$. This is a contradiction and the claim is proved.
\end{proof}

Theorem \ref{t.renormalize2} applied to $f^\tau$ provides discs that are trapped by $f^\tau$,
that are contained in $V$ (thanks to Claim~\ref{c.local}), and that contain the accumulation sets
of the unstable branches of $J^0$.
Consider a neighborhood of $J^0$. Iterating forward, it may be glued to the trapped discs.
This defines a disc contained in $V$ that is trapped by $f^\tau$.
The Lemma~\ref{l.first-disc} is proved.
\end{proof}

\begin{lemma}\label{l.trapping-tau}
For any $\tau>k$ there exists a finite number of disjoint $f^k-$trapped discs whose union $U_\tau$ is included in $V$ and contains $\mathcal{P}(\tau)$.
Moreover $U_{\tau}\subset U_{\tau'}$ when $\tau\leq \tau'$.
\end{lemma}
\begin{proof}
Observe that $\mathcal{P}(\tau)$ is compact.
From Lemma~\ref{l.first-disc}, there exist finitely many discs $\Delta_q\subset V$ that are trapped by some iterates of $f^k$
and contain all the points of $\mathcal{P}(\tau)$.
Up to {\color{black} modifying} slightly their boundaries if necessary, one can assume that they are transverse.
As a consequence the union $U_\tau$ of the discs is a finite disjoint union of submanifolds with boundary
which are trapped by an iterate of $f^k$.
Since $f$ is dissipative, the components of $U_\tau$ are topological discs.
The construction is performed inductively on $\tau$, so that $U_{\tau}\subset U_{\tau'}$ when $\tau\leq \tau'$.

It remains to prove that each component of $U_\tau$ is trapped by $f^k$ (instead of an iterate of $f^k$).
Let us assume by contradiction that this is not the case: there exists $k<l\leq \tau$  and a disc $\Delta_q\subset U_\tau$
that only contains points decreasing chain related to $p$ with period larger or equal to $l.$
As in the proof of Lemma~\ref{l.first-disc}, one considers a finite collection of disjoint isolated arcs fixed by $f^\tau$
and compute their contribution to the indices of $f^k$ and $f^l$ in the decorated region $V$.

The arc $I_0$ which contains $p$ is fixed by $f^k$ and $index(I_0,V,f^k)=index(I_0,V,f^k)$.
Similarly, the total contribution of the arcs contained in a disc trapped by $f^k$ equals $1$ (both for the maps $f^k$ and $f^l$).
But the total contribution of the arcs contained in a disc trapped by $f^l$ and not by $f^k$
equals $1$ for the map $f^l$ and $0$ for the map $f^k$.
Consequently the index of $f^l$ in $V$ is larger than the index of $f^k$ in $V$.
This is a contradiction, since from Proposition~\ref{p.local-lefschetz}, the indices of $f^k$ and $f^l$ in the decorated region $V$ coincide (and are equal to $1/2$).
\end{proof}

\begin{lemma}\label{c.U}
The set $\mathcal P$ is contained in one of the regions $U_\tau$.
\end{lemma}
\begin{proof}
If the conclusion of the lemma does not hold, one can find a sequence $\tau_k\to +\infty$
such that $U_{\tau_k+1}\setminus U_{\tau_k}$ contains a periodic $f^k$-orbit $\cO_k$ supported on $\mathcal{P}$.
Up to {\color{black} taking} a subsequence, $(\cO_k)$ converges towards a $f^k$-invariant compact set $K\subset \overline V$.

Note that $K$ is aperiodic. Indeed any $x\in K$ is accumulated by a sequence of points $(q_n)$ of $\mathcal{P}$.
If $x$ were periodic, then Corollary~\ref{c.isolated2} would imply that the periods of the points $q_n$ is bounded.
Since the $q_n$ are decreasing chain-related to $p$, this would imply that $x$ has the same property and belongs to $\mathcal P$.
This is a contradiction since $K$ is disjoint from the increasing union $\cup U_\mathcal \tau$ which contains $\mathcal{P}$.

Let $\mu$ be an ergodic $f^k$-invariant measure supported on $K$.
By construction, for $\mu$-almost every point $x$ there exist a component $D$ of $U_\tau$
which contains a point arbitrarily close to $x$ as $\tau\to +\infty$. Since $D$ is trapped by $f^k$,
the Proposition~\ref{p.trapping-aperiodic} implies that $\mu$ is supported on $D$. A contradiction
since $K$ is disjoint from $U_\tau$.
\end{proof}

We have shown that $\mathcal{P}$ is contained in the union $U_\tau$ of finitely many
disjoint disks (denoted by $D_1,\dots, D_m$) in $V$ that are trapped by $f^k$.
To conclude, we need to prove that for each disc $D_i$,
any periodic point in $D_i$ is decreasing chain related to $p.$

Note that the iterates of $D_i$ do not meet the stable manifold of the orbit of $p$ (otherwise the trapping property would imply that
$D_i$ contain $p$, a contradiction). In particular $D_i$ can not contain any fixed point.
Observe also that $D_i$ does not contain a stabilized periodic point (since one of its unstable branches
accumulates on a fixed point and has to be contained in $D_i$).
Hence by Proposition~\ref{p.decreasing-chain}, each periodic point in $D_i$ is decreasing chain related to some stabilized periodic point.
The next lemma asserts that they are necessarily decreasing chain related to $p$.

\begin{lemma}\label{c.disc2}
Any periodic point $q\in D_i$ is decreasing chain related to $p$.
\end{lemma}
\begin{proof}
The proof is done by contradiction: we assume that $D_i$ contains $q$ which is decreasing chain related to a stabilized periodic point $p'$
which is different from $p$.

Since $D$ does not contain the stabilized point $p'$ and is trapped by $f^k$, it is disjoint from $W^s_{\mathbb{D}}(p')$.
In particular, it is contained in a decorated region $V'$ of $p'$.
Note that either $V\subset V'$ or $V'\subset V$. We will assume that the first case occurs (the proof in the second case is similar).
By Definition~\ref{d.decreasing-chain},
there exists a chain $C$ for an iterate of $f$ that is contained in $\overline V'$  intersects $V$ and $p'$. Hence $C$ meets $W^s_\mathbb{D}(p)$:
there is an unstable branch in $C$ which intersects the stable manifold of $p$ implying that $p$ is decreasing chain-related to $p'$.
The Corollary~\ref{c.unique-stabilized} then gives the contradiction.
\end{proof}

The proof of Proposition~\ref{p.semi} is now complete.
\end{proof}

\section{Finiteness of the set of stabilized periodic points}
\label{ss.finitness}

In order to prove the global renormalization (Theorem~\ref{t.renormalize-prime} in the next section), one {\color{black} needs} to show that the number of stabilized orbits is finite. This section is devoted to  prove the following:

\begin{theorem}\label{t.finite}
Let $f$ be a mildly dissipative diffeomorphism of the disc with zero topological entropy.
Then the set of its stabilized periodic orbits is finite.
\end{theorem}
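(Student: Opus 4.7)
The plan is to argue by contradiction. Suppose there are infinitely many stabilized periodic orbits. I would proceed in four steps.

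First, I would extract a good subsequence. By proposition~\ref{p.group} applied to every iterate $f^N$, there are only finitely many periodic orbits of period at most $N$, hence infinitely many stabilized orbits $\mathcal{O}_n$ must have periods $\tau_n\to\infty$. Picking $p_n\in\mathcal{O}_n$ converging to some $z\in\mathbb{D}$, corollary~\ref{c.isolated2} forbids $z$ from being periodic: otherwise the $\tau_n$ would be bounded by twice the period of $z$. The empirical measures $\mu_n:=\tau_n^{-1}\sum_{i=0}^{\tau_n-1}\delta_{f^i(p_n)}$ admit a weak subsequential limit $\mu$ giving zero mass to every periodic orbit (each orbit receives at most the mass of a single $\mu_n$, which tends to $0$); ergodic decomposition then yields an aperiodic ergodic component $\nu$, whose support lies in the closure of $\bigcup_n \mathcal{O}_n$.

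Second, I would organise the infinitely many stabilized orbits using the tree structure of their decorated regions. Two distinct decorated regions are either disjoint or strictly nested (since distinct periodic orbits have disjoint stable manifolds), so they form a forest under inclusion. Using proposition~\ref{p.semi} applied at each node, only finitely many pairwise-disjoint decorated regions can sit inside a given one (they must lie inside the finitely many trapping sub-discs provided by the proposition, modulo a careful analysis of stabilized points in $V\setminus\bigcup D_i$ via propositions~\ref{p.both-related} and~\ref{p.unstable}). A K\"onig's-lemma argument then extracts either an infinite antichain of pairwise disjoint decorated regions, or an infinite strictly nested chain $V_{n_1}\supsetneq V_{n_2}\supsetneq\cdots$ of decorated regions whose periods $k_{n_\ell}$ tend to infinity.

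Third, I would apply proposition~\ref{p.trapping-aperiodic} to $\nu$ (or to an iterate $f^M$ for which $\nu$ becomes ergodic): for $\nu$-almost every $w$ there is $\varepsilon>0$ such that any disc trapped by $f$ which contains a point $\varepsilon$-close to $w$ must support $\nu$ entirely. Fix such a $\nu$-generic $w$, which lies in the accumulation set of the $\mathcal{O}_n$'s. For each sufficiently large $n$, some orbit point $f^{j_n}(p_n)$ comes within $\varepsilon/3$ of $w$. Inside the decorated region $V_n$ of $p_n$ (period $k_n$), proposition~\ref{p.semi} provides an $f^{k_n}$-trapped disc $D_n$, and the finite union $U_n:=\bigcup_{i=0}^{k_n-1}f^i(D_n)$ satisfies $f(U_n)\subset\interior(U_n)$. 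Fattening $U_n$ by thin tubular neighborhoods along short arcs of $\Gamma_n$ (the stabilizing unstable branch) that link the components, and then applying K\'er\'ekj\'art\'o's theorem as in the proof of theorem~\ref{t.renormalize2}, one produces an honest topological disc $\widetilde U_n$ that is trapped by $f$ and contains $f^{j_n}(p_n)$.

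Iterating this construction along $n_\ell\to\infty$, each $\widetilde U_{n_\ell}$ must then support $\nu$. However the area of $\widetilde U_{n_\ell}$ goes to zero: the disc $D_{n_\ell}$ is $f^{k_{n_\ell}}$-trapped with $k_{n_\ell}\to\infty$, so by dissipation its area is bounded by $|\det Df|^{k_{n_\ell}}\cdot\operatorname{Area}(\mathbb{D})\to 0$, and the fattening can be performed inside an arbitrarily small neighborhood of $U_n$. Hence $\operatorname{supp}(\nu)$ must shrink to a single point, contradicting the aperiodicity of $\nu$.

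The main obstacle is precisely the transition from the $f^{k_n}$-trapped disc $D_n\subset V_n$ to an actual $f$-trapped topological disc near $w$, while keeping control on the area. The difficulty is that stabilized points cannot themselves be decreasing chain-related to one another (corollary~\ref{c.dichotomy-stabilized}), so the deep stabilized orbits do not automatically lie inside trapping discs associated to shallower ones, forcing the tree/K\"onig argument in step two. Handling the antichain case (if it occurs) requires showing that any infinite family of pairwise disjoint decorated regions accumulates on an aperiodic invariant compact set satisfying an analogous trapping-disc contradiction, either directly or by passing to a convergent subsequence of stabilizers (which are fixed points, hence taken from a finite set by proposition~\ref{p.group}) and exploiting the local structure near a common stabilizer.
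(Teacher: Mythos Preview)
Your argument contains a genuine error in the final contradiction. The claim that an $f^{k}$-trapped disc $D$ has area at most $\sup|\det Df|^{k}\cdot\operatorname{Area}(\mathbb{D})$ is false: the trapping condition $f^{k}(D)\subset\interior(D)$ bounds the area of $f^{k}(D)$, not of $D$ itself (the whole disc $\mathbb{D}$ is $f^{k}$-trapped for every $k$). Even if the area of $\widetilde U_{n_\ell}$ tended to zero, this would say nothing about its diameter, so there is no reason for $\operatorname{supp}(\nu)$ to shrink to a point --- an aperiodic measure lives on a set of Lebesgue measure zero anyway. The construction of $\widetilde U_n$ is also not correct as written: the union $U_n=\bigcup_{i=0}^{k_n-1}f^{i}(D_n)$ satisfies $f(U_n)\subset U_n$ but \emph{not} $f(U_n)\subset\interior(U_n)$, since $f$ maps each component $f^{i}(D_n)$ onto $f^{i+1}(D_n)$ rather than into its interior; and linking the components along the stabilizing branch forces you through the stabilizing fixed point, so any honest $f$-trapped disc produced this way carries a fixed neighborhood of that point and cannot be made small.

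The paper's proof avoids these issues entirely. It first associates to every stabilized orbit $\mathcal{O}$ a pair of nested $f$-trapped discs $U_{\mathcal{O}}\subset\widehat U_{\mathcal{O}}$ such that the filtrating shell $\widehat U_{\mathcal{O}}\setminus U_{\mathcal{O}}$ contains exactly $\mathcal{O}$ together with the orbits decreasing chain-related to it (this is proposition~\ref{p.separation}, built from proposition~\ref{p.semi} and the arc/tube constructions of section~\ref{ss.trapping}). Using proposition~\ref{p.trapping-aperiodic} much as you do, one then extracts a strictly nested sequence of such discs $U_n$; the contradiction comes not from area but from the local closing lemma (theorem~\ref{t.measure local}): the intersection $\Lambda=\bigcap_n U_n$ is cellular and contains periodic points accumulating on the aperiodic limit set, yet the filtrating structure forces any such periodic point to be stabilized or decreasing chain-related to a stabilized orbit living in the shell $\widehat U_{\mathcal{O}}\setminus U_{\mathcal{O}}$, which is incompatible with lying inside every $U_n$. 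This case analysis replaces your area argument and also makes the antichain/chain dichotomy unnecessary.
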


Before proving the theorem we associate to any stabilized orbit a filtrating region.
 
\begin{proposition}\label{p.separation} 
For any stabilized periodic orbit $\cO$ there exist two topological disks $U_\cO\subset \widehat U_\cO$ that are trapped by $f$ such that
\begin{enumerate}
\item[--]  $\widehat U_\cO\setminus U_\cO$  contains $\cO$ and any periodic orbit decreasing chain related to $\cO$,
\item[--] any periodic orbit in $\widehat U_\cO\setminus U_\cO$ is either $\cO$ or is decreasing chain related to $\cO$.
\end{enumerate}
Moreover if $\cO$ is contained in a trapping disc $\Delta$,
then $U_\cO$ can be chosen in $\Delta$.
\end{proposition}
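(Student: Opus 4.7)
The strategy is twofold: build $\widehat U_\cO$ as a Pixton-type trapping disc enclosing $\cO$ together with all its decorated regions, and then carve out $U_\cO$ inside it as a trapping sub-disc that absorbs precisely those periodic orbits of $\widehat U_\cO$ which are neither $\cO$ nor decreasing chain related to $\cO$.

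For the outer disc, let $q$ be the stabilizing fixed point and $n$ the period of $\cO$. Mimicking the universal cover construction of lemma~\ref{l.highperiod} applied to $f^n$ with $q$ as the puncture, I would lift the stabilizing unstable branch of $p\in\cO$ together with fundamental domains of the stable manifolds of $\cO$ and glue them into a Jordan curve bounding a Pixton disc $D$ whose interior contains $\cO$ and every decorated region of $\cO$. Proposition~\ref{p.unstable} then guarantees that every orbit decreasing chain related to $\cO$, together with its unstable set, lies inside $D$. A thickening procedure like the one at the end of the proof of theorem~\ref{t.renormalize2} turns $D$ into an $f$-trapping topological disc $\widehat U_\cO$. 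If a trapping disc $\Delta$ with $\cO\subset\Delta$ is prescribed, one can instead start from $\Delta$ and perform the rest of the construction inside it, which will yield the moreover clause.

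For the inner disc, call an \emph{intruder} any periodic orbit of $\widehat U_\cO$ that is neither $\cO$ nor decreasing chain related to $\cO$. By proposition~\ref{p.decreasing-chain} together with the nesting given by proposition~\ref{p.both-related}, every intruder is either a fixed point of $f$ distinct from $q$, or belongs to a stabilized orbit $\cO'\neq \cO$ whose decorated region is strictly nested inside a decorated region of $\cO$, or is decreasing chain related to such a $\cO'$. To each intruder I would attach a trapping disc: corollary~\ref{c.trapping} handles the fixed case, while for a stabilized intruder $\cO'$ I would apply proposition~\ref{p.semi} to $\cO'$ to obtain $f^{k'}$-trapping discs and wrap them into $f$-trapping discs. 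By construction these trapping discs are disjoint from $\cO$ and from every orbit decreasing chain related to $\cO$. The disc $U_\cO$ is then the filled union (in the sense of remark~\ref{r.pixton}.(3)) of these intruder trapping discs, which is a topological disc trapped by $f$.

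The main obstacle is ensuring that the family of intruder trapping discs is effectively finite, so that $U_\cO$ is a bona fide topological disc. I would argue by contradiction: if infinitely many pairwise disjoint intruder orbits existed in $\widehat U_\cO$, a subsequence would Hausdorff-accumulate on an invariant compact set $K$; corollary~\ref{c.isolated2} applied to $\cO$ and to each fixed arc inside $\widehat U_\cO$ would force $K$ to be disjoint from the low-period periodic orbits, so $K$ would support an aperiodic ergodic measure $\mu$. Proposition~\ref{p.trapping-aperiodic}, used as in the proof of lemma~\ref{l.covering}, would then force $\mu$ to concentrate on an already built trapping disc containing a point close to a $\mu$-generic point, contradicting the location of $K$. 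Hence finitely many intruder discs suffice. Conditions (1) and (2) follow from the construction: $\cO$ and each decreasing chain related orbit lie in $\widehat U_\cO\setminus U_\cO$ because the intruder discs are chosen disjoint from them, while any other periodic orbit of $\widehat U_\cO$ is by definition an intruder and thus belongs to $U_\cO$.
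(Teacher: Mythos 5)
Your outer disc $\widehat U_\cO$ is plausibly the right object, but the construction of $U_\cO$ rests on a misidentification of which orbits have to be absorbed, and this makes the carving step fail on both required conditions. The genuine ``intruders'' of $\widehat U_\cO$ are the stabilizing fixed point $q$ itself and the periodic orbits downstream of it (those reached by following unstable branches from $q$), all of which lie in the component of $\mathbb{D}\setminus W^s_\mathbb{D}(\cO)$ containing $q$. By contrast, the orbits you propose to swallow into $U_\cO$ --- the ``stabilized orbits $\cO'$ nested inside a decorated region of $\cO$'' --- are stabilized only for the renormalized map $f^{k}|_{D}$, not for $f$ (their stabilizing point is periodic, not fixed), so by proposition~\ref{p.decreasing-chain} and corollary~\ref{c.unique-stabilized} they are decreasing chain related to $\cO$ and must remain in $\widehat U_\cO\setminus U_\cO$; putting them in $U_\cO$ violates the first condition. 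Meanwhile you explicitly exclude $q$ from the intruder list, so $q$ stays in $\widehat U_\cO\setminus U_\cO$ and violates the second condition. Even taking your intruder list at face value, the filled union cannot be a single topological disc: your candidate trapping discs sit on both sides of $W^s_\mathbb{D}(p)$ (some inside the decorated regions, some on the $q$-side), and any connected trapped set meeting both sides must intersect $W^s_\mathbb{D}(p)$ and hence, by forward invariance, contain $p\in\cO$ --- which is exactly what $U_\cO$ must avoid.

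The paper proceeds ``inside-out'' rather than ``outside-in'': $U_\cO$ is obtained first, by applying theorem~\ref{t.renormalize2} to the stabilizing unstable branch $\Gamma$ of $p\in\cO$, giving one trapping disc containing the whole accumulation set of $\Gamma$ (hence $q$ and everything downstream), and choosable inside a prescribed $\Delta$. Then $\widehat U_\cO$ is built by adjoining to $U_\cO$ the $f^k$-trapped discs $D_{i}$ of proposition~\ref{p.semi} and small neighborhoods $O_j$ of the isolated fixed arcs carrying the remaining decreasing-chain-related points, together with all their forward iterates; connectedness comes from the fact that these forward iterates eventually fall into $U_\cO$ or the $D_i$, and the delicate point (your finiteness worry appears here in a different guise) is lemma~\ref{l.trapped-finite}, which uses the closing lemma to show that every $\omega$-limit set from the closure of this union lands in its interior, so that the union can be thickened into an $f$-trapped disc. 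If you want to salvage your approach, you would have to redefine the intruders as $q$ together with its downstream hierarchy and build a single trapping disc around that set --- which is precisely what theorem~\ref{t.renormalize2} already provides.
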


\begin{proof}
We first assume that the period $k$ of $\cO$ is larger than $1$.
Let $p\in \cO$, let $V$ be the decorated region associated to $p$.
Theorem \ref{t.renormalize2} applied to the stabilized unstable branch $\Gamma$ of $p$
associates a trapping disc $U_\cO$ which contains the accumulation set of $\Gamma$.
Note that if $\cO$ is contained in a trapping disc $\Delta$,
then $U_\cO$ can be chosen to be included in $\Delta$.

Now, we deal with the decorated region. Let $D_1,\dots,D_n$ be the $f^k$-trapping discs given by Proposition \ref{p.semi}.
Let $I_1,\dots,I_\ell$ be isolated $f^k$-fixed arcs containing all the periodic points in $V$
that are decreasing chain-related to $p$ and not in $\cup D_i$.
Since any periodic point close to $p$ is contained in $V$ (by Corollary~\ref{c.isolated2}),
we can assume that the arcs $I_j$ are contained in $\overline V$.
By Definition~\ref{d.chain} of the chains and the invariance of the arcs,
each periodic point in $I_j$ either coincides with $p$ or is decreasing chain related to $p$.

Each $I_j$ admits a neighborhood $O_j$ which is a topological disc
such that if the $\omega$-limit set of a point $x$ by $f^k$
intersects $\overline O_j$, then it is contained in $I_j$ (by Corollary~\ref{c.isolated}).

Let $\widetilde U_\cO$ be the forward invariant set defined by the union of the forward iterates of $O_j$, of $D_i$ and of $U_\cO$.
It can be written as the union of finitely many connected sets:
\begin{itemize}
\item[--] the disc $U_\cO$,
\item[--] the trapping discs $f^m(D_i)$ for $0\leq m<k$ and $1\leq i\leq n$,
\item[--] the connected unions $f^m(T_j):=f^m(O_j)\cup f^{m+k}(O_j)\cup f^{m+2k}(O_j)\cup\dots$ for $1\leq j\leq \ell$
and $0\leq m<k$.
\end{itemize}
By definition of the decreasing chain relation,
for each $0\leq m<k$, the union of the interior of the sets $f^m(D_i)$ and $f^m(T_j)$ for $1\leq i\leq n$ and $1\leq j\leq \ell$
is connected. One set $O_{j_0}$ contains the point $p$ so that each $f^m(T_{j_0})$ for $0\leq m<k$ intersects $U_\cO$.
This proves that the interior of $\widetilde U_\cO$ is connected.

By construction, the set $\widetilde U_\cO$ is forward invariant.
Since $f$ contracts the volume, the interior of $\widetilde U_\cO$
is simply connected, hence homeomorphic to the open disc.

\begin{lemma}\label{l.trapped-finite}
If $O_1$,\dots, $O_\ell$ are sufficiently small neighborhoods of $I_1$,\dots, $I_\ell$, 
the $\omega$-limit set of any point in $\operatorname{Closure}(\widetilde U_\cO)$
is contained in $\operatorname{Interior}(\widetilde U_\cO)$.
\end{lemma}
\begin{proof}
Since there is no cycle, one can enumerate the arcs $I_j$ in a way that
the unstable branches of $I_j$ do not accumulate on $I_{j'}$ when $j\geq j'$.

\begin{claim}\label{c.unstable}
Let us consider an unstable branch $\Gamma$ of $I_j$.
Then the $\omega$-limit set by $f^k$ of any point of $\Gamma$ belongs to
$D_1\cup \dots\cup D_n\cup I_1\cup\dots\cup I_{j-1}$.
\end{claim}
\begin{proof}
Let us consider a $f^k$-invariant ergodic measure $\mu$ supported on the accumulation set $L$ of $\Gamma$.
If $\mu$ is supported on a $f^k$-periodic orbit, this orbit is decreasing chain-related to $\cO$,
hence is contained in $D_1\cup \dots\cup D_n\cup I_1\cup\dots\cup I_\ell$.
By our choice of the indices of the arcs $I_1,\dots,I_\ell$,
the periodic orbit is contained in $D_1\cup \dots\cup D_n\cup I_1\cup\dots\cup I_{j-1}$ and the claim follows
in this case.

If $\mu$ is aperiodic, the closing lemma (Theorem~\ref{t.measure local}) implies that there exist
periodic points in $L$ which accumulate on the support of $\mu$.
These periodic points are also decreasing chain-related to $\cO$
and are contained in $D_1\cup \dots\cup D_n\cup I_1\cup\dots\cup I_{j-1}$.
Passing to the limit one deduces that $\mu$ is also contained
in $D_1\cup \dots\cup D_n\cup I_1\cup\dots\cup I_{j-1}$.
\end{proof}

Since the discs $U_\cO$ and $D_i$ are trapped by some iterates of $f$,
it is enough to prove that the $\omega$-limit set under $f^k$ of any point in $\operatorname{Closure}(O_j)$ is contained in the union
$$U_\cO \cup (D_1\cup ,\dots, \cup  D_n) \cup (I_1\cup \dots \cup I_{j}).$$
This is proved inductively.
We assume that the property holds for any $j'<j$ and
consequently we can suppose that the closure of
$$\Delta_{j-1}:=U_\cO \cup (D_1\cup ,\dots, \cup  D_n) \cup (T_1\cup \dots \cup T_{j-1})$$
is mapped into its interior by some iterate of $f^k$.

Let us consider any point $x$ in $\operatorname{Closure}(O_j)$.
If its $\omega$-limit set belongs to $I_j$, the inductive property holds trivially.
Otherwise, $x$ has a forward iterate close to a neighborhood $W$ of a fundamental domain of the unstable branches of $I_j$.
By choosing the neighborhood $O_j$ of $I_j$ small enough, the neighborhood $W$ can be chosen arbitrarily small
and by the Claim~\ref{c.unstable}, any point in $W$ has a forward iterate by $f^k$ in the interior of $\Delta_{j-1}$.
We have thus proved that if the $\omega$-limit set of $x$ is not contained in $I_j$,
then a forward iterate of $x$ by $f^k$ belongs to $\Delta_{j-1}$ as required.
The inductive property is proved, which concludes the proof of Lemma~\ref{l.trapped-finite}.
\end{proof}

From the previous lemma,
one can thus modify $\widetilde U_\cO$ near its boundary
(as explained in the proof of Theorem~\ref{t.renormalize2})
and define a topological disc $\widehat U_\cO$
which is trapped by $f$.

The limit orbits in $\widehat U_\cO$ and $\widetilde U_\cO$
are the same:
for any point in $\widehat U_\cO$,
the $\omega$-limit set is contained in one of the trapping discs
$D_i$, or in $U_\cO$, or in one of the arcs $I_j$.
With Proposition \ref{p.semi}, this shows that any periodic orbit in $\widehat U_\cO\setminus U_\cO$
either coincide with $\cO$ or is decreasing chain related to $\cO$.
\medskip

When the period is $1$, the proof is almost the same.
The point $p$ is fixed and has two decorated regions $V,V'$, each one of period $2$.
Working with $V$,
one builds $f^2$-trapping discs $D_1,\dots,D_n$,
isolated $f^2$-fixed arcs $I_1,\dots,I_\ell$,
and neighborhoods $O_1,\dots, O_\ell$  as before.
\end{proof}

\begin{proof}[Proof of Theorem~\ref{t.finite}]
We distinguish two types of stabilized periodic orbits $\cO$.
\begin{itemize}
\item[--] First type: $\cO$ admits trapping discs $U_\cO\subset \widehat U_\cO$ as in Proposition~\ref{p.separation}
such that the set of stabilized periodic orbits in $U_\cO$ is finite.
\item[--] Second type: for any trapping discs $U_\cO\subset \widehat U_\cO$ associated to $\cO$ as in Proposition~\ref{p.separation}
there exist infinitely many stabilized periodic orbits in $U_\cO$.
\end{itemize}

The following shows that the set of stabilized periodic orbits of the first type is finite. Later we will prove that there are no stabilized periodic orbits of the second type.
\begin{claim}\label{c.subsequence}
Let $(\cO_n)$ be a sequence of distinct stabilized periodic orbits
and let $U_{\cO_n}\subset \widehat U_{\cO_n}$ be trapping discs associated to $\cO$ as in Proposition~\ref{p.separation}.
Up to {\color{black} considering} a subsequence, the following property holds:
$\cO_{m}\subset U_{\cO_n}$ for each $m>n$.
\end{claim}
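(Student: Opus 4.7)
The plan is to combine Ramsey's theorem with a Hausdorff-limit argument based on proposition~\ref{p.trapping-aperiodic}. First I establish the following dichotomy for any two distinct stabilized orbits $\cO_n,\cO_m$: either $\cO_m\subset U_{\cO_n}$, or $\cO_m\cap \widehat U_{\cO_n}=\emptyset$. Indeed, if some point of $\cO_m$ meets $\widehat U_{\cO_n}$, the trapping property $f(\widehat U_{\cO_n})\subset\widehat U_{\cO_n}^\circ$ together with the periodicity of $\cO_m$ forces $\cO_m\subset \widehat U_{\cO_n}$; by proposition~\ref{p.separation}, the periodic orbits in $\widehat U_{\cO_n}\setminus U_{\cO_n}$ are either $\cO_n$ itself or are decreasing chain related to $\cO_n$, and corollary~\ref{c.dichotomy-stabilized} excludes the latter possibility for the stabilized orbit $\cO_m\neq \cO_n$.

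Color each pair $\{i,j\}$ with $i<j$ by \emph{A} if $\cO_j\subset U_{\cO_i}$ and by \emph{B} if $\cO_j\cap \widehat U_{\cO_i}=\emptyset$. By the infinite Ramsey theorem, there is an infinite monochromatic subsequence $(\cO_{n_k})$. If its color is A, then $\cO_{n_\ell}\subset U_{\cO_{n_k}}$ holds for all $k<\ell$, which is precisely the desired conclusion. The task reduces to ruling out color B.

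Assume color B. By compactness, pass to a further subsequence so that $\cO_{n_k}$ converges in the Hausdorff topology to a compact $f$-invariant set $K\subset \DD$. For each fixed $k_0$, the tail $\{\cO_{n_k}\}_{k>k_0}$ lies in the closed complement of $\widehat U_{\cO_{n_{k_0}}}$, so $K\cap \widehat U_{\cO_{n_{k_0}}}^\circ=\emptyset$. I claim that $K$ carries an aperiodic ergodic measure. If not, there would be a periodic point $z\in K$ of some period $N$; corollary~\ref{c.isolated2} then provides a neighborhood $W$ of the orbit of $z$ such that any orbit $\cO_{n_k}$ meeting $W$ has period at most $2N$. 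By Hausdorff convergence, $\cO_{n_k}$ meets $W$ for all sufficiently large $k$, so infinitely many $\cO_{n_k}$ would be stabilized orbits of uniformly bounded period. This contradicts the finiteness of such orbits, which follows from proposition~\ref{p.group} applied to $f^{N!}$ together with the isolated nature of the stabilization condition along the resulting normally hyperbolic fixed arcs; verifying this finiteness is the main technical obstacle.

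Pick then an aperiodic ergodic measure $\mu$ on $K$ and a $\mu$-generic point $z\in K$ with the radius $\varepsilon>0$ furnished by proposition~\ref{p.trapping-aperiodic}. Since $z$ is accumulated by the $\cO_{n_k}$, for every large $k$ the trapping disc $\widehat U_{\cO_{n_k}}$ (which contains $\cO_{n_k}$) contains a point within $\varepsilon$ of $z$, whence $\operatorname{supp}(\mu)\subset \widehat U_{\cO_{n_k}}$. But $\operatorname{supp}(\mu)\subset K$ is disjoint from $\widehat U_{\cO_{n_k}}^\circ$, forcing $\operatorname{supp}(\mu)\subset \partial \widehat U_{\cO_{n_k}}$; then the trapping property $f(\partial\widehat U_{\cO_{n_k}})\subset \widehat U_{\cO_{n_k}}^\circ$ combined with the $f$-invariance of $\operatorname{supp}(\mu)$ yields $\operatorname{supp}(\mu)\subset \widehat U_{\cO_{n_k}}^\circ$, contradicting the previous disjointness. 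This rules out color B and completes the plan.
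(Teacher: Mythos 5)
Your proof is correct and, underneath the Ramsey packaging, it is essentially the paper's own argument: the paper also passes to a Hausdorff limit $K$ of the orbits, produces an aperiodic ergodic measure on $K$ via corollary~\ref{c.isolated2}, invokes proposition~\ref{p.trapping-aperiodic} to force $\supp(\mu)\subset\widehat U_{\cO_n}$ for all large $n$, and upgrades ``$\cO_m$ meets $\widehat U_{\cO_n}$'' to ``$\cO_m\subset U_{\cO_n}$'' exactly as in your dichotomy (proposition~\ref{p.separation} together with corollary~\ref{c.dichotomy-stabilized}); it merely replaces your two-colouring by a direct diagonal extraction. The one point where you hedge --- the finiteness of the set of stabilized orbits of period at most $2N$ --- is indeed needed (the paper relies on it tacitly when asserting that $K$ contains no periodic point), but it is not an obstacle: claim~\ref{c.decompose-arc} shows that an isolated arc fixed by an orientation-preserving iterate $f^{M}$ contains at most one stabilized point, since every other periodic point of such an arc is decreasing chain related to it and hence not stabilized by corollary~\ref{c.dichotomy-stabilized}, while proposition~\ref{p.group} applied to $f^{M}$ with $M=(2N)!$ yields finitely many such arcs covering all points of period at most $2N$. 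So what you call the main technical obstacle is a short consequence of results already established, and your argument goes through.
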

\begin{proof}
Up to {\color{black} taking} a subsequence, one can assume that the sequence $(\cO_{n})$ converges for the Hausdorff topology towards an invariant
compact set $K$. From the fact that periodic points are isolated from periodic points of large period (Corollary~\ref{c.isolated2})
it follows that  $K$ does not contain any periodic point.
Let $\mu$ be an ergodic measure supported on $K$.
It is aperiodic and by Proposition~\ref{p.trapping-aperiodic} for any $n$ large the disc $\widehat U_{\cO_n}$
contains the support of $\mu$. In particular the stabilized orbits
$\cO_m$ for $m>n$ large intersect $\widehat U_{\cO_n}$.
All the stabilized periodic points (different from the points of $\cO_n$)
that are contained in $\widehat U_{\cO_n}$ are contained in $U_{\cO_n}$,
hence the orbits $\cO_m$ meet and (by the trapping property) are contained in $U_{\cO_n}$.
Up to {\color{black} extracting} the sequence $(\cO_n)$, one can assume that
for any $n$, the set $K$ is contained in $U_{\cO_n}$.

Let us fix the integer $n$.
We have obtained that for $m$ large $\cO_m$ is contained in $U_{\cO_n}$.
Up to {\color{black} extracting} the subsequence $(\cO_m)_{m>n}$,
one can assume that all the orbits $\cO_m$ for $m>n$ are contained in $U_{\cO_n}$.
By induction, one builds an extracted sequence which satisfies the required property for all integers $m>n$.
\end{proof}
To conclude, it is enough to show that there are not  stabilized points of the second type. 
Let us assume now by contradiction that this is not the case.
One builds inductively a sequence of stabilized periodic orbits of the second type $(\cO_n)$
and trapping discs $(U_n)$ satisfying for each $n\geq 1$:
\begin{itemize}
\item[--] $U_{n}\subset U_{n-1}$,
\item[--] $\cO_{n}\subset U_{n-1}\setminus U_{n}$,
\item[--] $U_n$ contains infinitely many stabilized periodic orbits,
\item[--] the period of $\cO_{n}$ is minimal among the periods of the stabilized periodic orbits of the second type
contained in $U_{n-1}$.
\end{itemize}
After  $\cO_n$ and $U_n$ have been built,
we choose $\cO_{n+1}$ as a stabilized periodic orbits of the second type
contained in $U_{n}$ which minimizes the period.
By Proposition~\ref{p.separation}, there exists trapping discs $U_{n+1}\subset \widehat U_{n+1}$
associated to $\cO_n$ and one can require that $U_{n+1}$ is contained in the trapping $U_n$.
In particular $\cO_{n+1}\subset U_{n}\setminus U_{n+1}$
Since $\cO_{n+1}$ is of the second type, $U_{n+1}$ contains infinitely many stabilized periodic orbits
as required.
\medskip

Once the sequences $(\cO_n)$ and $(U_n)$ have been built,
one considers (up to {\color{black} extracting} a subsequence) the Hausdorff limit $K$ of $(\cO_n)$.
As in the proof of Claim~\ref{c.subsequence},
it supports an ergodic measure $\mu$ which is aperiodic.
The intersection of the discs $U_n$ defines an invariant cellular set $\Lambda$ that contains
$K$. The closing Lemma~\ref{t.measure local} implies that $\Lambda$ contains periodic points $(p_k)$ with arbitrarily large period
which accumulate on a point $x$ of $K$.
{\color{black} Each point $p_k$ may either belong to a stabilized periodic orbit (but since its period is large, it will be a stabilized orbit of the second type),
or is decreasing chain related to a stabilized periodic orbit.
We thus have to consider the following cases:}

\begin{itemize}
\item[--] \emph{Some $p_k$ belongs to stabilized periodic orbits of the second type.}
{\color{black} On the one hand the point $p_k$ belongs to all the sets $U_n$.
On the other hand} the minimal period of stabilized periodic orbits of the second type contained in $U_n$ goes to $+\infty$ as $n\to +\infty$,
{\color{black} hence is larger than the period of $p_k$ for $n$ large.} This is a contradiction.

\item[--] \emph{Some $p_k$ is decreasing chain related to a stabilized periodic orbit $\cO$ of the second type.}
Each trapping disc $U_n$ contains a fixed point $q$ and there exists a decorated region $V$ of $\cO$
which does not contain $q$. Up to {\color{black} replacing} $p_k$ by one of its iterates, one can assume $p_k\in V$.
This shows that $U_n$ meets $V$ and its complement, hence intersects the stable set of $\cO$.
Since $U_n$ is a trapping disc, it contains $\cO$.
{\color{black} On the one hand we have shown that the stabilized periodic orbit of the second type $\cO$ belongs to all the sets $U_n$.
On the other hand} the minimal period of stabilized periodic orbits of the second type contained in $U_n$ goes to $+\infty$ as $n\to +\infty$.
{\color{black} As before} this is a contradiction.

\item[--]
\emph{ All the points $p_k$ are decreasing chain related to stabilized periodic orbits of the first type.}
Since the number of this type of stabilized periodic orbits is finite,
one can assume that the $p_k$ are all decreasing chain related to the same stabilized periodic orbit $\cO$.
Let us consider trapping discs $U_\cO\subset \widehat U_\cO$ as in Proposition~\ref{p.separation}.
All the $p_k$ are contained in the filtrating region $\widehat U_\cO\setminus U_\cO$.
Taking the limit, $K$ meets that region. In particular, the orbits $\cO_n$ for $n$ large
also meet that region. This is a contradiction since the orbits $\cO_n$
are stabilized and the region $\widehat U_\cO\setminus U_\cO$ contains only one stabilized periodic orbit
(the orbit $\cO$).
\end{itemize}

In all the cases we found a contradiction.
This ends the proof of Theorem~\ref{t.finite}.
\end{proof}

\section{Global renormalization}
\label{s.renormalize}

We now prove a strong version of Theorem~\ref{t.theoremA} and its
Corollary~\ref{c.dichotomy0}.
The proof of Corollary~\ref{c.infinitely-renormalizable} is then immediate and left to the reader.

\newcounter{theorembis}
\setcounter{theorembis}{\value{theorem}}
\setcounter{theorem}{0}
\renewcommand{\thetheorem}{\Alph{theorem}'}

\begin{theorem}\label{t.renormalize-prime}
For any mildly dissipative diffeomorphism $f$ of the disc with zero topological entropy,
there exist $\ell\geq 0$, some disjoint topological discs $D_1, \dots, D_\ell$,
some integers $k_1,\dots,k_\ell\geq 2$ such that:
\begin{enumerate}

\item[--] each $D_i$ is trapped by $f^{k_i}$,

\item[--] the discs $f^m(D_i)$ for $1\leq i\leq \ell$ and $0\leq m<k_i$ are pairwise disjoint,

\item[--] for each $D_i$ there is a stabilized orbit such that each iterate of $D_i$ is contained in a decorated region of an iterate of the stabilized orbit,

\item[--] $f$ is generalized Morse-Smale in the complement of the union of the iterates of the disks  $(D_i)$ with periodic points of period smaller and equal to  to $\max\{1, k_1,\dots, k_\ell\}$.

\end{enumerate}
In particular the interior $W$ of the union $\bigcap _i \bigcap _{m\geq 0} f^m(D_i)$ is a filtrating open set. 
\end{theorem}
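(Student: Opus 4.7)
The plan is to combine the finiteness of stabilized periodic orbits (Theorem~\ref{t.finite}) with the local renormalization furnished by Proposition~\ref{p.semi} to assemble the family $(D_i)$, and then verify the generalized Morse-Smale property outside the iterates using the classification of periodic points (Proposition~\ref{p.decreasing-chain}) together with the local no-cycle property (Theorem~\ref{t.cycle2}).

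First, Theorem~\ref{t.finite} provides a finite list $\cO_1,\ldots,\cO_N$ of all stabilized periodic orbits. For each $\cO_j$ and each decorated region $V$ of a point of $\cO_j$, Proposition~\ref{p.semi} yields a finite family of topological discs, contained in $V$, trapped by $f^{k_V}$ where $k_V$ is the period of $V$, containing every periodic point of $V$ decreasing chain related to $\cO_j$ with period greater than $k_V$, and conversely only containing periodic points decreasing chain related to $\cO_j$. Collect all these discs into a finite family $\mathcal{D}_0$. By Proposition~\ref{p.both-related}, any two decorated regions associated to distinct stabilized orbits are either disjoint or nested; combined with property (d) of Proposition~\ref{p.semi} and Corollary~\ref{c.unique-stabilized}, this forces any two elements of $\mathcal{D}_0$ to be either disjoint or one contained in the other. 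I then define $D_1,\ldots,D_\ell$ as the maximal elements of $\mathcal{D}_0$ for inclusion, with $k_i\geq 2$ the period of the associated decorated region. The first three items of the theorem follow: item on trapping and item on localization in decorated regions are direct from Proposition~\ref{p.semi}; for the pairwise disjointness of the $f^m(D_i)$, within a single $i$ the iterates lie in the pairwise disjoint decorated regions of the iterates of the stabilized orbit (by Proposition~\ref{p.stab-decorate} and Corollary~\ref{c.orientation}), and across distinct maximal discs disjointness of all iterates is preserved by the invariance of the decorated-region structure under $f$.

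For the last item, set $K:=\max\{1,k_1,\ldots,k_\ell\}$ and let $U:=\DD\setminus \bigcup_{i,m\geq 0}f^m(D_i)$. By Proposition~\ref{p.decreasing-chain} every periodic orbit is fixed, stabilized, or decreasing chain related to a stabilized orbit. In the last case, if the period exceeds $k_V$ where $V$ is the decorated region containing it, then by Proposition~\ref{p.semi} the orbit belongs to some element of $\mathcal{D}_0$, hence to some $D_i$; thus periodic orbits in $U$ have period bounded by $K$. The set $U$ is filtrating (as the complement of a forward-invariant union of trapping discs, possibly after a mild thickening of the $D_i$'s so their iterates have nicely separated closures), and by Theorem~\ref{t.cycle2} the restriction $f|_U$ admits no cycle of periodic orbits. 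Applying the local closing lemma (Theorem~\ref{t.measure local}) together with Theorem~\ref{t.renormalize} to the unstable branches of fixed arcs of $f^K$ inside $U$, and following the argument of the ``first case'' in the proof of Theorem~\ref{t.theoremA}, every forward and backward orbit remaining in $U$ has $\omega$- and $\alpha$-limit reduced to a single periodic orbit of period at most $K$. Consequently $f$ is generalized Morse-Smale in $U$.

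The main obstacle I expect is the combinatorial bookkeeping in the second step: verifying both the ``disjoint-or-nested'' dichotomy for $\mathcal{D}_0$ and the preservation of pairwise disjointness of all iterates once maximal elements are extracted. The delicate point is that decorated regions of different stabilized orbits may be related only after several iterations of $f$, so one must track the orbit of each decorated region and may have to slightly enlarge the boundaries of the $D_i$'s near stable manifolds of other stabilized orbits (in the spirit of the modification performed at the end of the proof of Theorem~\ref{t.renormalize2}) to ensure strict disjointness and genuine trapping rather than mere tangency.
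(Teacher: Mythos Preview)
Your approach is essentially the paper's: enumerate the stabilized orbits via Theorem~\ref{t.finite}, apply Proposition~\ref{p.semi} in each decorated region, and verify the Morse--Smale property outside using the closing lemma. The main divergence is that you introduce an unnecessary ``nested-or-disjoint, take maximal elements'' step. In fact the discs produced by Proposition~\ref{p.semi} for \emph{distinct} stabilized orbits are outright disjoint (this is what the paper asserts, citing Lemma~\ref{c.disc2} and Corollary~\ref{c.unique-stabilized}): if two such discs met, one of them would lie entirely inside the other's decorated region (otherwise it crosses a stable manifold and, being trapped, would contain an iterate of the other stabilized point, contradicting Corollary~\ref{c.dichotomy-stabilized} via item~(d) of Proposition~\ref{p.semi}); then a periodic point of that disc lies in the second decorated region while being decreasing chain related to the first stabilized point, and Proposition~\ref{p.both-related} applied in both directions forces the two decorated regions, hence the two stabilized points, to coincide. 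So the ``combinatorial bookkeeping'' you flag as the main obstacle evaporates, and no boundary modification in the spirit of Theorem~\ref{t.renormalize2} is needed here.

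For the generalized Morse--Smale conclusion the paper is also more direct than your route through Theorem~\ref{t.cycle2} and Theorem~\ref{t.renormalize}: if the $\omega$-limit of a point outside the discs supported an aperiodic ergodic measure, the closing lemma (Theorem~\ref{t.measure local}) would produce periodic orbits of arbitrarily large period outside the discs, contradicting the period bound just established; hence the limit measure is periodic, and Corollary~\ref{c.isolated} then forces the $\omega$-limit set to be that periodic orbit.
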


\setcounter{theorem}{\value{theorembis}}
\renewcommand{\thetheorem}{\Alph{theorem}}

\subsection{Global renormalization: proof of Theorem~\ref{t.renormalize-prime}}
We apply Proposition \ref{p.semi} and associate to each stabilized periodic orbit $\cO_i$ some
discs $D_{i,1},\dots,D_{i,\ell_i}$ that are trapped by $f^{k_i}$ where $k_i$ is the
period of the decorated regions associated to $\cO_i$.
By construction each $D_{i,j}$ is contained in a decorated region of $\cO_i$
and all the periodic points decreasing chain-related to $\cO_i$
and with period larger than $k_i$ belong to the orbit of the $D_{i,j}$.

The discs $D_{i,j}$ and $D_{i',j'}$ associated to different orbits $\cO_i,\cO_{i'}$ are disjoint by Lemma \ref{c.disc2}, Proposition~\ref{p.decreasing-chain} and Corollary~\ref{c.unique-stabilized}.
Since the number of stabilized orbits is finite (Theorem~\ref{t.finite}), we get the two first items.

Note that any periodic point which does not belong to the $D_{i,j}$ is either fixed, or stabilized,
or decreasing chain-related to a stabilized point with the same period.
Hence its period is bounded by $\max\{1, k_1,\dots, k_\ell\}$.

Let $x$ be any point whose $\omega$-limit set does not belong to a trapped disc.
The limit set supports an ergodic measure $\mu$
This measure cannot be aperiodic since the closing lemma would produce a periodic orbit
with large period outside the discs $D_{i,j}$.
Hence the limit set contains a periodic orbit and by 
Corollary \ref{c.isolated} coincides with the periodic orbit.
The Theorem~\ref{t.renormalize-prime} is now proved.
\qed

\subsection{Infinite renormalization: proof of Corollary~\ref{c.dichotomy0}}
By Theorem~\ref{t.renormalize-prime},
the dynamics of $f$ reduces to a generalized Morse-Smale dynamics in a filtrating set
$\mathbb{D}\setminus \overline W$.
If $W=\emptyset$, the diffeomorphism $f$ is generalized Morse-Smale and Corollary~\ref{c.dichotomy0} holds.

Each connected component of $W$ is a topological disc $\Delta$ which is trapped by an iterate $f^k$ of $f$;
moreover the restriction of $f^k$ to $\Delta$ is a mildly dissipative diffeomorphism.
One may thus apply Theorem~\ref{t.renormalize-prime}
inside each of these discs. Arguing inductively, one gets a new decomposition of the dynamics
into a generalized Morse-Smale part and discs that are eventually trapped after
a return time which increases at each step of the induction.
If $f$ is not generalized Morse-Smale, the induction does not stop and $f$ is infinitely renormalizable.
Corollary~\ref{c.dichotomy0} follows.
\qed

\section{Chain-recurrent dynamics}
\label{s.odometers}

We now describe in detail the dynamics of a mildly dissipative diffeomorphism with zero entropy
and prove Corollary~\ref{c.structure}.

\subsection{Generalized odometers}

\begin{proposition}\label{p.odometer}
Let $f$ be a mildly dissipative diffeomorphism of the disc,
$(D_i)$ be a sequence of topological discs and $(k_i)$ be a sequence of integer such that
\begin{itemize}
\item[--] $D_i$ is trapped by $f^{k_i}$ and disjoint from its $k_i-1$ first iterates,
\item[--] $D_i\subset D_{i+1}$ and $k_i<k_{i+1}$ for each $i$.
\end{itemize}
Then the intersection of the sets $f^{k_1}(D_i)\cup f^{k_i+1}(D_i)\cup\dots \cup f^{2k_i-1}(D_i)$ is
a chain-recurrence class $\cC$ which is a generalized odometer.
In particular it supports a unique invariant measure $\mu$ and for $\mu$-almost every point $x$,
the connected component of $x$ in $\cC$ is {\color{black} reduced to $\{x\}$}.
\end{proposition}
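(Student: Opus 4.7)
First I would check the implicit divisibility $k_i \mid k_{i+1}$: the inclusion $f^{k_{i+1}}(D_{i+1})\subset D_{i+1}\subset D_i$ gives $f^{k_{i+1}}(D_i)\cap D_i\neq\emptyset$, while the trapping/disjointness on $D_i$ forces $f^j(D_i)\cap D_i=\emptyset$ for any $j\not\equiv 0\pmod{k_i}$ (this is immediate once one notes that $f^j(D_i)\subset f^{j\bmod k_i}(D_i)$ by the trapping $f^{k_i}(D_i)\subset D_i$). Setting $\cC_i:=\bigcup_{m=0}^{k_i-1}f^m(D_i)$, each $\cC_i$ is a disjoint union of $k_i$ topological discs cyclically permuted by $f$, with $f^{k_i}(\overline{\cC_i})\subset\interior(\cC_i)$, and $(\cC_i)$ is decreasing. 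Each $x\in\cC:=\bigcap_i\cC_i$ lies in a unique component of $\cC_i$ with an index $m_i(x)\in\ZZ/k_i\ZZ$; these indices are compatible under $\ZZ/k_{i+1}\ZZ\twoheadrightarrow\ZZ/k_i\ZZ$ and define a continuous surjection $\pi\colon\cC\to\mathcal{K}:=\varprojlim\ZZ/k_i\ZZ$. Since $k_i\to\infty$, $\mathcal{K}$ is a Cantor set carrying the odometer $h(m_i)_i=(m_i+1)_i$, and $\pi\circ f=h\circ\pi$.

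Next I would check that $\cC$ is a chain-recurrence class. For every $i$ the trapping condition gives $\delta_i:=d(f(\overline{\cC_i}),\DD\setminus\cC_i)>0$, so no $\varepsilon$-pseudo-orbit with $\varepsilon<\delta_i$ can escape $\cC_i$; hence $\cC$ is chain-separated from its complement. Conversely, given $x,y\in\cC$ and $\varepsilon>0$, I pick $i$ large, iterate $x$ by $f^n$ with $n\equiv \pi_i(y)-\pi_i(x)\pmod{k_i}$ so that $f^n(x)$ lies in the same (connected) component of $\cC_i$ as $y$, join $f^n(x)$ to $y$ by an arc inside that disc, and discretize the arc into $\varepsilon$-small jumps to obtain an $\varepsilon$-pseudo-orbit from $x$ to $y$; symmetrically one goes from $y$ to $x$.

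For the measure-theoretic part, any $f$-invariant probability $\mu$ on $\cC$ projects under $\pi$ to an $h$-invariant probability on $\mathcal{K}$, which must be the unique Haar measure $\nu$ of the odometer; in particular $\mu$ is non-atomic (an atom would by mild dissipation be a sink, excluded since $\cC$ contains no periodic point). The crucial step is to show that $\pi^{-1}(z)$ is a singleton for $\nu$-almost every $z\in\mathcal{K}$. Each fiber is a cellular continuum (nested intersection of discs) with vanishing area, since dissipation forces $\operatorname{area}(\cC)=0$. To collapse these fibers I would invoke proposition~\ref{p.gamma-strong}, which gives $\gamma$-dissipation on $\cC$ for every $\gamma<1$ (using that $\cC$ is aperiodic and uniquely ergodic as a factor of the odometer, and meets no positive-entropy transitive set since $f$ has zero entropy), and then apply theorem~\ref{t.stable} to obtain a Pesin block $A\subset\cC$ of $\mu$-measure arbitrarily close to $1$ on which the separating stable curves $W^s_\DD(x)$ vary $C^1$-continuously; a $y\neq x$ in the same fiber as a $\mu$-generic $x\in A$ would have the same forward disc-itinerary as $x$, forcing $y\in W^s_\DD(x)$, and the combination of $C^1$-continuity of stable curves across $A$ with the cellular nesting of the fibers then produces a contradiction. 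Once the generic fiber is a singleton, the disintegration $\mu=\int\mu_z\,d\nu(z)$ with $\mu_z$ a Dirac mass yields uniqueness of $\mu$, and, since $\pi$ is constant on each connected component of $\cC$, also the statement that the component of $\mu$-almost every $x$ in $\cC$ equals $\{x\}$.

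The main obstacle is clearly this rigidity step: upgrading the $\mu$-almost everywhere existence of a continuously varying separating stable curve into the geometric conclusion that the cellular fibers of $\pi$ collapse to points. Everything else is relatively mechanical bookkeeping on the inverse-limit structure, whereas collapsing the fibers genuinely exploits the interaction between Pesin continuity, mild dissipation and the nested trapping geometry.
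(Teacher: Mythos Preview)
Your overall architecture matches the paper's, but the step you correctly flag as ``the main obstacle'' is where your argument breaks down, and not in the way you think. You claim that if $y\neq x$ lies in the same $\pi$-fiber as a generic $x\in A$, the shared disc-itinerary ``forces $y\in W^s_\DD(x)$''. This is circular: the itinerary only says that $f^n(x)$ and $f^n(y)$ lie in the same component of $\cC_i$ for every $i$ and every $n$, which yields $d(f^n(x),f^n(y))\to 0$ \emph{only if you already know those components shrink}. The subsequent appeal to ``$C^1$-continuity of stable curves across $A$ with the cellular nesting'' is too vague to rescue this. The paper's mechanism is different and concrete: for $x$ in a Pesin block it picks forward iterates $x',x''$ on opposite sides of $W^s_\DD(x)$ and, together with $\partial f^n(D)$ for a trapping disc $D\supset\cC$, encloses $x$ in a rectangle $R$ of diameter $<\varepsilon$. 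The component $f^{m_i+k_i}(D_i)$ containing $x$ lies in $f^n(D)$ for $i$ large, and it cannot meet $W^s_\DD(x')$: if it did, iterating the intersection point under $f^{k_i}$ would keep it in $f^{m_i+k_i}(D_i)$ while dragging it toward the disjoint component containing $x'$. Hence the component sits inside $R$, and letting $\varepsilon\to 0$ collapses the fiber.

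Two smaller points. First, your chain-transitivity argument is wrong as written: discretizing an arc into $\varepsilon$-close points gives $d(z_j,z_{j+1})<\varepsilon$, not $d(f(z_j),z_{j+1})<\varepsilon$, so it is not a pseudo-orbit. The paper proves fiber-collapse \emph{first} and then uses the resulting small-diameter disc component as a rendezvous for pseudo-orbits. Second, your invocation of proposition~\ref{p.gamma-strong} assumes $\cC$ is uniquely ergodic (part of the conclusion) and that $f$ has zero entropy (not among the hypotheses of proposition~\ref{p.odometer}); the paper avoids both by using only a standard positive-measure Pesin block, which mild dissipation already provides.
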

\begin{proof}
Let us denote by $\cC$ the intersection of the sets $f^{k_1}(D_i)\cup f^{k_i+1}(D_i)\cup\dots \cup f^{2k_i-1}(D_i)$.
It is a compact invariant set.

For each $i$, let $(\cO_i,h_i)$ be the cyclic permutation on the set with $k_i$ elements.
The inverse limit of the systems $(\cO_i,h_i)$ defines an odometer $(\mathcal{K},h)$ on the Cantor set.
The sets $D_i,\dots, f^{k_i-1}(D_i)$ define a partition of $\cC$ and induce a factor map on
$(\cO_i,h_i)$, hence a semi-conjugacy $p\colon (\cC,f)\to (\mathcal{K},h)$.
Since the connected components of $\cC$ coincide with the decreasing intersections
of sequences of the form $f^{m_i+k_i}(D_i)$, the preimages $p^{-1}(x)$ coincide with the connected components
of $\cC$.

Let $\nu$ be the unique invariant probability measure on $(\mathcal{K},h)$ and let $\mu$
be an ergodic probability on $(\cC,f)$ such that $p_*(\mu)=\nu$.
The following claim shows that $\cC$ is a generalized odometer.

\begin{claim}
For $\nu$-almost every point $z\in\mathcal{K}$, the preimage $p^{-1}(z)$ is a singleton.
\end{claim}
\begin{proof}
Let us consider a set $X\subset \cC$ with positive $\mu$-measure which is a hyperbolic block,
such that $W^s_\mathbb{D}(x)$ varies continuously with $x\in X$.
One can also find a disc $D\subset \mathbb{D}$ which contains $\cC$ such that $f(D)\subset \interior(D)$
and whose boundary is transverse to the manifolds $W^s_\mathbb{D}(x)$, for $x\in X$.
Let $B\subset X$ be a subset with positive measure
of points having arbitrarily large backward iterates $f^{-n}(x)\in X$ that are accumulated by points of $X$ on both
components of $\mathbb{D}\setminus W^s_\mathbb{D}(f^{-n}(x))$.

Let us choose $\varepsilon>0$.
For each $x\in B$, there exist backward iterates $f^{-n}(x)\in X$ such that
$f^n(W^s_\mathbb{D}(f^{-n}(x)))$ has diameter smaller than $\varepsilon/2$.
As in the proof of Theorem~\ref{t.measure local}, one can thus find a rectangle $R$ with diameter smaller than $\varepsilon$, which contains $x$
and whose  boundary is contained in $\partial f^n(D) \cup W^s_\mathbb{D}(x')\cup W^s_\mathbb{D}(x'')$
for two forward iterates $x',x''$ of $x$.
For $i$ large enough, the disc $f^{m_i+k_i}(D_i)$ which contains $x$ is contained in $f^n(D)$,
and does not meet the iterates $x',x''$, nor their stable manifolds.
Consequently, $f^{m_i+k_i}(D_i)$ has diameter smaller than $\varepsilon$.
Since $\varepsilon>0$ has been chosen arbitrarily,
the connected component of $\cC$ containing $x\in B$ is reduced to $x$.

Since $x$ is arbitrary in $B$ which has positive measure and since $\mu$ is ergodic,
one deduces that for $\mu$-almost every $x$, the connected component of $x$
in $\cC$ (which coincides with $p^{-1}(p(x))$) {\color{black} is reduced to a unique point}.
Since $p_*(\mu)=\nu$, the claim follows.
\end{proof}

The claim and the characterization of the connected components of $\cC$ prove that for $\mu$-almost
every point $x$, the connected component of $x$ in $\cC$ {\color{black} is reduced to a unique point}.

Since the discs $D_i$ are trapped by $f^{k_i}$, any chain-recurrence class which meets $\cC$
is contained in $\cC$. For any $\varepsilon$, let us consider $i$ and an iterate $f^{m_i+k_i}(D_i)$
with diameter smaller than $\varepsilon$.
Any forward and backward orbit in $\cC$ intersects $f^{m_i+k_i}(D_i)$,
showing that $\cC$ is chain-transitive.
This implies that $\cC$ is a chain-recurrence class.
\end{proof}

\subsection{Dynamics on chain-recurrence classes: proof of Corollary~\ref{c.structure}}
Let us apply inductively Theorem~\ref{t.renormalize-prime}, as in the proof of Corollary~\ref{c.dichotomy0}.
We obtain a decreasing sequence $(W_n)$ of trapped open sets such that the dynamics in each $\DD\setminus \overline {W_n}$
is generalized Morse-Smale.
By Proposition~\ref{p.MS}, the chain-recurrent set in that region is the set of periodic points.
Since their period is bounded, the chain-recurrence classes $\cC$ of $f$ in the complement of any $W_k$
can be written as a disjoint union $\cC=C\cup\dots\cup f^{m-1}(C)$, where $C$ is a connected component of the set
of periodic points and $f^m(C)=C$.
Corollary~\ref{c.structure} is proved when $f$ is generalized Morse-Smale.

It remains to describe the dynamics when $f$ is infinitely renormalizable,
that is, when the sequence $(W_n)$ is infinite.
By construction the infimum of the periods of the periodic points in $W_n$
gets arbitrarily large as $n$ goes to infinity.
Up to {\color{black} replacing} $(W_n)$ by the sequence $(f^n(W_n))$, one can assume that the intersection
$\cap W_n$ is an invariant compact set $\Lambda$.

By construction, each connected component of $W_n$ is a topological disc $D$
which is trapped by an iterate $f^m$ and disjoint from its $m-1$ first iterates.
Moreover, $m$ goes uniformly to infinity as $n\to +\infty$ (since the periods in $D$
get large when $n$ increases).
One deduces from Proposition~\ref{p.odometer} that $\Lambda$ is a union of generalized odometers
that are chain-recurrences classes of $f$.
This ends the proof of Corollary~\ref{c.structure}.
\qed

\section{Set of periods}
\label{s.period}

In the present section we provide the proof of Theorem~\ref{t.period} and Corollary~\ref{c.period}.

\subsection{Proof of Theorem~\ref{t.period}}
We consider an infinitely renormalizable $C^r$ diffeomorphism,
since for generalized Morse-Smale systems the conclusion of Theorem~\ref{t.period} holds immediately by taking $W=\emptyset$.
From Theorem~\ref{t.renormalize-prime}, if one assumes by contradiction that Theorem~\ref{t.period} is not satisfied,
there exist a sequence of topological discs $(D_i)$, integers $k_i,\tau_i\to +\infty$, periodic orbits $(\cO_i)$ such that
\begin{itemize}
\item[--] $D_i$ is trapped by $f^{k_i}$ and disjoint from its $k_i-1$ first iterates,
\item[--] $\cO_{i}$ is contained in $D_i\cup f(D_i)\cup\dots\cup f^{k_i-1}(D_i)$ and has period $\tau_i$,
\item[--] $\cO_i\cap f^m(D_i)$ is a stabilized orbit of $f^{k_i}$ for $0\leq m<k_i$ and has period $\tau_i/k_i\geq 3$.
\end{itemize}

\begin{claim}
For each $i,m$, $\cO_{i}\cap f^{m}(D_i)$ is a decorated orbit of $f^{k_i}$ in $\DD$.
\end{claim}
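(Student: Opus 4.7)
My plan is to deduce the claim as a direct corollary of Proposition~\ref{p.stab-decorate} applied to the iterate $g := f^{k_i}$ in place of $f$ and to the orbit $\cO := \cO_i \cap f^m(D_i)$, which by hypothesis is stabilized for $g$ and has $g$-period $\tau_i/k_i \geq 3$. In particular the second clause of Definition~\ref{d.stabilization} applies, furnishing a $g$-fixed point $q$ (a periodic point of $f$ of period dividing $k_i$) accumulated by an unstable branch of $\cO$ for $g$. Once Proposition~\ref{p.stab-decorate} is available for $g$, Definition~\ref{d.decoration} then reads off exactly that for each $p \in \cO$ one connected component of $\DD \setminus W^s_\DD(p)$ is disjoint from $\cO$, which is the statement that $\cO$ is a decorated orbit of $g$ in $\DD$.

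The only nontrivial step is therefore to verify that $g$ itself falls under the hypotheses of Proposition~\ref{p.stab-decorate}, i.e., is a mildly dissipative diffeomorphism of $\DD$ with zero topological entropy. Dissipation $|\det Dg(x)| < 1$ follows from the chain rule, and $h_{\mathrm{top}}(g) = k_i \cdot h_{\mathrm{top}}(f) = 0$. For mild dissipation, the point to exploit is that the one-dimensional stable manifold $W^s(x)$, and hence its component $W^s_\DD(x)$ inside the disc, depends only on the forward orbit of $x$ and is therefore the same set whether one regards $x$ as lying on an $f$-orbit or a $g$-orbit. Concretely, given a $g$-ergodic probability $\nu$ not supported on a hyperbolic sink (a notion which coincides for $f$ and $g$), the $f$-invariant average $\bar\nu := \tfrac{1}{k_i}\sum_{j=0}^{k_i - 1} f^j_* \nu$ is not carried by a hyperbolic $f$-sink either, so mild dissipation of $f$ applied to its ergodic components yields that $W^s_\DD(x)$ separates $\DD$ for $\bar\nu$-almost every $x$; since $\bar\nu \geq \tfrac{1}{k_i}\nu$ as measures, the same property holds $\nu$-almost everywhere, giving mild dissipation of $g$.

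With that in hand, Proposition~\ref{p.stab-decorate} applies to $g$ and $\cO$ and the claim follows. The only mildly delicate point is the bookkeeping in the previous paragraph, identifying the notion of ``stable manifold separating $\DD$'' for $f$ and $g$ and transferring an $f$-almost everywhere statement to $g$-ergodic measures; once this identification is accepted, all geometric content is already delivered by Proposition~\ref{p.stab-decorate} and no further argument is needed.
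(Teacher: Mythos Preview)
Your approach is correct and takes a genuinely different route from the paper. The paper first observes that $\cO_i \cap f^m(D_i)$ is decorated \emph{inside the small disc} $D = f^m(D_i)$ (using that $f^{k_i}|_D$ is mildly dissipative, as noted in the proof of Corollary~\ref{c.dichotomy0}) and then transfers this to $\DD$ by reformulating decoration as a path property: any two points $x,y$ of the orbit can be joined by a path in $D$ avoiding the local stable manifolds $W^s_D(z)$ of the remaining points, and the paper asserts that this same path avoids $W^s_\DD(z)$ as well. Your argument instead verifies once and for all that the iterate $g = f^{k_i}$ is mildly dissipative on the \emph{full} disc $\DD$, and then invokes Proposition~\ref{p.stab-decorate} for $g$ directly. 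This is arguably cleaner: it sidesteps the comparison between $W^s_D(z)$ and $W^s_\DD(z)\cap D$ (the latter could a priori be larger if the global stable curve re-enters $D$), and the observation that iterates of a mildly dissipative map remain mildly dissipative is of independent interest. The paper's route, on the other hand, stays closer to the renormalization picture by working inside the trapped disc before promoting the conclusion to $\DD$.
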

\begin{proof}
The intersection $\cO_{i}\cap f^{m}(D_i)$ is a decorated orbit in $D=f^{m}(D_i)$: this means that for any points
$x,y$ in the orbit, there exists a path in $D$ which connect them and is disjoint from the local stable manifolds
$W^s_D(z)$ in $D$ of the other points of the orbit. In particular there exists a path in $\DD$ which connect them and is disjoint
from the local manifolds $W^s_\DD(z)$ in $\DD$, proving that $\cO_{i+1}\cap f^{m}(D_i)$ is decorated in $\DD$
\end{proof}

Let us choose $\alpha\in (0,\min(1, r-1))$ and $\varepsilon \in (0, 1/4)$.
Theorem~\ref{t.stable} associates $\gamma\in (0,1)$.
By Theorem~\ref{t.renormalize-prime}, there exists a nested sequence of topological discs
$\widehat D_i$ that are periodic and trapped with periods $\widehat k_i\to+\infty$
such that $D_i\subset \widehat D_i$.
By Proposition \ref{p.odometer} the intersection of the sets $\widehat D_i\cup f(\widehat D_i)\cup\dots \cup f^{\widehat k_i-1}(\widehat D_i)$
is a chain-recurrence class $\cC$ which is a generalized odometer. In particular it does not contain any periodic points
and it supports a unique invariant probability {\color{black} measure} $\mu$. Proposition~\ref{p.gamma-strong} implies that $f$ is $\gamma$-dissipative on $\cC$,
hence on the domains $\widehat D_i\cup f(\widehat D_i)\cup\dots \cup f^{\widehat k_i-1}(\widehat D_i)$ for $i$ large enough.
Theorem~\ref{t.stable} provides a compact set $A$ such that $W^s_\mathbb{D}(x)$ exists and varies continuously with $x\in A$ in the $C^1$ topology
and ${\color{black} \mu(A)}>3/4$ for any invariant probability measure supported on a neighborhood of $\cC$. In particular the orbits $\cO_i$ have at least $3\tau_i/4$ iterates in $A$.

By Proposition~\ref{p.odometer}, for $\mu$-almost every point $x$, the connected component of $x$ in $\cC$
is reduced to $\{x\}$.
This implies that for any $\delta>0$ and for $i$ large enough,
at least $3\widehat k_i/4$ discs in the family $\widehat D_i\cup f(\widehat D_i)\cup\dots \cup f^{\widehat k_i-1}(\widehat D_i)$
have diameter smaller than $\delta$.

The number of discs $f^{m}(\widehat D_i)$ ($0\leq m<\widehat k_i$) which contain at most $2$ points in $\cO_{i+1}\cap A$
is smaller than $(\tau_{i}/\widehat k_i-2)^{-1}\operatorname{Card}(\cO_{i}\setminus A)$, hence than $\tau_i/4$.
Consequently there exists a disc $f^{m+k_i}(\widehat D_i)$ with diameter smaller than $\delta$ which contains at least $3$
points $x,y,z$ of $A\cap \cO_i$.
Since the three points are close, the local stable manifolds $W^s_\DD(x)$, $W^s_\DD(y)$, $W^s_\DD(z)$
are close for the $C^1$-topology. In particular there are coordinates in the disc such that
the three curves are graphs over one of the coordinate axis. This implies that one of the stable manifolds
separates the two other ones in $\DD$. This is a contradiction since the orbit $\cO_{i}\cap f^{m}(D_i)$ of $f^{k_i}$ is decorated.
Theorem~\ref{t.period} is proved.
\qed

\subsection{Proof of Corollary~\ref{c.period}}
Theorem~\ref{t.period} implies that there exists $m\geq 1$, a finite number of topological discs
$D_1,\dots,D_\ell$ and integers $m_1,\dots,m_\ell$ such that
\begin{itemize}
\item[--] the discs $f^k(D_i)$ with $1\leq i\leq \ell$ and $0\leq k<m_i$ are pairwise disjoint,
\item[--] each disc $D_i$ is trapped by $f^{m_i}$,
\item[--] the set $F$ of periodic points in the complement of $\cup_{i,k} f^k(D_i)$ is finite,
\item[--] each $f^{k_i}|_{D_i}$ is infinitely renormalizable and each renormalization disc $\Delta\subset D_i$
is contained in a sequence of renormalization discs $\Delta_0=\Delta\subset \Delta_1\subset\dots\subset \Delta_s=D_i$
such that the period of $\Delta_{j}$ is the double of the period of $\Delta_{j+1}$.
\end{itemize}
Theorem~\ref{t.renormalize-prime} shows that the set of periods of each diffeomorphism $f^{k_i}|_{D_i}$ coincides with $\{2^n,n\geq 0\}$.
This shows that the set of periods of $f$ coincides with
$$F\cup \{m_i.2^n,\: 1\leq i\leq \ell \text{ and } n\in\NN\}.$$
Corollary~\ref{c.period} follows.\qed

\section{Dynamics close to one-dimensional endomorphisms}\label{ss.close-endo}
In this section, we prove Theorem~\ref{t.small jacobian}.

\subsection{Extension of one-dimensional endomorphisms}
From now on, and to keep the approach described in \cite{CP} we consider extensions of a one-dimensional endomorphisms which slightly differ from~\eqref{e.extension},
but  which work both for the interval and the circle: given a one-dimensional manifold $I$
(the circle $S^1$ or the interval $(0, 1)$), a $C^2$ map $h : I \to I$ isotopic to the identity (such that
$h(\partial I) \subset \interior(I)$ in the case of the interval), $\eps>0$ small and $b \in (-1, 1)$ even smaller, we
get a map $f_b$ on $\DD := I \times (-\eps, \eps)$ defined by
$$f_b : (x, y) \to (h(x) + y, b(h(x)-x + y)).$$
Indeed for any $y\in \RR$ close to $0$ and any $x \in  h(I)$, the sum $x + y$ is well defined and, since $h$
is isotopic to the identity, the difference $h(x) - x$ belongs to $\RR$.
Note that the Jacobian is constant and equal to $b$. When $|b|> 0,$ the map $f_b$ is a diffeomorphism onto its image. When $b = 0$ the image $f_0 (\DD)$ is contained in $I \times  \{0\}$ and the
restriction of $f_0$ coincides with $h \times \{0\}.$

Theorems 1 and 2 in~\cite{CP} assert that for $|b|>0$ small enough, the map $f_b$ is mildly dissipative and that the same property holds for any diffeomorphism.
The diffeomorphism~\eqref{e.extension} that is presented in the introduction can be handled in the same way.
Indeed for $b = 0$, the map $f_0$ is
an endomorphism which contracts the curves $h(x) + y = {\color{black} \operatorname{const}}$ to a point: these curves are
analogous to strong stable manifolds. One can check moreover that, for any ergodic measure
which is not supported on a sink, the points in a set with uniform measure are far from the
critical set, implying that these curves cross the domain $I \times (-\eps, \eps)$. For
$|b| > 0$, the control of the uniformity of the stable manifolds ensures that for points in a set
with uniform measure has local stable manifolds close to the curves $h(x) + y = {\color{black} \operatorname{const}}$.

\subsection{Parallel laminations}
The proof of Theorem~\ref{t.small jacobian} follows from the property that
for points on a large set, the stable manifolds are ``parallel", \emph{i.e.,} do not contain decorated configurations.

\begin{definition}\label{d.paralell} A family of $C^1-$curves $\gamma\colon [0,1]\to \DD$ is \emph{parallel}  if: 
\begin{enumerate}
 \item[--] every curve separates the disc: $\gamma(\{0,1\})\subset \partial \DD$ and $\gamma((0,1))\subset \interior(\DD)$;
 \item[--]  given three of them, there is one that separates the other two.
\end{enumerate}
\end{definition}

\begin{proposition}\label{p.improving-onedim} Given $\delta>0$ and a $C^2-$endomorphism $h$ of the interval, there exists $b_0>0$ such that for any
$b$ with $0<|b|<b_0$, and for any diffeomorphism $g$ in a $C^2$-neighborhood of
$f_b$, there exists a compact set $S$ such that:

\begin{enumerate}
\item[--] each $x\in S$ has a stable manifold and the family $\{W^s_\DD(x),x\in S\}$ is parallel;
\item[--] for any ergodic measure $\mu$ which is not supported on a sink,  $\mu(S)>1-\delta$.
\end{enumerate}
\end{proposition}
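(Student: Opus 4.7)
The plan is first to establish global $\gamma$-dissipation for $\gamma$ close to $1$, then extract a Pesin block via Theorem~\ref{t.stable}, and finally show that the stable curves on this block stay $C^0$-close to the level curves of the function $u(x,y) := h(x)+y$ (these are exactly the fibers that $f_0$ collapses to points), from which parallelness will follow.

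First I would verify that $\det Df_b(x,y)\equiv b$ by direct computation of
\[
Df_b=\begin{pmatrix} h'(x) & 1 \\ b(h'(x)-1) & b \end{pmatrix},
\]
and that $\|Df_b\|$ is bounded by some constant $M$ depending only on $h$ and $\eps$. For $g$ in a $C^2$-neighborhood of $f_b$ both bounds survive with $M$ replaced by $M+1$ and $|b|$ by $2|b|$. Because the smallest singular value of $Dg^n(x)$ is at least $|\det Dg^n(x)|/\|Dg^n(x)\|$, one has for any unit $u\in T_x\DD$
\[
\frac{|\det Dg^n(y)|}{\|Dg^n(x).u\|^\gamma}\le \frac{(2|b|)^n}{(|b|/(4M))^{n\gamma}},
\]
which tends to $0$ uniformly in $x,y,u$ as soon as $|b|^{1-\gamma}(8M)^{\gamma}<1$. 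Hence for every prescribed $\gamma\in(0,1)$ there exists $b_0>0$ so that $g$ is $\gamma$-dissipative on all of $\DD$ when $0<|b|<b_0$. Applying Theorem~\ref{t.stable} with $\alpha=1$ and $\varepsilon$ smaller than $\delta$ then yields a compact set $S\subset \DD$ with $\mu(S)>1-\delta$ for every ergodic $\mu$ not supported on a sink, on which each stable manifold $W^s_\DD(x)$ exists and varies continuously in the $C^1$-topology.

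The second step is to show that every $W^s_\DD(x)$ with $x\in S$ stays in a thin $u$-strip around the level set $\{u=u(x)\}$. A direct inspection of $Df_b$ shows that its most contracted direction converges, uniformly on $\DD$ as $b\to 0$, to the kernel of $du=(h'(x),1)^*$, namely the direction $(1,-h'(x))$ tangent to the level curves of $u$. Since for $|b|$ small and $g$ $C^2$-close to $f_b$ the stable cone field on $\DD$ is a narrow uniform cone around this singular direction, the tangent to $W^s_\DD(x)$ at \emph{every} point $(x_1,x_2)$ along the curve (not only at $x\in S$) is close to $(1,-h'(x_1))$. Integrating, the function $u$ varies by an arbitrarily small amount along $W^s_\DD(x)$, uniformly in $x\in S$.

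To conclude, recall that each $W^s_\DD(x)$ for $x\in S$ separates $\DD$ by mild dissipation. Given three such curves $W^s_\DD(x_1),W^s_\DD(x_2),W^s_\DD(x_3)$, their $u$-values are pairwise distinct, since two coincident values together with the thin-strip property would force the corresponding stable manifolds to intersect and hence coincide (contradicting the distinctness). Reorder so that $u(x_1)<u(x_2)<u(x_3)$; the ordered level curves $\{u=u(x_1)\},\{u=u(x_2)\},\{u=u(x_3)\}$ are pairwise disjoint and the middle one separates the other two, and the proximity established in the second step transfers this separation to the stable curves themselves. Hence $\{W^s_\DD(x) : x\in S\}$ is parallel in the sense of Definition~\ref{d.paralell}. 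The main obstacle in this plan is the global (not merely local near $S$) control of the tangent direction of $W^s_\DD(x)$; the standard Pesin continuity only gives it near $x$, and bridging to the whole curve requires the uniform narrowness of the stable cone field provided by the smallness of $|b|$.
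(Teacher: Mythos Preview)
Your $\gamma$-dissipation step and the application of Theorem~\ref{t.stable} are fine, but the argument has a genuine gap exactly where you flag the ``main obstacle''. You correctly note that the one-step most contracted direction of $Df_b$ converges uniformly to the level-curve direction $(1,-h'(x))$ as $b\to 0$, but the tangent $T_yW^s_\DD(x)$ at a point $y$ on the stable curve is \emph{not} the one-step most contracted direction at $y$: it is the direction contracted by all forward iterates and depends on the whole forward orbit of $y$. For the narrow cone about $(1,-h'(\cdot))$ to confine stable tangents one would need it to be backward-invariant, i.e.\ $(Df_b(z))^{-1}$ should carry the cone at $f_b(z)$ into the cone at $z$. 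A direct computation shows this fails whenever $f_b(z)$ lies in the critical strip $\{|h'|\text{ small}\}$: for instance $(Df_b(z))^{-1}(1,0)=(1,\,1-h'(z_1))$, whose slope differs from $-h'(z_1)$ by $1$, so the cone about $(1,0)$ at a critical image point is \emph{not} pulled back into the cone at $z$. Hence there is no globally invariant stable cone field, and your integration step (``$u$ varies by an arbitrarily small amount along $W^s_\DD(x)$'') is unjustified as soon as the stable curve passes through the critical region.

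The paper's proof avoids this difficulty altogether. It first takes the Pliss block $\mathcal A(f_b)$ from~\cite{CP} (measure $>1-\delta/2$ for every relevant $\mu$) and then \emph{explicitly removes} a small neighbourhood $U\times(-\eps,\eps)$ of the critical set (further measure cost $<\delta/2$), setting $S=\mathcal A(f_b)\setminus(U\times(-\eps,\eps))$. On $S$ one has $|h'|$ bounded away from $0$, so the level-curve direction is uniformly transverse to the horizontal; combined with theorem~1 of~\cite{CP}, which gives that the stable leaves through Pliss points are $C^1$-close to \emph{affine segments}, one obtains a family of separating curves uniformly transverse to the horizontal in the thin strip $I\times(-\eps,\eps)$, and such a family is automatically parallel. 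The two ingredients your approach lacks are thus the excision of the critical region and the global ``close to affine'' control of the leaves from~\cite{CP}; the latter is what replaces the invariant-cone argument that cannot be made to work across the critical strip.
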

\begin{proof}
We follow and adapt the proof of Theorem~2 in~\cite{CP}.
Let $K>\|Dh\|$ and fix $L\gg K$.
We introduce four numbers, depending on $b$:
$$\sigma(b):=L.|b|,\quad \tilde \sigma(b)=|b|/5K,\quad
\tilde \rho(b):=|b|/25K^2,\quad \rho=L^2.|b|.$$
Consider the set $\mathcal{A}(f_b)$ of points $x$
having a direction $E\subset T_x\mathbb{D}$ satisfying
\begin{equation*}
\forall n\geq 0,\;\;\; {\tilde \sigma}^{n}\leq \|Df^n(x)_{|E}\|\leq  \sigma^{n},\;
\text{ and }\; {\tilde \rho}^n\leq \frac{\|Df^n(x)_{|E}\|^2}{|\det Df^n(x)|}\leq \rho^{n}.
\end{equation*}
The proof of Lemma 4.4 in~\cite{CP} shows that by taking $L$ large enough,
then $\mu(\mathcal{A}(f_b))>1-\delta/2$ for any invariant ergodic probability $\mu$ which is not supported on a sink. Let us choose a small neighborhood $U$ of
the critical set $\{x, Dh(x)=0\}$.
Then the measure $\mu(U\times (-\eps,\eps))$ is smaller than $\delta/2$
and on its complement, the angle between the stable manifolds
$W^s(x)$ for $x\in \mathcal{A}(f_b)\setminus U\times (-\eps,\eps)$ is bounded away from zero.

Having chosen $|b|$ small enough,
the leaves $W^s_\DD(x)$ for $f_b$ are $C^1$-close to affine segments
(Theorem~1 in~\cite{CP})
and are uniformly transverse to the horizontal
for points $x$ in the set $S:=\mathcal{A}(f_b)\setminus U\times (-\eps,\eps)$,
defining a parallel lamination for $f_b$.

When $|b|$ is small enough, the mild dissipation is robust
(see Theorem~1 of~\cite{CP}) and the property extends to diffeomorphisms
$g$ that are $C^2$-close to $f_b$.
\end{proof}

\subsection{Proof of Theorem~\ref{t.small jacobian}}
Let us choose a diffeomorphism $g$ as in the statement of Theorem~\ref{t.small jacobian}.
Having chosen $g$ in a small neighborhood of a diffeomorphism $f_b$, with $|b|$ small,
ensures that 
Proposition \ref{p.improving-onedim} holds for some $\delta\in(0,1/3).$

In particular at least $2/3$ of the iterates of any stabilized periodic orbit meets the set $S$: the parallel property then implies that the period of any stabilized periodic orbit is
$1$ or $2$.
The Theorem~\ref{t.renormalize-prime} gives disjoint renormalization discs with period $2$, such that any periodic orbit in the complement has period $1$ or $2$.

Let us consider any one of the obtained renormalization domains $D$
and the induced diffeomorphism $g^2|_D$.
Note that if $O\subset D$ is a stabilized orbit of $g^2$, then both $O$ and $g(O)$
are  stabilized periodic orbits of $g^2$ in $\DD$. By Proposition \ref{p.improving-onedim}, at least $2/3$ of the iterates of $O$, or $g(O)$ belong to $S$.
The parallel property then implies that the period of $O$ under $g^2$ is $1$ or $2$.
The Theorem~\ref{t.renormalize-prime} gives smaller
disjoint renormalization discs with period $4$, such that any periodic orbit in the complement is fixed by $g^4$.
Arguing inductively, one deduces that there exists
renormalization discs of period $2^n$ such that
the periodic orbits in the complement are fixed by $g^{2^n}$.
Consequently, any periodic orbit is fixed by some iterate $g^{2^n}$,
hence has a period which is a power of $2$.
\qed

\section{Dynamics of the H\'enon map}
In this section we prove Corollary~\ref{c.henon}.

\subsection{Reduction to a dissipative diffeomorphism of the disc}
The dynamics of a dissipative H\'enon map is the same as the dynamics of a dissipative diffeomorphism of the disc.

\begin{proposition}\label{p.reduction}
For any H\'enon map $f_{b,c}$ with $|b|\in(0,1)$, there exists:
\begin{itemize}
\item[--] a smooth dissipative diffeomorphism of the disc $g\colon \DD\to g(\DD)$,
\item[--] a topological disc $\Delta\subset \RR^2$,
\item[--] a homeomorphism $h\colon \Delta\to \DD$,
\item[--] a decomposition $\DD=\DD_1\cup\DD_2$ into {\color{black} two} half discs,
\item[--] a decomposition $\Delta=\Delta_1\cup \Delta_2$ with $\Delta_i=h(\DD_i)$,
\end{itemize}
such that:
\begin{enumerate}
\item $g(\DD_2)\subset \operatorname{interior}(\DD_2)$ and
any forward orbit of $g|_{\DD_2}$ converges to a fixed point $p_0$;
\item $f_{b,c}(\Delta_1)\subset \operatorname{interior}(\Delta)$ and
$f_{b,c}=h\circ g\circ h^{-1}$ on $\Delta_1$;
\item the forward orbit of any $x\in \Delta_2$ under $f_{b,c}$ escapes to infinity: $\|f_{b,c}^n(x)\|\underset{\tiny n\to+\infty}\longrightarrow \infty$;
\item the backward orbit of any $x\in \Delta\setminus f_{b,c}(\Delta)$  under $f_{b,c}$ escapes to infinity: $\|f_{b,c}^n(x)\|\underset{\tiny n\to-\infty}\longrightarrow \infty$;
\item any $f_{b,c}$-orbit which does not meet $\Delta$ escapes to infinity in the past and future.
\end{enumerate}
\end{proposition}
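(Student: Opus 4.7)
The plan is to perform a Bonatti--Franks style compactification of the Hénon dynamics. First I would establish the escape estimates: for $b\in(0,1)$ and any $c\in\RR$, if $R>0$ is chosen with $R^2>2(|c|+bR)$ then $|x|>R$ forces $|f_{b,c}(x,y)_1|\geq x^2/2$, so the $x$-coordinate grows super-exponentially in forward time; applying the same bound to $f_{b,c}^{-1}(x,y)=(-y/b,\,x-y^2/b^2-c)$ gives backward escape when $|y|>R$. Hence every bounded orbit must stay in a compact region, and one can choose any sufficiently large topological disc $\Delta\subset\RR^2$ so that orbits never meeting $\Delta$ escape in both directions, which is property~(5).

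Next, I would design $\Delta$ and its decomposition $\Delta=\Delta_1\cup\Delta_2$. Because $|b|<1$, the image $f_{b,c}(\Delta)$ is a thin parabolic strip protruding out of $\Delta$ on a single ``right'' side. Let $U:=\Delta\cap f_{b,c}^{-1}(\RR^2\setminus\Delta)$, the set of points of $\Delta$ mapped outside in one iterate; this is a connected half-disc-shaped subset of $\Delta$ adjacent to one arc of $\partial\Delta$. Take $\Delta_2$ to be a tubular thickening of $U$ inside $\Delta$, bounded on its ``inner'' side by a smooth arc $\sigma$ slightly within the $\Delta_1$-side of $\partial U$. Set $\Delta_1:=\operatorname{Closure}(\Delta\setminus\Delta_2)$; both are topological half-discs glued along $\sigma$. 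By construction $f_{b,c}(\Delta_1)\subset\operatorname{interior}(\Delta)$ (since $\Delta_1$ avoids $U$, with strict inclusion after a slight shrinking of $\Delta_1$), and $f_{b,c}(\Delta_2)\subset(\RR^2\setminus\Delta)\cup\operatorname{interior}(\Delta_2)$, so forward orbits starting in $\Delta_2$ eventually leave $\Delta$ and then escape by property~(5), which gives property~(3). Property~(4) follows from the same escape estimate applied backward to $\Delta\setminus f_{b,c}(\Delta)$. A crucial additional requirement is that $f_{b,c}(\sigma)\subset\operatorname{interior}(\Delta_2)$: this is arranged by taking the thickening of $U$ wide enough that it contains the image of $\sigma$, which lies close to $\partial\Delta$ and has transverse width bounded by a factor of $|b|$ thanks to area contraction.

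Finally, I would pick a homeomorphism $h\colon\Delta\to\DD$ sending $\Delta_i\to\DD_i$ and $\sigma$ to the diameter, smooth away from the endpoints of $\sigma$. Define $g$ on $\DD_1$ by $g=h\circ f_{b,c}\circ h^{-1}$, and on $\DD_2$ as the time-one map of a smooth gradient-like vector field with unique sink $p_0\in\operatorname{interior}(\DD_2)$, adjusted near the diameter so that its jet matches that of $g|_{\DD_1}$ (possible by a standard Whitney extension argument, since the prescribed image of the diameter lies strictly inside $\operatorname{interior}(\DD_2)$, leaving ample room). The result is a smooth dissipative diffeomorphism satisfying all five properties. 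The main obstacle in the above argument is the middle geometric step: the simultaneous choice of $\Delta$, $\sigma$, $\Delta_1$, and $\Delta_2$ so that all conditions hold, especially the smoothness-enabling condition $f_{b,c}(\sigma)\subset\operatorname{interior}(\Delta_2)$. This requires a careful analysis of the transverse intersection of $\partial\Delta$ and $f_{b,c}(\partial\Delta)$ and an appropriate thickening of $U$ whose width is calibrated against the dissipation constant $|b|$.
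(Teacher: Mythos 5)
Your overall architecture is the same as the paper's (cut off the escaping part of a large disc and replace the dynamics there by an artificial sink), but the escape analysis contains genuine gaps. First, the estimate ``$|x|>R$ forces $|f_{b,c}(x,y)_1|\geq x^2/2$'' is false without a bound on $|y|$ (take $y=-(x^2+c)+\epsilon$), so the region $\{|x|>R\}$ is not forward invariant with growth. What is needed is a pair of complementary invariant cones coupling $|y|$ to $|x|$: the paper uses $U^+=\{|x|\geq R,\ |y|\leq \sqrt{|b|}\,|x|\}$, which is forward invariant with $x'>2|x|$, and $U^-=\{|x|\geq R,\ |y|\geq \sqrt{|b|}\,|x|\}$, which is backward invariant with $|y'|>2|y|$. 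Second, and more seriously, your derivation of item~(3) is circular: you argue that points of $\Delta_2$ ``leave $\Delta$ and then escape by property~(5)'', but (5) only concerns orbits that never meet $\Delta$; a point that exits $\Delta$ could a priori re-enter. To get (3) you must show that $\Delta_2$ and $f_{b,c}(\Delta_2)\setminus\Delta$ sit inside the forward-invariant escaping cone $U^+$, which is precisely why the paper takes $\Delta_2$ to be (an approximation of) the explicit box $D_2=[R,2R^2]\times[-\sqrt{|b|}R,\sqrt{|b|}R]\subset U^+$, with $D_1=[-R,R]\times[-\sqrt{|b|}R,\sqrt{|b|}R]$ and $f_{b,c}(D_1)\cup f_{b,c}(D_1\cap D_2)\subset \operatorname{interior}(D_1\cup D_2)$ checked directly.

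Relatedly, the quasi-invariance $f_{b,c}(\Delta_2)\subset(\RR^2\setminus\Delta)\cup\operatorname{interior}(\Delta_2)$ and the claim that $U=\Delta\cap f_{b,c}^{-1}(\RR^2\setminus\Delta)$ is a connected half-disc adjacent to a single boundary arc are asserted but not established (for a large round disc, $U$ is actually a cap bounded by a downward parabola $x^2+c+y=\mathrm{const}$, hugging most of $\partial\Delta$); you yourself flag the calibration of the thickening and the condition $f_{b,c}(\sigma)\subset\operatorname{interior}(\Delta_2)$ as the unresolved ``main obstacle''. These points can be repaired, but the repair essentially amounts to redoing the paper's explicit rectangle-and-cone computation, so as written the proof is incomplete at its key step.
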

\begin{remark}\label{r.reduction}
When $|b|<1/4$, the diffeomorphism $g$ is mildly dissipative.
Indeed let us consider an ergodic measure $\mu$ of $g$ which is not supported on a sink.
From item (1), it is supported on $\DD_1$.
From items (2), $\nu:=h^{-1}_*(\mu)$ is an ergodic measure for $f_{b,c}$ which is not supported on a sink.
From Wiman theorem (see~\cite{CP} Theorem 2 and Section 4.2), for $\nu$-almost every point $x$,
each stable curve of $x$ is unbounded in $\RR^2$, hence intersects the boundary of $\Delta_1$.
From item (2) again, one deduces that for $\mu$-almost every point $y$, each stable curve intersects the boundary of $\DD_1$
and cannot meet $\DD_2$ since its forward orbit is not attracted by $p_0$. One deduces that each stable curve of $y$
meets the boundary of $\DD$, proving that $g$ is mildly dissipative.
\end{remark}

\begin{proof}[Proof of Proposition~\ref{p.reduction}]
Let us fix $R>0$ and define $D=D_1\cup D_2$ where
$$D_1=[-R,R]\times\left[-\sqrt{|b|}R,\sqrt{|b|}R\right],\quad
D_2=[R, 2R^2]\times\left[-\sqrt{|b|}R,\sqrt{|b|}R\right].$$
One checks easily that if $R$ is large enough,
$$f_{b,c}(D_1)\subset \operatorname{interior}(D)\quad \text{ and } \quad
f_{b,c}(D_1\cap D_2)\subset \operatorname{interior}(D_2).$$
One then defines an embedding $\tilde f\colon D\to \operatorname{interior}(D)$ such that
\begin{itemize}
\item[--] $\tilde f|_{D_1}=f_{b,c}|_{D_1}$,
\item[--] $\tilde f(D_2)\subset \operatorname{interior}(D_2)$,
\item[--] any forward orbit of $\tilde f|_{D_2}$ converges to a fixed sink.
\end{itemize}
One approximates $D$ by a disc $\Delta\subset D$ with a smooth boundary:
$D\setminus \Delta$ is contained in a small neighborhood of $\partial\Delta$ and such that $\{R\}\times \RR$ decomposes $\Delta$ in two half discs $\Delta_1$, $\Delta_2$. One then chooses a diffeomorphism $h\colon \Delta\to \DD$ and set $g=h\circ \tilde f\circ h^{-1}$.
The items (1) and (2) of the proposition are then satisfied.

Note that the domain $U^+=\{(x,y),\; |x|\geq R \text{ and } |y|\leq \sqrt{|b|}|x|\}$ is mapped into itself and that if $R$ has been chosen large enough then
the image $(x',y')$ of $(x,y)\in U^+$ satisfies $x'>2|x|$. Consequently the forward orbit of any point in $U^+$ escapes to infinity.

The inverse map is $f_{b,c}^{-1}\colon (x,y)\mapsto (-y/b, x-y^2/b^2-c).$
As before, the domain $U^-=\{(x,y),\; |x|\geq R \text{ and } |y|\geq \sqrt{|b|}|x|\}$ is mapped into itself by $f_{b,c}^{-1}$
and that if $R$ has been chosen large enough then
the preimage $(x',y')$ of $(x,y)\in U^-$ satisfies $|y'|>2|y|$. Consequently the backward orbit by $f_{b,c}$ of any point in $U^-$ escapes to infinity.
This concludes the proof of items (3), (4), (5).
\end{proof}

\subsection{Proof of Corollary~\ref{c.henon}}
Let $g$ be the diffeomorphism given by Proposition~\ref{p.reduction}.
Since the topological entropy of $f_{b,c}$ vanishes, the same holds for $g$.
Moreover by Remark~\ref{r.reduction}, $g$ is mildly dissipative.

From the items (3) and (5) of Proposition~\ref{p.reduction}, any forward orbit by $f_{b,c}$ which does not escape to infinity
accumulates in a subset $K$ of $\Delta_1$. The image $h(K)$ by the conjugacy is the limit set of a forward orbit of $g$.
It is contained in a chain-recurrence class of $g$. With Corollary~\ref{c.structure},
one deduces that the forward orbit of $f_{b,c}$ converges to a periodic orbit or to a subset of a generalized odometer.
From items (4) and (5), a similar conclusion holds for backward orbits.

The periodic set of $f_{b,c}$ is included in $\Delta_1$ and is conjugated by $h$ to the periodic set of
$g$, once the fixed point $p_0$ has been excluded. Hence the set of periods of $f_{b,c}$
can be described from the set of periods of $g$. By Corollary~\ref{c.period}, it has the structure~\eqref{e.period}.
\qed

\subsection{Final remark: trapping discs for the H\'enon map}
We propose an alternative proof to Corollary~\ref{c.henon}
in the case where the H\'enon map $f=f_{b,c}$ is orientation-preserving (\emph{i.e.,} $b\in (0,1/4)$).
Indeed the following proposition holds.
When the dynamics of $f$ is not trivial, we can find a trapping disc for $f$
(whose boundary is the union of two compact graphs contained in the stable and unstable manifolds of a fixed point)
and apply Corollaries~\ref{c.structure} and~\ref{c.period} directly to $f_{b,c}$.

\begin{proposition}\label{p.quadritomie}
Any H\'enon map $f=f_{b,c}\colon (x,y)\mapsto (x^2+c+y,-bx)$ with jacobian $b\in (0,1)$ satisfies one of the following properties:
\begin{itemize}
\item[a)] There is no fixed point. All the orbits escape to infinity in the past and the future.
\item[b)] The fixed points belong to a simple curve $\gamma\colon [0,+\infty)\to \RR^2$   whose image is invariant and which satisfies $\gamma(t)\underset{t\to+\infty}\longrightarrow \infty$. Any forward (resp. backward) orbit either converges to a fixed point or escape to infinity.
\item[c)] There exist a topological disc $D\subset \RR^2$ trapped by $f$ and a simple curve $\gamma\colon \RR\to \RR^2$
whose image is invariant and which satisfies $\gamma(t)\underset{t\to+\infty}\longrightarrow \infty$ and $\gamma((-\infty,0])\subset D$
such that all the fixed points are contained in $D\cup \gamma(\RR)$.
Any backward (resp. forward) orbit either converges to a fixed point, escapes to infinity or is (eventually) contained in $D$. See Figure~\ref{f.construction-trapped}.
\item[d)] There is a fixed point with a homoclinic orbit. The topological entropy is positive.
\end{itemize}
\end{proposition}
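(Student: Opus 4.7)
I would classify by the discriminant $D=(1+b)^2-4c$ of the fixed-point equation $x^2-(1+b)x+c=0$, giving $0$, $1$, or $2$ fixed points according to $D<0$, $D=0$, $D>0$. When $D>0$ the two fixed points have $x$-coordinates $x_\pm=((1+b)\pm\sqrt{D})/2$, and since $\lambda^+\lambda^-=b$ and $x_+\geq(1+b)/2>\sqrt{b}$, the point $p_+=(x_+,-bx_+)$ is always a hyperbolic saddle with two positive eigenvalues $0<\lambda^-<1<\lambda^+$; its unstable eigendirection $(1,-\lambda^-)$ points into the forward escape region $U^+$ from the proof of proposition~\ref{p.reduction}, so one branch of $W^u(p_+)$ escapes to infinity along $U^+$. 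The four cases of the proposition correspond respectively to $D<0$, $D=0$, and the dichotomy for $D>0$ according to whether $p_+$ admits a homoclinic orbit.

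For cases (a) and (b) I would rely on Cartwright--Littlewood (proposition~\ref{p.CL}): any $f$-invariant compact subset of $\RR^2$ with connected complement contains a fixed point of $f$. In case (a), if $z\in\RR^2$ has a bounded forward orbit, the filled $\omega$-limit set of $z$ (the complement of the unbounded connected component of $\RR^2\setminus\omega(z)$) is $f$-invariant and cellular, hence contains a fixed point, contradicting the assumption; the symmetric argument for $f^{-1}$ using the backward escape region $U^-$ of the proof of proposition~\ref{p.reduction} rules out bounded backward orbits. In case (b) the unique fixed point $p$ has eigenvalues $1$ and $b$, so the center-unstable manifold theorem produces an invariant $C^1$ curve $\gamma$ through $p$; the cellular argument applied to any invariant compact set distinct from $\{p\}$ forces $\gamma$ to leave every compact set on its expanding side through $U^+$, and a local analysis on the center manifold combined with the fact that no orbit returns once it enters $U^+$ shows that any bounded orbit not converging to $p$ would produce a second cellular invariant set requiring another fixed point, a contradiction.

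For $D>0$, case (d) is immediate from lemma~\ref{l.heteroclinic-cycle}: a homoclinic orbit of $p_+$ yields positive topological entropy. Otherwise the bounded branch $\Gamma^-$ of $W^u(p_+)$ is disjoint from $W^s(p_+)$; by proposition~\ref{p.transitive} combined with the localized no-cycle theorem~\ref{t.cycle2} applied on a small filtrating neighborhood of $\overline{\Gamma^-}$ avoiding $W^s(p_+)$, the restricted dynamics has zero entropy, so theorem~\ref{t.renormalize} applied to the mildly dissipative disc model $g$ of proposition~\ref{p.reduction} and to the $g$-invariant branch corresponding to $\Gamma^-$ produces a disc $\Delta\subset\DD$ trapped by $g$, containing the accumulation set of $\Gamma^-$ and disjoint from $W^s(p_+)$. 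Pulling back by $h^{-1}$ gives the trapping disc $D\subset\RR^2$ of case (c); the curve $\gamma$ is then the whole $W^u(p_+)$, parametrized so that $\gamma((-\infty,0])$ is the portion of the bounded branch lying in $D$ and $\gamma((0,+\infty))$ extends through $p_+$ into the escaping branch along $U^+$. The orbit descriptions follow by combining the cellular argument of cases (a) and (b) applied to the open invariant set $\RR^2\setminus D$ minus a large ball, together with items (3)--(5) of proposition~\ref{p.reduction}.

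The main obstacle is the application of theorem~\ref{t.renormalize} in case (c) without assuming that the global topological entropy of $f$ vanishes: I would need to verify that the absence of a homoclinic orbit of the fixed point $p_+$ is sufficient, via proposition~\ref{p.transitive} and the localized no-cycle theorem~\ref{t.cycle2}, to guarantee that an appropriate filtrating neighborhood of $\overline{\Gamma^-}$ carries zero-entropy dynamics; the careful step is to ensure that no periodic cycle localized on the $p_-$-side of $W^s(p_+)$ can arise without forcing, via the transitivity of accumulation, a homoclinic orbit of $p_+$ itself. This local zero-entropy statement is what permits the Pixton-disc construction of section~\ref{ss.trapping} to produce a disc trapped by $f$ lying strictly on one side of $W^s(p_+)$, as demanded by the statement.
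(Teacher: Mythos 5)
Your handling of cases (a), (b) and (d) is broadly in the spirit of the paper (the paper also uses a Brouwer-type fixed point argument for the escaping cases, and the transversality forced by concavity for the homoclinic case), but your route to case (c) has a genuine gap, and it is exactly the one you flag as "the main obstacle" — it is not a technicality that can be patched. Theorem~\ref{t.renormalize} is proved for \emph{mildly dissipative} diffeomorphisms with \emph{zero topological entropy}. Mild dissipation of the disc model $g$ of proposition~\ref{p.reduction} is only established for $|b|<1/4$ (remark~\ref{r.reduction}), whereas the proposition is asserted for all $b\in(0,1)$; and the absence of a homoclinic orbit of the fixed point $q$ does not give you zero entropy, even locally: theorem~\ref{t.cycle2} characterizes zero entropy in a filtrating set by the absence of cycles of \emph{periodic orbits}, and a cycle (hence a horseshoe) could perfectly well be carried by periodic orbits of higher period on the bounded side of $W^s(q)$ without forcing a homoclinic orbit of $q$ itself. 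Note also that case (c) of the statement deliberately says nothing about the dynamics \emph{inside} $D$ — it is compatible with positive entropy there — so you cannot hope to first establish zero entropy near $\overline{\Gamma^-}$ and then build the disc. Finally, constructing a filtrating neighborhood of $\overline{\Gamma^-}$ disjoint from $W^s(q)$, which you need even to invoke the localized no-cycle theorem, is essentially the trapping-disc statement you are trying to prove; the argument is circular.

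The paper's proof is entirely elementary and bypasses all of this machinery, which is the whole point of this "final remark" (the proposition is what \emph{produces} the trapping disc to which the heavy theorems are then applied). Concretely: an explicit invariant cone field shows that the right unstable branch of the saddle $q$ (the fixed point with largest abscissa) is a graph over $(x_q,+\infty)$ along which orbits escape; a second elementary computation shows that the stable (resp. unstable) local branches of $q$ are convex (resp. concave) graphs wherever they are graphs. The trichotomy for the bounded part is then driven by this geometry: if the left unstable or left stable branch is a global graph one gets an invariant half-line through both fixed points and case (b) via Brouwer; if the left unstable branch turns at a point $z_u$ with vertical tangent and $x_s\le x_u$, one builds an explicit Jordan domain $\Delta$ bounded by an unstable arc, a vertical segment through $z_u$ and a stable arc, and verifies $f(\Delta)\subset\Delta$ directly from convexity/concavity (a crossing of the two graphs would be transverse and put us in case (d)); the remaining configuration $x_s>x_u$ is excluded because the same construction would give $f(\Delta)\supset\Delta$, contradicting the contraction of area. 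If you want to salvage your write-up, you should replace the appeal to theorem~\ref{t.renormalize} by such a direct construction of the trapped disc.
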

\begin{proof}[Sketch of the proof]
If there is no fixed point, by Brouwer theorem, any orbit $(f^n(x))_{n\in\ZZ}$ converges to infinity when $n\to \pm \infty$
and the proposition holds.
If there exists a fixed point with a homoclinic orbit, then the topological entropy is positive by Lemma~\ref{l.heteroclinic-cycle}.
In the following we will thus assume that there exists at least one fixed point and that there is no homoclinic orbit.

If $(x,y)$ is fixed, then $x^2+c-(1+b)x=0$. Hence there exists at most two fixed points.
We denote by $q=(x_q,y_q)$ the point whose first coordinate satisfies
$x_q=\frac{1+b}{2}+ \sqrt{\frac{(1+b)^2}{4}-c}.$ Note that it is a saddle-node fixed point when $4c=(1+b)^2$,
and a hyperbolic saddle with positive eigenvalues otherwise.

\begin{claim}\label{c.unbounded-graph}
The right unstable branch of $q$ is a graph over the interval $(x_q,+\infty)$ in the first coordinate.
It is contained in the wandering set and the forward orbit of any point in a neighborhood escapes to infinity.
\end{claim}
\begin{proof}
At a point $z=(x,y)$, one considers the direction $(1,v_z):=(1,-x+\sqrt{x^2-b})$.
Let us assume $x\geq 0$ and that the image $z'=(x',y')$ satisfies $x'\geq x$. Note that $v_{z}\leq v_{z'}\leq 0$.
If the direction $(1,v)$ at $z$ satisfies $-x\leq v\leq v_z$, then the image $(1,v')$ at $z'$ satisfies
$$-x'\leq -x\leq v\leq v'\leq v_z\leq v_{z'}.$$
One can thus obtain the unstable manifold of $q$ by iterating forwardly a local half graph at $q$ whose tangent directions $(1,v)$ satisfy
$-x< v<v_z$ at any of its points $z$ with $x\geq x_q$. The iterates still satisfy these inequalities.
The sequence of iterated graphs converges towards the right unstable branch of $q$, which is a graph.
Since there is no fixed point satisfying $x>x_q$, it is a graph over $(x_q,+\infty)$.

Any point $x$ in a neighborhood of the unstable branch of $q$ has a forward iterate in the domain $U^+$
introduced in the proof of Proposition~\ref{p.reduction}, hence escape to infinity in the future.
In particular the first coordinate is strictly increasing and $x$ admits a wandering neighborhood.
\end{proof}

\begin{claim}\label{l.concavity}
On the domain where it is a graph, the local stable manifold of $q$ is convex.
On the domain where it is a graph, the local unstable manifold of $q$ is concave.
\end{claim}
In the following, we denote by $(x_s,x_q)$ and $(x_u,x_q)$ the maximal open domains where the left stable and left unstable branches of $q$ are graphs.
\begin{proof}
Let us consider a graph $\Gamma=\{(t,\varphi(t))\}_{t\in I}$
whose image  is a graph $\Gamma'=\{(s,\varphi'(s))\}$  such that $f|_\Gamma$ preserves the orientation on the first projection.
The slope $v(t)=D\varphi(t)$ has an image with slope $v'(t)=-b/(2t+v(t))$.
The derivative of the slope of the image is $Dv'(t)=\frac{b(2+Dv(t))}{(2t+v(t))^2}$.
If $Dv(t)$ is positive, the same holds for $Dv'(t)$. If $Dv'(t)$ is negative, the same holds for $2+Dv(t)$, hence for $Dv(t)$.

One deduces that: if $\Gamma$ is concave, then $\Gamma'$ is also concave;
if $\Gamma'$ is a convex, then $\Gamma$ is also convex.
\end{proof}

\paragraph{\it First case: the left unstable or the left stable branch of $q$ is a graph.}
If the left unstable branch of $q$ is a graph, it is bounded by the second fixed point $p$. Hence $W^u(q)\cup\{p\}$ is an invariant closed half line
containing the two fixed points. The domain $U=\RR^2\setminus (W^u(q)\cup\{p\})$ is homeomorphic to a plane.
By Brouwer theorem, any orbit which does not belong to that line escape to infinity in the domain $U$ when $n\to \pm \infty$.
Together with the Claim~\ref{c.unbounded-graph}, this implies that the forward (resp. backward) orbit either belongs to the stable (resp. unstable) manifold of a fixed point, or converges to infinity in $\RR^2$. If the left stable branch of $q$ is a graph, the union of the left stable branch and of the right stable branch is an invariant closed half line and a similar argument holds.

\paragraph{\it Second case: the unstable manifold of $q$ is not a graph and $x_s\leq x_u$.}
The left unstable branch is not a graph: there exists a point $z_u\in W^u(q)$ with a vertical tangent space and (by Lemma~\ref{l.concavity}) the unstable arc connecting the points $z_u$ and $q$ is a concave graph $\gamma^u$ over an interval $(x_u,x_q)$. The (local) left stable branch of $q$ is a graph over a maximal interval $(x_s,x_q)$.
The tangent map at $q$ is $Df(q)=\begin{pmatrix} 2x_q & 1 \\ -b & 0 \end{pmatrix}$, hence the stable graph is above the
unstable graph. Since there is no homoclinic point, the two graphs are disjoint.

One can build a Jordan domain $\Delta$ by considering the union of
the unstable arc $\gamma^u$, a vertical segment $\gamma^v$ and a stable arc $\gamma^s$ above $(x_u,x_q)$,
see Figure~\ref{f.construction-trapped}.

We claim that $f(\Delta)\subset \Delta$. Indeed $f(\gamma^u)$ does not intersect $\gamma^s$, as explained above.
It does not crosses $\gamma^v$ either, since $f^{-1}(\gamma^v)$ is a subset of a convex graph $\{(t, {\color{black} \operatorname{const}}-t^2),t\in \RR\}$
which is tangent to the concave graph $\gamma^u$ at the point $f^{-1}(z_u)$.
Similarly the horizontal segment $f(\gamma^v)$ does not cross the convex graph $\gamma^s$ since one of its endpoints is below the convex graph $\gamma^s$ and the other one is on the graph.

One considers a domain $D$ which is bounded by curves close to (but disjoint from) $\gamma^u, \gamma^v,\gamma^s$.
The inclination lemma implies that $D$ is a trapped disc, see Figure~\ref{f.construction-trapped}. By construction it contains a fundamental domain of the left unstable branch of $q$.
One deduces that $\RR^{2}\setminus (W^u(q) \cup (\cap_n f^n(D)))$ is homeomorphic to the plane and does not contain any fixed point.
Brouwer theorem implies that any forward (resp. backward) orbit either is contained in the stable (resp. unstable) manifold of $q$,
or intersects $D$ (resp. is contained in $\cap_n f^n(D)$), or escapes to infinity.
\begin{figure}
\includegraphics[width=5cm,angle=0]{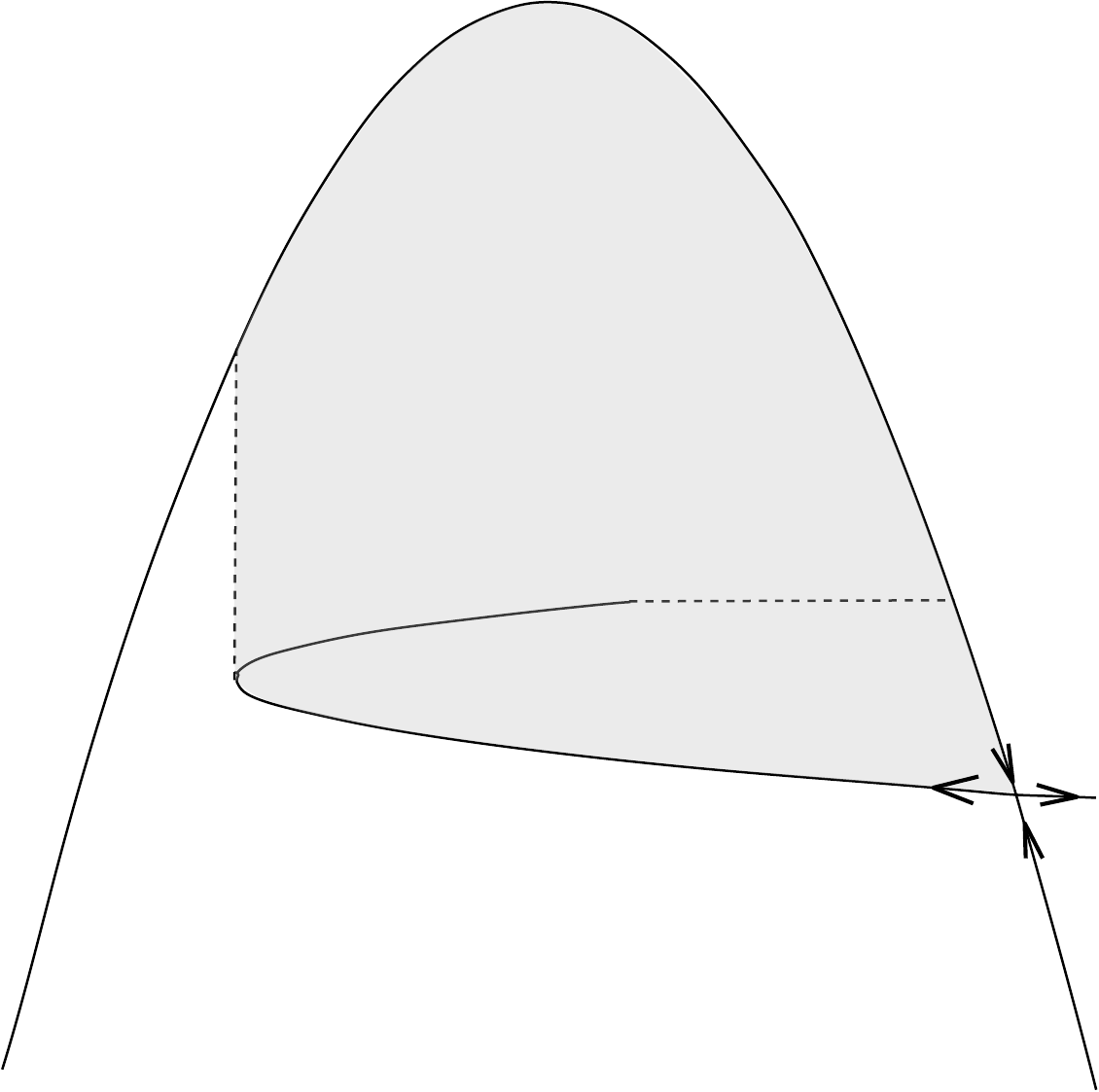}
\hspace{2cm}
\includegraphics[width=5cm,angle=0]{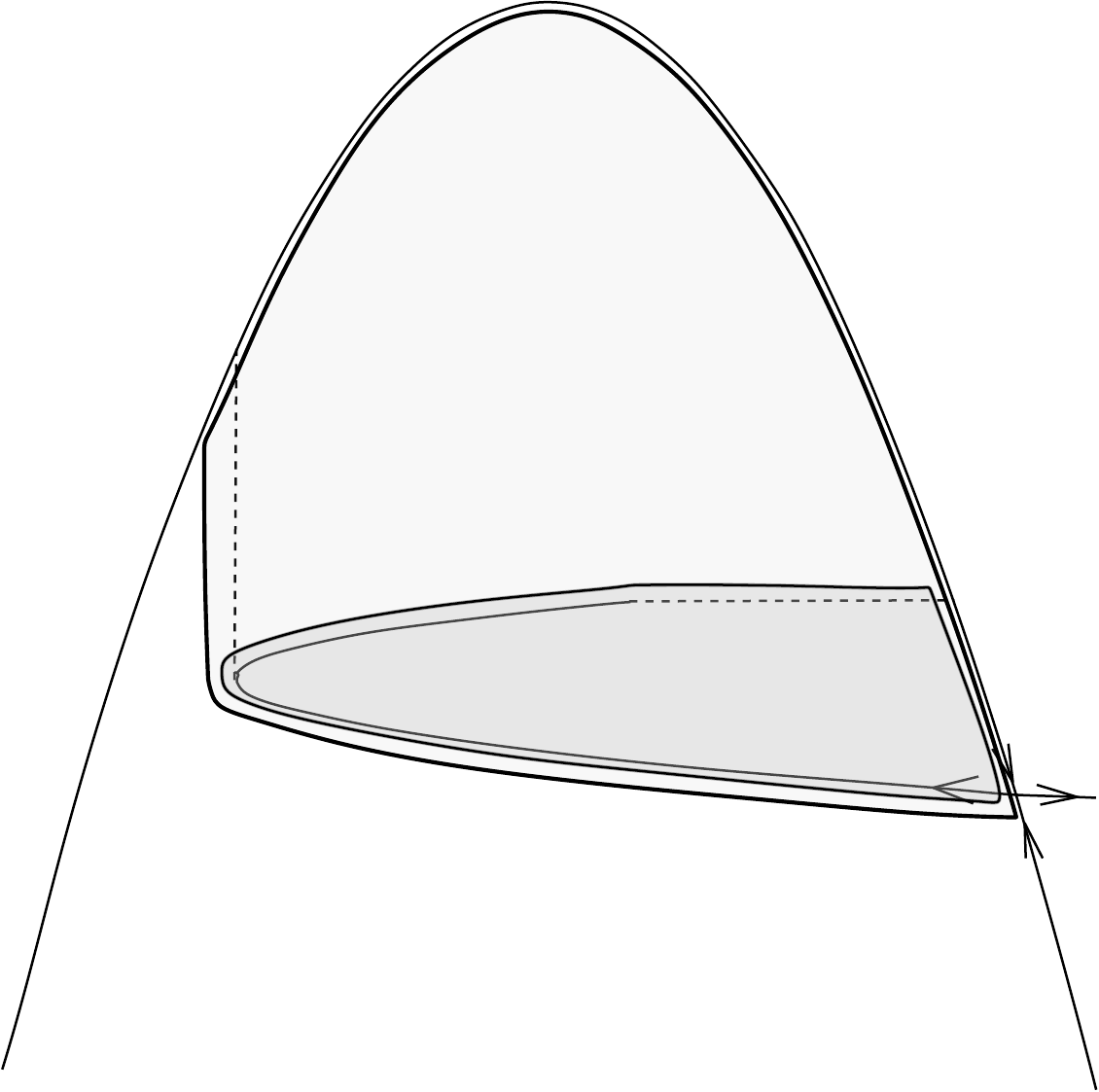}
\put(-212,31){\small $q$}
\put(-266,48){\small $f(\Delta)$}
\put(-325,43){\small $z_u$}
\put(-315,70){\small $\gamma^v$}
\put(-280,30){\small $\gamma^u$}
\put(-235,90){\small $\gamma^s$}
\put(-275,84){\small $\Delta$}
\put(-6,31){\small $q$}
\put(-49,48){\small $f(D)$}
\put(-73,84){\small $D$}
\caption{Construction of a trapped domain (when $b\in (0,1)$).\label{f.construction-trapped}}
\end{figure}

\paragraph{\it Third case: the left stable branch is not a graph and $x_s> x_u$.}
We perform a similar construction.
The local stable graph is bounded by a point $z_s$ with a vertical tangent space.
As before, the two local graphs are disjoint and one builds a Jordan domain $\Delta$ by considering the union of
a stable arc $\gamma^s$, a vertical segment $\gamma^v$ and an unstable arc $\gamma^u$ above $(x_s,x_q)$.
For the same reasons as before, the boundary of $\Delta$ does not cross its image.
In this case $f(\gamma^v)$ is an horizontal graph tangent to the convex graph $\gamma^s$ and hence above it.
This implies $f(\Delta)\supset \Delta$, contradicting the volume contraction of~$f$.
\end{proof}

{\color{black}
\printindex
}

\bigskip

\hspace{-2.5cm}
\begin{tabular}{l l l l l}
\emph{Sylvain Crovisier}
& &
\emph{Enrique Pujals}
& &
\emph{Charles Tresser}
\\

Laboratoire de Math\'ematiques d'Orsay
&& Graduate Center-CUNY
&& IMPA\\
CNRS - UMR 8628, Univ. Paris-Saclay
&& New York, USA
&& Estrada Dona Castorina, 110\\
Orsay 91405, France
&&
&& 22460-320 Rio de Janeiro, Brazil
\end{tabular}
 
\end{document}